\newtheorem{theorem}{Theorem}[section]
\newtheorem{lemma}[theorem]{Lemma}
\newtheorem{proposition}[theorem]{Proposition}
\newtheorem{corollary}[theorem]{Corollary}
\theoremstyle{definition}
\newtheorem{definition}[theorem]{Definition}
\newtheorem{example}[theorem]{Example}
\newtheorem{remark}[theorem]{Remark}
\newenvironment{prooflinked}{\noindent {\bf Proof of Proposition \ref{prop:linked}:}}{\qed}
\newcommand{\excise}[1]{}
\newcommand{\Spec}{\operatorname{Spec}}
\newcommand{\Specm}{\operatorname{Specm}}
\newcommand{\supp}{\operatorname{Supp}}
\newcommand{\Ext}{\operatorname{Ext}}
\newcommand{\id}{\operatorname{id}}
\renewcommand{\dim}{\operatorname{dim}}
\newcommand{\Ann}{\operatorname{Ann}}
\newcommand{\Sym}{\operatorname{Sym}}
\newcommand{\fin}{{\operatorname{fin}}}
\newcommand{\Int}{{\operatorname{int}}}
\newcommand{\D}{\mathbbm{D}}
\renewcommand{\and}{\qquad\text{and}\qquad}
\newcommand{\Z}{\mathbb{Z}}
\newcommand{\N}{\mathbb{N}}
\newcommand{\R}{\mathbb{R}}
\newcommand{\C}{\mathbb{C}}
\newcommand{\del}{\partial}
\newcommand{\la}{\leftarrow}
\newcommand{\M}{\mathfrak{M}}
\newcommand{\mmod}{{-}{\sf mod}}
\newcommand{\modfin}{\mmod_{\fin}}
\newcommand{\udot}{{\scriptscriptstyle \bullet}}
\newcommand{\bdy}{\partial}
\renewcommand{\la}{\lambda}
\newcommand{\vart}{\vartheta}
\renewcommand{\a}{\alpha}
\newcommand{\ol}{\overline}
\renewcommand{\b}{\beta}
\newcommand{\Hom}{\operatorname{Hom}}
\newcommand{\vb}{{\Lambda}}
\newcommand{\IC}{\operatorname{IC}^{\scriptscriptstyle{\bullet}}}
\newcommand{\cF}{\mathcal{F}}
\newcommand{\cP}{\mathcal{P}}
\newcommand{\cO}{\mathcal{O}}
\newcommand{\cI}{\mathcal{I}}
\newcommand{\fS}{\mathfrak{S}}
\renewcommand{\cD}{\mathcal{D}}
\newcommand{\bS}{\mathbb{S}}
\newcommand{\T}{\mathbb{T}}
\newcommand{\gr}{\operatorname{gr}}
\newcommand{\sgr}{{\mathrm{gr}}}
\newcommand{\spol}{{\mathrm{pol}}}
\renewcommand{\t}{\mathfrak{t}}
\newcommand{\cQ}{\mathcal{Q}}
\newcommand{\cB}{\mathcal{B}}
\newcommand{\Loc}{\operatorname{Loc}}
\newcommand{\nc}{\newcommand}
\nc{\cq}{/\!\!/}
\nc{\red}{{U}}
\nc{\zred}{{U_\fz}}
\nc{\sred}{{\cal U}}
\nc{\szred}{{\cal U}_{\fz}}
\nc{\sEnd}{{\cal E}\mathit{nd}}
\nc{\sHom}{{\cal H}\mathit{om}}
\nc{\Pic}{\operatorname{Pic}}
\nc{\cone}[1]{\text{\textswab C}(#1)}
\nc{\coneZ}[1]{\text{\textswab C}_\Z(#1)}
\newcommand{\fg}{{\mathfrak{g}}}
\newcommand{\fk}{{\mathfrak{k}}}
\newcommand{\fh}{{\mathfrak{h}}}
\newcommand{\fz}{{\mathfrak{z}}}
\nc{\Gd}{G'}
\newcommand{\fb}{{\mathfrak{b}}}
\newcommand{\fM}{{\mathfrak{M}}}
\newcommand{\mg}{{\mathfrak{g}}}
\newcommand{\fI}{{\mathfrak{I}}}
\newcommand{\Umod}{{U\mmod}}
\newcommand{\Ulmod}{{U_\la\mmod}}
\newcommand{\proWM}{\Ulmod_{pro}}
\newcommand{\Ulpmod}{{U_{\la'}\mmod}}
\newcommand{\lf}{{\operatorname{lf}}}
\newcommand{\End}{\operatorname{End}}
\newcommand{\UWM}{\Umod_{\lf}}
\newcommand{\WM}{\Ulmod_{\lf}}
\newcommand{\wh}{\widehat}
\newcommand{\Lleq}{\overset{L}{\leq}}
\newcommand{\Rleq}{\overset{R}{\leq}}
\newcommand{\Tleq}{\overset{2}{\leq}}
\nc{\Ree}{\EuScript{R}}
\newcommand{\cM}{{\cal M}}
\nc{\muq}{\boldsymbol{\mu}}
\renewcommand{\cL}{{\cal Y}} 
\newcommand{\cU}{\mathcal{U}}
\newcommand{\cN}{\mathcal{N}}
\newcommand{\mgs}{\operatorname{Mod}^{\operatorname{good}}_{\bS}(\cU_\la)}
\newcommand{\bT}{\mathbb{T}}
\newcommand{\bt}{\mathbbm{t}}
\newcommand{\secs}{\Gamma_\bS}
\newcommand{\tra}{T}
\newcommand{\ctra}{\mathcal{T}}
\renewcommand{\cH}{\mathcal{H}}
\newcommand{\plusminus}{{+,-}}
\newcommand{\X}{\mathfrak{X}}
\renewcommand{\ol}{\tilde}
\newcommand{\Iv}{{I_\qvb}}
\newcommand{\SHZ}{\SH_\Z}
\newcommand{\SHR}{\SH_\R}
\newcommand{\SgrH}{W}
\newcommand{\SgrHZ}{\SgrH_\Z}
\newcommand{\SgrHR}{\SgrH_\R}
\newcommand{\laurenth}{{(\!(\hh)\!)}}
\newcommand{\hh}{\hbar^{\nicefrac{1}{2}}}
\newcommand{\Weyl}{\mathbb{V}}
\newcommand{\rWeyl}{\mathbb{W}}
\newcommand{\grH}{{H}}
\newcommand{\bH}{\mathbf{H}}
\newcommand{\V}{{\mathbf{V}}}
\newcommand{\qvb}{\mathbf{\Lambda}}
\newcommand{\DDelta}{\mathbf{\Delta}}
\newcommand{\SH}{{\mathbf{W}}}
\newcommand{\PA}{X}
\newcommand{\QPA}{{\mathbf{X}}}
\newcommand{\qH}{\text{\boldmath$\cH$}}
\newcommand{\ordk}{{(k)}}
\newcommand{\Pk}{P^{\ordk}}
\newcommand{\htbmc}{H_{2d,\bT}^{B\! M}\!\left(\fM^+; \C\right)}
\newcommand{\hbmc}{H_{2d}^{B\! M}\!\!\left(\fM^+; \C\right)}
\newcommand{\hmid}{H^{2d}_{\bT}(\fM; \C)}
\newcommand{\htbmm}{H_{2d,\bT}^{B\! M}\!\left(\fM; \C\right)}
\newcommand{\mb}{\mathfrak{b}}
\newcommand{\mh}{\mathfrak{h}}
\begin{document}
\spacing{1.2}

\noindent {\Large \bf 
Hypertoric category $\cO$}
\bigskip

\noindent{\bf Tom Braden}\footnote{Supported by NSA grant H98230-08-1-0097.}\\
Department of Mathematics and Statistics, University of Massachusetts,
Amherst, MA 01003\smallskip \\
{\bf Anthony Licata}\\
Department of Mathematics, Stanford University,
Palo Alto, CA 94305\smallskip \\
{\bf Nicholas Proudfoot}\footnote{Supported by NSF grants DMS-0738335 and DMS-0950383.}\\
Department of Mathematics, University of Oregon,
Eugene, OR 97403\smallskip \\
{\bf Ben Webster}\footnote{Supported by an NSF Postdoctoral Research Fellowship and  by  NSA grant H98230-10-1-0199.}\\
Department of Mathematics, University of Oregon,
Eugene, OR 97403
\bigskip

{\small
\begin{quote}
\noindent {\em Abstract.}
We study the representation theory of the invariant subalgebra of the
Weyl algebra under a torus action, which we call a ``hypertoric enveloping algebra.'' 
We define an analogue of BGG category $\cO$ for this algebra, and identify 
it with a certain category of sheaves on a hypertoric variety.  We prove that a regular block of this category is highest weight
and Koszul, identify its Koszul dual, compute its center, and study its cell structure.
We also consider a collection of derived auto-equivalences analogous to the shuffling
and twisting functors for BGG category $\cO$.
\end{quote}
}
\bigskip

\section{Introduction}
\label{sec:intro}
In this paper we study an algebra $U$ 
analogous to the universal enveloping algebra $U(\fg)$ of a semisimple
Lie algebra $\fg$.  Just as the central quotients of $U(\fg)$ are quantizations
of the ring of functions on the cotangent bundle to the flag variety, the central quotients of our
algebra are quantizations of the ring of functions on a hypertoric variety; for this reason, we
call $U$ the {\bf hypertoric enveloping algebra}.
The most important structure from our perspective is a category $\cO$ of $U$-modules 
analogous to the Bernstein-Gelfand-Gelfand (BGG) category $\cO$ of modules over the universal enveloping algebra. 
Our category $\cO$ shares many beautiful structures and properties with the BGG category $\cO$, including 
a Koszul grading, the presence of ``standard objects'' (analogues of Verma modules), 
a ``cell'' partition of the set of simple objects in a block, and two commuting actions (shuffling and twisting) of 
discrete groups by derived auto-equivalences.  

In the first part of the introduction, we will review how these structures arise in Lie theory, 
before describing the analogous phenomena in the hypertoric setting.  
Let $\mg$ be a semisimple Lie algebra with Cartan and Borel subalgebras $\mh\subset\mb\subset\mg$.
{\bf BGG category \boldmath$\cO$} is defined 
to be the category of finitely generated $U(\mg)$-modules on which $U(\mh)$ acts semisimply 
and $U(\mb)$ acts locally finitely.  The center of $U(\mg)$ 
also acts locally
finitely on objects of $\cO$, which implies that $\cO$ decomposes into {\bf infinitesimal blocks} $\cO_\la$ indexed 
by central characters $\la$ of $U(\mg)$.

Each infinitesimal block has finitely many simple objects 
(at most the order of the Weyl group), and may 
decompose further, depending on the existence of nontrivial extensions between the simple objects.  In this manner
we decompose all of $\cO$ into irreducible {\bf blocks}.  If $\la$ is generic, then $\cO_\la$ is semisimple, and breaks
into one block for each simple object.  At the other extreme, when $\la$ is {\bf integral}, 
the infinitesimal block $\cO_\la$ is itself already a block.
An infinitesimal block is called {\bf regular} if the number of (isomorphism classes of) simple objects is equal to the order of the Weyl group.
In the non-integral case, the constituent blocks of a regular infinitesimal block are called regular, as well.  
Each of these blocks is equivalent to a regular integral block for a semi-simple subalgebra of $\mg$,
a phenomenon which is paralleled in the hypertoric setting.

One of the most powerful tools for studying BGG category $\cO$ is the geometry of the flag variety $G/B$.
For any central character $\la$ of $U(\mg)$, one can define $\la$-twisted D-modules on $G/B$.
If the infinitesimal block $\cO_\la$ is regular, then it is equivalent via the Beilinson-Bernstein localization
theorem to the category of finitely generated $\la$-twisted D-modules on $G/B$ with regular singularities
and singular supports in the conormal varieties to the Schubert strata.  Such D-modules may also be regarded
as sheaves on the cotangent bundle $T^*(G/B)$, the ``Springer resolution" of the nilpotent
cone in $\mg$.
This perspective allows one to define a Koszul grading on $\cO_\la$ 
and to understand various algebraic properties of the category in terms of the geometry of the flag variety and
the Springer resolution.

Kazhdan and Lusztig \cite{KL79} define a partition of the simple objects of $\cO$ called the {\bf two-sided
cell} partition, which can be further refined into {\bf left} and {\bf right cell} partitions.  
The two-sided cell partition induces a direct sum decomposition of the Grothendieck group $K(\cO_\la)_\C$, 
in which the subspaces are spanned by the classes of projective covers of simples in a given two-sided cell.
The set of cells has a natural partial order, and this defines for us a filtration
of $K(\cO_\la)_\C$ that we will call the {\bf cell filtration}.  

Two-sided cells are in bijection
with special $G$-orbits in the nilpotent cone, and the partial order is given by inclusions of closures of orbits.
Let $\bT := \C^\times$ act on $G/B$ in such a way so that the associated Bia{\l}ynicki-Birula stratification agrees with the Schubert stratification.  Let $d := \dim G/B$.
If we regard objects of $\cO_\la$ as twisted D-modules on $G/B$, then the singular support map defines
an isomorphism from $K(\cO_\la)_\C$ to $H_\bT^{2d}\big(T^*(G/B); \C\big)$.  
In the case where $\mg=\mathfrak{sl}_N$,
this isomorphism takes
the cell filtration of $K(\cO_\la)_\C$ to the filtration of $H_\bT^{2d}\big(T^*(G/B); \C\big)$ determined
by the Beilinson-Bernstein-Deligne (BBD) decomposition theorem \cite{BBD,CG97}.\footnote{A statement of this form
can be made for arbitrary $\mg$, but it is more complicated due to the existence of non-special nilpotent orbits
which show up in the BBD picture but not in the cell picture.}

We now summarize some of the well known and important properties of regular integral
blocks of BGG category $\cO$,
many of which we have already stated above.

\begin{theorem}\label{first-intro-thm}
Let $\cO_{\la}$ be a regular integral block of BGG category $\cO$.
\begin{enumerate}
\setlength{\itemsep}{-2pt}
  \item $\cO_\la$ is highest weight 
  (equivalent to the representations of a
    quasihereditary algebra) with respect to some partial ordering of the simple objects \cite{CPS88}.
  \item $\cO_\la$ is Koszul (equivalent to the representations of an algebra with a Koszul grading)
    \cite{BGS96}.
    \item $\cO_\la$ is equivalent via the localization functor to a certain
category of sheaves of modules over a quantization
of the structure sheaf of $T^*(G/B)$ \cite{BB}.
\item The center of the Yoneda algebra of $\cO_\la$ is isomorphic to $H^*(G/B; \C)$ \cite{Soe90}.
\item The complexified Grothendieck group $K(\cO_\la)_\C$ is isomorphic to $H_\bT^{2d}(G/B; \C)$,
and if $\mg = \mathfrak{sl}_N$, the cell filtration corresponds to the BBD filtration \cite{BBD,CG97}.
\item There are two collections of derived auto-equivalences of $\cO_\la$ (shuffling and twisting). 
%obtained by varying the central character and the Borel subalgebra \cite{AS, Irvshuf}.
They define commuting actions of the Artin braid group of $\mg$, which is the fundamental group of the quotient by the Weyl group of the complement to the Coxeter
arrangement for $\fg$ \cite{AS, MOS, BBM04}. 
\item $\cO_\la$ is Koszul self-dual \cite{BGS96}.  The corresponding derived auto-equivalence 
exchanges (graded versions of) the shuffling and twisting functors \cite[6.5]{MOS}.
The permutation of the set of simple objects induced by Koszul duality
sends left cells to right cells, right cells to left cells, and two-sided cells to two-sided cells.
It is order-reversing for the highest weight ordering of the simple objects, and also order-reversing
on the set of two-sided cells.
\end{enumerate}
\end{theorem}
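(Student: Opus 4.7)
Theorem~\ref{first-intro-thm} is a compilation of established results, each carrying a canonical reference, so my plan is to orchestrate the existing literature rather than to re-derive anything. The organizing principle is that Beilinson-Bernstein localization (item (3)) provides a geometric model of $\cO_\la$ that underpins almost every other item, so I would establish (3) first and then pivot between algebraic and geometric descriptions of $\cO_\la$ as convenient.

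First, for (3), invoke \cite{BB}: at regular $\la$, the global sections functor from $\la$-twisted $\cD$-modules on $G/B$ to $U(\fg)$-modules of infinitesimal character $\la$ is an exact equivalence, and restricting to Schubert-constructible regular holonomic objects recovers $\cO_\la$. Then for (1), identify standard objects on both sides: Verma modules correspond to standard extensions off Schubert cells, Bruhat order gives the highest-weight order, and the Cline-Parshall-Scott axioms \cite{CPS88} follow from the standard filtration of projectives. For (2) and (7), pass to the mixed perverse sheaf model on $G/B$; \cite{BGS96} proves that the Ext-algebra of a projective generator admits a Koszul grading and is Koszul self-dual, with the duality swapping simples with projectives and reversing the highest-weight order. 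The refinement in (7) that the graded duality exchanges shuffling and twisting is \cite[6.5]{MOS}, and its compatibility with the cell structure follows from the Kazhdan-Lusztig description of the involved permutations.

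For (4), apply Soergel's structure theorem \cite{Soe90}: his functor from $\cO_\la$ to modules over the coinvariant algebra $C = H^*(G/B;\C)$ is fully faithful on projectives and identifies $Z(\cO_\la)$ with $C$; combined with Koszulity, this computes the center of the Yoneda algebra. For (5), the characteristic cycle map sends $[L_w]$ to a class supported on the conormal of the $w$-th Schubert cell, and a dimension count makes it an isomorphism; in type $A$, the match between cell filtration and BBD filtration follows from the bijection between special nilpotent orbits and two-sided cells together with the Springer-theoretic computation of \cite{CG97}. For (6), construct shuffling by varying $\mb$ and twisting by varying $\la$, verify the braid relations following \cite{AS,Irvshuf}, and check that the two families commute via their bimodule realizations as in \cite{MOS,BBM04}.

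The main obstacle, such as it is, lies in keeping conventions compatible across the cited sources --- left versus right cells, dominant versus antidominant orderings, the sign of the Koszul grading, and the exact version of Soergel's functor used --- since the theorems were originally stated in slightly different setups and the present statement compiles them into a single coherent package. No genuinely new mathematical content is required; the value of Theorem~\ref{first-intro-thm} lies in its role as a template that the rest of the paper will mirror in the hypertoric setting.
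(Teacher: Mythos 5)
Your proposal is correct and matches the paper's treatment: Theorem \ref{first-intro-thm} is a background compilation whose only "proof" in the paper is the inline citations, and you have attributed each item to the same sources and sketched the standard derivations accurately. No further argument is required or given in the paper itself.
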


Now we turn to the hypertoric enveloping algebra.  
This algebra, which was originally studied by Musson and Van den Bergh \cite{MvdB},
is easy to define.  Start with the ring 
$\C[x_1,\partial_1,\ldots, x_n, \partial_n]$ of polynomial differential operators
on $\C^n$, which is equipped with the action of an algebraic torus $T\cong (\C^\times)^n$.
The algebra $U$ is defined to be the invariant ring with respect to a subtorus $K\subset T$.
Inside of $U$ is the polynomial subalgebra $\bH$ generated by $x_1\partial_1, \ldots, x_n\partial_n$,
which plays the role of the Cartan subalgebra.  The role of the Borel subalgebra is played by 
the subalgebra $U^+ \subset U$ consisting of elements of 
non-negative weight for a certain action of the
multiplicative group $\bT$; this subalgebra always contains $\bH$.
We note that in our situation there is no analogue of the conjugacy of Borel subgroups;  
different choices of $\bT$-action result in non-isomorphic subalgebras $U^+$.

We define {\bf hypertoric category $\cO$} to be the category of finitely generated $U$-modules
with the property that $U^+$ acts locally finitely and the center $Z(U)$ acts semisimply.
Note that this definition differs in a key way from the definition of BGG category $\cO$:
\begin{itemize}
\item in BGG category $\cO$ the Cartan algebra acts semisimply, while the center acts locally finitely;
\item in our definition, the center acts semisimply, but
$\bH$ only acts locally finitely.
\end{itemize}
In fact, the distinction vanishes if we look only at regular blocks or regular infinitesimal blocks.
A theorem of Soergel \cite{Soe86} says that a regular 
infinitesimal block $\cO_\la$ of BGG category $\cO$ is equivalent to the ``reversed" category obtained by allowing the Cartan
subalgebra to act locally finitely but requiring that the center act semisimply with character $\la$.
Indeed, the proof of Part (3) of Theorem \ref{first-intro-thm} goes through this equivalence, 
thus one can argue that even in the Lie-theoretic setting the reversed category is the more fundamental of the two.

The geometric perspective on BGG category $\cO$ begins with the
observation that a central quotient of $U(\fg)$ can be
realized as the ring of global twisted differential operators on
$G/B$, or equivalently as the ring of $\bS$-invariant global sections
of an equivariant quantization of $T^*(G/B)$, where $\bS := \C^\times$ acts by scaling the fibers.
In our setting the analogue of $T^*(G/B)$ is a hypertoric variety
$\fM$.  Though $\fM$ is not itself
a cotangent bundle, it is a symplectic variety that admits a Hamiltonian $\bT$-action
analogous to the induced $\bT$-action on $T^*(G/B)$ as well as a $\bS$-action analogous to
the scaling action on the cotangent fibers.\footnote{The groups $\bT$ and $\bS$ are both copies of the multiplicative group,
but they play very different roles in this paper.  In particular, $\bT$ acts on both $T^*(G/B)$ and $\fM$
preserving the symplectic forms, while $\bS$ does not.}  
In Section \ref{sec:hypertoric} we construct an equivariant
quantization of $\fM$ whose ring of $\bS$-invariant global
sections is isomorphic to a central quotient of $U$ (there is a unique equivariant quantization for each central quotient).  
This quantization has already been studied by Bellamy and Kuwabara \cite{BeKu}, who
prove an analogue of the Beilinson-Bernstein localization theorem in this context.

The data required to construct a hypertoric variety
along with the necessary group actions are encoded by a linear algebraic object called a {\bf polarized arrangement}.
For any polarized arrangement $\PA$, let $\fM(\PA)$ be the associated hypertoric variety.
The data required to construct a block 
of hypertoric category $\cO$ (a subtorus $K\subset T$, a subalgebra $U^+\subset U$,
a central character of $U$, plus a little bit more data\footnote{The ``little bit more data" is needed if and only
if $\vb_0$ fails to be unimodular or the central character fails to be integral (Remark \ref{inf-block=block}).}) 
are encoded by another,
slightly more complicated linear algebraic object called a {\bf quantized polarized arrangement}.  
For any quantized polarized arrangement $\QPA$, let $\cO(\QPA)$ be the associated block.
We define
what it means for $\QPA$ to be {\bf integral} and {\bf regular}, and we show that the regular integral blocks
are exactly those that have the largest possible number of isomorphism classes of simple objects 
(Remark \ref{number of simples}).
Quantized polarized arrangements and polarized arrangements are closely related; 
in Section \ref{sec:linked} we make this precise by defining certain pairs $\PA$ and $\QPA$ to be {\bf linked}.

Every statement in Theorem \ref{first-intro-thm} has a hypertoric equivalent.  For example, we define left, right,
and two-sided cells in the category $\cO(\QPA)$, as well as a support isomorphism from the Grothendieck
group of $\cO(\QPA)$ to the degree $2d$ equivariant cohomology group of $\fM(\PA)$, where $2d$ is the dimension
of $\fM(\PA)$.
The statement of Theorem \ref{first-intro-thm} is simplified by the fact that the category $\cO_\la$ is Koszul dual to itself.
In the hypertoric setting any regular polarized arrangement $\PA$ has a {\bf Gale dual}
$\PA^!$, and we will see that Gale duality of arrangements corresponds to Koszul duality of categories.

\begin{theorem}\label{second-intro-thm}
Suppose that $\QPA$ and $\QPA^!$ are regular, integral, and linked to a dual pair $\PA$ and $\PA^!$.
\begin{enumerate}\setlength{\itemsep}{-2pt}
  \item $\cO(\QPA)$ is highest weight with respect to a partial ordering of the simple objects (Corollary \ref{qhk}).
  \item $\cO(\QPA)$ is Koszul (Corollary \ref{qhk}).
    \item $\cO(\QPA)$ is equivalent to a certain
category of sheaves of modules over a quantization
of the structure sheaf of $\fM(\PA)$ (Corollary \ref{linked-loc}).
\item The center of the Yoneda algebra of $\cO(\QPA)$ is isomorphic to $H^*(\fM(\PA); \C)$ (Theorem \ref{center}).
\item The complexified Grothendieck group $K(\cO(\QPA))_\C$ is isomorphic to $H_\bT^{2d}(\fM(\PA); \C)$,
and the cell filtration corresponds to the BBD filtration (Theorem \ref{CaCo}).
\item There are two collections of derived auto-equivalences of $\cO(\QPA)$, which we call shuffling and twisting
functors. 
%obtained by varying the central character or the subalgebra $U^+\subset U$.
They define commuting actions
of the fundamental groups of the quotients of the complements of the discriminantal arrangements for $\PA$ and $\PA^!$ by the actions of certain finite groups of automorphisms of $U$ 
(Corollary \ref{commute}, Theorems \ref{action} and \ref{TwistShuf Koszul}).
\item $\cO(\QPA)$ is Koszul dual to $\cO(\QPA^!)$.  The associated derived equivalence exchanges shuffling and twisting functors.  The induced bijection between sets of simple objects sends left cells to right cells, right cells to left cells,
and two-sided cells to two-sided cells.  It is order-reversing for the highest weight orderings of the simple objects
of $\cO(\QPA)$ and $\cO(\QPA^!)$, and also order-reversing from the set of two-sided cells of $\cO(\QPA)$
to the set of two-sided cells of $\cO(\QPA^!)$
(Corollary \ref{qhk}, Theorems \ref{opposite preorders} and \ref{TwistShuf Koszul}).\footnote{Unlike in the Lie-theoretic setting, one cannot deduce the reversal of either of the highest weight partial order on the simple objects of $\cO(\QPA)$ or the partial order on the set of two-sided cells from reversal of the other, so these are truly independent statements.}
\end{enumerate}
\end{theorem}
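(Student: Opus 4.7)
The plan is to prove all seven parts by working in three parallel pictures and moving freely between them: the algebraic category $\cO(\QPA)$, a geometric category of modules over an equivariant quantization of $\cO_{\fM(\PA)}$, and a finite-dimensional algebra $A(\QPA)$ built from the combinatorics of the linked polarized arrangement $\PA$. The fact that $\QPA$ is linked to $\PA$ is the input that glues the three pictures together; once it is in hand, each of the seven statements reduces to either a computation in $A(\QPA)$, a geometric statement about $\fM(\PA)$, or a formal consequence of Gale duality.

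First I would attack the algebraic/geometric model and parts (1) and (3). For each ``feasible and bounded'' sign vector of $\PA$ I would construct a standard module in $\cO(\QPA)$ as a quotient of a Verma-like module induced from the subalgebra analogous to the Borel, verify that the indecomposable projectives admit standard filtrations, and conclude that $\cO(\QPA)$ is equivalent to modules over a finite-dimensional quasi-hereditary algebra $A(\QPA)$ whose highest weight order comes from the combinatorics of $\PA$; this gives (1). For (3) I would build an $\bS$-equivariant quantization $\cU$ of $\cO_{\fM(\PA)}$ whose ring of $\bS$-invariant global sections is the appropriate central quotient of $U$, and invoke the Bellamy--Kuwabara localization theorem to identify $\cO(\QPA)$ with a suitable category of good $\bS$-equivariant $\cU$-modules supported on the core of $\fM(\PA)$.

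Next come (4), (5), and (6). For (4), with the explicit presentation of $A(\QPA)$ and the localization at hand, the center of the Yoneda algebra and $H^*(\fM(\PA);\C)$ can both be computed through $\bT$-equivariant formality, and I expect them to match because both are governed by the same arrangement data. Part (5) is the characteristic cycle map: assign to each simple the class of its singular support in $H^{2d}_{\bT}(\fM(\PA);\C)$, check this map is an isomorphism on complexified Grothendieck groups, and then identify the two-sided cell filtration with the moment-map stratification that defines the BBD filtration. For (6) I would construct twisting functors by deforming the central character across walls and shuffling functors by varying the subalgebra $U^+$, verify that they are derived equivalences by computing on standards inside $A(\QPA)$, and identify the resulting group actions with fundamental groups of discriminantal complements by interpreting the parameter spaces of central characters and $U^+$-choices.

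The main obstacle, which I would address last, is the simultaneous proof of (2) and (7): Koszulity of $\cO(\QPA)$ and the identification of its Koszul dual with $\cO(\QPA^!)$. The plan is to extract from $A(\QPA)$ an explicit quadratic presentation by generators indexed by feasible bounded sign vectors together with ``wall-crossing'' arrows, and verify directly that the quadratic dual algebra is isomorphic to $A(\QPA^!)$; Gale duality $\PA \leftrightarrow \PA^!$ is exactly what makes this match, since it swaps feasibility with boundedness. Koszulity then follows from the quadratic presentation together with a purity argument on $\fM(\PA)$. The interchange of shuffling and twisting, and the order-reversing bijection on simples sending left cells to right cells and two-sided cells to two-sided cells, then fall out of tracking how the generators of each picture map under Koszul duality and using the Gale-duality dictionary between the cell structures. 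Producing the explicit quadratic presentation and verifying that its dual is the predicted Gale-dual algebra is where the real technical difficulty lies; everything else in the theorem rests on this step and on the localization of part (3).
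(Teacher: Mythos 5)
Your three-picture architecture (algebra, geometry, combinatorics, glued together by linkage) is exactly the paper's, and most of your individual reductions match: parts (1), (2), (4) and the Koszul duality in (7) are obtained by identifying $\cO(\QPA)$ with modules over the finite-dimensional algebra $A(\PA)$ of the authors' earlier combinatorial paper and importing quasi-heredity, Koszulity, the center computation, and the duality $A(\PA)^!\cong A(\PA^!)$ from there; part (3) is Bellamy--Kuwabara localization; part (5) is the support map; part (6) is translation and truncation functors. Where you genuinely diverge is in proposing to produce the quadratic presentation, its Gale-dual identification, and Koszulity (via a purity argument on $\fM(\PA)$) from scratch. The paper instead reaches $A(\PA)$ through the Musson--Van den Bergh quiver description of weight modules (Theorems \ref{first-equiv}, \ref{alg=comb}, \ref{blpw}), and the only new Koszulity computation it needs is for the auxiliary algebras $A(-,\xi)$ and $A^!_\spol(\eta^!,-)$ that enter the proof that Koszul duality exchanges shuffling and twisting. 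Your route is viable in principle, but it amounts to rederiving the earlier paper, and your sketch misplaces where the new technical work actually sits. Note also that standard modules cannot be obtained by naive induction from $U^+$, since $U^+\otimes_{U^0}U^-\to U$ is not an isomorphism; the paper constructs them as restrictions of explicit $\D$-modules.

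Three concrete gaps. First, in (3) the geometric category is not ``good modules supported on the core'': the correct condition is support on the relative core $\fM^+$, and one must additionally require a $\cU_\la(0)$-lattice preserved by $\hat\xi$ --- without this regularity condition $\Gamma_\bS$ does not land in $\cO_\la$ (Example \ref{kazhdan-needed} is a counterexample). Second, you have the names reversed in (6): shuffling functors come from varying the central character (the translation bimodules ${}_{\la'}T_\la$), while twisting functors come from varying $\xi$ and hence $U^+$; with your labels the exchange asserted in (7) would come out backwards. Third, the cell statements in (7) cannot be read off by ``tracking generators under Koszul duality'': one needs actual definitions of left cells (via annihilators of simples) and right cells (via Harish--Chandra bimodules), their combinatorial characterizations in terms of coloop-free flats and localized arrangements (Propositions \ref{left cells} and \ref{right-cells}), and the fact that Gale duality reverses the order on coloop-free flats. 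That is where the content of Theorem \ref{opposite preorders} lies, and your sketch provides no mechanism for producing it.
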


\begin{remark}
A consequence of Parts (5) and (7) of Theorem \ref{second-intro-thm} is that the cohomology groups
of $\fM(\PA)$ and $\fM(\PA^!)$ have dual BBD filtrations (Corollary \ref{dual filtrations}).  This fact has
combinatorial implications that we explore in Remark \ref{matroids}.
\end{remark}

\begin{remark}
Several parts of Theorem \ref{second-intro-thm} are proved by 
means of an equivalence between $\cO(\QPA)$ and the category of modules 
over a finite dimensional algebra $A(\PA)$ that we introduced in \cite{GDKD}
(this equivalence is stated in Theorems \ref{alg=comb} and \ref{blpw} of this paper).
In \cite{GDKD} we proved that $A(\PA)$ is quasihereditary and Koszul, computed the center
of its Yoneda algebra, gave a combinatorial construction of shuffling functors,
and showed that $A(\PA)$ is Koszul dual to $A(\PA^!)$, so this equivalence
immediately implies the corresponding statements for $\cO(\QPA)$.
To establish this equivalence we rely heavily on work of Musson and Van den Bergh \cite{MvdB}.
They give an detailed analysis of $U$ and some of its representation categories, but they never consider the 
subalgebra $U^+\subset U$ or our category $\cO(\QPA)$.
Everything involving the geometry of $\fM(\PA)$ or the cell structure of $\cO(\QPA)$ is new to this paper.
\end{remark}

\begin{remark}
This paper is a part of a larger program initiated by the authors, in which $\fM$
will be replaced by an equivariant symplectic resolution of an affine cone,
and $U$ will be replaced by an algebra whose central quotients are quantizations
of the algebra of functions on $\fM$ \cite{BLPWgco}.  When two symplectic resolutions yield
categories that are Koszul dual as in Parts (2) of Theorems \ref{first-intro-thm} and \ref{second-intro-thm}, 
we call those resolutions
a {\bf symplectic dual pair}.  Above we see that $T^*(G/B)$ is self-dual and $\fM(\PA)$ is dual to $\fM(\PA^!)$.
Other conjectural examples of dual pairs include Hilbert schemes on ALE spaces, which we 
expect to be dual to certain moduli spaces of instantons on $\C^2$, and quiver varieties of simply
laced Dynkin type, which we expect to be dual to resolutions of slices to certain subvarieties
of the affine Grassmannian.
We expect further examples to arise from physics as Higgs branches of the moduli
space of vacua for mirror dual 3-dimensional $\mathcal{N}=4$ superconformal field theories,
or as the Higgs and Coulomb branches of a single such theory.  That 
hypertoric varieties occur in mirror dual theories was observed by Kapustin and Strassler in \cite{KS99}.  
\end{remark}

\noindent
{\em Acknowledgments:}
The authors are grateful to the
Mathematisches Forschungsinstitut Oberwolfach for its hospitality
and excellent working conditions during the preparation of this paper.
Thanks are also due to Gwyn Bellamy for pointing out the 
reference \cite{MvdB} and for helpful comments on a draft, and to Carl Mautner and Ethan Kowalenko for pointing out a mistake in the proof of Lemma \ref{Akoszul}. 

\section{Linear algebraic data}\label{sec:lad}
In this section we introduce the basic linear algebraic constructions that will be crucial
to our analysis of hypertoric category $\cO$.
We define \textbf{polarized arrangements}, which are used to construct hypertoric varieties,
and \textbf{quantized polarized arrangements}, whose 
combinatorics control our category in a manner similar to the
way the Weyl group and associated Coxeter arrangement control 
BGG category $\cO$.

\subsection{The polynomial rings}\label{sec:poly}
Fix a positive integer $n$, and consider the ring
 $$\bH:= \C[h_1^{\pm}, \ldots, h_n^{\pm}]\big/\big\langle h_i^+ - h_i^- + 1\mid i,\ldots, n\big\rangle.$$
It is isomorphic to a polynomial ring on $n$ generators, 
but it is naturally filtered by the semigroup $2\mathbb{N}$ rather than graded, where the $(2k)^{\text{th}}$ piece $F_{2k}\bH$ of the 
filtration is the space of polynomials of degree $\le k$ in the $h^+_i$ (or equivalently in the $h^-_i$).\footnote{It will become clear
in Section \ref{sec:weyl} why we have indexed our filtration by the even natural numbers.}
Its associated graded is the polynomial ring
$$\grH := \gr\bH = \C[h_1,\ldots, h_n], \;\;\;\text{where}\; h_i = h^+_i + F_0\bH = h^-_i + F_0\bH.$$
The maximal spectrum $\SH := \Specm \bH$ is an $n$-dimensional complex affine space.
It is naturally a torsor for the vector space $W := \Specm \grH$.

Both $\SH$ and $\SgrH$ have distinguished
integral structures, given by
$$\SHZ := \{v\in\SH \mid h_i^\pm(v)\in\Z\;\text{for all $i$}\}\and\SgrHZ := \{v \in \SgrH \mid h_i(v) \in \Z\;\text{for all $i$}\}.$$
These in turn induce real structures
$$\SHR := \SHZ\otimes_\Z\R\and\SgrHR := \SgrHZ\otimes_\Z\R.$$

\subsection{Polarized arrangements}\label{sec:polarr}
In this section we use only the graded ring $\grH$ and the vector space $\SgrH$ from Section \ref{sec:poly};
the filtered ring $\bH$ and the affine space $\SH$ will be used in the next section.

\begin{definition} A \textbf{polarized arrangement} is a triple $\PA = (\vb_{0}, \eta, \xi)$, where
$\vb_0 \subset \SgrHZ$ is a direct summand,
 $\eta$ is a $\vb_0$-orbit in $\SgrHZ$, and
$\xi \in \vb_0^*$.  
To avoid some degenerate possibilities, we will always assume that $h_i(\vb_0) \ne 0$ for every $i$, so 
$\vb_0$ is not contained in any coordinate hyperplane, and that $\vb_0$ doesn't contain any of the 
coordinate axes.
\end{definition}

Given a polarized arrangement, let
$$V_0 := \C\vb_0\subset\SgrH\and V_{0,\R} := \R\vb_0\subset\SgrHR$$ be the complex and 
real vector spaces spanned by $\vb_0$, and let $$V_{\R} := \eta + V_{0, \R}\subset\SgrHR.$$
For each $i\in\{1,\ldots,n\}$, let $$H_i := \{v \in V_{\R} \mid h_i(v) = 0\},$$ and let $\cH := \{H_i\mid i=1,\ldots,n\}$
be the associated (multi)arrangement.
The assumption that $h_i(\vb_0) \ne 0$ ensures that each $H_i$ is really a hyperplane.

We will also need the corresponding central arrangement $\cH_0$ in $V_{0,\R}$, but to avoid  notational unpleasantness,
we will refrain from giving names to its hyperplanes.  If we identify $V_{\R}$ with $V_{0,\R}$ by choosing
an origin (that is, by choosing a lift of $\eta$ to $\SgrHZ$), then the arrangement $\cH_0$ is obtained
from $\cH$ by translating all of the hyperplanes to the origin.
We say that $\eta$ is \textbf{regular} if the arrangement $\cH$ is simple, which means that 
no point in $V_{\R}$ lies on more than $\dim V$ hyperplanes.  This is a genericity assumption
with respect to the positions (but not the slopes) of the hyperplanes in $\cH$.

A \textbf{flat} of the arrangement $\cH$ or $\cH_0$ is any nonempty intersection of the hyperplanes.
The parameter $\xi$ determines a linear functional on $V_{0,\R}$, and an affine linear functional on $V_\R$
that is only defined up to a constant.
We say that the covector $\xi$ is \textbf{regular} if it is not constant on any 
one-dimensional flat of $\cH_0$, or equivalently on any 
one-dimensional flat of $\cH$.
We say that $\PA = (\vb_0, \eta,\xi)$ is \textbf{regular} if both $\eta$ and $\xi$ are regular.

\begin{remark}
This definition differs slightly from the one in \cite[2.1]{GDKD}.  In that paper $\eta$ and $\xi$ were allowed
to be real rather than integral, but they were always assumed to be regular.
\end{remark}
 
\subsection{Quantized polarized arrangements}\label{sec:qpd}
In this section we introduce objects that are analogous to those considered in Section \ref{sec:polarr},
but with $\grH$ and $\SgrH$ replaced by $\bH$ and $\SH$.

\begin{definition} A \textbf{quantized polarized arrangement} is a triple $\QPA = (\vb_0, \qvb, \xi)$, where
$\vb_0 \subset \SgrHZ$ is a direct summand, 
$\qvb$ is a $\Lambda_0$-orbit in $\SH$, and $\xi \in \Lambda_0^*$.
\end{definition}

Given a quantized polarized arrangement, let
$$\V := \qvb + V_0 = \qvb + \C\vb_0\subset\SH\and \V_\R := \qvb + V_{0, \R} = \qvb + \R\vb_0\subset\SHR.$$

Let $I_\qvb$ be the set of indices $i\in \{1, \dots, n\}$ for which $h_i^+(\qvb) \subset \Z$ 
(or equivalently $h_i^-(\qvb) \subset \Z$).  We say that $\QPA$ and $\qvb$ are \textbf{integral} if
$\qvb \subset \SHZ$, or equivalently if $I_\qvb = \{1, \dots, n\}$.  For each $i \in I_\qvb$, let
\[H_i^+ := \{v \in \V_\R \mid h_i^+(v) = 0\} \and H_i^- := \{v \in \V_\R \mid h_i^-(v) = 0\},\]
and let $\qH := \{H^\pm_i \mid i \in I_\qvb\}$ be the associated (multi)arrangement.

\begin{remark}  The definitions of polarized and quantized polarized arrangements are clearly very close.
One important difference is that the parameter $\eta$ is required to be integral (it is a $\vb_0$-orbit in $\SgrHZ$),
while the parameter $\qvb$ can sit anywhere in the complex vector space $\SH$.
This difference is unavoidable:  to define a hypertoric variety it is necessary for $\eta$ to be integral, but to study
the hypertoric enveloping algebra it is necessary to consider arbitrary $\qvb$.  
\end{remark}

The definition of regularity for the parameter $\qvb$ is more subtle than for $\eta$, because
of the more complicated geometry of the doubled hyperplanes and certain integrality issues.
Ignoring the integrality issues for the moment, we make the following definitions.
A hyperplane arrangement is called {\bf essential} if it has a zero-dimensional flat.  The arrangement
$\cH$ is always essential, but $\qH$ is not, since the index set $I_\qvb$ may be very small or even empty.
For a $\vb_0$-orbit $\qvb$ in $\SH$, let $m$ be the maximum over all points in $\V_\R$ of the number of 
pairs of hyperplanes $H_i^\pm$ in between which the point lies:
\[m := \max_{v \in \V_\R} \left|\{i \in I_\qvb \mid 0 < h_i^-(v) < 1\}\right|.\]
If $\qH$ is essential, then we necessarily have $m\geq\dim V$.  

\begin{definition}\label{def:quasi-reg}
We say that $\qvb$ is {\bf quasi-regular} if $\qH$ is essential and $m = \dim V$.  This means 
that any arrangement obtained by replacing each pair $H_i^\pm$ with a single hyperplane lying strictly between them is simple.
\end{definition}

The definition of regularity of $\qvb$ and $\QPA$ will appear in the next section.

\subsection{Boundedness and feasibility}\label{sec:bf}
Fix a polarized arrangement $\PA = (\vb_0, \eta,\xi)$ 
and a quantized polarized arrangement $\QPA = (\vb_0,\qvb, \xi)$ with 
the same underlying lattice $\vb_0$ and the same covector $\xi \in \vb_0^*$.
The following definitions for $\PA$ are repeated from
\cite{GDKD}; we then adapt them for $\QPA$.

For a sign vector $\a \in \{+, -\}^n$, define the chamber 
$\Delta_\a \subset V_{\R}$ 
to be the polyhedron cut out by the inequalities
\begin{equation*}\label{chamber ineqs}
h_i \ge 0\text{ for all $i$ with }\a(i) = + \and h_i \le 0 \text{ for all $i$ with }\a(i) = -,
\end{equation*}
and let $\Delta_{0,\a}$ be the polyhedral cone in $V_{0,\R}$ cut out 
by the same inequalities.
If $\Delta_\a \ne \emptyset$, we say that $\a$ is \textbf{feasible} for $\PA$;
let $\cF_\PA$ be the set of feasible sign vectors.
If $\a$ is feasible, then $\Delta_{0,\a}$ is the cone of unbounded directions in $\Delta_\a$.
Note, however, that $\Delta_{0,\a}$ is always nonempty, even if $\a$ is infeasible.

We say that $\a$ is \textbf{bounded} for $\PA$ if the restriction
of $\xi$ is proper and bounded above on the cone $\Delta_{0,\a}$.
Note that if $\xi$ is regular, we can
drop the properness hypothesis.  Also note that if $\a$ is feasible,
then $\a$ is bounded if and only if $\xi$ is proper and bounded above
on $\Delta_{\a}$.  Let $\cB_\PA$ be the set of bounded 
sign vectors, and let $\cP_{\PA} = \cF_\PA \cap \cB_\PA$.

\begin{remark}\label{bijection}
When $\PA$ is regular, the set $\cP_\PA$
is in natural bijection with the set of vertices of the 
hyperplane arrangement $\cH$, as each vertex appears 
as the $\xi$-maximal point of
$\Delta_{\a}$ for a unique bounded feasible sign vector $\a$.
\end{remark}

\begin{remark}\label{abuse}
It will often be the case that we will fix a lattice $\vb_0$ and vary the parameters $\eta$ and $\xi$.
Since $\cF_\PA$ depends only on $\vb_0$ and $\eta$, we will often abusively write $\cF_\eta$
rather than $\cF_\PA$.  Likewise, since $\cB_\PA$ depends only on $\vb_0$ and $\xi$, we will
often write $\cB_\xi$ rather than $\cB_\PA$.  We will then write $\cP_{\eta,\xi} = \cF_\eta\cap\cB_\xi =  \cP_\PA$.
This notation coincides with that of \cite{GDKD}.
\end{remark}

\begin{remark}\label{totally bounded}
We have $\Delta_{0,\a} = \{0\}$ if and only if $\a\in\cB_\xi$ for any choice of $\xi$.
We call such a sign vector \textbf{totally bounded}.  If $\a$ is
feasible, then it is totally bounded if and only if the polyhedron $\Delta_{\a}$ is compact.
The set of totally bounded sign vectors depends only on $\vb_0$;
it is independent of both $\eta$ and $\xi$.
\end{remark}

Turning to the quantized polarized arrangement $\QPA$, we 
define the chamber $\DDelta_\a$
corresponding to $\a \in \{+, -\}^{I_\qvb}$
to be the subset of the affine space $\V_\R$ cut out by the inequalities
\begin{equation*}\label{dd}
h^+_i \ge 0\text{ for all $i \in I_\qvb$ with }\a(i) = + \and h^-_i \le 0 \text{ for all $i\in I_\qvb$ with }\a(i) = -.
\end{equation*}
If $\DDelta_\a \cap \qvb$ is nonempty, we say that $\a$ is \textbf{feasible} for $\QPA$, and we
denote the set of feasible sign vectors $\cF_\QPA$.
Since boundedness depends only on the $\vb_0$ and the covector $\xi$, the
definition of boundedness for $\QPA$ is the same as for a polarized arrangement $X$:
we let the set of bounded sign vectors be $\cB_\QPA = \cB_\xi$.
Note that if $\a \in \cF_\QPA$, then $\a$ is bounded if and only if $\xi$ is proper and bounded above
on $\DDelta_{\a}$ (or equivalently on $\DDelta_\a\cap\qvb$).

\begin{remark}\label{quantum abuse}
Following the conventions of Remark \ref{abuse}, we will often use the notation $\cF_\qvb$
in place of $\cF_\QPA$, since this set does not depend on $\xi$.  Unlike in the non-quantized case,
the set $\cB_\QPA$ does depend on $\qvb$ because $I_\qvb$ depends on $\qvb$.  
We will thus write $\cB_{\qvb,\xi}$
in place of $\cB_\QPA$, and $\cP_{\qvb,\xi} = \cF_\qvb\cap \cB_{\qvb,\xi} = \cP_\QPA$.
\end{remark}

It is possible for the polyhedron $\DDelta_\a$ to be nonempty, but so small that it does not
contain an element of $\qvb$.  We are primarily interested in quantized polarized arrangements for which this
does not happen, thus we incorporate this condition into our definition of regularity.

\begin{definition}
We say that $\qvb$ is \textbf{regular} if it
is quasi-regular and $\DDelta_\a \ne \emptyset$ implies $\a\in\cF_\qvb$ for every $\a \in \{+, -\}^n$.
\end{definition}

\begin{remark}\label{rmk:unimodular} We say that $\vb_0$ is \textbf{unimodular} if its image
under the projection $\SgrHZ \cong \Z^n \to \Z^I$ is a direct summand of
$\Z^I$ for every $I\subset \{1, \dots, n\}$.  In this case regularity 
and quasi-regularity are equivalent, since any vertex of a chamber
$\DDelta_\a$ must lie in $\qvb$.
\end{remark}

\colorlet{bounde}{blue!20}

\begin{figure}

\begin{center}
\begin{tikzpicture}
\node at (-4,0){
\begin{tikzpicture}[thick,scale=2,every node/.style={fill=white}]
\fill[bounde] (intersection of -2,-2 -- 2,2 and  -2,-.6 -- 2,-.6) -- (intersection of -2,-2 -- 2,2 and .6,-2 -- .6,2) -- (intersection of -2,-.6 -- 2,-.6 and .6,-2 -- .6,2) -- cycle;

\fill[bounde] (intersection of -2,-2 -- 2,2 and  -2,-.8 -- 2,-.8) -- (-2,-2) -- (.6,-2) -- (intersection of -2,-.8 -- 2,-.8 and .6,-2 -- .6,2) -- cycle;

\fill[bounde] (intersection of -2,-1.8 -- 1.8,2 and  -2,-.8 -- 2,-.8) -- (-2.2, -2) -- (-2.4,-2) -- ( -2.4,-.8) -- cycle;

\foreach \x in {-2.4,-2.2,...,2}
\foreach \y in {-2,-1.8,...,2}
\fill[color=gray] (\x,\y) circle (.5pt);;
\draw[gray] (-2.2,-2) -- (1.8,2) node[at start,anchor=north east]{$h_1^-=0$};
\draw[gray] (-2.5,-.8) -- (2,-.8) node[at start,anchor=10]{$h_2^-=0$};
\draw[gray] (.8,-2) -- (.8,2) node[at start,anchor=145]{$h_3^-=0$};
\draw (-2,-2) -- (2,2) %node[at end, above right=4pt]{1} 
node[at start,anchor=145]{$h_1^+=0$};%\draw[->,thin] (1.8,1.7)--(2,1.5);
\draw (-2.5,-.6) -- (2,-.6) %node[at end,right=4pt]{2} 
node[at start, anchor=-10]{$h_2^+=0$}; %\draw[->,thin] (1.8,-.5)--(1.8,-.2);
\draw (.6,-2) -- (.6,2) %node[at end,above=4pt]{3} 
node[at start,anchor=35]{$h_3^+=0$};
%\draw[->,thin] (.5,1.8)--(.2,1.8);
\node[scale=1.1,inner sep=0 pt ,outer sep=0 pt, fill=bounde] (a) at (.18,-.38) {$\Delta_{(+,+,+)}$};
\node[scale=1.1] at (-1.07,.3) {$\Delta_{(-, +, +)}$};
\node[scale=1.1,fill=bounde] at (-.27,-1.5) {$\Delta_{(+,-,+)}$};
\node[scale=1.1] at (1.53,.3) {$\Delta_{(+,+,-)}$};
\node[scale=1.1,inner sep=0 pt ,outer sep=0 pt, fill=bounde] at (-1.95,-1.2) {$\Delta_{(-,-,+)}$};
\node[scale=1.1] at (1.51,-1.5) {$\Delta_{(+,-,-)}$};
\node[scale=1.1,inner sep=0 pt ,outer sep=0 pt] at (1.28,2) {$\Delta_{(-,+,-)}$};
%\node[scale=1.1] at (-.5,2.3) {($\Delta_{(-1,-1,-1)}=\emptyset$)};
%\draw[very thick,dashed,->] (a.-135) to[out=-135,in=135] (.3,-.3);
\fill[white] (-2.5,0) -- (-.4,2.1) -- (-2.5,2.1) -- cycle;
\draw[->,thick] (-1.8,1.2) -- (-1.2,1.8) node[above,midway, fill=none]{$\xi$} ;
\end{tikzpicture}};
\end{tikzpicture}
\end{center}

\caption{A regular quantized polarized arrangement.  There are 7 feasible sign vectors, 3 of which are bounded.
The polyhedra associated to bounded feasible sign vectors are shaded.  
}
\label{quant-fig}
\end{figure}
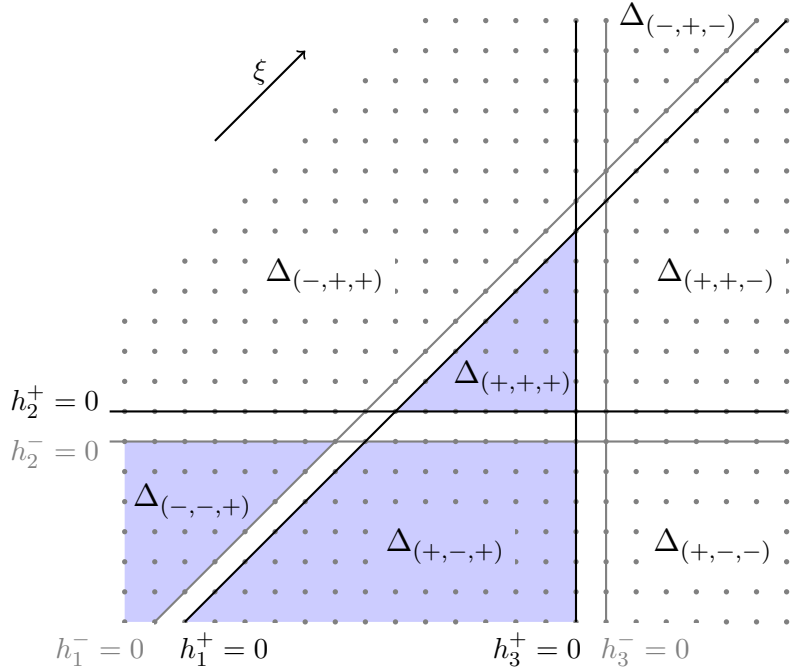

\subsection{Linked arrangements}\label{sec:linked}
In this section we fix a direct summand $\vb_0\subset\SgrHZ$
and an arbitrary element $\xi\in\vb_0^*$.  For any $\vb_0$-orbit $\qvb\subset\SH$,
let $\pi:\{+,-\}^n \to \{+,-\}^{I_\qvb}$ be the projection.

\begin{definition}\label{def:linked}
We say that two regular parameters $\eta$ and $\qvb$ are \textbf{linked} if $\pi(\cF_\eta) = \cF_\qvb$.
In this case we also say that the polarized arrangement $\PA = (\vb_0,\eta,\xi)$ is linked
to the quantized polarized arrangement $\QPA = (\vb_0,\vb,\xi)$.
\end{definition}

\begin{remark}
In general, a polarized arrangement does not determine a quantized polarized arrangement, nor
the other way around.  Linkage is the only property that we will consider that relates these two types of objects.
\end{remark}

\begin{definition}\label{def:equivalence}
We will consider two polarized arrangements (respectively quantized polarized arrangements)
to be {\bf equivalent} if they have the same direct summand $\vb_0\subset\SgrHZ$
and the same sets $\cF_\eta$ and $\cB_\xi$ (respectively $I_\qvb$,
$\cF_\qvb$, and $\cB_{\qvb,\xi}$) of feasible and bounded sign vectors.
\end{definition}

It is clear that every regular $\eta$ is linked to some regular integral $\qvb$ and {\em vice versa}, hence
the concept of linkage provides a bijection between equivalence classes of regular polarized arrangements
and equivalence classes of regular integral quantized polarized arrangements with the same lattice $\vb_0$
and covector $\xi$.
The following proposition will be a key tool for us in Section \ref{sec:gco}.

\begin{proposition}\label{prop:linked}
Two regular parameters
$\eta$ and $\qvb$ are linked if and only if there exists a positive integer $k$ 
such that $\cF_{\qvb} = \cF_{\qvb + r k\eta}$ for all positive integers $r$.
If $\vb_0$ is unimodular, then $k$ may be taken equal to 1.
\end{proposition}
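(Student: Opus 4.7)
\begin{prooflinked}
The strategy is to establish an asymptotic stabilization: for $r$ sufficiently large, $\cF_{\qvb+r\eta}$ equals $\pi(\cF_\eta)$, irrespective of whether $\eta$ and $\qvb$ are a priori linked. Both directions of the equivalence then fall out at once.

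Fix base points $v_0\in\qvb$ and $w_0\in\eta$, and write $c_i = h_i^+(v_0)$ and $e_i = h_i(w_0)$; then $c_i\in\Z$ exactly when $i\in I_\qvb$, while $e_i\in\Z$ for all $i$. Since $\eta\subset\SgrHZ$, we have $I_{\qvb+r\eta}=I_\qvb$ for every $r\in\Z$, so the sets $\cF_{\qvb+r\eta}$ all sit inside $\{+,-\}^{I_\qvb}$. Using the basepoint $v_0+rw_0$, the polyhedron $\DDelta_\a^{(r)}$ is identified with the region in $V_{0,\R}$ cut out by $h_i(v)\ge -c_i-re_i$ when $\a(i)=+$ and $h_i(v)\le -c_i-re_i-1$ when $\a(i)=-$ (for $i\in I_\qvb$), and $\a\in\cF_{\qvb+r\eta}$ iff this region meets $\vb_0$. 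I now claim the \emph{asymptotic lemma}: for $\eta$ regular and any $\qvb$ with the same $\vb_0$, there exists $R\ge 1$ such that $\cF_{\qvb+r\eta}=\pi(\cF_\eta)$ for every $r\ge R$. To prove this, substitute $v=r\tilde v$: the dilated polyhedron $\tfrac{1}{r}\DDelta_\a^{(r)}$ converges as $r\to\infty$ to the partial polyhedron
\[\Delta_{\a,\eta}^{\mathrm{part}}:=\{\tilde v\in V_{0,\R}:h_i(\tilde v)\ge -e_i\text{ if }\a(i)=+,\ h_i(\tilde v)\le -e_i\text{ if }\a(i)=-,\ i\in I_\qvb\}=\bigcup_{\tilde\a\in\pi^{-1}(\a)}\Delta_{\tilde\a},\]
which is nonempty precisely when $\a\in\pi(\cF_\eta)$. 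Regularity of $\eta$ (simplicity of $\cH$) lets one promote any feasible extension of $\a$ to the covector of a top-dimensional chamber of $\cH$ extending $\a$, so $\Delta_{\a,\eta}^{\mathrm{part}}$ has nonempty interior whenever nonempty. Since $\tfrac{1}{r}\vb_0$ becomes dense in $V_{0,\R}$, for large $r$ the dilated polyhedron meets this scaled lattice exactly when $\a\in\pi(\cF_\eta)$; infeasibility for $\a\notin\pi(\cF_\eta)$ follows from a Farkas certificate that is stable under $O(1/r)$ perturbation.

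Granting the asymptotic lemma, both directions of the main equivalence are immediate. If $\eta$ and $\qvb$ are linked then $\cF_\qvb=\pi(\cF_\eta)$; setting $k=R$ forces $rk\ge R$ for every $r\ge 1$, so $\cF_{\qvb+rk\eta}=\pi(\cF_\eta)=\cF_\qvb$. Conversely, if $\cF_{\qvb+rk\eta}=\cF_\qvb$ holds for all $r\ge 1$, picking any $r$ with $rk\ge R$ yields $\cF_\qvb=\pi(\cF_\eta)$, establishing linkedness.

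For the unimodular case, the goal is to sharpen the asymptotic lemma so that $R=1$ suffices. By Remark \ref{rmk:unimodular}, unimodularity implies that every vertex of every $\DDelta_\a^{(r)}$ lies in $\vb_0$, so $\qvb+r\eta$ is regular iff quasi-regular. An induction on $r\ge 0$ then reduces matters to showing that if $\qvb$ is regular and linked to $\eta$, then $\qvb+\eta$ is also quasi-regular (hence regular) and linked, with $\cF_{\qvb+\eta}=\cF_\qvb$. The main obstacle lies here: one must verify that integer translation by $\eta$ produces no new wall coincidences and flips the emptiness status of no $\DDelta_\a$. Writing each wall's position in $V_{0,\R}$ as a linear function of $r$, such coincidences occur at finitely many real values of $r$; the argument is that all of them occur at $r\le 0$, which follows from the linking hypothesis since $\eta$ supplies the flow direction away from the discriminantal locus of the arrangement $\qH$.
\end{prooflinked}
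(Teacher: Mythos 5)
Your asymptotic lemma handles the main biconditional correctly, and by a route genuinely different from the paper's: the paper proves the forward direction by exhibiting the explicit inclusion $(\Delta_{rk\eta,\a}\cap\SgrHZ)+(\DDelta_{\qvb,\pi(\a)}\cap\qvb)\subset\DDelta_{\qvb+rk\eta,\pi(\a)}\cap\qvb$, which gives $\cF_\qvb\subset\cF_{\qvb+rk\eta}$, and then upgrades the inclusion to an equality by a counting argument (Lemma \ref{chambers}, which bounds $|\cF_\qvb|$ by the number of independent sets of the matroid via the Orlik--Solomon algebra); your large-$r$ stabilization replaces both steps at once. Your sketch of the stabilization is essentially sound: the Farkas certificate for $\a\notin\pi(\cF_\eta)$ does persist under the $O(1)$ perturbation of the constraint constants, and the claim that a nonempty partial polyhedron has nonempty interior is true --- though it uses not only simplicity of $\cH$ but also its essentiality (which the paper guarantees); for a simple but inessential multiarrangement, e.g.\ two coincident lines in the plane, a nonempty chamber can be lower-dimensional, so this hypothesis should be invoked explicitly.

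The genuine gap is in the unimodular refinement. You correctly isolate the key step --- that $\qvb+\eta$ is again regular with $\cF_{\qvb+\eta}=\cF_\qvb$ --- and then dispose of it with the assertion that all wall coincidences occur at $r\le 0$ because ``$\eta$ supplies the flow direction away from the discriminantal locus of $\qH$.'' This is not an argument. To make it one you would need to show, circuit by circuit, that linkage forces the signed circuit functional evaluated on $\qvb$ and on $\eta$ to lie on the same side of the corresponding discriminantal wall, and separately that feasibility of each $\DDelta_\a$ (not merely quasi-regularity) is preserved across every integer step; that is a substantive combinatorial claim, roughly comparable in content to the proposition itself. The paper sidesteps all of this: unimodularity enters only through the observation that every nonempty chamber $\Delta_{r\eta,\a}$ contains a lattice point (its vertices lie in $\SgrHZ$ by Remark \ref{rmk:unimodular}), so the Minkowski-sum inclusion above already holds with $k=1$, and then the count $|\cF_{\qvb+r\eta}|\le N_{\qvb+r\eta}=N_\qvb=|\cF_\qvb|$ forces equality using only the regularity of $\qvb$. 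As written, your proof of the last sentence of the proposition is incomplete; you must either carry out the wall-crossing analysis in full or replace it with an argument of this second kind.
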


Since we need to consider the polyhedra $\Delta_\a$ and $\DDelta_\a$ and arrangements $\qH$ for
varying choices of $\eta$ and $\qvb$, we add them to the notation 
for this section only, writing $\Delta_{\eta,\a}$, $\DDelta_{\qvb,\a}$, and $\qH_\qvb$.
Let $N_\qvb$ be the number of subsets $I\subset I_\qvb$ such that the composition
$\vb_\R\subset\SgrHR\twoheadrightarrow\R^I$ is surjective; such subsets are commonly
known as {\bf independent sets} of the matroid associated to $\cH_0$.   

\begin{lemma}\label{chambers}
For any $\qvb$, $|\cF_\qvb|\leq N_\qvb$, and equality is attained
if and only if $\qvb$ is regular.
\end{lemma}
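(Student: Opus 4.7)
The plan is to bound $|\cF_\qvb|$ from above by the chamber count of an auxiliary affine arrangement, then apply a classical matroid-theoretic bound on that chamber count. The key idea is to relate the quantized polytopes $\DDelta_\a$ to a simple affine arrangement obtained by ``collapsing'' each pair $H_i^\pm$ to a single hyperplane.

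Introduce the collapsed affine arrangement $\cH^c = \{H_i^c : i \in I_\qvb\}$ in $\V_\R$, where $H_i^c$ is any hyperplane lying strictly between $H_i^-$ and $H_i^+$, say $H_i^c = \{h_i^- = \tfrac{1}{2}\}$. The underlying matroid of $\cH^c$ is the restriction of the matroid of $\cH_0$ to $I_\qvb$, and by definition has $N_\qvb$ independent sets. For any sign vector $\a$, the polytope $\DDelta_\a$ is contained in the closed chamber $C_\a$ of $\cH^c$ with the matching sign, since the inequalities $h_i^- \geq 1$ and $h_i^- \leq 0$ strictly imply $h_i^- \geq \tfrac{1}{2}$ and $h_i^- \leq \tfrac{1}{2}$. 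The map $\a \mapsto C_\a$ therefore gives an injection $\{\a : \DDelta_\a \neq \emptyset\} \hookrightarrow \{\text{chambers of } \cH^c\}$, yielding the chain
\[|\cF_\qvb| \;\leq\; \#\{\a : \DDelta_\a \neq \emptyset\} \;\leq\; r(\cH^c) \;\leq\; N_\qvb.\]
The third inequality is the classical bound $r(\cA) \leq T_M(2,1) = \#\{\text{indep sets of } M\}$ for any affine arrangement $\cA$ with matroid $M$ (via Zaslavsky's formula together with the Tutte polynomial identity), with equality if and only if $\cA$ is simple. By the equivalent formulation of quasi-regularity in Definition~\ref{def:quasi-reg}, $\cH^c$ is simple if and only if $\qvb$ is quasi-regular.

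The bound $|\cF_\qvb| \leq N_\qvb$ is then immediate. For the equality statement, each of the three inequalities in the chain must be an equality: the first says every nonempty $\DDelta_\a$ contains a point of $\qvb$ (the second half of the definition of regular); the third is quasi-regularity (the first half of the definition of regular); and the second says that every chamber of $\cH^c$ is realized by some nonempty $\DDelta_\a$. The main obstacle --- and the geometric heart of the proof --- is to show that in the quasi-regular case the middle inequality is automatically an equality: given a chamber $C$ of the simple arrangement $\cH^c$ with sign vector $\a$, one must exhibit an interior point of $C$ that avoids every open strip $\{0 < h_i^- < 1\}$. My planned approach is via Farkas' lemma: infeasibility of the $\DDelta_\a$ system would produce a nonnegative linear dependence among its defining functionals, which, combined with the simplicity of $\cH^c$, forces the existence of a point lying in strictly more than $\dim V$ of the strips simultaneously, contradicting the hypothesis $m = \dim V$ built into quasi-regularity. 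Assembling the three equality conditions, we conclude that $|\cF_\qvb| = N_\qvb$ exactly when $\qvb$ is regular.
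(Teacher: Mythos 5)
Your strategy is the same as the paper's: collapse each pair $H_i^\pm$ to a single hyperplane lying between them, inject the sign vectors with $\DDelta_\a\neq\emptyset$ into the chambers of the collapsed arrangement, and count those chambers by the independent sets of the underlying matroid. The resulting inequality $|\cF_\qvb|\le N_\qvb$ is fine, and the step you call the ``geometric heart'' --- that quasi-regularity forces every chamber of the simple collapsed arrangement to contain a nonempty $\DDelta_\a$ --- is indeed the real content of the forward direction. The paper asserts that step with only a pointer to Figure \ref{quant-fig}, so leaving it as a Farkas-lemma plan puts you at roughly the paper's level of detail there, though the plan is not carried out.

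The genuine gap is in the converse direction (equality $\Rightarrow$ regular), and it comes from two false ``iff''s. First, simplicity of your \emph{fixed} midpoint arrangement $\cH^c$ is not equivalent to quasi-regularity: quasi-regularity says no point of $\V_\R$ lies in more than $\dim V$ of the open strips $\{0<h_i^-<1\}$, i.e.\ that \emph{every} collapsed arrangement is simple, and a particular choice can be simple while this fails. Concretely, with $\dim V=1$ and $h_1,h_2$ of slopes $1$ and $2$ on $\vb_0$, the strips can be $(0,1)$ and $(\tfrac{1}{2},1)$: they overlap, so $m=2>1$, yet the midpoints $\tfrac{1}{2}$ and $\tfrac{3}{4}$ are distinct and $\cH^c$ is simple. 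In such a case your third inequality is an equality, so strictness must come from the middle one --- you must show that failure of quasi-regularity forces some chamber of $\cH^c$ to be swallowed by the strips --- which is the \emph{converse} of your Farkas claim and is nowhere addressed. Second, ``equality in $r(\cA)\le\#\{\text{indep.\ sets}\}$ iff $\cA$ is simple'' is also wrong with the paper's notion of simple (no point on more than $\dim V$ hyperplanes): in dimension at least $3$ a translate can realize a size-$3$ circuit concurrently along a positive-dimensional flat, remaining simple while losing regions; equality requires the translate to be \emph{generic}. The paper sidesteps both problems by working with a generically chosen collapsed arrangement rather than a fixed one, and by treating the non-regular case separately.
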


\begin{proof}
If $\qvb$ is regular, then it is possible to replace each pair $H_i^\pm$ of hyperplanes
in $\qH_\qvb$ with a single hyperplane lying strictly in between them in such a way so that the new arrangement
of $|I_\qvb|$ hyperplanes is simple, and every chamber of the new arrangement contains a unique non-empty
set of the form $\DDelta_{\qvb,\a}\cap\qvb$.
(To visualize this, see Figure \ref{quant-fig}.)  Thus the feasible sign vectors $\cF_\qvb$ are in bijection with the
chambers of the new arrangement, which is a simplification
of our central arrangement $\cH_0$.

The Orlik-Solomon algebra of any arrangement has dimension equal to the
number of chambers, and if the arrangement is simple, it has a basis indexed by the independent sets
of the matroid associated to the corresponding central arrangement \cite[3.45 \& 5.95]{OT}.  
Since the number of chambers is equal to $|\cF_\qvb|$
and the size of the basis is equal to $N_\qvb$, this proves the equality when $\qvb$ is regular.

If $\qvb$ is not regular, it is still possible to construct a simplification of $\cH_0$ in the manner described
above, but some of the chambers will not contain any nonempty sets of the form $\DDelta_{\qvb,\a}\cap\qvb$.
Thus the number of chambers is strictly greater than $|\cF_\qvb|$, which proves the inequality.
\end{proof}

\begin{prooflinked}
Suppose that $\eta$ and $\qvb$ are linked.
Choose a positive integer $k$ such that $\Delta_{k\eta,\a} \cap \SgrHZ \ne \emptyset$ 
for every $\a\in\cF_\eta$.  
The same condition will hold when
$k$ is replaced with any positive multiple $rk$.  
When $\vb_0$ is unimodular, we can take $k=1$ (see Remark \ref{rmk:unimodular}).

For any positive integer $r$ and any $\a \in \{+,-\}^n$, we have 
$$(\Delta_{rk\eta, \a}\cap \SgrHZ) + (\DDelta_{\qvb, \pi(\a)}\cap\qvb) \subset (\DDelta_{\qvb+ rk\eta,\pi(\a)}\cap\qvb),$$
which tells us that
$\cF_\qvb \subset \cF_{\qvb+ rk\eta}$. Since
$$|\cF_\qvb| = N_\qvb = N_{\qvb+ rk\eta} \geq |\cF_{\qvb+ rk\eta}|,$$ this inclusion must be an equality.

Conversely, suppose that there exists a positive integer $k$ 
such that $\cF_{\qvb} = \cF_{\qvb + r k\eta}$ for all positive integers $r$.  
Choose an arbitrary element of $\SH$; this allows us to identify $\SH$ with $\SgrH$.
Choose an arbitrary sign vector $\a$.
As $r$ approaches $\infty$, the set $\displaystyle{\frac{1}{rk}\DDelta_{\qvb+rk\eta,\pi(\a)}}\subset\SH\cong\SgrH$
approaches a set containing $\Delta_{\eta,\a}\subset\SgrH$.
In particular, when $r$ is very large, one set is non-empty if and only if the other is,
so we have $\cF_\eta = \cF_{\qvb+rk\eta} = \cF_{\qvb}$.
Thus $\eta$ and $\qvb$ are linked.
\end{prooflinked}

\subsection{Gale duality}\label{sec:gd}
We recall the notion of Gale duality for
polarized arrangements from \cite[2.3]{GDKD}.  
Fix a positive integer $n$, and let $\PA = (\vb_{0}, \eta, \xi)$ and $\PA^! = (\vb^!_0, \eta^!,\xi^!)$ be two polarized
arrangements with $n$ hyperplanes.

\begin{definition}\label{def:gd}
We say that $\PA^!$ is {\bf Gale dual} to $\PA$ if 
\begin{itemize}
\item $\vb^!_0$ and $\vb_0$ are complementary with respect to the coordinate inner product on $\SgrH_\Z \cong \Z^n$
\item $\eta^! = - \xi$ and $\xi^! = -\eta$ under the resulting
identifications $\SgrHZ/\vb^!_{0}\cong\vb_0^*$ and $(\vb^!_{0})^*\cong \SgrHZ/\vb_{0}$.
\end{itemize}
\end{definition}

\begin{theorem}[\mbox{\cite[2.4]{GDKD}}]\label{bffb}
If $\PA$ and $\PA^!$ are Gale dual,
then $$\cF_\eta = \cB_{\xi^!}
\qquad\text{and}\qquad
\cF_{\eta^!} = \cB_\xi.$$
In particular, we have $\cP_{\PA} = \cP_{\PA^!}$.
Furthermore, $\eta$ is regular if and only if $\xi^!$ is 
regular, and $\xi$ is regular if and only if $\eta^!$ is
regular.
\end{theorem}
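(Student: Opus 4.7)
The plan is to recast the main equality $\cF_\eta = \cB_{\xi^!}$ as an instance of polar cone duality (Farkas' lemma) in $\SgrHR \cong \R^n$, exploiting the Gale-duality hypothesis that $\R\vb_0$ and $\R\vb_0^!$ are orthogonal complements with respect to the coordinate inner product.

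First I would reformulate feasibility concretely: fixing a lift $\tilde\eta \in \Z^n$ of the $\vb_0$-orbit $\eta$, a sign vector $\a$ is feasible for $\PA$ if and only if the affine subspace $\tilde\eta + \R\vb_0$ meets the signed orthant $C_\a := \{x \in \R^n \mid \a(i) x_i \geq 0 \text{ for all } i\}$, equivalently $\tilde\eta \in C_\a + \R\vb_0$. Next I would reformulate boundedness: the Gale-duality identification $(\vb_0^!)^* \cong \Z^n/\vb_0$ induced by the coordinate inner product sends the functional $\xi^! = -\eta$ to the map $x \mapsto -\langle \tilde\eta, x\rangle$ on $\R\vb_0^!$, independently of the choice of lift since $\vb_0 \perp \vb_0^!$. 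Consequently $\a \in \cB_{\xi^!}$ is equivalent to $\langle \tilde\eta, x\rangle \geq 0$ for every $x$ in the cone $\Delta^!_{0,\a} = C_\a \cap \R\vb_0^!$.

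Polar cone duality then closes the argument. The polar of the closed polyhedral cone $K := C_\a + \R\vb_0$ in $\R^n$ is
\[
K^\circ \;=\; C_\a^\circ \cap (\R\vb_0)^\perp \;=\; (-C_\a) \cap \R\vb_0^!,
\]
where we invoke the hypothesis $(\R\vb_0)^\perp = \R\vb_0^!$. Biduality for closed convex cones gives $\tilde\eta \in K$ iff $\langle \tilde\eta, y\rangle \leq 0$ for all $y \in K^\circ$, which after the substitution $y = -x$ is precisely the condition $\langle \tilde\eta, x\rangle \geq 0$ for all $x \in C_\a \cap \R\vb_0^!$. This yields $\cF_\eta = \cB_{\xi^!}$; exchanging the roles of $\PA$ and $\PA^!$ gives $\cF_{\eta^!} = \cB_\xi$, and intersecting the two identities immediately produces $\cP_\PA = \cP_{\PA^!}$.

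For the regularity assertion, the equivalences reduce to matroid duality. Regularity of $\eta$ says that $\cH$ is simple (no point of $V_\R$ lies on more than $\dim V$ hyperplanes), a condition controlled by the circuits of the matroid of the vector configuration $\{h_i|_{V_{0,\R}}\}$ in $V_0^*$. Regularity of $\xi^!$ says that $\xi^!$ is not constant on any one-dimensional flat of the central arrangement $\cH_0^!$, which is a genericity condition expressible in terms of the cocircuits of the matroid associated to $\vb_0^!$. By the Gale hypothesis these two matroids are classically dual, so the cocircuits of one coincide with the circuits of the other, and the two genericity conditions translate into each other; the symmetric argument yields the equivalence of regularity of $\xi$ with that of $\eta^!$. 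The main technical point to watch is the properness clause in the definition of boundedness: whenever $\Delta^!_{0,\a}$ contains a nonzero linear subspace, properness forces $\xi^!$ to be strictly negative off the origin rather than merely nonpositive, and one must verify (using regularity) that this strictness issue does not disrupt the polar duality equivalence with feasibility.
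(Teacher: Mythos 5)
The paper itself gives no proof of this statement; it is quoted verbatim (up to the integrality convention) from \cite[2.4]{GDKD}, where polarized arrangements are \emph{regular by definition}. Your cone-duality argument is essentially the right mechanism, and the reformulations are all correct: $\a\in\cF_\eta$ iff $\tilde\eta\in C_\a+\R\vb_0$ for the orthant $C_\a\subset\R^n$; the identification of $\xi^!=-\eta$ with $x\mapsto-\langle\tilde\eta,x\rangle$ on $\R\vb_0^!$ is legitimate precisely because $\R\vb_0^!=(\R\vb_0)^\perp$; and biduality for the closed polyhedral cone $C_\a+\R\vb_0$ gives exactly ``$\a\in\cF_\eta$ iff $\xi^!\le 0$ on $\Delta^!_{0,\a}$''. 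The matroid-duality sketch for the regularity equivalences is also sound: circuits of the configuration $\{h_i|_{V_0}\}$ are the minimal supports of vectors in $V_0^!$, such vectors span the one-dimensional flats of $\cH_0^!$, and regularity of $\eta$ is exactly the statement that $\langle\tilde\eta,\cdot\rangle$ is nonzero on each of them.

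The one genuine gap is the properness clause, which you flag but neither close nor characterize correctly. The cones $\Delta^!_{0,\a}$ never contain a nonzero linear subspace --- they are pointed because the central arrangement is essential --- so lineality is not the issue; the issue is that $\xi^!$ may vanish on an extreme ray of $\Delta^!_{0,\a}$ while being $\le 0$ on the whole cone, in which case $\a$ satisfies your polar-duality criterion but fails properness, so $\a\in\cF_\eta\smallsetminus\cB_{\xi^!}$. This really happens without regularity: for $n=2$, $\vb_0=\Z(1,1)$, $\xi=0$, one checks that $\cF_{\eta^!}$ consists of all four sign vectors while $\cB_\xi=\{(+,-),(-,+)\}$. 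So the theorem needs the regularity hypothesis carried over from \cite{GDKD}, and your proof must invoke it. The fix is short and non-circular: first establish the regularity equivalences by your matroid argument, so that regularity of $\eta$ yields regularity of $\xi^!$; then note (as the paper itself remarks in Section \ref{sec:bf}) that a regular covector which is $\le 0$ on a pointed polyhedral cone is automatically proper, since each extreme ray lies on a one-dimensional flat, where a regular covector cannot vanish, so the covector is strictly negative on every extreme ray and hence on the whole cone away from the origin. With that inserted, polar duality gives $\cF_\eta=\cB_{\xi^!}$ on the nose, and $\cF_{\eta^!}=\cB_\xi$ and $\cP_\PA=\cP_{\PA^!}$ follow by symmetry as you say.
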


There is no direct way to define Gale duality for
quantized polarized arrangements.  
However, if both $\QPA$ and $\QPA^!$ are regular and integral, 
then we will say that $\QPA$ and $\QPA^!$ are Gale dual if they
are linked to a pair of Gale dual polarized arrangements $\PA$ and 
$\PA^!$.

\section{The hypertoric enveloping algebra}
\label{sec:ring}
Recall that a quantized polarized arrangement consists of a triple $(\vb_0,\qvb,\xi)$.
The hypertoric enveloping algebra itself is determined by the parameter $\vb_0$,
and the affine space $\V = \qvb + \C\vb_0 \subset \SH$ determines a central character of the
algebra (Section \ref{sec:algdef}).  There are of course many different choices of $\qvb$ that
yield the same $\V$; this choice of a lattice $\qvb\subset\V$ determines a certain subcategory
of modules over the algebra with central character given by $\V$ (Section \ref{sec:weight}).  
The entirety of Section \ref{sec:ring} is devoted to understanding
this subcategory, which we describe in combinatorial terms in Section \ref{sec:quiver}.
In Section \ref{translation functors}, we consider translation functors between these subcategories for various
choices of $\V$ and $\qvb$.

The parameter $\xi$ will not enter the picture until Section \ref{hyper O}, where we use it to define
hypertoric category $\cO$.  The intersection of $\cO$ with the category studied in Section \ref{sec:ring} will be a block of $\cO$.

\subsection{The Weyl algebra}\label{sec:weyl}
Fix an integer $n$, and consider the $n$-dimensional torus $T := \Specm\C[W_\Z]$,
where $\C[W_\Z]$ denotes the group ring of the lattice $W_\Z$ that was introduced in Section \ref{sec:poly}.
Thus the Lie algebra $\t$ of $T$ is naturally identified with $\C\{h_1,\ldots,h_n\}$, $\Sym(\t)$ is naturally identified with $H$,
and $\t^*$ is naturally identified with $W$.  The character lattice $\t^*_\Z$ is equal to $W_\Z$, and the cocharacter
lattice $\t_\Z$ is the lattice spanned by $h_1,\ldots,h_n$.

Consider the coordinate vector space $\C^n := \Specm\C[x_1,\ldots,x_n]$.
We let $T$ act on $\C^n$ in a manner such that the induced action of $\t$ on the ring of regular
functions is given by $h_i\cdot x_j = \delta_{ij} x_j$.\footnote{Note that if we use the basis 
$\{h_1,\ldots,h_n\}$ of $\t$ to identify $T$ with the coordinate torus
$(\C^\times)^n$, then the action of $T$ on $\C^n$ is the {\em opposite} of the standard action.}
Let $\D$ be the Weyl algebra of polynomial differential operators on $\C^n$.
The ring $\D$ is generated over $\C[x_1,\ldots,x_n]$ by pairwise commuting elements $\{\del_1,\ldots,\del_n\}$
that satisfy the relations $[\del_i, x_j] = \delta_{ij}$ for all $1 \le i, j \le n$.  
The action of $T$ on $\C^n$ induces an action on $\D$, along with a grading
$$\D = \bigoplus_{z\in W_\Z}\D_z$$
of $\D$ by the character lattice of $T$.  We identify the $0^\textrm{th}$ graded piece
$$\D_0 = \D^T = \C[x_1\del_1,\ldots, x_n\del_n] = \C[\del_1x_1,\ldots,\del_nx_n]$$
with the ring $\bH$ from Section \ref{sec:poly} by sending $x_i\del_i$ to $h_i^+$ and $\del_ix_i$ to $h_i^-$.
This algebra will play a role for us analogous to a Cartan subalgebra of
a semisimple Lie algebra.

We filter $\D$ by the semigroup $\mathbb{N}$ by letting $F_k\D$ be the linear span of all monomials
of total degree $\leq k$.  In particular, we have $F_{2k}\bH = \bH \cap F_{2k}\D$, where $F_{2k}\bH$ is the filtered
piece introduced in Section \ref{sec:poly}.
The associated graded ring $\gr \D$ is canonically identified with
functions on $T^*\C^n$, with its $T$-action induced by the action on $\C^n$.
We have the following link between the algebra structure on $\D$ and the $T$-action: 
let $\text{\boldmath$\sigma$} \in F_2\bH\cong F_2\D_0$, and let $\sigma$ be its image in the degree 2
part of $H = \gr\bH$, which we have identified with $\t$.
Then for all $z\in W_\Z \cong \t^*$ and $a\in \D_z$, we have
\begin{equation}\label{moment map}
[\text{\boldmath$\sigma$},\, a] = z(\sigma)a\in \D_z. 
\end{equation}

\subsection{The hypertoric enveloping algebra}\label{sec:algdef}
In this section we fix a direct summand $\vb_0\subset\SgrHZ$,
and use it to define the hypertoric enveloping algebra.
Let $V_0 := \C\vb_0\subset W\cong\t^*$, and let $\fk := V_0^\perp\subset W^*\cong\t$.
Let $K\subset T$ be the connected subtorus with Lie algebra $\fk$, so that $\vb_0$
may be identified with the character lattice of $T/K$ and $\SgrHZ/\vb_0$ may be identified with the character
lattice of $K$.
Our main object of study is the ring of $K$-invariants
\[U := \D^K = \bigoplus_{z \in \vb_0} \D_z,\]
which we will call the {\bf hypertoric enveloping algebra}
associated to $\vb_0$.

From Equation \eqref{moment map} it is easy to see that 
the center $Z(U)$ is the subalgebra generated by 
all $\text{\boldmath$\sigma$} \in F_2\bH$ whose image $\sigma\in\t$ lies in $\fk$.
In particular, $Z(U)\subset\bH$, and its maximal spectrum $\Specm Z(U)$ is naturally 
the quotient of the affine space $\SH = \Specm \bH$ by the action of $V_0\subset W$.

Let $\V\subset\SH$ be a $V_0$-orbit, and let
$\la\colon Z(U) \to \C$ be the associated central character of $U$.
We stress that $\V$ and $\la$ completely determine each other. 
Let $U_\la := U/U\langle\ker \la\rangle$ be the corresponding
central quotient, and let $\bH_\la := \bH/\bH\langle \ker \la\rangle$ be
the image of $\bH$ in $U_\la$.

\begin{remark}\label{grul} 
There is a unique splitting of the surjection $F_2Z(U) \to \fk$
whose image is the kernel of $\la$.  The induced map
$\mu_\la:\Sym(\fk) \to Z(U) \to \D$ 
is known as a \textbf{quantized moment map} for the $K$-action,
since the associated graded map 
\[\gr \mu_\la: \Sym(\fk) \overset{\sim}{\longrightarrow} \gr Z(U) \to \gr \D \cong \C[T^*\C^n]\]
is given by composition with the classical
moment map $T^*\C^n \to \fk^*$.  The ring $U_\la$ is known as a {\bf noncommutative Hamiltonian reduction}
of $\D$ by $K$ \cite{CBEG}.
The associated graded algebra
\begin{eqnarray*}
\gr U_\la &=& \gr\left(\D^K\Big/\big\langle \mu_\a(\fk)\big\rangle\right)\\
&\cong& \left(\gr\D\right)^K\Big/\big\langle \gr\mu_\a(\fk)\big\rangle\\
&\cong& \C[T^*\C^n]^K\Big/\big\langle \gr\mu_\a(\fk)\big\rangle
\end{eqnarray*}
is isomorphic to the coordinate ring of the symplectic quotient of
$T^*\C^n$ by $K$, which is a hypertoric variety (see Proposition \ref{prop-grul}).
This ring inherits a natural Poisson structure from 
the symplectic structure on the hypertoric variety, and $U_\la$ is
a quantization of this Poisson structure.
Since $\gr U_\la$ is a finitely generated $\C$-algebra, $U_\la$
is finitely generated as well.
\end{remark}

\begin{remark}\label{module Y}
The algebra $U_\la$ can also be realized as the ring of $K$-equivariant endomorphisms of the right
$\D$-module $$Y_\la := \D\,\big/\,\big\langle \ker \la\big\rangle\,\D.$$  The isomorphism is given by sending an endomorphism $\psi$ to $\psi(1)\in Y_\la^K\cong U_\la$.
\end{remark}

\subsection{Weight modules}\label{sec:weight}
Let $\D\mmod$, $\Umod$, and $\Ulmod$ denote the category of finitely generated
left $\D$-modules, $U$-modules, and $U_\la$-modules, respectively.  
(All modules over any ring in this paper will be assumed to be finitely generated.)
We will mainly be interested in modules which
decompose into (generalized) weight spaces for the 
action of $\bH$.  For $v \in \SH = \Specm\bH$, let $\mathfrak{I}_v \subset\bH$ be the associated maximal ideal.
For any module $M \in \Umod$, define the 
\textbf{\boldmath$v$-weight space} of $M$ to be
\[M_v := \{m \in M \mid \mathfrak{I}_v^k m = 0\;\text{for $k \gg 0$}\}.\]  

\begin{remark}
It is more conventional to call $M_v$ a ``generalized weight space", and reserve the term ``weight space"
for the more restrictive $k=1$ condition.   
In this article, however, we will never be interested in weight spaces in the usual sense, 
and will always use ``weight space" to mean the subspace defined above.
\end{remark}

Equation \eqref{moment map} implies that 
$\mathfrak{I}_{z+v}a = a \mathfrak{I}_v$ for all $z\in\vb_0\subset\SgrHZ$, $v\in\SH$, and $a\in\D_z$,
and therefore that $$\D_z\cdot M_v \subset M_{z+v}.$$
For any $U$-module $M$, define its \textbf{support}
by \[ \supp M := \{v \in \SH \mid M_v \neq 0\}.\]
Let $\D\mmod_\lf$, $\UWM$, and $\WM$ be the full subcategories of $\D\mmod$, $\Umod$, and $\Ulmod$
consisting of modules for which $H$ acts locally finitely.
Objects of these categories will be called \textbf{weight modules}; these are exactly the modules
that are isomorphic to the direct sum of their weight spaces.

So far we have chosen an integer $n$, a direct summand $\vb_0\subset\SgrHZ$,
and a central character $\la:Z(U)\to\C$. 
Now choose a $\vb_0$-orbit $\qvb \subset \V$,
and let $\Ulmod_\qvb$ be the full subcategory of $\WM$ consisting of modules
supported in $\qvb$.  This is the category on which we will focus our attention for
the rest of Section \ref{sec:ring}.

\begin{remark} Since a point $v \in \SH$ lies in $\V$ if and only if 
$\ker \la \subset \mathfrak{I}_v$, the category $\WM$ can be thought of as
the subcategory of $\UWM$ consisting of objects that are \emph{scheme-theoretically} supported
on $\V$.  However, an object $M$ of $\UWM$ can have $\supp M \subset \V$
and still not lie in $\WM$.  We will consider the larger category of
weight modules for $U$ with set-theoretic support in $\qvb$ in Section
\ref{sec:Deformed O} when we discuss a deformation of our category $\cO$.
\end{remark}

Define functors
\[(\cdot)^\la\colon \D\mmod_\lf \to \WM\quad \text{and}\quad 
(\cdot)^\qvb\colon \D\mmod_\lf \to \Ulmod_\qvb \]
by 
$$M^\la := \left\{ m \in \bigoplus_{v \in \V} M_v \,\,\Big{|}\,\, (\ker \la) m = 0\right\}
\qquad\text{and}\qquad
M^\qvb := \left\{ m \in \bigoplus_{v \in \qvb} M_v \,\,\Big{|}\,\, (\ker \la) m = 0\right\}.$$
These functors are left exact, and they are also right exact when restricted to the full subcategory of
objects annihilated by $\ker \la$.  They are right adjoint to the functor
$(\D \otimes_U -)$ restricted to $\WM$ or $\Ulmod_\qvb$, respectively.  
Furthermore, the adjunction map $M \to (\D \otimes_U M)^\la$
is an isomorphism for any $M \in \WM$.

Fix a weight $v\in\qvb$.  For any sign vector $\a\in\{+,-\}^\Iv$, consider the simple $\D$-module
$$L_\a := \D\Big/\D \langle \del_i\mid \a(i) = +\rangle 
+ \D\langle x_i\mid \a(i) = -\rangle+\D\langle h^+_i-h^+_i(v)\mid i\notin \Iv\rangle.$$
It is easy to check that the isomorphism class of $L_\a$ does not depend on the choice of $v$ and that
\begin{equation}\label{simple-support}
\supp L_\a^\qvb = \DDelta_{\a}\cap\qvb.
\end{equation}

\begin{proposition}
\label{simple modules}
The modules $\{L_\a^\qvb\mid \a \in \cF_\qvb\}$ give a complete and irredundant set of representatives for the
isomorphism classes of simple objects of $\Ulmod_\qvb$.  
\end{proposition}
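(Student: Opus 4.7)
My plan is to establish the proposition in three parts: (A) each $L_\a^\qvb$ with $\a\in\cF_\qvb$ is nonzero and simple; (B) distinct $\a\in\cF_\qvb$ give non-isomorphic objects; and (C) every simple of $\Ulmod_\qvb$ arises in this way.

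For (A), I would first verify the support formula \eqref{simple-support} directly from the presentation of $L_\a$. The image $\bar 1_\a\in L_\a$ of $1\in\D$ is a weight vector, with $h_i^+$-eigenvalue equal to $0$ when $\a(i)=+$, equal to $-1$ when $\a(i)=-$, and equal to $h_i^+(v)$ when $i\notin\Iv$; a monomial basis argument then shows $(L_\a)_w$ is one-dimensional exactly when $w\in\DDelta_\a$ (with the prescribed non-$\Iv$ coordinates) and zero otherwise. This proves \eqref{simple-support} and gives $L_\a^\qvb\neq 0$ iff $\a\in\cF_\qvb$. To show simplicity, let $N\subset L_\a^\qvb$ be a nonzero $U_\la$-submodule; local finiteness of the $\bH$-action forces $N$ to contain some $(L_\a)_w$ with $w\in\DDelta_\a\cap\qvb$. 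I then show that $U_\la$ acts transitively on the one-dimensional weight spaces, in the sense that for every other $w'\in\DDelta_\a\cap\qvb$ there is a monomial in $U=\bigoplus_{z\in\vb_0}\D_z$ sending $(L_\a)_w$ isomorphically onto $(L_\a)_{w'}$. The relevant monomial is built out of $x_i$ (for $\a(i)=+$) and $\partial_j$ (for $\a(j)=-$) of total $T$-weight $w'-w\in\vb_0$; its existence reduces to a combinatorial lattice-path argument: one may travel from $w$ to $w'$ by unit steps inside the convex polyhedron $\DDelta_\a\cap\qvb$, and the sign conditions defining $\DDelta_\a$ guarantee that the operator chosen at each step does not annihilate the weight vector.

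For (B), an isomorphism $L_\a^\qvb\cong L_\b^\qvb$ forces $\DDelta_\a\cap\qvb=\DDelta_\b\cap\qvb$, and any point $w$ in the relative interior of this common support satisfies the strict inequalities $h_i^+(w)>0$ for $\a(i)=+$ and $h_i^-(w)<0$ for $\a(i)=-$, which uniquely reconstructs $\a$, so $\a=\b$. For (C), let $M\in\Ulmod_\qvb$ be simple and set $N:=\D\otimes_U M$. By the adjunction recalled in the text, the unit map $M\to N^\la$ is an isomorphism, and since $M$ is $\qvb$-supported we actually have $M\cong N^\qvb$. The module $N$ is a finitely generated weight $\D$-module, and the Musson--Van den Bergh classification of simple weight $\D$-modules \cite{MvdB} yields a finite filtration of $N$ with simple subquotients of the form $L_\b$. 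Applying the (left exact, and exact on modules annihilated by $\ker\la$) functor $(\cdot)^\qvb$ yields a filtration of $M\cong N^\qvb$ with subquotients $L_\b^\qvb$; simplicity of $M$ then forces $M\cong L_\a^\qvb$ for some $\a$, which must lie in $\cF_\qvb$ since $L_\a^\qvb\neq 0$.

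The main obstacle I expect is the input from \cite{MvdB} in step (C): identifying all simple factors of $\D\otimes_U M$ as $L_\b$'s and checking that $(\cdot)^\qvb$ is sufficiently exact on the relevant subcategory so that the filtration really passes to a filtration of $M$ with at most one nonzero factor. The combinatorial path-in-polyhedron lemma in step (A) is the other subtlety, but it follows from convexity and integrality considerations inside $\DDelta_\a\cap\qvb$.
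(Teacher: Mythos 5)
Your proposal reaches the right conclusion but by a genuinely different route from the paper. The paper's proof is essentially a citation: it observes that the adjoint action of $\bH$ on $U_\la$ is semisimple with cyclic weight spaces, so the Musson--Van den Bergh machinery applies, and then invokes \cite[4.2.1, 4.3.1, \& 7.2.4]{MvdB} to conclude directly that the simples of $\Ulmod_\qvb$ are exactly the nonzero $L_\a^\qvb$ (completeness \emph{and} irredundancy in one stroke); the only thing checked by hand is the support formula \eqref{simple-support}, which converts ``nonzero'' into ``$\a\in\cF_\qvb$.'' You instead prove simplicity and irredundancy by direct computation and only lean on \cite{MvdB} for the classification and finite length of weight $\D$-modules in step (C), which makes the argument more self-contained at the cost of two combinatorial lemmas. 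Your step (C) is sound: $\ker\la$ does annihilate $\D\otimes_U M$ (since $\vb_0$ pairs to zero with $\fk$, the generators of $\ker\la$ commute with each $\D_z$, $z\in\vb_0$), so $(\cdot)^\qvb$ is exact on the relevant filtration.

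Two local repairs are needed, though neither is fatal. In (A), the ``travel by unit steps inside $\DDelta_\a\cap\qvb$'' claim is both unnecessary and not obviously true (lattice points of a convex polyhedron need not be connected by steps in a generating set of $\vb_0$ staying inside it). A single monomial suffices: for $w,w'\in\DDelta_\a\cap\qvb$ with $z=w'-w\in\vb_0$, take $m=\prod_i x_i^{a_i}\partial_i^{b_i}$ with $a_i=\max(0,z_i)$, $b_i=\max(0,-z_i)$; the only way $m$ can kill the one-dimensional space $(L_\a)_w$ is by applying $\partial_i$ more than $h_i^+(w)$ times when $\a(i)=+$ (resp.\ $x_i$ too many times when $\a(i)=-$), and the inequality $h_i^+(w')\ge 0$ (resp.\ $h_i^-(w')\le 0$) rules this out. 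In (B), the ``relative interior gives strict inequalities'' argument is not right as stated --- points of the support can lie on $H_i^{\pm}$ --- but the conclusion holds for a simpler reason: since $h_i^- = h_i^+ + 1$ takes integer values on $\qvb$ for $i\in\Iv$, the conditions $h_i^+\ge 0$ and $h_i^-\le 0$ are mutually exclusive on $\qvb$, so the sets $\DDelta_\a\cap\qvb$ for distinct $\a$ are pairwise \emph{disjoint}; nonempty supports therefore determine $\a$. With these two fixes your argument is complete.
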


\begin{proof}
Since the adjoint action of $\bH$ on $U_\la$ is semisimple and
the resulting weight spaces are all cyclic left $\bH$-modules, 
the theory developed in \cite{MvdB} applies to $U_\la$.
The modules $\{L_\a\mid\a \in \{\plusminus\}^\Iv\}$ are precisely the simple weight modules for $\D$ 
supported in the lattice $\qvb + W_\Z$, 
and the results \cite[4.2.1, 4.3.1, \& 7.2.4]{MvdB}
imply that the isomorphism classes of simples in $\Ulmod_\qvb$ are given by the 
set of all nonzero $L_\a^\qvb$.
Since $L_\a^\qvb$ is a weight module, we have $L_\a^\qvb \ne 0$ if and only if
$\supp L_\a^\qvb \neq\emptyset$.
By Equation \eqref{simple-support}, this is exactly the condition that $\a\in\cF_\qvb$.
\end{proof}

\subsection{Quiver description of weight modules}\label{sec:quiver}
The results of Musson and Van den Bergh \cite{MvdB}
give an equivalence between $\Ulmod_\qvb$ and the category of finite
dimensional modules over a certain algebra,
which we now describe.

Let $Q$ be the path algebra over $\C$ of the quiver 
\[ \begin{tikzpicture}
\node (a) at (0,0){$(-)$};
\node (b) at (3,0){$(+)$};
\draw[->,thick] (a) to[out=20,in=160] (b);

\draw[<-,thick] (a) to[out=-20,in=-160] (b);
\end{tikzpicture}\]
with vertices labeled $+$ and $-$ and one 
arrow in each direction, 
and let $Q_n = Q \otimes_\C Q \otimes_\C \dots \otimes_\C Q$ be
the tensor product of $n$ copies of $Q$.  The algebra $Q_n$ is the path algebra of the
quiver whose vertices are labeled by $\{\plusminus\}^n$,
the set of vertices of an $n$-cube, with an edge connecting $\alpha$
to $\beta$ whenever $\alpha$ and $\beta$ differ in exactly one
position, modulo the relations that whenever $\alpha$ and $\gamma$
differ in exactly two positions, the two paths $\alpha \to \beta \to
\gamma$ and $\alpha \to \beta' \to \gamma$ are equal in $Q_n$.
Consider the grading on $Q_n$ for which a path of length $d$ has degree $d$, and  
let $\wh{Q}_n$ be the completion of $Q_n$ with
respect to the grading. 

For each $\alpha \in \{\plusminus\}^n$ and each $1 \le i \le n$, let
$\theta_{\a, i}$ be the element of ${Q}_n$ represented by the path $\a
\to \b \to \a$, where $\b$ agrees with $\a$ except in the $i^\text{th}$ place,
and put $\theta_i := \sum_{\a \in \{\plusminus\}^n} \theta_{\a,i}$.  
The center $Z(Q_n)$ is a polynomial algebra on the elements
$\{\theta_i\mid 1 \le i \le n\}$, and the center of $Z(\wh{Q}_n)$ is the completion of this
polynomial algebra with respect to the grading.  Let $\vart\colon \t \to Z(Q_n)$ denote
the linear map which sends $h_i$ to $\theta_i$.

Next, let $Q_\qvb \subset Q_n$ and $\wh{Q}_\qvb \subset \wh{Q}_n$ be the centralizers of all 
length $1$ paths $\a \to \b$ where $\a$ and $\b$ agree in every position 
except for $i \notin I_\qvb$.  If $\qvb$ is integral, we have $Q_\qvb = Q_n$ and 
$\wh{Q}_\qvb = \wh{Q}_n$.
Otherwise, $Q_\qvb$ is isomorphic to 
\[Q_{|I_\qvb|} \,\otimes_\C \,\C[\theta_i \mid i\notin I_\qvb],\] and 
$\wh{Q}_\qvb$ is its completion.
The primitive idempotents of $\wh{Q}_\qvb$ are indexed by $\a \in \{+,-\}^{I_\qvb}$.
The idempotent $e_\a$ corresponding to $\a$ is the sum of the primitive idempotents
of $\wh{Q}_n$ for all vertices in the fiber over $\a$ of the projection
$\{+,-\}^n \to \{+,-\}^{I_\qvb}$ forgetting the indices $i \notin I_\qvb$.  
Let $e_\qvb := \sum_{\a \in \cF_\qvb} e_\a$.

\begin{theorem}\label{first-equiv}
There is an equivalence of categories between $\Ulmod_\qvb$ and the category of 
finite dimensional modules over the ring $\left(e^{}_\qvb\, \wh{Q}_\qvb\, e^{}_\qvb\right)\Big{/}\big\langle \vart(x)e^{}_\qvb\mid x\in\fk\big\rangle$.
\end{theorem}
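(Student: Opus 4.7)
The plan is to reduce to the corresponding statement for weight $\D$-modules, which is essentially what \cite{MvdB} provides, and then transfer the description to $U_\la$-modules by means of the adjunction $\bigl(\D\otimes_U(-),\,(-)^\qvb\bigr)$ from Section \ref{sec:weight}.

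First I would invoke the Musson--Van den Bergh classification to describe the category of weight $\D$-modules supported on the $W_\Z$-orbit $\qvb+W_\Z$. The classification of simples established in the proof of Proposition \ref{simple modules} already foreshadows the outcome: the relevant quiver algebra is $\wh{Q}_\qvb$, whose cube-quiver factor lives in the integral coordinates $i\in \Iv$, while the polynomial factor $\C[\theta_i\mid i\notin \Iv]$ accounts for the coordinates in which the weights along $\qvb$ are constant and non-integral. Under this equivalence, the generalized weight decomposition matches the idempotent decomposition of $1\in \wh{Q}_\qvb$, each generator $\D_z$ (for $z\in W_\Z$) maps the weight space at $\b$ into the one at $\b+z$ via an appropriate path in the cube quiver, and the action of $\bH$ matches the action of $\vart(\t)\subset Z(\wh{Q}_\qvb)$ after a shift determined by a chosen reference point in $\qvb$.

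Next, I would translate the passage from $\D\mmod_\lf$ to $\Ulmod_\qvb$ into quiver terms. Because the adjunction unit $M\to(\D\otimes_U M)^\la$ is an isomorphism for $M\in\WM$, the functor $\D\otimes_U(-)$ is fully faithful on $\Ulmod_\qvb$, and its essential image inside the category of weight $\D$-modules supported on $\qvb+W_\Z$ is cut out by two conditions: (i) support contained in $\qvb$ rather than the whole $W_\Z$-coset, and (ii) annihilation by $\ker\la\subset Z(U)$. On the quiver side, condition (i) amounts to truncating by the idempotent $e_\qvb=\sum_{\a\in\cF_\qvb}e_\a$, so the essential image becomes the category of finite-dimensional modules over $e_\qvb\,\wh{Q}_\qvb\,e_\qvb$; and condition (ii) amounts to imposing the relations $\vart(x)e_\qvb=0$ for $x\in\fk$, since after the shift used in step one the quantized moment map $\mu_\la\colon\Sym(\fk)\to Z(U)$ is intertwined with the restriction of $\vart$ to $\fk\subset\t$.

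The main obstacle is verifying this last intertwining with the correct normalization. Concretely, one must check that at each $\a\in\cF_\qvb$ the scalar by which $\mu_\la(x)$ acts on the simple module $L_\a^\qvb$ agrees with the scalar by which $\vart(x)$ acts on the corresponding simple $\wh{Q}_\qvb$-module, and in particular that this scalar vanishes on $\fk$ when the $\vart$-shift has been calibrated so that the reference weight maps to $\la$. Once this calibration is pinned down, the finiteness hypotheses match on both sides: $\cF_\qvb$ is finite by Lemma \ref{chambers}, and the finite-generation hypothesis on objects of $\Ulmod_\qvb$ forces each weight space to be finite-dimensional (using the cyclicity of the $\bH$-weight spaces of $U_\la$ from \cite{MvdB}), so the equivalence follows formally.
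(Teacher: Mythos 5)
Your overall architecture is the same as the paper's: quote Musson--Van den Bergh for the equivalence between weight $\D$-modules supported on $\qvb+\SgrHZ$ and finite dimensional $\wh{Q}_\qvb$-modules, and then descend to $\Ulmod_\qvb$; your discussion of the calibration of $\vart|_{\fk}$ against the quantized moment map is also the right bookkeeping. The gap is in the descent step. Your claim that the essential image of $\D\otimes_U(-)$ on $\Ulmod_\qvb$ is cut out by the condition ``support contained in $\qvb$'' is false: for $M\in\Ulmod_\qvb$ the module $\D\otimes_U M$ is generated by its weight spaces in $\qvb$ but is typically supported on a much larger subset of $\qvb+W_\Z$ (already for $n=1$, $\vb_0=0$, $\qvb=\{v\}$, the module $\D\otimes_{\bH}\C_v$ has nonzero weight spaces at every point of $v+\Z$). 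More seriously, even after correcting the support condition, the logic ``fully faithful functor plus description of the essential image'' does not deliver the theorem, because finite dimensional $e_\qvb\wh{Q}_\qvb e_\qvb$-modules are not a full subcategory of $\wh{Q}_\qvb\modfin$ cut out by any such condition: the functor $N\mapsto e_\qvb N$ is a Serre quotient functor, killing all composition factors $L_\a$ with $\a\notin\cF_\qvb$. The functor that realizes the equivalence of the theorem is the right adjoint $(\cdot)^\qvb$ (equivalently, $\varprojlim\Hom$ from the pro-projective generators $(\Pk_\a)^\qvb$, $\a\in\cF_\qvb$, as made explicit in the proof of Theorem \ref{Translation and quivers}), not the left adjoint $\D\otimes_U(-)$, and its full faithfulness is exactly the nontrivial point.

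Concretely, what is missing is the content that the paper imports from Proposition \ref{simple modules} and \cite[4.4.1]{MvdB}: (a) the simple $\wh{Q}_\qvb/\langle\vart(\fk)\rangle$-modules not annihilated by $e_\qvb$ are precisely those indexed by $\cF_\qvb$, and these correspond bijectively to the simples $L_\a^\qvb$ of $\Ulmod_\qvb$; and (b) the truncations $(\Pk_\a)^\qvb$ for $\a\in\cF_\qvb$ remain projective generators of the subcategories $\Ulmod_\qvb^{(k)}$, so that the idempotent truncation is fully faithful on the relevant category and not merely a quotient with a kernel. Without an argument for (b) -- or an equivalent statement that every object of $\Ulmod_\qvb$, together with its morphisms, is recovered from its generalized weight spaces at the reference points $v_\a$, $\a\in\cF_\qvb$ -- the proposal does not establish that the functor to $e_\qvb\wh{Q}_\qvb e_\qvb/\langle\vart(x)e_\qvb\rangle\modfin$ is an equivalence rather than just essentially surjective.
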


\begin{proof}
\cite[3.5.6 \& 6.3]{MvdB} give an equivalence between 
the category of weight modules for $\D$ with weights
in $\qvb + \SgrHZ$ and the category of finite dimensional
$\wh{Q}_\qvb$-modules.  The result now follows from Proposition \ref{simple modules} and
\cite[4.4.1]{MvdB}.  
\end{proof}

\begin{remark} In the proof of Theorem \ref{Translation and quivers} below we explain in more detail how
the equivalence of Theorem \ref{first-equiv} is constructed.
\end{remark}

\subsection{Translation functors}\label{translation functors}
Let $\la, \la' \colon Z(U) \to \C$ be two central characters, and let $\V, \V'$ be the corresponding
$V_0$-orbits in $\SH$.  Since $\SH$ is a torsor for $\SgrH$, the difference $\V - \V'$ is naturally
a $V_0$-orbit in $\SgrH$.
Let $$\D^{\la - \la'} := \displaystyle\bigoplus_{z\,\in\,\V - \V'} \D_z,$$ 
and consider the $(U_\la, U_{\la'})$-bimodule
\begin{align*}
 {}_\la T_{\la'} & := \D^{\la-\la'}\big/\D^{\la-\la'}\langle \ker \la'\rangle \\
 & = \D^{\la-\la'}\big/\langle \ker \la\rangle\D^{\la-\la'}\\
 & \cong \Hom_K(Y_\la, Y_{\la'}).
\end{align*}
Note that $ {}_\la T_{\la'}$ is nonzero if and only if
$\la - \la'$ is integral, meaning that $\V-\V'$ contains an element of $\SgrHZ$.
We have an associative collection of maps
\begin{equation}\label{bimodule map}
{}_\la T_{\la'} \otimes_{U_{\la'}} {}_{\la'}T_{\la''} \to {}_{\la}T_{\la''}
\end{equation}
given by compositions of homomorphisms.
If $\la = \la'$, then we have ${}_\la T_{\la'} \cong U_\la$ and Equation \eqref{bimodule map} is the obvious isomorphism.
The following proposition is proved in \cite[4.4.4]{MvdB}.

\begin{proposition}\label{trans-equiv}
Assume $\la-\la'$ is integral.
Then to any $\vb_0$-orbit $\qvb'\subset\V'$ there is a unique $\vb_0$-orbit $\qvb\subset\V$
such that $\qvb'-\qvb\subset W_\Z$.
The bimodule map  $${_{\la}T_{\la'}}\otimes_{U_{\la'}}{_{\la'}T_\la}\to {}_\la T_{\la}\cong U_\la$$ is an isomorphism
if and only if $\cF_{\qvb'} = \cF_\qvb$ for all $\vb_0$-orbits $\qvb'\subset \V'$.
\end{proposition}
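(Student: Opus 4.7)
I would begin by handling the existence and uniqueness of $\qvb$. The difference $\V - \V'$ is a single $V_0$-orbit in $\SgrH$ that, by integrality of $\la - \la'$, meets the lattice $\SgrHZ$. Since $\vb_0$ is a direct summand of $\SgrHZ$ spanning $V_0$, we have $V_0 \cap \SgrHZ = \vb_0$, so $(\V - \V') \cap \SgrHZ$ is a single coset of $\vb_0$. For any $z$ in this coset, $\qvb := \qvb' + z \subset \V$ is a well-defined $\vb_0$-orbit independent of $z$, and any other orbit $\tilde\qvb \subset \V$ with $\qvb' - \tilde\qvb \subset \SgrHZ$ must coincide with it, because $\tilde\qvb - \qvb'$ is then forced to lie in the same $\vb_0$-coset.

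For the main equivalence my plan is to decompose the bimodules under the left and right $\bH$-actions. One obtains ${_\la T_{\la'}} = \bigoplus_\qvb ({_\la T_{\la'}})_\qvb$, with $\qvb$ running over $\vb_0$-orbits in $\V$, each summand relating $\Ulmod_\qvb$ to $U_{\la'}\mmod_{\qvb'}$ for the corresponding $\qvb'$ from the first part of the proposition. Since the composition map respects this decomposition and $U_\la$ decomposes similarly as a bimodule, it is enough to show, for each individual $\qvb$, that the induced map
\[ ({_\la T_{\la'}})_\qvb \otimes_{U_{\la'}} ({_{\la'}T_\la})_\qvb \longrightarrow (U_\la)_\qvb \]
is an isomorphism if and only if $\cF_\qvb = \cF_{\qvb'}$.

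To analyze a single $\qvb$-summand, I apply Theorem \ref{first-equiv} and the MvdB framework underlying it. Because $\qvb - \qvb' \subset \SgrHZ$ we have $I_\qvb = I_{\qvb'}$, so the algebras $A_\qvb$ and $A_{\qvb'}$ both sit inside a common ambient algebra $\bar A := \wh{Q}_\qvb/\langle \vart(\fk)\rangle$ as $e_\qvb \bar A e_\qvb$ and $e_{\qvb'} \bar A e_{\qvb'}$, and the translation bimodules become $e_\qvb \bar A e_{\qvb'}$ and $e_{\qvb'} \bar A e_\qvb$. Under these identifications the composition map is the multiplication $e_\qvb \bar A e_{\qvb'} \otimes_{e_{\qvb'} \bar A e_{\qvb'}} e_{\qvb'} \bar A e_\qvb \to e_\qvb \bar A e_\qvb$. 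When $\cF_\qvb = \cF_{\qvb'}$ the two idempotents coincide as sums of primitive idempotents and the map is tautologically an isomorphism; when they differ, any primitive idempotent $e_\a$ with $\a$ in the symmetric difference of $\cF_\qvb$ and $\cF_{\qvb'}$ cannot be expressed as a combination of products through the other set of idempotents once the central relations $\vart(\fk) = 0$ are imposed, so the map fails to be an isomorphism.

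The main obstacle is this last step: the quiver $\wh{Q}_\qvb$ is connected, so naive considerations of two-sided ideals in the path algebra would suggest that every primitive idempotent is reachable through every other, and what actually creates the obstruction is the interaction between the primitive idempotents and the central relations $\vart(\fk) = 0$. Showing that an idempotent $e_\a$ with $\a$ in the symmetric difference genuinely lies outside $\bar A e_{\qvb'}\bar A$ modulo these central relations (in both the integral and the non-integral case, where $\wh Q_\qvb$ is a proper subalgebra of $\wh Q_n$) is precisely the technical content of \cite[4.4.4]{MvdB}.
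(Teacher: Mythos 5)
For context: the paper offers no argument for this proposition at all --- it simply cites \cite[4.4.4]{MvdB} --- and your proposal likewise defers the decisive step (that an idempotent $e_\a$ with $\a$ in the symmetric difference of $\cF_\qvb$ and $\cF_{\qvb'}$ is not reachable modulo the relations $\vart(\fk)=0$) to that same reference. So at the level of where the real mathematics lives, you and the paper agree, and your treatment of the first claim (existence and uniqueness of $\qvb$, via $V_0\cap\SgrHZ=\vb_0$ because $\vb_0$ is a direct summand) is correct.

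However, the reduction you interpose contains a genuine error. You claim ${}_\la T_{\la'}=\bigoplus_\qvb({}_\la T_{\la'})_\qvb$ with $\qvb$ running over $\vb_0$-orbits in $\V$, and that ``$U_\la$ decomposes similarly as a bimodule.'' It does not: the set of $\vb_0$-orbits in $\V$ is the uncountable quotient $\V/\vb_0$, and $U_\la$ is generated by $1$ as a bimodule, so it admits no such direct sum decomposition. What genuinely decomposes is the \emph{category} $\WM$ of weight modules, and correspondingly the \emph{functor} ${}_\la T_{\la'}\otimes_{U_{\la'}}-$ restricted to weight modules. This matters because Theorem \ref{Translation and quivers} (which you invoke via the idempotent picture $e_\qvb\bar A e_{\qvb'}$) is a statement about functors on $\Ulmod_\qvb$, and knowing that the composite functor is isomorphic to the identity on every weight-module block does not immediately yield that the bimodule map ${}_\la T_{\la'}\otimes_{U_{\la'}}{}_{\la'}T_\la\to U_\la$ is an isomorphism, since $U_\la$ is not an object of any $\Ulmod_\qvb$. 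To repair this one should instead decompose both sides by adjoint $\bH$-weight into $\vb_0$-graded pieces, observe that each graded piece is a finitely generated $\bH_\la$-module, and check the map after completion at each point $v\in\V$ (this is where the quantification over all orbits $\qvb'\subset\V'$ honestly enters), then pass back by Nakayama/Krull intersection. A smaller gap in the same vein: when neither of $\cF_\qvb$, $\cF_{\qvb'}$ contains the other, the failure of the composition map may be a failure of injectivity rather than surjectivity, so the argument ``$e_\a$ is not in the image'' does not cover all cases of the ``only if'' direction.
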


\begin{remark}
The functor of tensoring with ${}_\la T_{\la'}$ can be considered a kind
of ``translation functor" on weight modules, and on the category $\cO$
which we will define in Section \ref{hyper O}.  The reader should
be warned, however, that it behaves quite differently from 
the usual translation functors on the BGG category $\cO$.  In particular, 
tensoring with ${}_\la T_{\la'}$ is only right exact.  (See 
Remark \ref{trick} below for more about the difference between
our framework and classical BGG category $\cO$.)
These functors will be important 
in Section \ref{sec:Localization} where we use them to
study the localization of $U_\la$-modules to a hypertoric variety, in 
Section \ref{sec:cells} where we define right cells in our category $\cO$, and in Section
\ref{pione}, where we use them to construct certain derived equivalences between different
blocks of our hypertoric category $\cO$.
\end{remark}

The following easy result is useful in describing the effect of tensoring
with ${}_\la T_{\la'}$ on weight modules.

\begin{lemma}\label{factoring translation} There is an equivalence of functors 
\[({}_\la T_{\la'} \otimes_{U_{\la'}} -) \cong (\D \otimes_U -)^\la  \colon U_{\la'}\mmod_{\lf} \to U_{\la}\mmod_{\lf}.\]
\end{lemma}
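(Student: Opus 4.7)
The plan is to decompose the right $U$-module $\D$ according to cosets of $\vb_0$ in $W_\Z$, apply $- \otimes_U M$, and then use the weight decomposition to isolate the $\la$-part as exactly the summand coming from $\D^{\la-\la'}$, which I will then recognize as ${}_\la T_{\la'} \otimes_{U_{\la'}} M$. All maps in sight are natural in $M$, so the functorial isomorphism comes for free once the pointwise isomorphism is established.

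First I would observe that, because $U = \bigoplus_{z \in \vb_0} \D_z$ is a $W_\Z/\vb_0$-graded subalgebra of $\D = \bigoplus_{w \in W_\Z} \D_w$, the right $U$-module $\D$ splits as
\[\D \;=\; \bigoplus_{[w] \in W_\Z/\vb_0} \D^{[w]}, \qquad \D^{[w]} := \bigoplus_{z \in \vb_0} \D_{w+z},\]
where each $\D^{[w]}$ is a $(\bH, U)$-subbimodule. Hence $\D \otimes_U M$ inherits the same direct sum decomposition. The identity $\D_z \cdot M_v \subset M_{z+v}$ (coming from Equation \eqref{moment map}) shows that the summand $\D^{[w]} \otimes_U M$ is a weight module supported in the $V_0$-orbit $w + \V' \subset \SH$. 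Taking $(-)^\la$ picks out those summands whose weights lie in $\V$, and by construction $w + \V' = \V$ if and only if $w \in (\V - \V') \cap W_\Z$. Thus only the summand $\D^{\la-\la'} \otimes_U M$ survives, and I am reduced to showing
\[{}_\la T_{\la'} \otimes_{U_{\la'}} M \;\cong\; \bigl(\D^{\la-\la'} \otimes_U M\bigr)^\la.\]

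For this I would use two facts. Since $\ker \la'$ annihilates $M$, the functor $\D^{\la-\la'} \otimes_U (-)$ restricted to $U_{\la'}\mmod_\lf$ factors as $\bigl(\D^{\la-\la'}/\D^{\la-\la'}\langle \ker \la'\rangle\bigr) \otimes_{U_{\la'}} (-) = {}_\la T_{\la'} \otimes_{U_{\la'}} (-)$. On the other hand, the bimodule identity $\langle \ker \la\rangle \D^{\la-\la'} = \D^{\la-\la'}\langle \ker \la'\rangle$ (asserted in the definition of ${}_\la T_{\la'}$) says that $\ker \la$ already acts as zero on ${}_\la T_{\la'}$ from the left, so it acts as zero on ${}_\la T_{\la'} \otimes_{U_{\la'}} M$. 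Combined with the weight constraint verified above, this means the module ${}_\la T_{\la'} \otimes_{U_{\la'}} M$ is equal to its own $(-)^\la$, matching $\bigl(\D^{\la-\la'} \otimes_U M\bigr)^\la$. The natural map $[a] \otimes m \mapsto a \otimes m$ makes the identification explicit and visibly functorial.

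The only real obstacle is establishing the bimodule identity $\langle \ker \la\rangle \D^{\la-\la'} = \D^{\la-\la'}\langle \ker \la'\rangle$, which I would justify from the shift property of $\bH$-weight spaces: for $f \in \bH$ and $a \in \D_z$ with $z \in \V - \V'$, one has $f \cdot a = a \cdot T_z(f)$ where $T_z$ is translation by $z$ on $\SH$, so that $T_z$ carries functions vanishing on $\V$ to functions vanishing on $\V - z = \V'$. This converts left multiplication by elements of $\ker \la$ into right multiplication by elements that vanish on $\V'$, which is exactly what is needed (and is really the content of the assertion made in Section \ref{sec:algdef}). Everything else is bookkeeping with the direct sum decomposition.
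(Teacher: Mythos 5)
Your argument is correct, and since the paper states this lemma without proof (calling it an ``easy result''), your write-up is exactly the argument the authors leave implicit: both key inputs --- the weight-shift $\D_z\cdot M_v\subset M_{z+v}$ and the identity $\langle\ker\la\rangle\D^{\la-\la'}=\D^{\la-\la'}\langle\ker\la'\rangle$ --- are already recorded in Sections \ref{sec:weight} and \ref{translation functors}. The coset decomposition of $\D$ over $U$, the observation that only the summand $\D^{\la-\la'}$ has weights meeting $\V$, and the identification $\D^{\la-\la'}\otimes_U M\cong{}_\la T_{\la'}\otimes_{U_{\la'}}M$ together with the vanishing of the $\ker\la$-action do constitute a complete proof.
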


We have the following explicit formula for tensoring with ${}_\la T_{\la'}$ in terms of the quiver algebra
in Theorem \ref{first-equiv}.  
Let $\qvb\subset\V$ and $\qvb'\subset\V'$ be as in Proposition \ref{trans-equiv}, so that $\qvb'-\qvb\subset W_\Z$.
Let $R$ be the ring $\wh{Q}_\qvb\big{/}\langle \vart(x) \mid x\in\fk\rangle$, 
and put $e = e_{\qvb}$, $e' = e_{\qvb'}$
so that Theorem \ref{first-equiv} gives equivalences
\[F \colon \Ulmod_\qvb \to eRe\modfin \and 
F'\colon \Ulpmod_{\qvb'} \to e'Re'\modfin.\]
(Note that $\qvb'-\qvb\subset W_\Z$ implies that $\Iv = I_{\qvb'}$, so $\wh{Q}_\qvb = \wh{Q}_{\qvb'}$.)

\begin{theorem} \label{Translation and quivers}
The square 
\[\xymatrix{
\Ulmod_\qvb \ar[r]^{F}\ar[d]_{({}_{\la'}T^{}_\la \otimes_{U_\la} -)} & eRe\modfin \ar[d]^{( e'Re \,\otimes_{eRe}\, -)} \\
\Ulpmod_{\qvb'} \ar[r]^{F'}  & e'Re'\modfin
}\]
commutes up to natural isomorphism.
\end{theorem}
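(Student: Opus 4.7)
The plan is to identify the two vertical functors as tensor products with bimodules that correspond to one another under the Musson--Van den Bergh equivalence underlying Theorem \ref{first-equiv}.

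First, by Lemma \ref{factoring translation} the left vertical is naturally isomorphic to $(\D\otimes_U-)^{\la'}$. I would factor this through the ambient category of weight $\D$-modules with weights in $\qvb+W_\Z=\qvb'+W_\Z$ (the equality uses $\qvb'-\qvb\subset W_\Z$). The proof of Theorem \ref{first-equiv} extracted from \cite[3.5.6, 6.3, 4.4.1]{MvdB} realizes this ambient category as equivalent to $R\modfin$ with $R=\wh Q_\qvb/\langle\vart(\fk)\rangle$, under which the support restriction to $\qvb$ (respectively $\qvb'$) corresponds to multiplication by the idempotent $e$ (respectively $e'$), and the induction functor $\D\otimes_U-$ corresponds to extension of scalars $Re\otimes_{eRe}-$. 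The key point behind the latter identification is that the $(\D,U_\la)$-bimodule $Y_\la=\D/\langle\ker\la\rangle\D$ of Remark \ref{module Y} corresponds to the $(R,eRe)$-bimodule $Re$: both are cyclic from a distinguished generator, their endomorphism rings $\End_K(Y_\la)\cong U_\la$ and $\End_R(Re)\cong eRe$ match, and the left-module identification follows by inspection of the MvdB construction on cyclic modules.

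Second, both vertical arrows of the square are given by tensor products with bimodules: on the left with the $(U_{\la'},U_\la)$-bimodule ${}_{\la'}T_\la$ by definition, and on the right with the $(e'Re',eRe)$-bimodule $e'Re$. Because MvdB's equivalence of module categories is built from a compatible family of natural isomorphisms of Hom spaces, it lifts to an equivalence between the corresponding bimodule categories. Hence the square commutes once we show that ${}_{\la'}T_\la$ and $e'Re$ correspond under this lifted equivalence. Granted the bimodule identification in the previous paragraph, this is a formal consequence: ${}_{\la'}T_\la = (\D\otimes_U U_\la)^{\la'}$ on the one side, and $e'Re = e'(Re\otimes_{eRe} eRe)$ on the other, so the two results match term by term after the translations of $\D\otimes_U-$ and $(-)^{\la'}$ summarized above.

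The main obstacle is the bimodule identification $Y_\la\leftrightarrow Re$ with its full $(R,eRe)$-structure. The left $R$-action is visible from the cyclic presentation, but the compatibility of the right actions must be checked by unwinding the isomorphism $U_\la\cong\End_K(Y_\la)$ of Remark \ref{module Y} and matching it with the analogous presentation of $eRe$ as endomorphisms of $Re$ in the quiver picture; this is a bookkeeping step that uses \cite[4.4.1]{MvdB} but is otherwise direct. Once that is done, the compatibility of tensor products with the equivalences follows, and chasing the definitions yields the required natural isomorphism of functors.
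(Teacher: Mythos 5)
Your proposal is correct and follows essentially the same route as the paper: both arguments reduce to showing that, under the Musson--Van den Bergh equivalences, the induction functor $\D\otimes_U-$ (equivalently tensoring with ${}_{\la'}T_\la$) corresponds to $e'Re\otimes_{eRe}-$, verified on projective generators and extended by right exactness. The only caution is that the step you call bookkeeping is where the real content lives: the identification works only because the projectives $(\Pk_\a)^\qvb$ for \emph{feasible} $\a$ satisfy $\D\otimes_U(\Pk_\a)^\qvb\cong\Pk_\a$ (possible since $v_\a$ may be chosen in $\DDelta_\a\cap\qvb$), a statement that fails for infeasible $\a$ and is precisely why $e=e_\qvb$ sums only over $\cF_\qvb$; moreover the pro-finiteness of $R$ forces one to work with the truncated projectives inside $\Ulmod_\qvb^\ordk$ before passing to inverse limits.
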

\begin{proof}
First we explain more carefully how the functors $F$ and $F'$ are
constructed using the methods of \cite{MvdB}. 
For each $\a\in\{+,-\}^{\Iv}$, choose an element $v_\a\in\qvb+W_\Z$ satisfying the inequalities
\[h^+_i(v_\a) \ge 0\text{ for all $i \in I_\qvb$ with }\a(i) = + \and h^-_i(v_\a) \le 0 \text{ for all $i\in I_\qvb$ with }\a(i) = -.\]
Note that these are exactly the inequalities that cut $\DDelta_\a$ out of $\V_\R$.
Thus when $\a\in\cF_\qvb$, we can and will assume that $v_\a\in\DDelta_\a\cap\qvb$.  

Recall from Section \ref{sec:weight} that $\fI_{v_\a}\subset\bH$ is the vanishing ideal of the point $v_\a\in\SH$.
Musson and van den Bergh \cite[6.3]{MvdB} give an isomorphism \[\varprojlim_k\, \End_\D\left(\bigoplus_{\a\in\{+,-\}^{\Iv}} \D/\D\fI^{k+1}_{v_\a}\right)^{\!\!\text{op}} \cong \,\,\,\,\wh{Q}_\qvb,\]
and we have an equivalence of categories
$$F_\D:\D\mmod_{\qvb + \SgrHZ} \to \wh{Q}_\qvb\modfin$$
given by
$$F_\D(M)\,\, := \,\,\,\varprojlim_k \Hom_\D\left(\bigoplus_{\a\in\{+,-\}^{\Iv}} \D/\D\fI^{k+1}_{v_\a}, \,\,\, M\right).$$
In particular, we have 
\[F_\D\left(\D/\D\fI^{k+1}_{v_\a}\right) \cong \left(\wh{Q}_\qvb/\langle \theta_i\rangle^{k+1}\right)e_\a.\]

Let $M$ be an object of $\D\mmod_{\qvb + \SgrHZ}$.  For any $m\in M$ and $v\in \qvb+\SgrHZ$, let $m_v$
denote the projection of $m$ onto $M_v$, so that $m = \sum m_v$.  For each $1 \le i \le n$, consider the endomorphism
$\Theta_i \in \End_\D(M)$ given by $\Theta_i\big(\sum m_v\big) := \sum \left(h_i^+ - h_i^+(v)\right)m_v$.
Using \cite[6.3]{MvdB} it is easy to check that $\Theta_i$ corresponds
under the equivalence $F_\D$ to multiplication by the central element $\theta_i \in \wh{Q}_\qvb$.
When $M = \D/\D\fI_{v_\a}^{k+1}$, $\Theta_i$ is given by \emph{right} 
multiplication by $\Theta_i(1) = h_i^+ - h_i^+(v_\a)$.
  
For each $\a\in\{+,-\}^{\Iv}$, 
let $\la_\a$ be the unique character of $Z(U)$ for which $\ker \la_\a$ is contained in $\fI_{v_\a}$
(equivalently, the character corresponding to the $V_0$-orbit $v_\a+V_0\subset\SH$), and put
\[\Pk_{\a} := \D \Big{/} \big{(}\D\fI_{v_\a}^{k+1} + \D\langle \ker \la_\a\rangle \big{)}.\]
This module is the quotient of $\D/\D\fI_{v_\a}^{k+1}$ by
the sum of the images of the operators
\[\sum_{i = 1}^n x_i\Theta_i, \;\, x \in \fk,\]
and since $F_{\D}$ is an equivalence, it follows that
\[F_\D(\Pk_\a) \cong \left( \wh{Q}_\qvb \big{/} \langle \theta_i\rangle^{k+1} + \langle \vart(x) \mid x\in\fk\rangle \right) e_\a.\]
This in turn implies that 
\[\varprojlim \End_\D\Big(\bigoplus_{\a \in \{+,-\}^{\Iv}}\Pk_\a\Big) \cong R,\] and the functor $F$ is given by 
$\varprojlim \Hom_{U_\la}\!\left(\bigoplus_{\a\in \cF_\qvb}(\Pk_\a)^\qvb, -\right)$.

The key property of the modules $\Pk_\a$ that we will need is the fact that
for any $\a \in \cF_\qvb$ we have
\begin{equation}\label{translation isomorphism}
\D \otimes_U (\Pk_\a)^\qvb \cong \D \otimes_U \left(U/U(\fI_{v_\a}^{k+1} + \langle \ker \la_\a\rangle)\right) \cong \Pk_\a,
\end{equation}
since $v_\a \in \DDelta_\a \cap \qvb$.
Note that \eqref{translation isomorphism} does \emph{not} hold for $\a \notin \cF_\qvb$.

Now let $M$ be an object of $\Ulmod_\qvb$.  Then if $\a \in \cF_{\qvb}$, we have
\[e_\a F(M) = \Hom_U\!\big((\Pk_\a)^\qvb, M\big) \cong \Hom_U\big((\Pk_\a)^\qvb, (\D \otimes_U M)^\qvb\big)
\cong \Hom_\D(\Pk_\a, \D \otimes_U M),\]
where the second isomorphism comes from \eqref{translation isomorphism} and adjunction. 
On the other hand, if $\b \in \cF_{\qvb'}$, we get
\[e_\b F'({}_{\la'}T_\la \otimes_{U_\la} M) \cong \Hom_U\big((\Pk_\b)^{\qvb'}, (\D \otimes_U M)^{\qvb'}\big)
\cong \Hom_\D(\Pk_\b, \D \otimes_U M).\]
The required natural transformation $\phi_M\colon e'Re \otimes_{eRe} F(M) \to F'({}_{\la'}T_\la \otimes_{U_\la} M)$
therefore comes from taking inverse limits of the composition
\[\Hom_\D(\Pk_\b, \Pk_\a) \otimes \Hom_\D(\Pk_\a, \D\otimes_U M) \to \Hom_\D(\Pk_\b, \D\otimes_U M).\]
Furthermore, it is clear from \eqref{translation isomorphism} 
that $\phi$ is an isomorphism on 
$(\Pk_\a)^\qvb$, $\a \in \cF_\qvb$.  This object is the projective cover of 
$L_\a^\qvb$ in $\Ulmod_\qvb^\ordk$, the full subcategory of $M \in \Ulmod_\qvb$
for which $\fI_{v}^{k+1} M_v = 0$ for all $v \in \qvb$.  This category has
enough projectives, and any $M \in \Ulmod_\qvb$ lies in $\Ulmod_\qvb^\ordk$ for 
some $k$, so we can find an exact sequence $P_1 \to P_0 \to M \to 0$ where
$\phi_{P_0}$ and $\phi_{P_1}$ are isomorphisms.
Since both the source and target of $\phi$ are right exact functors, $\phi_M$
is an isomorphism for all $M$.
\end{proof}

\section{Hypertoric category \texorpdfstring{$\cO$}{O}}\label{hyper O}
Fix a quantized polarized arrangement $\QPA = (\vb_0, \qvb, \xi)$.  In Section \ref{sec:ring} we explained
how $\vb_0$ determines an algebra $U$, how $\V = \qvb + \C\vb_0\subset\SH$ determines a central character
$\la:Z(U)\to\C$, and how the lattice $\qvb\subset\V$ determines a subcategory $\Ulmod_\qvb$ of the category
of weight modules over $U_\la$.  In this section we use the parameter $\xi\in\vb_0^*$ to restrict our categories
even further, and thus obtain the hypertoric category $\cO$.

\subsection{Definition of the category}\label{o-def}
Recall that $U$ has the 
decomposition $U = \displaystyle\bigoplus_{z \in \Lambda_0} U_z$, where
$U_z = \D_z$ is the $z$-isotypic piece of $\D$.
For any $k\in\Z$, put
$$U^k := \bigoplus_{\xi(z) =k} U_{z}.$$
Then put
$$U^+ := \bigoplus_{k\geq 0} U^k
\and
U^- := \bigoplus_{k\leq 0} U^k.$$
The algebra $U^+$ will play a role for us similar to 
the role played by the enveloping algebra
of a Borel in the definition BGG category $\cO$.
Note that the analogy is not exact; in particular, we
have a surjection
\[U^+ \otimes_{U^0} U^{-} \to U,\]
but it is not an isomorphism.

These subalgebras and subspaces  induce corresponding subalgebras 
and subspaces of the 
central quotient $U_\la$: we
let $U_{\la, z}$, $U_\la^k$, $U_\la^+$, and $U_\la^-$ be
the images of $U_z$, $U^k$, $U^+$, and $U^-$, respectively, under the
quotient map $U \to U_\la$.

\begin{definition}\label{defn of O}
We define {\bf hypertoric category \boldmath$\cO$} to be
the full subcategory of $U\mmod$ consisting of modules that are $U^+$-locally finite and 
semisimple over the center $Z(U)$.
We define $\cO_\la$ to be the full subcategory of $\cO$ consisting of modules on which $U$ acts with
central character $\la$; equivalently, it is the full subcategory of $\Ulmod$ consisting of modules that are $U_\la^+$-locally finite.
We define $\cO(\QPA)$ to be the full subcategory of $\cO_\la$ consisting of modules supported in $\qvb$;
equivalently, it is the full subcategory of $\Ulmod_\qvb$ consisting of modules that are $U_\la^+$-locally finite.
\end{definition}

We have a direct sum decomposition $$\cO = \bigoplus_{\qvb \in \SH/\vb_0} \cO(\vb_0, \qvb, \xi).$$
It follows from Theorem \ref{alg=comb} below that these summands of $\cO$ are blocks, that is, they
are the smallest possible direct summands.
In accordance with the terminology in Lie theory, we will call $$\cO_\la := \bigoplus_{\qvb \in \V/\vb_0} \cO(\vb_0, \qvb, \xi)$$
an {\bf infinitesimal block} of $\cO$.
We will call the block $\cO(\QPA)$ {\bf regular} (respectively {\bf integral}) if $\QPA$
is regular (respectively integral) as defined in Section \ref{sec:qpd}; we call an infinitesimal block
regular if its constituent blocks are all regular.

\begin{remark}\label{inf-block=block}
If $\vb_0$ is unimodular (Remark \ref{rmk:unimodular}) and $\QPA$ is integral, then $I_{\qvb'}=\emptyset$
for all $I_{\qvb'}\subset\V$ different from $\qvb$, and therefore $\cO(\QPA) = \cO_\la$. 
If $\vb_0$ is not unimodular, however, then the regular infinitesimal blocks of $\cO$ are never themselves blocks.
\end{remark}

\begin{remark}\label{trick}
We warn the reader of a subtle but important
difference between hypertoric category $\cO$ and the classical
BGG category $\cO$.  Let $\fg$ be a semisimple Lie algebra
with $\fh\subset\fb\subset\fg$ a Cartan and Borel subalgebra.
BGG category $\cO$ is the full subcategory of finitely generated
$U(\fg)$-modules for which 
$U(\fb)$ acts locally finitely and $U(\fh)$ acts semisimply, while
hypertoric category $\cO$ is the full subcategory of finitely
generated $U$-modules for which $U^+$ acts locally finitely and $Z(U)$ acts semisimply.
The analogy is imprecise because $U(\fh)$ is not the center of $U(\fg)$.

In the case of a regular infinitesimal block, we are rescued by a theorem of Soergel:
a regular infinitesimal block of BGG category $\cO$ is equivalent
to a regular infinitesimal block of the category obtained by requiring the center of $U(\fg)$, rather than the Cartan $U(\fh)$,
to act semisimply \cite{Soe86}.  Thus, it is reasonable to regard regular infinitesimal blocks (or blocks)
of hypertoric category $\cO$ as analogues of regular infinitesimal blocks (or blocks) of BGG category $\cO$.  
Furthermore, Theorems \ref{first-intro-thm} and \ref{second-intro-thm} demonstrate
that regular integral blocks of BGG category $\cO$ and hypertoric category $\cO$ have many properties in common.
\end{remark}

Consider the natural projection $\SgrHZ^*\to\vb_0^*$, and choose any lift $\tilde\xi\in\SgrHZ^*$ of $\xi$.  
Recall from Section \ref{sec:weyl} that we have identified $\SgrH$ with $\t^*$
and $\t$ with the degree 2 part of $H$, thus we may regard $\tilde\xi$ as a linear combination of $\{h_1,\ldots,h_n\}$.
Lift it further to an element $\hat\xi\in F_2\bH$, that is, to a linear combination of $\{h_1^\pm,\ldots,h_n^\pm\}$.  Note that,
by Equation \eqref{moment map}, the subspace $U^k\subset U$ is exactly
the space on which the conjugation operator $\operatorname{ad}(\hat\xi)$ acts with eigenvalue $k$.

\begin{lemma}\label{gamma-finite}
Let $M$ be a finitely generated $U_\la$-module.  Then the following are equivalent:
\begin{enumerate}
\item $M$ is an object of $\cO$.
\item $M$ is generated by a finite-dimensional $U^+$-invariant subspace $S$.
\item $\hat\xi\in\bH\subset U^0$ acts locally finitely on $M$ with finite
dimensional generalized eigenspaces and the eigenvalues that appear are bounded above.
\end{enumerate} 
\end{lemma}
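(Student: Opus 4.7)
The plan is to close the cycle $(1)\Rightarrow(2)\Rightarrow(3)\Rightarrow(1)$. For $(1)\Rightarrow(2)$, pick any finite generating set $m_1,\dots,m_r$ of $M$; by the $U^+$-local-finiteness hypothesis each $U^+m_i$ is finite-dimensional, so $S:=\sum_i U^+m_i$ is a finite-dimensional $U^+$-invariant subspace that generates $M$.

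For $(2)\Rightarrow(3)$, the key reduction is $M=U^-\cdot S$, which follows from the vector-space decomposition $U=U^++U^-$ (with overlap $U^0$) together with $U^+\cdot S\subseteq S$. Since $\hat\xi\in U^0\subseteq U^+$ preserves the finite-dimensional $S$, there is a finite decomposition $S=\bigoplus_j S^{(c_j)}$ into generalized $\hat\xi$-eigenspaces. Equation \eqref{moment map} gives $[\hat\xi,u]=k\cdot u$ for $u\in U^k$, and a short induction yields $(\hat\xi-(c+k))^N(u\cdot s)=u\cdot(\hat\xi-c)^Ns$ for $s\in S^{(c)}$, so $U^k\cdot S^{(c_j)}\subseteq M^{(c_j+k)}$. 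Because $k\leq 0$ on $U^-$, the $\hat\xi$-eigenvalues occurring on $M$ are bounded above by $\max_jc_j$, and every element of $M$ is a finite sum of generalized eigenvectors, giving local finiteness of $\hat\xi$.

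For $(3)\Rightarrow(1)$, semisimplicity of $Z(U)$ on $M$ is automatic since $M$ is a $U_\la$-module, so it remains to verify $U^+$-local finiteness. Fix $m\in M$, let $c_{\max}$ be the (finite) supremum of $\hat\xi$-eigenvalues on $M$, and decompose $m=\sum_im^{(c_i)}$ into its finitely many generalized-eigenspace components. The same commutation identity places $v\cdot m^{(c_i)}\in M^{(c_i+k)}$ for $v\in U^+\cap U^k$, and this can be nonzero only when $c_i+k\leq c_{\max}$. Hence $U^+\cdot m$ is contained in the finite direct sum $\bigoplus_i\bigoplus_{k=0}^{c_{\max}-c_i}M^{(c_i+k)}$, which is finite-dimensional by the eigenspace hypothesis in (3).

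The main obstacle is proving $\dim M^{(c)}<\infty$ in $(2)\Rightarrow(3)$. My plan is twofold. First, since $\bH\subseteq U^+$ preserves $S$, there is a finite decomposition $S=\bigoplus_i S_{v_i}$ into generalized $\bH$-weight spaces, and each $U_z=\bH\cdot\psi_z$ is free of rank one over $\bH$ generated by a monomial $\psi_z\in\D_z$; the commutation identity $\fI_{v+z}\cdot\psi_z=\psi_z\cdot\fI_v$ (iterated from Equation \eqref{moment map}) bounds the $\fI_w$-nilpotency degree of $\psi_zS_{v_i}$ by that of $S_{v_i}$, so $U_z\cdot S_{v_i}=\bH\cdot\psi_zS_{v_i}$, and hence $M_w=\sum_{v_i+z=w}U_z\cdot S_{v_i}$, is finite-dimensional. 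Second, I would bound the number of weights $w\in\operatorname{supp} M$ with $\hat\xi(w)=c$: using Noetherianity of $U_\la$ together with the support structure of the simple weight modules $L_\a^\qvb$ from Proposition \ref{simple modules}, one should be able to show that $\operatorname{supp} M$ lies in $\bigcup_{\a\in\cP_\QPA}\DDelta_\a\cap\qvb$, after which compactness of each slice $\DDelta_\a\cap\xi^{-1}(c)$ for bounded $\a$ leaves only finitely many lattice points of $\qvb$, so that $M^{(c)}$ is a finite direct sum of finite-dimensional weight spaces. This support-restriction step is the subtle heart of the proof.
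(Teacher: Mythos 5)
Your cycle $(1)\Rightarrow(2)\Rightarrow(3)\Rightarrow(1)$ is the same as the paper's, and your $(1)\Rightarrow(2)$, your $(3)\Rightarrow(1)$, and the boundedness-above and local-finiteness half of $(2)\Rightarrow(3)$ (via $M=U^-S$) are all correct; so is your argument that each individual weight space $M_w$ is finite dimensional. The genuine gap is exactly the step you flag as "the subtle heart": finiteness of the set of weights $w\in\supp M$ with $\hat\xi(w)=c$. The inclusion $\supp M\subseteq\bigcup_{\a\in\cP_\QPA}\DDelta_\a\cap\qvb$ that you propose to establish is essentially a restatement of what must be proved (it is how the paper later derives Corollary \ref{simples} \emph{from} this lemma), and the route you sketch does not close. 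Even granting that $M$ has finite length in $\Ulmod_\qvb$ and that $\xi$ is bounded above on the support of each composition factor $L_\b^{\qvb'}$, boundedness above does not put $\b$ in $\cB_\xi$: the definition of boundedness also requires $\xi$ to be \emph{proper} on $\Delta_{0,\b}$, and the lemma makes no regularity assumption on $\xi$, so a chamber can have $\xi$ bounded above yet constant on an unbounded face, in which case the slice $\DDelta_\b\cap\qvb'\cap\{\hat\xi=c\}$ is infinite. Nothing in the decomposition $M=U^-S$ rules out such factors, because $U^-\cap U^k=U^k$ is infinite dimensional: infinitely many weights $z$ with $\xi(z)=k$ could a priori contribute to a single generalized eigenspace.

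The paper closes this with one extra idea you should adopt in place of $M=U^-S$: choose finitely many algebra generators $u_1,\dots,u_\ell$ of $U$ that are $\bT$-weight vectors, let $U'$ be the subalgebra generated by those of strictly negative weight, and verify $U'U^+=U$ by passing to the associated graded (where $\gr U$ is commutative, so the negative-weight generators in any product can be moved to the front). Then $M=U'S$, and since every generator of $U'$ has weight at most $-1$, the piece $U'\cap U^k$ is spanned by the finitely many words of length at most $|k|$ in the generators and is therefore finite dimensional (and zero for $k>0$). Finite dimensionality of $M^{(c)}\subseteq\sum_{c_j+k=c}(U'\cap U^k)S^{(c_j)}$, together with boundedness above, is then immediate, with no analysis of $\supp M$, composition factors, or the chambers $\DDelta_\a$ required.
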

\begin{proof}
\noindent $(1)\Rightarrow(2)$:
Given a finite generating set for $M$, we can apply $U^+$ to obtain a finite-dimensional $U^+$-invariant
generating set.

\noindent $(2)\Rightarrow(3)$:
Suppose that $S$ is a finite-dimensional $U^+$-invariant generating set for $M$.   Choose finitely many  
algebra generators $u_1, \dots, u_\ell$ for $U$ which are $\bT$-weight vectors,
and let $k_i\in\Z$ be the weight of $u_i$.  Let $U'\subset U$ be the subalgebra generated by those $u_i$ for which $k_i<0$.
Then we have $U'U^+ = U$, since this holds after taking the associated graded.  
This in turn implies that $M=U'S$.  Since $U' \cap U^k$ is finite dimensional for all $k$ and is zero for $k > 0$, 
$M$ has finite dimensional $\hat{\xi}$-weight spaces, all of which have no higher weight than the highest occurring in $S$.

\noindent $(3)\Rightarrow(1)$: If the generalized $\hat \xi$ eigenspaces of $M$ are both finite dimensional and bounded above, then for all $m\in M$, $U^+m$ is a subspace of finitely many generalized eigenspaces and thus finite dimensional.
\end{proof}

Recall from Section \ref{sec:bf} that $\cF_\qvb$ is defined to be the set of all $\a\in\{+,-\}^{\Iv}$ such that $\DDelta_\a\cap\qvb$
is nonempty, and $\cP_{\qvb,\xi}\subset\cF_\qvb$ is the subset for which $\xi$ is proper and bounded above on $\DDelta_\a\cap\qvb$.
Thus Lemma \ref{gamma-finite} has the following immediate corollary.

\begin{corollary}\label{simples}
Suppose that $\a \in \cF_\vb$, so $L_\a^\qvb$ is
a simple object of $\Ulmod_\qvb$.  Then $L_\a^\qvb \in \cO(\QPA)$
if and only if $\a\in\cP_{\qvb,\xi}$.
\end{corollary}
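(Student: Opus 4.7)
The strategy is to apply Lemma~\ref{gamma-finite}(3) to $M = L_\a^\qvb$ and translate each of its clauses into a combinatorial statement about $\DDelta_\a$ and the covector $\xi$. First, since $L_\a^\qvb$ is by construction a weight module, $\bH$ acts locally finitely on it, and in particular so does $\hat\xi \in F_2\bH$. Thus Lemma~\ref{gamma-finite}(3) reduces to checking that (a) the generalized $\hat\xi$-eigenspaces of $L_\a^\qvb$ are finite dimensional, and (b) their eigenvalues are bounded above.

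Second, I would describe these eigenspaces concretely. For any weight module $M$, the subring $\bH \subset U$ acts on the weight space $M_v$ via the character $v$ modulo nilpotents, so $\hat\xi$ acts on $M_v$ as $\hat\xi(v)$ modulo nilpotents. Hence the generalized $c$-eigenspace of $\hat\xi$ on $M$ is
$$ \bigoplus_{v \in \supp M,\ \hat\xi(v) = c} M_v. $$
Specializing to $M = L_\a^\qvb$, Proposition~\ref{simple modules} together with \eqref{simple-support} gives $\supp L_\a^\qvb = \DDelta_\a \cap \qvb$. Each individual weight space of $L_\a^\qvb$ is finite dimensional: under the equivalence of Theorem~\ref{first-equiv} the simple $L_\a^\qvb$ corresponds to a simple (one-dimensional) module over $eRe$ where $e = e_\a$, and the subspaces picked out by the idempotents $e_\b$ are finite dimensional on any finite-dimensional $eRe$-module. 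Granted this, conditions (a) and (b) become: every fiber of $\hat\xi$ on $\DDelta_\a \cap \qvb$ is finite, and $\hat\xi$ is bounded above on $\DDelta_\a \cap \qvb$.

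Third, I would translate $\hat\xi$ into $\xi$. As an element of $F_2\bH$, $\hat\xi$ is a polynomial of degree $\le 1$ in the generators $h_i^+$, hence an affine function on $\SH$. Its linear part is the chosen lift $\tilde\xi \in \SgrHZ^*$, which by construction restricts to $\xi$ on $\vb_0$. Therefore $\hat\xi|_{\V_\R}$ is an affine function on $\V_\R$ whose linear part along $V_{0,\R}$ is exactly $\xi$. The two conditions above are thus equivalent to $\xi$ being proper and bounded above on $\DDelta_\a \cap \qvb$, which, as noted in Section~\ref{sec:bf} for $\a \in \cF_\qvb$, is precisely the definition of $\a \in \cB_{\qvb,\xi}$. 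Combined with the hypothesis $\a \in \cF_\qvb$, this yields $\a \in \cP_{\qvb,\xi} = \cF_\qvb \cap \cB_{\qvb,\xi}$, and conversely every step in the argument is reversible.

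The only input coming from outside the polyhedral combinatorics is the finite-dimensionality of each weight space of $L_\a^\qvb$, so that is the step one has to handle with care; everything else is a direct translation through Lemma~\ref{gamma-finite}. Once that is granted via Theorem~\ref{first-equiv}, the corollary is immediate.
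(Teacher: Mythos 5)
Your proposal is correct and follows the same route as the paper, which simply records this corollary as an immediate consequence of Lemma~\ref{gamma-finite}: the generalized $\hat\xi$-eigenspaces of $L_\a^\qvb$ are the sums of weight spaces over fibers of $\hat\xi$ on $\supp L_\a^\qvb = \DDelta_\a\cap\qvb$, and finiteness of fibers plus boundedness above of the values is exactly the condition $\a\in\cB_{\qvb,\xi}$. The one step you flag — finite-dimensionality of the individual weight spaces — is most directly seen from the presentation of $L_\a$ as a quotient of $\D$, whose weight spaces are visibly at most one-dimensional, rather than through the quiver equivalence.
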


\begin{remark}\label{number of simples}
Corollary \ref{simples} tells us that the block $\cO(\QPA)$ has $|\cP_{\qvb,\xi}|$ isomorphism classes of simple objects.
This number is always less than or equal to the number of bases for the matroid associated to $\cH_0$,
or equivalently the number of vertices of $\cH_\eta$ for a regular value of $\eta$ (Remark \ref{bijection}).
Equality is achieved if and only if $\QPA$ is both regular and integral.  If $\QPA$ fails to be integral, then there
will be too few hyperplanes in $\qH$, and therefore too few chambers $\DDelta_\a$.  If $\QPA$ is integral but fails
to be quasi-regular, then there will again be too few chambers.  If $\QPA$ is integral and quasi-regular
but not regular, then we will have the right number of chambers, but some of them will not contain any points
of the lattice $\qvb$.
\end{remark}

\subsection{Quiver description of \texorpdfstring{$\cO(\QPA)$}{O(X)}}\label{sec:quiver-description}
Theorem \ref{first-equiv} and Corollary \ref{simples} combine to give us the following result.
Let \[e_\xi\,\,\,\, := \sum_{\alpha \notin \cB_{\qvb,\xi}}\!\!\! e_\a\,\, .\]

\begin{theorem}\label{alg=comb}
The category $\cO(\QPA)$ is equivalent to the category of finite dimensional 
modules over
\[A(\QPA) := \left(e^{}_\qvb\, \wh{Q}_\qvb\, e^{}_\qvb\right)\Big{/}\big\langle e_\xi e_\qvb\big\rangle+
            \big\langle \vart(x)e^{}_\qvb\mid x\in\fk\cap \t^\Iv\big\rangle.\]
\end{theorem}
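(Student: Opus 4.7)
The plan is to deduce Theorem~\ref{alg=comb} directly from Theorem~\ref{first-equiv} by recognizing $\cO(\QPA)$ as the Serre subcategory of $\Ulmod_\qvb$ obtained by killing the simples $L_\a^\qvb$ for $\a\in\cF_\qvb\setminus\cB_{\qvb,\xi}$, and then simplifying the resulting presentation.

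Theorem~\ref{first-equiv} already provides an equivalence $\Ulmod_\qvb\simeq R\modfin$ with
\[R:=e_\qvb\,\wh Q_\qvb\,e_\qvb\,\big/\,\langle\vart(x)e_\qvb\mid x\in\fk\rangle,\]
under which the simple $L_\a^\qvb$ corresponds to the simple $R$-module associated to the primitive idempotent (the image of) $e_\a$, for every $\a\in\cF_\qvb$. The first step is to identify $\cO(\QPA)$ as a Serre subcategory of $\Ulmod_\qvb$ whose simple objects are exactly $\{L_\a^\qvb\mid\a\in\cP_{\qvb,\xi}\}$. The labeling of the simples is Corollary~\ref{simples}. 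Closure under subobjects, quotients, and extensions is best seen via the equivalent formulation in Lemma~\ref{gamma-finite}(3): the condition that $\hat\xi$ act locally finitely with finite-dimensional generalized eigenspaces bounded above is visibly preserved by these three operations.

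A standard Morita-theoretic lemma then applies: for a basic (pro-)finite algebra $R$ with primitive idempotents $\{e_\a\mid \a\in I\}$ and a subset $T\subset I$, the Serre subcategory of $R\modfin$ whose composition factors are indexed by $T$ coincides with the category of $R$-modules annihilated by $e:=\sum_{\a\notin T}e_\a$, which in turn is naturally equivalent to $(R/R\bar e R)\modfin$, where $\bar e$ is the image of $e$. Applying this with $T=\cP_{\qvb,\xi}$, and observing that $\sum_{\a\in\cF_\qvb\setminus\cB_{\qvb,\xi}}e_\a = e_\xi e_\qvb$, one obtains
\[
\cO(\QPA)\;\simeq\;\Bigl(e_\qvb\wh Q_\qvb e_\qvb\,\Big/\,\langle\vart(x)e_\qvb\mid x\in\fk\rangle+\langle e_\xi e_\qvb\rangle\Bigr)\modfin.
\]

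The remaining, and most delicate, point is to match this quotient with $A(\QPA)$, whose defining relations range only over $x\in\fk\cap\t^\Iv$ rather than over all of $\fk$. I expect this to be the main obstacle, since the two presentations differ precisely on the ``external'' central generators $\theta_i$ for $i\notin\Iv$. Using the factorization $\wh Q_\qvb\cong\wh Q_{|\Iv|}\,\hat\otimes\,\C[[\theta_i\mid i\notin\Iv]]$, these external $\theta_i$ sit in the augmentation completion and therefore act nilpotently on any finite-dimensional module; the extra $\fk$-relations coming from components outside $\t^\Iv$ thus impose no essential constraint on the modfin category beyond what is already forced by this nilpotence. The two algebras are consequently Morita equivalent on finite-dimensional modules, giving the required equivalence $\cO(\QPA)\simeq A(\QPA)\modfin$. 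Carefully executing this last Morita reduction is where I anticipate the bulk of the technical work.
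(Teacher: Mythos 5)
Your first two steps are correct, and they are exactly the content behind the paper's one-line proof (which simply combines Theorem \ref{first-equiv} with Corollary \ref{simples}): within $\Ulmod_\qvb$ the condition of Lemma \ref{gamma-finite}(3) concerns only the support of a weight module, so $\cO(\QPA)$ is indeed the Serre subcategory whose simples are $\{L_\a^\qvb\mid\a\in\cP_{\qvb,\xi}\}$, and since $\sum_{\a\in\cF_\qvb\smallsetminus\cB_{\qvb,\xi}}e_\a=e_\xi e_\qvb$, the standard idempotent-quotient lemma gives an equivalence of $\cO(\QPA)$ with finite dimensional modules over $e_\qvb\wh{Q}_\qvb e_\qvb\big/\big(\langle e_\xi e_\qvb\rangle+\langle\vart(x)e_\qvb\mid x\in\fk\rangle\big)$.

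The third step, however, does not work, and the principle it invokes is false: requiring a central, topologically nilpotent element to act by \emph{zero} is a genuine constraint on finite dimensional modules ($\C[[\theta]]\modfin$ is not equivalent to $\C\modfin$), so nilpotence does not let you discard the relations $\vart(x)e_\qvb=0$ for $x\in\fk$ outside $\fk\cap\t^\Iv$. In fact the two quotients genuinely differ whenever $\QPA$ is non-integral and $\fk\not\subset\t^\Iv$. For example, take $n=2$, $\vb_0=\Z(1,1)$ (so $\fk=\C(h_1-h_2)$), and $\qvb$ with $\Iv=\{1\}$ and $\xi$ positive on $(1,1)$: then $\fk\cap\t^\Iv=0$, both sign vectors are feasible, $e_\qvb=1$ and $e_\xi e_\qvb=e_+$, so the algebra displayed in the theorem is $\wh{Q}_\qvb/\langle e_+\rangle\cong\C[[\theta_2]]$, whereas the quotient produced by your steps 1--2 is $\C$ (because $\theta_1\in\langle e_+\rangle$ and the relation $\vart(h_2-h_1)=\theta_2-\theta_1=0$ then kills $\theta_2$); these have inequivalent categories of finite dimensional modules. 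The resolution is that the relation set in the definition of $A(\QPA)$ should be read as ranging over all of $\fk$: this is the form appearing in Theorem \ref{first-equiv}, it is how the ``polynomial part'' of $A(\QPA)$ is written in the proof of Theorem \ref{blpw}, and it is what the proof of Theorem \ref{reduced-thm} requires in order to conclude $A(\QPA)\cong A(\QPA')$. With that reading your steps 1--2 already complete the proof, and the ``Morita reduction'' you anticipated is neither needed nor available.
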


In \cite[3.1]{GDKD} we constructed an algebra $A(\PA)$ from a
regular polarized arrangement $\PA$.

\begin{theorem} \label{blpw}
If $\QPA$ is integral and regular and linked to $\PA$, then $A(\QPA) \cong A(\PA)$.
\end{theorem}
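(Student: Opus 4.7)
The plan is to verify the isomorphism by writing $A(\QPA)$ in a form that matches the presentation of $A(\PA)$ from \cite[3.1]{GDKD}. Because $\QPA$ is integral, we have $I_\qvb = \{1,\dots,n\}$, so $\wh Q_\qvb = \wh Q_n$, $\t^\Iv = \t$, and $\fk \cap \t^\Iv = \fk$. Thus
\[
A(\QPA) \;=\; \bigl(e_\qvb\, \wh Q_n\, e_\qvb\bigr)\Big/\bigl(\langle e_\xi e_\qvb\rangle + \langle \vart(x) e_\qvb \mid x\in\fk\rangle\bigr).
\]
Furthermore, because $\QPA$ is integral the projection $\pi\colon\{+,-\}^n\to\{+,-\}^\Iv$ is the identity, so linkage gives $\cF_\qvb = \cF_\eta$, and $\cB_{\qvb,\xi} = \cB_\xi$ holds by definition. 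Hence $\cP_{\qvb,\xi} = \cP_\PA$.

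Next I would match up the building blocks. After passing to the quotient, the primitive idempotents $e_\a$ surviving in $A(\QPA)$ are those with $\a\in \cF_\qvb \setminus(\cF_\qvb\setminus\cB_{\qvb,\xi}) = \cP_\PA$, matching the idempotents of $A(\PA)$. For $\a,\b\in\cP_\PA$, the subspace $e_\a A(\QPA) e_\b$ is the image of paths in $\wh Q_n$ from $\b$ to $\a$; the relation $e_\xi e_\qvb = 0$ forces any path that passes through a sign vector in $\cF_\qvb\setminus\cB_\xi$ to become zero, so effective paths remain in $\cP_\PA$. The central ideal $\langle\vart(\fk)e_\qvb\rangle$ cuts the polynomial algebra $Z(\wh Q_n) = \C[[\theta_1,\dots,\theta_n]]$ down to the completed symmetric algebra on $V_0^* = (\vb_0\otimes\C)^*$, since $\fk = V_0^\perp$; this is exactly the polynomial data attached to each idempotent in the construction of $A(\PA)$.

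Finally, one compares these surviving paths-with-polynomial-coefficients to the presentation of $A(\PA)$ in \cite[3.1]{GDKD}: the idempotents are in bijection via $\cP_{\qvb,\xi}=\cP_\PA$, the length-one generators correspond to adjacent sign-flips between bounded feasible sign vectors, the length-two ``loop'' generators $\theta_i e_\a$ correspond to the polynomial generators at $\a$, and the relations (equality of all paths of a given type between two signs, and the linear relations in $\fk$) match those imposed in \cite{GDKD}. The main obstacle is bookkeeping: the algebra $A(\PA)$ in \cite{GDKD} is presented intrinsically from the polyhedral data of $\PA$ (``flips,'' ``straightening,'' and polynomial relations in $\Sym V_0^*$), not as a quotient of a completed path algebra, so one must produce an explicit dictionary between the two sets of generators and check relation-by-relation that killing $\langle e_\xi e_\qvb\rangle$ together with $\langle\vart(\fk)e_\qvb\rangle$ reproduces precisely the relations in $A(\PA)$. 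The regularity hypotheses on both $\PA$ and $\QPA$, together with Lemma \ref{chambers} and the linkage equality $\cF_\qvb=\cF_\eta$, ensure that no extra paths or polynomial relations are missed on either side.
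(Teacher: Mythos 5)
Your approach is essentially the paper's: use linkage (which, in the integral case where $\pi$ is the identity, gives $\cF_\qvb=\cF_\eta$ and hence $\cP_{\qvb,\xi}=\cP_{\eta,\xi}$) to match the presentation of $A(\QPA)$ with that of $A(\PA)$ from \cite[3.1]{GDKD}. The identification of idempotents, arrows, and the reduction of $Z(\wh{Q}_n)$ modulo $\vart(\fk)$ is the "straightforward check" the paper alludes to, and your bookkeeping outline is fine as far as it goes.

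However, there is one genuine gap: you never confront the fact that $A(\QPA)$ is by definition a quotient of the \emph{completed} path algebra $\wh{Q}_n$, whereas $A(\PA)$ is built from uncompleted polynomial data. A dictionary between generators and relations only identifies $A(\PA)$ with the quotient of the \emph{uncompleted} algebra $e_\qvb Q_n e_\qvb$ by the same ideal; a priori the corresponding quotient of the completion could differ (e.g.\ contain formal power series in the $\theta_i$ not accounted for by polynomial relations). The paper closes exactly this gap by invoking \cite[4.14]{GDKD}, which shows the uncompleted quotient is finite dimensional and hence canonically isomorphic to its completion. Your proof needs this (or an equivalent) finiteness argument; without it, the final step "$A(\QPA)\cong A(\PA)$" does not follow from the relation-by-relation comparison alone.
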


\begin{proof}
The linkage of of $\PA$ and $\QPA$ implies that $\cP_{\qvb,\xi} = \cP_{\eta,\xi}$,
and knowing this, it is straightforward to check that the algebra
defined in \cite[3.1]{GDKD} is equal to the ``polynomial part''
of $A(\QPA)$, namely the uncompleted algebra
$$\left(e^{}_\qvb\, Q_{\qvb}\, e^{}_\qvb\right)\Big{/}\big\langle e_\xi e_\qvb\big\rangle + \big\langle \vart(x)e^{}_\qvb\mid x\in\fk\big\rangle.$$
But \cite[4.14]{GDKD} implies that this algebra is finite dimensional,
so it is isomorphic to its completion.
\end{proof}

This result implies that if $\QPA$ is
a regular integral quantized polarized arrangement
then, up to equivalence,
the category $\cO(\QPA)$ only depends on
the equivalence class of $\QPA$ (Definition \ref{def:equivalence}).  More precisely, if $\QPA$ and $\QPA'$ are regular integral polarized arrangements in the same equivalence class, then
Theorem \ref{Translation and quivers} implies that 
 tensoring with a bimodule ${}_{\la'}T_\la$ gives an equivalence of categories from
 $\cO(\QPA)$ to $\cO(\QPA')$.  

When the quantized polarized arrangement $\QPA$ is not integral, there are two
possibilities.  First, the hyperplane arrangement $\qH$ associated to $\QPA$ could be inessential (Section \ref{sec:qpd}).  In this case the arrangement $\qH$ has no vertices; this happens, for example, when $\Iv=\emptyset$.  
For any $\a\in\cP_{\qvb,\xi}$ the polyhedron $\DDelta_\a$ has a vertex of $\qH$ as a $\xi$-maximal point.
Thus, when $\qH$ is inessential, $\cP_{\qvb,\xi}$ must be empty, 
so there are no nonzero objects in $\cO(\QPA)$.

Suppose on the other hand that $\qH$ is essential.
Then we can define an integral quantized polarized arrangement 
$\QPA' = (\vb_0',\qvb, \xi')$ with $n' := |\Iv|$ hyperplanes 
along with an isomorphism $\V_\R\cong\V'_\R$ that takes $\qH$ to $\qH'$.
To define $\QPA'$, let
$\SgrH'$ and $\SH'$ be the vector space and affine space defined in 
Section \ref{sec:polarr}, but with $n$ replaced by $n'$.
More precisely, they are the spectra of the subrings of $H$ and $\bH$
generated by $h_i^{\pm}$ and $h_i$ for all $i \in I_\qvb$.
Then let $\vb_0'$ and $\qvb'$ be the images of $\vb_0$ and $\qvb$ under the natural
projections $\SgrH\to\SgrH'$ and $\SH\to\SH'$.
The condition that $\qH$ has a vertex implies that the projections
induce isomorphisms $\vb_0' \cong \vb_0$ and $\qvb'\cong \qvb$.
Letting $\xi'$ be the composition of $\xi$ with the
isomorphism $\vb_0' \cong \vb_0$, we obtain equalities
$$\cF_{\qvb'} = \cF_\qvb, \qquad\cB_{\qvb',\xi'}=\cB_{\qvb,\xi},\and\cP_{\qvb',\xi'}=\cP_{\qvb,\xi}$$
of subsets of $\{\plusminus\}^\Iv$.
The following result now allows us to reduce the study of arbitrary blocks of hypertoric category
$\cO$ to the integral case.

\begin{theorem}\label{reduced-thm}
If $\qH$ is essential, then $\cO(\QPA)$ is equivalent to $\cO(\QPA')$ and $A(\QPA)\cong A(\QPA')$.  
\end{theorem}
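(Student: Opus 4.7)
The plan is to reduce both assertions to the algebra isomorphism $A(\QPA)\cong A(\QPA')$, from which the categorical equivalence follows directly via Theorem \ref{alg=comb}.

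First I would unpack the essentialness hypothesis in linear algebra. The arrangement $\qH$ has a vertex precisely when the restrictions $\{h_i|_{V_0}\mid i\in\Iv\}$ span $V_0^*$, equivalently $V_0\cap(\t^{\Iv})^\perp=0$, equivalently $\fk+\t^{\Iv}=\t$. This guarantees that the projections $W\to W'$ and $\SH\to\SH'$ induce isomorphisms $V_0\cong V_0'$ and $\qvb\cong\qvb'$, along with the identification $\fk\cap\t^{\Iv}=\fk'$. Combined with the equalities $\cF_{\qvb'}=\cF_\qvb$, $\cB_{\qvb',\xi'}=\cB_{\qvb,\xi}$, and $\cP_{\qvb',\xi'}=\cP_{\qvb,\xi}$ recorded just before the theorem, every combinatorial datum indexing the two algebras coincides.

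Next I would apply the tensor decomposition $\wh Q_\qvb\cong\wh Q_{\qvb'}\,\widehat{\otimes}\,\C[[\theta_i\mid i\notin\Iv]]$ from Section \ref{sec:quiver}, in which the second factor is central and the primitive idempotents $e_\a$, $e_\qvb$, and $e_\xi$ all lie in the first tensor factor. Surjectivity of $\fk\twoheadrightarrow\t/\t^{\Iv}$ lets us choose, for each $i\notin\Iv$, an element $x_i\in\fk$ satisfying $\vart(x_i)\equiv\theta_i\pmod{\vart(\t^{\Iv})}$. The relation $\vart(x_i)\,e_\qvb=0$ then rewrites every extra central generator $\theta_i\,e_\qvb$ in terms of the $\theta_j\,e_\qvb$ with $j\in\Iv$, which live in $\wh Q_{\qvb'}$; after this collapse, matching $\langle e_\xi e_\qvb\rangle$ with $\langle e_{\xi'}e_{\qvb'}\rangle$ and the $\fk\cap\t^{\Iv}$ relations with the $\fk'$ relations produces the algebra isomorphism $A(\QPA)\cong A(\QPA')$.

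The main obstacle is to justify the use of $\vart(x_i)\,e_\qvb=0$ for $x_i\in\fk\setminus\t^{\Iv}$, since such relations are not explicitly imposed in the definition of $A(\QPA)$. This is resolved by Remark \ref{grul}: for any $x\in\fk$, $\mu_\la(x)\in Z(U)$ acts by zero on every $U_\la$-module, and since $\theta_j$ acts as the nilpotent part $h_j^+-h_j^+(v)$ on the $v$-weight space, the identity $\sum c_j h_j^+=-\mathrm{const}$ forces $\sum c_j\theta_j=0$ on every weight space. Hence the extra $\fk$-relations hold automatically on any object of $\cO(\QPA)$, so the collapse is valid at the level of the effective algebra governing $\cO(\QPA)$, and Theorem \ref{alg=comb} then delivers $\cO(\QPA)\cong\cO(\QPA')$.
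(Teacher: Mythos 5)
Your argument is correct and follows essentially the same route as the paper: both proofs rest on the tensor decomposition $\wh{Q}_\qvb \cong \wh{Q}_{|I_\qvb|}\,\wh\otimes_\C\,\C[[\theta_i \mid i\notin \Iv]]$ together with the observation that essentialness of $\qH$ makes $\fk \to \t/\t^{\Iv}$ surjective, so that the generators $\theta_i$ with $i\notin\Iv$ are eliminated by the $\vart(\fk)$-relations and the residual relations are exactly $\vart'(\fk\cap\t^{\Iv})$, which under $\t^{\Iv}\cong(\SgrH')^*$ become the $\fk'$-relations defining $A(\QPA')$. Your extra paragraph justifying that $\vart(x)e_\qvb$ vanishes for every $x\in\fk$ (not only $x\in\fk\cap\t^{\Iv}$) on the algebra governing $\cO(\QPA)$ addresses a point the paper's proof passes over silently; it is already built into Theorem \ref{first-equiv}, which imposes the relations $\vart(x)e_\qvb$ for all $x\in\fk$, and your weight-space argument via the central character is a correct rederivation of that fact.
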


\begin{proof} As noted in Section \ref{sec:quiver}, there is an isomorphism
$\wh{Q}_\qvb \cong \wh{Q}_{|I_\qvb|}\,\wh\otimes_\C\, \C[[\theta_i \mid i\notin I_\qvb]]$, and
the map $\vart$ gives an isomorphism from $\t \cong \C^n$ to the degree two part
of $Z(\wh{Q}_\qvb)$.  Let $\t^{I_\qvb} \subset \t$ be the space of vectors whose $i^\mathrm{th}$
coordinates vanish for all $i\notin I_\qvb$, so $\vart(\t^{I_\qvb}) = Z(\wh{Q}_{|I_\qvb|})_2 \otimes \C$.
Then the assumption that $\qH$ is essential implies that the composition
\[\fk \hookrightarrow \t \to \t/\t^{I_\qvb}\]
is a surjection.  Thus \[\wh{Q}_\qvb/\langle \vart(\fk)\rangle \cong 
\wh{Q}_{|I_\qvb|}/\langle \vart'(\fk \cap \t^{I_\qvb})\rangle,\]
where $\vart'\colon \t^{I_\qvb} \to Z(\wh{Q}_{|I_\qvb|})$ is the induced map.
The isomorphism $A(\QPA)\cong A(\QPA')$ follows, since $\fk\cap \t^{I_\qvb}$ is the annihilator of $\vb_0'$
under the natural isomorphism $\t^{I_\qvb} \cong (\SgrH')^*$.  The equivalence of $\cO(\QPA)$ and $\cO(\QPA')$
now follows from Theorem \ref{alg=comb}.
\end{proof}

As a corollary we obtain the first two parts of Theorem \ref{second-intro-thm}, along with the beginning of the seventh part.

\begin{corollary}\label{qhk}
If $\QPA$ is regular, then the category $\cO(\QPA)$ is highest weight and Koszul.
If $\QPA$ and $\QPA^!$ are regular, integral, and Gale dual to each other, then the rings $A(\QPA)$ and $A(\QPA^!)$
are Koszul dual.
\end{corollary}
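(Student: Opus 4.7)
The plan is to reduce both statements to the corresponding results about the combinatorially defined algebras $A(\PA)$ from our earlier paper \cite{GDKD}, using the equivalences and identifications already established in this section.

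First I would handle the highest weight and Koszul properties. Since $\QPA$ is regular, $\qvb$ is in particular quasi-regular, so the arrangement $\qH$ is essential. Theorem \ref{reduced-thm} then provides an equivalence $\cO(\QPA) \simeq \cO(\QPA')$ and an isomorphism $A(\QPA) \cong A(\QPA')$, where $\QPA'$ is an integral quantized polarized arrangement which is still regular (since regularity is phrased in terms of the hyperplane combinatorics of $\qH$, which is preserved under the reduction). So we may assume $\QPA$ is integral. By the discussion at the end of Section \ref{sec:linked}, any regular integral $\QPA$ is linked to some regular polarized arrangement $\PA$, and Theorem \ref{blpw} then gives $A(\QPA) \cong A(\PA)$. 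Combined with Theorem \ref{alg=comb}, this yields an equivalence of $\cO(\QPA)$ with the category of finite-dimensional $A(\PA)$-modules. Since $A(\PA)$ was shown to be quasi-hereditary and Koszul in \cite{GDKD}, both properties transfer to $\cO(\QPA)$.

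For the second statement, the hypotheses give us regular integral quantized polarized arrangements $\QPA$ and $\QPA^!$ that are, by the definition of Gale duality for quantized arrangements at the end of Section \ref{sec:gd}, linked to a Gale dual pair $\PA$ and $\PA^!$ of regular polarized arrangements. Applying Theorem \ref{blpw} on both sides produces isomorphisms $A(\QPA) \cong A(\PA)$ and $A(\QPA^!) \cong A(\PA^!)$. The Koszul duality $A(\PA)^! \cong A(\PA^!)$ was established in \cite{GDKD}, so we may transport it across these isomorphisms to conclude that $A(\QPA)$ and $A(\QPA^!)$ are Koszul dual.

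The main obstacle is almost invisible at this stage: all the substantial work was either done in \cite{GDKD} (where the quasi-heredity, Koszulity and Koszul self-duality of the combinatorial algebras $A(\PA)$ were established) or earlier in this section (where the identifications $A(\QPA) \cong A(\QPA') \cong A(\PA)$ were proved via Theorems \ref{reduced-thm} and \ref{blpw}, reducing the representation theory of the hypertoric enveloping algebra to a finite-dimensional quiver algebra). What remains here is essentially bookkeeping, and the only point that requires any care is confirming that regularity of $\QPA$ implies essentialness of $\qH$ so that the reduction in Theorem \ref{reduced-thm} is available; this is built directly into Definition \ref{def:quasi-reg}.
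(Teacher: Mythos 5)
Your proof is correct and follows exactly the route the paper takes: reduce to the integral case via Theorem \ref{reduced-thm} (using that regularity forces $\qH$ to be essential), identify $A(\QPA)$ with the combinatorial algebra $A(\PA)$ of a linked polarized arrangement via Theorem \ref{blpw}, and then quote the quasi-heredity, Koszulity, and Koszul duality results of \cite{GDKD}. The paper's own proof is a one-line citation of precisely these ingredients, so your write-up is just a fuller version of the same argument.
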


\begin{proof}
These facts follow from Theorems \ref{blpw} and \ref{reduced-thm} and \cite[3.11, 5.23, \& 5.24]{GDKD}.
\end{proof}

\begin{example}\label{big example}
We illustrate the preceding results with a simple example.  Let $K \subset T$ be
the 1-dimensional torus acting diagonally on $\C^n$, so the corresponding $(n-1)$-dimensional lattice
$\vb_0\subset\SgrHZ$ is the vanishing locus of $h_1+\ldots+h_n$.
For $c\in \Z$, let $\qvb_c := \{v \in \SHZ \mid \sum_{i=1}^n h^+_i(v) = c\}$;
it is regular if and only if $c \ge 0$ or $c \le -n$.
The quantized polarized arrangement $\qH$ consists of $n$ pairs of parallel hyperplanes
in a vector space of dimension $n-1$, with each pair in general position with respect to each other pair.
Take any covector $\xi \in \vb_0^*$ which is
positive on the weight of $x_i\partial_j$ if and only if $i < j$. 
These weights are exactly the directions of the $1$-dimensional flats, so $\xi$ is
regular.  Let $\QPA_c = (\vb_0, \qvb_c, \xi)$.
An example with $n=3$ is illustrated in Figure \ref{quant-fig}.  We warn the reader that
even when $\qvb_c$ is regular it is possible
to obtain a picture in which three hyperplanes pass through a single point (see Definition \ref{def:quasi-reg}).
In fact, when $c=0$, the $n$ hyperplanes $\{H_1^+,\ldots,H_n^+\}$ all pass through a single point.

Let us examine the case $c = 0$ in more detail.  We have
\[\cP_{\qvb_0,\xi} = \{(+,+,\dots, +), (-,+, \dots, +), (-, -, +, \dots, +), \dots,  (-, -, \dots, -, +)\}.\]
The algebra $A(\QPA_0)$ is the quotient of the path algebra of the quiver 
\[\xy
(-35,0)*++{\bullet}="1"; (-15,0)*++{\bullet}="2";(5,0)*++{\;}="3l";(10,0)*++{...}="3";(15,0)*++{\;}="3r";(35, 0)*++{\bullet}="4";
{\ar@/^/^{a_1} "1";"2"}; {\ar@/^/^{b_1} "2";"1"};
{\ar@/^/^{a_2} "2";"3l"}; {\ar@/^/^{b_2} "3l";"2"};
{\ar@/^/^{a_{n-1}} "3r";"4"}; {\ar@/^/^{b_{n-1}} "4";"3r"};
\endxy\]
by the relations $b_1a_1 = 0$, and $a_{i+1}a_i = 0$, $b_ib_{i+1} = 0$ and $a_ib_i = b_{i+1}a_{i+1}$ for all $1 \le i \le n-2$.
The corresponding block $\cO(\QPA_0)$ of our hypertoric category $\cO$ is equivalent to a parabolic block of
classical BGG category $\cO$ for $\fg := \mathfrak{gl}_n$ in the following way.  
There is a surjective homomorphism $U(\fg) \to U$  
which takes the elementary matrix $E_{ij}$ to $x_i\partial_j$ and sends $U(\fb)$ to $U^+$.
Pulling back by this homomorphism is a full and faithful
functor from hypertoric category $\cO$ into the category $\cO'(\fg)$ of
$U(\fg)$-modules on which $U(\fb)$ acts locally finitely and
$Z(U(\fg))$ acts semisimply.  The image of this functor is the subcategory generated by the simple modules with highest weights
$w \cdot 0 = w(\rho) - \rho$, where $\rho$ is half the sum of the positive roots of $\fg$ and 
$w$ runs over all smallest representatives in $S_n$ for the left cosets $S_n/(1\times S_{n-1})$.

The equivalence of \cite{Soe86} gives an equivalence between this category and the subcategory
of BGG category $\cO(\fg)$ generated by simple modules with highest weights $w^{-1}\cdot 0$, where
$w$ runs over the same set; this subcategory is a block of the parabolic category $\cO^{\mathfrak{p}}(\fg)$ consisting of $\mathfrak{p}$-locally finite modules in $\cO(\fg)$,
where $\mathfrak{p} \subset \fg$ is the set of block 
matrices preserving the first basis vector of $\C^n$.
\end{example}

\begin{example}\label{next example}
We now give an example that shows how Corollary \ref{qhk} can fail when $\QPA$ is not regular.
Let $K\subset T$ be the $(n-1)$-dimensional subtorus that acts on $\C^n$ with determinant 1,
so that $\vb_0 \subset\SgrHZ$ is the 1-dimensional lattice cut out by the equations $h_i=h_j$ for all $i,j\leq n$.
For appropriate choices of regular $\qvb$ and $\xi$, the resulting 
quantized polarized arrangement $\QPA = (\vb_0,\qvb, \xi)$ will be Gale dual to the one from 
example \ref{big example}.
On the other hand, if either $\qvb$ or $\xi$ is not regular, then $\cO(\QPA)$ can fail to be highest weight and Koszul.

Consider first the example where $\qvb = \{v \in \SHZ \mid h^+_i(v) = h^+_j(v)\;\text{for all $i, j\leq n$}\}$
(which is not regular) and $\xi\ne 0$.
Then $\cF_\qvb = \{(-, -, \dots, -), (+, +, \dots, +)\}$
and $\cP_{\qvb, \xi}$ contains exactly one of these sign vectors (which one depends on our choice of $\xi$.) 
Theorem \ref{first-equiv} tells us that
$A(\QPA)\cong \C[\theta]/\langle \theta^n\rangle$, which is not quasihereditary when
$n > 1$ and is not Koszul when $n > 2$.

On the other hand, suppose that $\qvb$ is integral and regular, 
so $\cF_\qvb$ contains $n+1$ sign vectors.
If $\xi = 0$, then $\cP_{\qvb,\xi}$ contains exactly those $n-1$ sign vectors 
whose associated polyhedra are compact.  
The resulting algebra
$A(\QPA)$ is the quotient of the path algebra of the quiver
\[\xy
(-35,0)*++{\bullet}="1"; (-15,0)*++{\bullet}="2";(5,0)*++{\;}="3l";(10,0)*++{...}="3";(15,0)*++{\;}="3r";(35, 0)*++{\bullet}="4";
{\ar@/^/^{c_1} "1";"2"}; {\ar@/^/^{d_1} "2";"1"};
{\ar@/^/^{c_2} "2";"3l"}; {\ar@/^/^{d_2} "3l";"2"};
{\ar@/^/^{c_{n-2}} "3r";"4"}; {\ar@/^/^{d_{n-2}} "4";"3r"};
\endxy\]
by the relations $d_1c_1 = 0$, $c_{n-2}d_{n-2} = 0$, and $c_id_i = d_{i+1}c_{i+1}$ for $1 \le i \le n-3$.
This algebra is not quasihereditary when
$n > 1$ and is not Koszul when $n > 2$.
\end{example}

\subsection{Projective objects in \texorpdfstring{$\cO(\QPA)$}{O(X)} and \texorpdfstring{$\xi$}{xi}-truncation}\label{truncation}
Let $\QPA = (\vb_0, \qvb, \xi)$ be a quantized polarized arrangement, and let
$\la$ be the corresponding character of $Z(U)$.  The inclusion functor
$\iota\colon \cO(\QPA) \to \Ulmod_\qvb$ has a left adjoint $\pi_\xi$, which we
call $\xi$-truncation.  To define it, take $M \in \WM$ 
and let $\pi_\xi(M)$ be the quotient of
$M$ by the submodule generated by all weight spaces $M_v$ with 
$v \notin \bigcup_{\a \in \cP_{\qvb,\xi}} \DDelta_\a$.  
It is evident that this functor is right exact, and that
$\pi_\xi \circ \iota$ is the identity functor on $\cO(\QPA)$.

Equivalently, $\pi_\xi(M)$ is the largest quotient of $M$ which 
is $U^+$-locally finite.  This leads immediately to 
a description of truncation in terms of the equivalent
quiver categories.
We use the notation of Theorems \ref{Translation and quivers} and \ref{alg=comb},
so $e = e_\qvb$, and $A(\QPA)$ is the $e R e$ algebra $eRe/ eR e_\xi e R e$.  
\begin{proposition}
The diagram
\[\xymatrix{
\Ulmod_\qvb \ar[r] \ar[d]_{\pi_\xi} & eRe\modfin \ar[d]^{( A(\QPA) \,\otimes_{eRe}\, -)} \\
\cO(\QPA) \ar[r]  & A(\QPA)\mmod
}\]
commutes up to natural equivalence of functors, where the horizontal functors are
the equivalences of Theorems \ref{first-equiv} and  \ref{alg=comb}.
\end{proposition}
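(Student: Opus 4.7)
The plan is to prove commutativity by recognizing both vertical functors as left adjoints of inclusion functors that correspond to one another under the horizontal equivalences, and then invoke uniqueness of adjoints.

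First I would verify that the equivalence of Theorem~\ref{alg=comb} is literally the restriction to $\cO(\QPA)$ of the equivalence $F\colon\Ulmod_\qvb \to eRe\modfin$ from Theorem~\ref{first-equiv}. Three observations are needed: (a)~$\cO(\QPA) \subset \Ulmod_\qvb$ is a Serre subcategory, since $U^+$-local finiteness is preserved by subobjects, quotients, and extensions; (b)~by Corollary~\ref{simples} its simple objects are precisely the $L_\a^\qvb$ with $\a \in \cP_{\qvb,\xi}$; and (c)~since $e_\xi e = \sum_{\a \in \cF_\qvb \setminus \cP_{\qvb,\xi}} e_\a$, an $eRe$-module lies in $A(\QPA)\mmod$ if and only if $e_\a$ acts as zero for every $\a \in \cF_\qvb \setminus \cP_{\qvb,\xi}$, equivalently if and only if no composition factor is the simple at such a vertex. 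Since $F$ sends $L_\a^\qvb$ to the simple $eRe$-module at vertex $\a$, the subcategory $A(\QPA)\mmod$ is exactly the image of $\cO(\QPA)$, and the two inclusion functors commute with the horizontal equivalences.

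Next I would identify the vertical functors as the left adjoints to these two inclusions. The left-adjoint property of $\pi_\xi$ is asserted (and essentially transparent from the construction) in the text preceding the Proposition: a morphism from $M$ to an object of $\cO(\QPA)$ must annihilate every $M_v$ with $v\notin \bigcup_{\a\in\cP_{\qvb,\xi}}\DDelta_\a$. The functor $A(\QPA)\otimes_{eRe}-$ is left adjoint to the inclusion $A(\QPA)\mmod \hookrightarrow eRe\modfin$ by the standard tensor–restriction adjunction for the surjection $eRe\twoheadrightarrow A(\QPA)$; the tensor product of a finite-dimensional $eRe$-module with $A(\QPA)$ is again finite-dimensional, so this adjunction restricts to the finite-dimensional subcategories. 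With both right adjoints identified under the horizontal equivalences, uniqueness of left adjoints up to natural isomorphism gives the commutativity of the square.

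The main obstacle is the bookkeeping in step~(c): one must confirm that $F(M)\in A(\QPA)\mmod$ if and only if $M\in\cO(\QPA)$. This rests on the identification $e_\a F(M) \cong \Hom_{U_\la}(P_\a^\qvb, M)$ that was established in the proof of Theorem~\ref{Translation and quivers}, together with the fact that $P_\a^\qvb$ is a projective cover of $L_\a^\qvb$ in the relevant subcategory $\Ulmod_\qvb^{(k)}$ for $k\gg 0$. These together yield $e_\a F(M)=0$ iff $L_\a^\qvb$ is not a composition factor of $M$, which by (a) and (b) is equivalent to $M\in\cO(\QPA)$. Once this matching is in place, the rest of the argument is purely formal.
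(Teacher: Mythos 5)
Your proposal is correct and is essentially the argument the paper intends: the text immediately preceding the proposition identifies $\pi_\xi$ as the left adjoint to the inclusion (equivalently, the largest $U^+$-locally finite quotient), and the square then commutes by uniqueness of left adjoints once one checks, as you do, that the two inclusions match under the horizontal equivalences. Your write-up simply makes explicit the bookkeeping (Serre subcategory, matching of simples via Corollary \ref{simples}, and the identification of $A(\QPA)\otimes_{eRe}-$ as the left adjoint to restriction) that the paper leaves as immediate.
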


We use the $\xi$-truncation functor to construct the projective objects in our category as follows.
Take $\a \in \cP_{\qvb,\xi}$, and recall the $U_\la$-module $(\Pk_\a)^\qvb$ from the
proof of Theorem \ref{Translation and quivers}.  This module, which is projective
in the subcategory $\Ulmod_\qvb^\ordk \subset \Ulmod_\qvb$, represents
the functor $M \mapsto M_v$ for any $v \in \DDelta_\a \cap \qvb$.  It
follows that for $k$ large enough (say $k > \dim_\C A(\QPA)$), the
truncated module $\pi_\xi (\Pk_\a)^\qvb$ is independent of $k$, and represents
the $v$-weight space functor on $\cO(\QPA)$.  It is thus the projective
cover of $L_\a^\qvb$.

\subsection{Standard modules of \texorpdfstring{$\cO(\QPA)$}{O(X)}}\label{sec:std modules}
Let us assume that $\QPA = (\vb_0,\qvb,\xi)$ is regular, so that by Corollary \ref{qhk} the category $\cO(\QPA)$ is highest weight.   Being highest weight means that $\cO(\QPA)$ contains certain objects called {\bf standard modules},
which are analogues of the Verma modules of BGG category $\cO$. 
The definition of standard modules involves a partial order on the set $\cP_{\qvb, \xi}$ that indexes
the simple objects of $\cO(\QPA)$; this partial order was introduced in \cite[\S 2.6]{GDKD}.
For $\a\in\cP_{\qvb,\xi}$, the standard module $S^\qvb_\a$ is defined to be the projective cover
of $L_\a^\qvb$ in the full subcategory $\cO(\QPA)_{\le \a} \subset \cO(\QPA)$ 
consisting of modules whose composition factors $L^\qvb_\beta$ satisfy $\b\leq \a$.
In this section we construct the standard modules explicitly.

Fix an element $\a \in \cP_{\qvb,\xi}$, and let $v_\xi$ be the $\xi$-maximal point of 
$\DDelta_{\a}\cap\qvb$.  Consider the set $$Q := \{h_i\mid v_\xi\in H_{i}^+\} \cup \{-h_i\mid v_\xi\in H_{i}^-\}.$$
By quasi-regularity, there exists a unique subset $Q_0\subset Q$ of order $\dim V$ such that $\xi$ is 
equal to the restriction to $\Lambda_0$ of a negative linear combination
of the elements of $Q_0$.
Fix any weight $v\in\DDelta_{\a}\cap\qvb$, 
and let $J_\a\subset\D$ be the left ideal generated by the following elements:\footnote{In the published
version of this paper, the first and second items below had $Q_0$ replaced with $Q$.  This was a mistake,
as Propositions \ref{KL} and \ref{projective cover} did not hold in full generality with the old definition.}
\medskip
\begin{itemize}
\item $\partial_i$ for all $i$ such that $h_{i}\in Q_0$
\item $x_i$ for all $i$ such that $-h_{i}\in Q_0$
\item $h_i^{\a(i)}$ for all $i\in\Iv$
\item $h_i^+-h_i^+(v)$ for all $i\notin\Iv$.
\end{itemize}
Let $S_\a := \D/J_\a$.  This module lies in $\D\mmod_{\qvb + \SgrHZ}$, 
and there is a natural surjection $S_\a \to L_\a$ of $\D$-modules.
Each simple object $L_\b$ in $\D\mmod_{\qvb + \SgrHZ}$ appears in the composition series of $S_\a$
with multiplicity either $0$ or $1$, with the multiplicity being $1$ 
if and only if $\b(i) = +$ whenever $h_i \in Q_0$ and $\b(i) = -$ whenever $-h_i \in Q_0$ (in particular, $\b(i) = \a(i)$ for these values of $i$).  
Since all the weight spaces are $1$-dimensional, $S_\a$ is semisimple over $Z(U)$, 
and so  $S_\a^\qvb = \bigoplus_{v \in \qvb} (S_\a)_v$.  It follows that
that the simple object $L_\b^\qvb$ in $U_\la\mmod_\qvb$ 
appears in the composition series of $S_\a^\qvb$ if and only if $\b\in\cF_\qvb$
and $\b(i) = \a(i)$ for all $i$ such that $h_i$ or $-h_i$ is in $Q_0$.  
Corollary \ref{simples} then shows that $S_\a^\qvb\in\cO(\QPA)$, and from \cite[2.11]{GDKD} we get
that $S_\a^\qvb\in\cO(\QPA)_{\le \a}$.  

\begin{remark} The definition of $S^\qvb_\a$ is made more complicated by the need to address two possibilities: first, the point $v_\xi$ might not lie in the lattice $\qvb$, and second, $v_\xi$ might lie on more than $\dim V$ of the hyperplanes $H_i^\pm$.  The first possibility will not occur if the arrangement is unimodular.  In that case we can take $v = v_\xi$ and the 
third and fourth sets of generators of $J_\a$ can be replaced by 
$h_i^+ - h_i^+(v)$ for all $i$.
The second issue will not occur if the set $\DDelta_\a$ has full dimension $\dim V$, since then regularity implies that it is a simple polyhedron and the linear forms in $Q$ cut out its facets which contain $v_\xi$, 
so $Q = Q_0$. 
\end{remark}

For use in Section \ref{sec:grothendieck}, it will be convenient if we restate these observations 
for integral $\QPA$ in terms of a polarized
arrangement $\PA$ linked to $\QPA$.

\begin{proposition}\label{KL}
Suppose that $\QPA$ is integral and linked to $\PA$.  Choose any $\a\in\cP_{\qvb,\xi}$, and let $\Sigma_\a\subset V_\R$
be the unique cone that coincides with $\Delta_\a$ in a neighborhood of the $\xi$-maximal point of $\Delta_\a$.
The simple module $L_\b^\qvb$ appears in the composition series of $S_\a^\qvb$ if and only if 
$\Delta_\b\subset\Sigma_\a$, in which case it appears exactly once.
\end{proposition}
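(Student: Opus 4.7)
The plan is to translate the combinatorial condition already extracted in the paragraph immediately preceding the proposition---namely, that $L_\b^\qvb$ appears in a composition series of $S_\a^\qvb$ (exactly once) if and only if $\b\in\cF_\qvb$ and $\b(i)=\a(i)$ for every $i$ with $v_\xi\in H_i^\pm$---into the geometric statement $\Delta_\b\subset\Sigma_\a$. Since $\QPA$ is integral, $I_\qvb=\{1,\dots,n\}$ and the projection $\pi$ of Definition~\ref{def:linked} is the identity, so the linkage hypothesis forces $\cF_\eta=\cF_\qvb$. The ``$\b$ is feasible'' parts of the two conditions therefore coincide, and the ``appears exactly once'' clause carries over directly from the quoted statement; what remains is to match the sign-vector conditions.

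First I would describe $\Sigma_\a$ explicitly. Let $v^\xi\in V_\R$ denote the unique $\xi$-maximal point of $\Delta_\a$ (unique by regularity of $\xi$), and set $I:=\{i:v^\xi\in H_i\}$. Regularity of $\eta$ makes $\cH$ simple, so $|I|=\dim V$, and since the inequalities $\a(i)h_i\ge 0$ for $i\notin I$ are strict at $v^\xi$ (hence inactive in a neighborhood of $v^\xi$), the unique cone coinciding with $\Delta_\a$ near $v^\xi$ is
\[
\Sigma_\a \;=\; \{\,v\in V_\R \;:\; \a(i)h_i(v)\ge 0\text{ for all } i\in I\,\}.
\]
From this description the equivalence ``$\Delta_\b\subset\Sigma_\a$ iff $\b(i)=\a(i)$ for all $i\in I$'' is straightforward: the ``if'' direction is immediate from the defining inequalities of $\Delta_\b$, while for the ``only if'' direction, if $\b\in\cF_\eta$ and $\b(i)=-\a(i)$ for some $i\in I$, then the full-dimensional polyhedron $\Delta_\b$ (full-dimensionality coming from simplicity of $\cH$) has interior in $\{\a(i)h_i<0\}$, contradicting $\Delta_\b\subset\Sigma_\a$.

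The remaining step is to identify $I$ with the index set $\{i:v_\xi\in H_i^\pm\}$, where $v_\xi$ is the $\xi$-maximal vertex of $\DDelta_\a\subset\V_\R$. For this I would invoke the rescaling from the proof of Proposition~\ref{prop:linked}: for a suitable positive integer $k$ and large $r$, the rescaled polyhedra $\tfrac{1}{rk}\DDelta_{\qvb+rk\eta,\,\a}$ approach $\Delta_\a$, and facet-incidence data at the $\xi$-maximal vertex is preserved in this limit. Quasi-regularity of $\qvb$ forces exactly $\dim V$ of the hyperplanes $H_i^\pm$ to pass through $v_\xi$, and regularity of $\xi$ makes this vertex unique, so the indices of those $\dim V$ hyperplanes must coincide with the $\dim V$ indices in $I$.

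I expect the main obstacle to be this last identification of the two index sets. The combinatorics in the quantized picture takes place in $\V_\R$ with the doubled arrangement $\qH$, while $\Sigma_\a$ lives in $V_\R$ with the single arrangement $\cH$; verifying that linkage plus regularity of $\xi$ genuinely forces the same $\dim V$ indices to appear at both $v_\xi$ and $v^\xi$ is where the key geometric content sits. Everything else is bookkeeping translation between the two arrangements.
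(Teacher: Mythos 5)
The paper offers no proof of Proposition \ref{KL}: it is presented as a restatement, in terms of the linked arrangement $\PA$, of the multiplicity criterion derived in the paragraph just before it ($L_\b^\qvb$ occurs in $S_\a^\qvb$, with multiplicity one, iff $\b\in\cF_\qvb$ and $\b$ agrees with $\a$ on $\{i : v_\xi\in H_i^\pm\}$). So you are right that the entire content of a proof is the matching of index sets you isolate at the end, and your treatment of the polarized-arrangement side (the description of $\Sigma_\a$ as the tangent cone at $v^\xi$, and the equivalence of $\Delta_\b\subset\Sigma_\a$ with $\b|_I=\a|_I$ for feasible $\b$) is fine. The gap is in the matching itself. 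First, quasi-regularity does \emph{not} force exactly $\dim V$ of the hyperplanes $H_i^\pm$ to pass through $v_\xi$: Definition \ref{def:quasi-reg} constrains only the ``collapsed'' arrangement obtained by replacing each pair $H_i^\pm$ by a hyperplane strictly between them, and the paper explicitly warns (after introducing Example \ref{big example}) that several hyperplanes of $\qH$ can meet at a point even when $\qvb$ is regular. Second, the rescaling in the proof of Proposition \ref{prop:linked} compares $\Delta_{\eta,\a}$ with $\DDelta_{\qvb+rk\eta,\a}$ for \emph{large} $r$; even granting that it preserves incidence at the $\xi$-optimum in that limit, it only identifies the active set for $\qvb+rk\eta$ with that of $\cH$, and says nothing about $\qvb$ itself, since active sets at optima are not stable under specializing $r$ back to $0$.

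In fact the identification you need is false under the stated hypotheses. Take $n=2$ and $\qvb_0$ from Example \ref{big example} (the $\cO_0(\mathfrak{sl}_2)$ case), which is regular, integral, and linked to any $\eta$ with $c>0$. For $\a=(+,+)$ the polyhedron $\DDelta_\a$ is the single point $v_\xi$ with $h_1^+=h_2^+=0$, which lies on \emph{both} $H_1^+$ and $H_2^+$; the construction of Section \ref{sec:std modules} then gives $S_\a=\C[x_1,x_2]$ and $S_\a^\qvb=\C$, a simple module. But the $\xi$-maximal vertex of $\Delta_{(+,+)}=[0,c]$ lies only on $H_2$, so $\Sigma_{(+,+)}=\{h_2\ge 0\}\supset\Delta_{(-,+)}$ and the proposition predicts a second composition factor $L_{(-,+)}^\qvb$ (as it must, if $S_{(+,+)}^\qvb$ is to be the Verma module $M(0)$). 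Replacing $\qvb_0$ by $\qvb_1$ restores the expected answer. So the statement, read together with the definition of $S_\a$, requires an additional genericity hypothesis on $\qvb$ --- namely that exactly $\dim V$ hyperplanes of $\qH$ pass through each $v_\xi$, which can always be arranged within the equivalence class of $\QPA$ --- or else $J_\a$ must be defined using the indices active at the $\xi$-maximum of $\Delta_\a$ in $\cH$ rather than of $\DDelta_\a$ in $\qH$. Even with such a hypothesis, transferring the active set between the two optimization problems still needs an argument that genuinely uses linkage (both candidate active sets satisfy the same slope and complementary-slackness conditions, which depend only on $\vb_0$, $\a$ and $\xi$, but the feasibility of the corresponding vertex must be carried across $\cF_\eta=\cF_\qvb$), not just the rescaling limit.
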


\begin{proposition}\label{projective cover} 
The module $S_\a^{\qvb}$ is a projective cover of $L_\a^\qvb$ in the category $\cO(\QPA)_{\le \a}$.
\end{proposition}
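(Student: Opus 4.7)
By Corollary \ref{qhk}, $\cO(\QPA)$ is highest weight, so the projective cover of $L_\a^\qvb$ in $\cO(\QPA)_{\le\a}$ is uniquely characterized by (a) lying in that subcategory, (b) having $L_\a^\qvb$ as its simple head, and (c) being projective in the subcategory. Containment (a) was established in the paragraph just before the statement.

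For (b), I would argue that $S_\a^\qvb$ is cyclic as a $U_\la$-module. The relations in $J_\a$ force the class $[1]\in S_\a$ to be a weight vector at the specific weight $w$ determined by $h_i^{\a(i)}(w)=0$ for $i\in\Iv$ and $h_i^+(w)=h_i^+(v)$ for $i\notin\Iv$; since these conditions take integer values and $v\in\qvb$, we have $w\in\qvb$, so $[1]\in S_\a^\qvb$. Combining the cyclicity $S_\a=\D\cdot[1]$ with the weight decompositions $\D=\bigoplus_z\D_z$ and $U=\bigoplus_{z\in\vb_0}\D_z$ gives $S_\a^\qvb=U_\la\cdot[1]$. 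A cyclic module has a unique simple head, and the surjection $S_\a\twoheadrightarrow L_\a$ identifies this head with $L_\a^\qvb$.

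For (c), I would use the adjunction $(\D\otimes_U-)\dashv(-)^\qvb$ together with the isomorphism $\D\otimes_U S_\a^\qvb\cong S_\a$, whose proof parallels that of \eqref{translation isomorphism} in the proof of Theorem \ref{Translation and quivers}. This yields
\[\Hom_{U_\la}(S_\a^\qvb,M)\cong\Hom_\D(S_\a,\D\otimes_U M)=\{m\in(\D\otimes_U M)_w\mid J_\a\cdot m=0\}\]
for every $M\in\Ulmod_\qvb$. I would then show that on $\cO(\QPA)_{\le\a}$ the extra constraints $\partial_i m=0$ (for $v_\xi\in H_i^+$) and $x_i m=0$ (for $v_\xi\in H_i^-$) are automatic: each such operator sends $m$ to a weight space whose $h_i^+$-value is $-1$ or $1$, and by Proposition \ref{KL} together with the partial order of \cite[\S 2.6]{GDKD}, every composition factor $L_\b^\qvb$ of $M$ with $\b\le\a$ satisfies $\b(i)=\a(i)$ whenever $v_\xi\in H_i^{\a(i)}$, so no such factor has support at that weight. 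Hence the Hom functor reduces on $\cO(\QPA)_{\le\a}$ to the weight-space functor $M\mapsto M_w$, which is exact.

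The main obstacle is the combinatorial claim that $\b\le\a$ implies $\DDelta_\b\subset\Sigma_\a$, equivalently $\b(i)=\a(i)$ whenever $v_\xi\in H_i^{\a(i)}$; this should follow from the definition of the partial order in \cite[\S 2.6]{GDKD} together with the description of $\Sigma_\a$ as the tangent cone to $\Delta_\a$ at $v_\xi$ cut out by precisely the hyperplanes through $v_\xi$. A secondary technicality is whether the resulting weight-space functor is truly exact, since the $J_\a$-relations $h_i^{\a(i)}$ pick out the ordinary eigenspace rather than the generalized one; but since $\cO(\QPA)$ corresponds under Theorem \ref{alg=comb} to finite-dimensional $A(\QPA)\mmod$, in which the idempotent decomposition realizes weight spaces as direct summands, this technicality can be controlled via the quiver presentation and does not obstruct the argument.
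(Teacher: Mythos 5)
Your overall strategy is the same as the paper's: realize $S_\a^\qvb$ as the representing object of the weight-space functor on $\cO(\QPA)_{\le\a}$ via the adjunction with $\D\otimes_U-$, and check that the generators of $J_\a$ annihilate the relevant weight space, using the partial order to handle the $\partial_i$ and $x_i$ generators. However, two steps fail as written. First, the weight $v_\a$ of $1\in S_\a$ does \emph{not} in general lie in $\qvb$: one only gets $v_\a-v\in\SgrHZ$, not $v_\a-v\in\vb_0$. (Concretely, in Example \ref{big example} with $n=3$, $c=0$ and $\a=(-,+,+)$, the conditions $h_1^-(v_\a)=h_2^+(v_\a)=h_3^+(v_\a)=0$ force $\sum_i h_i^+(v_\a)=-1\neq 0$, so $v_\a\notin\qvb_0$.) Hence $[1]\notin S_\a^\qvb$, your cyclicity argument is broken, and the functor you end up representing cannot be $M\mapsto M_{v_\a}$, which is identically zero on modules supported in $\qvb$. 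The paper circumvents this by generating $S_\a^\qvb$ by a nonzero element of $(S_\a)_v$ with $v\in\DDelta_\a\cap\qvb$ and transporting between $\ol{M}_v$ and $\ol{M}_{v_\a}$ via a monomial spanning $\D_{v-v_\a}$ over $\bH$ --- an element of $\D$ that need not lie in $U$, which is precisely why the whole verification must be carried out at the level of $\D$-modules.

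Second, what you call a ``secondary technicality'' is actually the crux, and your proposed fix does not address it. The generators $h_i^{\a(i)}$ ($i\in\Iv$) and $h_i^+-h_i^+(v)$ ($i\notin\Iv$) span the maximal ideal $\fI_{v_\a}$, and a priori only a \emph{power} of $\fI_{v_\a}$ kills the generalized weight space $\ol{M}_{v_\a}$; one must show the first power suffices. The quiver idempotents $e_\a$ cut out generalized weight spaces, so they carry no information about honest eigenvectors and cannot resolve this. The paper's argument is: objects of $\cO$ are by definition semisimple over $Z(U)$, so $\ker\la_\a$ kills $\ol{M}_{v_\a}$; the generators $h_i^{\a(i)}$ with $v_\xi\in H_i^{\a(i)}$ kill it because $h_i^+=x_i\partial_i$ and $h_i^-=\partial_ix_i$ factor through the already-handled $\partial_i$ or $x_i$; and --- the essential observation --- $\fI_{v_\a}$ is generated by $\ker\la_\a$ together with exactly those $h_i^{\a(i)}$ for which $v_\xi\in H_i^{\a(i)}$ (this uses that $v_\xi$ is a vertex of the arrangement). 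Without this generation statement the argument does not close, and no appeal to exactness of weight-space functors can substitute for it.
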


\begin{proof}
The $v$-weight space of $S_\a$ is $1$-dimensional,
and $S_\a^\qvb$ is a cyclic $U_\la$-module generated by any nonzero element $x$ of this weight space.
In particular, $S_\a^\qvb$ is indecomposable, and it remains only to show that it is projective in $\cO(\QPA)_{\le \a}$.

To do this, we will show that for any object $M$ of $\cO(\QPA)_{\le \a}$, the map
\begin{equation*}\label{restrict to weight space}
\Hom_{U_\la}(S_\a^\qvb, M) \to M_v, \ \ \phi \mapsto \phi(x)
\end{equation*}
is surjective.  Since this map is obviously injective, it will follow
that $S_\a^\qvb$ represents the exact functor $M \mapsto M_v$, and therefore
that $S_\a^\qvb$ is projective in $\cO(\QPA)_{\le \a}$.

To see surjectivity, let $\ol{M} := \D \otimes_U M$,
and consider the analogous map for $\D$-modules
\begin{equation*}\label{D map}
\Hom_\D(S_\a, \ol{M}) \to \ol{M}_v, \ \ \psi \mapsto \psi(x).
\end{equation*}
It suffices to show that this map is surjective since it factors
$$\Hom_\D(S_\a, \ol{M})\overset{(\cdot)^\qvb}{\longrightarrow}
\Hom_{U_\la}(S_\a^\qvb, M) \to M_v\cong \ol{M}_v,$$
and $(\cdot)^\qvb$ is surjective.

Let $v_\a$ be the weight of the element $1\in S_\a$; the associated
ideal $\mathfrak{I}_{v_\a} \subset \bH$ is spanned by 
$\{h^{\a(i)}_i\mid i\in\Iv\}\cup\{h_i^+ - h_i^+(v)\mid i\notin\Iv\}$.
Let $m$ be a monomial which generates $\D_{v-v_\a}$ as a left or right $\bH$-module.
For any $\D$-module $N$, multiplication by $m$ gives an isomorphism $N_{v_\a} \to N_{v}$.
Thus it is sufficient to show that the map
$$\Hom_\D(S_\a, \ol{M})\to \ol{M}_{v_\a}$$ taking $\psi$ to $\psi(1)$ is surjective.
In other words, we must show that the defining ideal $J_\a$ of $S_\a$
annihilates $\ol{M}_{v_\a}$.

The ideal $J_\a$ has four types of generators; we treat them one at a time.
If $v_\xi\in H_{i}^+$, then $\partial_i$ takes $\ol{M}_{v_\a}$ to $\ol{M}_{v_\b}$,
where $\b$ differs from $\a$ only in the $i^\text{th}$ coordinate.  But such a $\b$ cannot
satisfy $\b\leq\a$, thus $M \in \cO(\QPA)_{\le \a}$ implies that $\ol{M}_{v_\b}=0$.
Hence $\partial_i$ annihilates $\ol{M}_{v_\a}$.  By similar reasoning,
if $v_\xi\in H_{i}^-$, then $x_i$ annihilates $\ol{M}_{v_\a}$.
Now consider the third and fourth set of generators in the list.
These lie in the ideal
$\mathfrak{I}_{v_\a}$, and we know (by definition) that some power of $\mathfrak{I}_{v_\a}$ annihilates
$\ol{M}_{v_\a}$.  We only need to show that the first power does the trick.

Let $\la_\a\colon Z(U) \to \C$ be the character for which $\ker \la_\a \subset \fI_{v_\a}$.
Since $M$ is semisimple over $Z(U)$, so is $\ol{M}$, and so $(\ker \la_\a)\ol{M}_{v_\a} = 0$.
Furthermore, if $v_\xi\in H_{i}^{\a(i)}$, then $h^{\a(i)}_i$ annihilates $\ol{M}_{v_\a}$ 
by our analysis of the action of the first two types of generators of $J_\a$.  
Now we observe that $\mathfrak{I}_{v_\a}$ is generated by 
$$\{h^{\a(i)}_i\mid v_\xi\in H_{i}^{\a(i)}\}\cup \ker \la_\a,$$
and we are done.
\end{proof}

\subsection{Deformed category \texorpdfstring{$\cO$}{O(X)}}\label{sec:Deformed O}
In \cite{kosdef} we described a universal flat graded deformation of any 
Koszul graded algebra, and we studied its properties in the special case
when the original module category and its Koszul dual are both highest weight.
Let $\QPA = (\vb_0,\qvb,\xi)$ be a regular 
quantized polarized arrangement, and for simplicity assume that it is integral.  We now
explain how to interpret modules over the universal deformation of the 
Koszul algebra $A(\QPA)$ as modules over the hypertoric enveloping 
algebra $U$.  The results of this section will not be used later
in the paper.

Let $\tilde{A}(\QPA)$ be the universal deformation of the 
Koszul algebra $A(\QPA)$, as defined in \cite[4.1]{kosdef}.
It is a flat graded algebra over a certain
polynomial ring $S$, and its specialization
at $0 \in \Specm S$ is isomorphic to $A(\QPA)$ as a graded algebra.
The following result relates $\tilde{A}(\QPA)$ to the other quiver
algebras we have considered in this paper.

\begin{proposition} There are isomorphisms
\[\tilde{A}(\QPA) \cong e_\qvb Q_\qvb e_\qvb / \langle e_\xi e_\qvb\rangle\and S\cong \Sym(\fk),\]
and under these isomorphisms the $S$-algebra structure on $\tilde{A}(\QPA)$ is induced by 
$\vartheta\colon \fk \to Q_\qvb$.
\end{proposition}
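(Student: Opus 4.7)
The plan is to identify $B := e_\qvb Q_\qvb e_\qvb / \langle e_\xi e_\qvb\rangle$ as the universal flat graded deformation $\tilde A(\QPA)$ by verifying its universal property, with the map $\Sym(\fk) \to B$ coming from $\vartheta$. Since $A(\QPA)$ is Koszul by Corollary \ref{qhk}, the machinery of \cite{kosdef} applies directly.

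First I would set up the candidate. The elements $\theta_i \in Q_n$ lie in the center, so the map $\vartheta\colon \fk \hookrightarrow \t \to Z(Q_\qvb) \twoheadrightarrow Z(B)$ is a graded algebra map once we place $\fk$ in degree $2$ of $\Sym(\fk)$, matching the fact that each $\theta_i$ is a length-$2$ loop in the cube quiver. Via this map, $B$ is a graded $\Sym(\fk)$-algebra. The specialization at the augmentation ideal gives $B/\vartheta(\fk_+)B = A(\QPA)$: by Theorem \ref{alg=comb}, $A(\QPA)$ is $e_\qvb \wh Q_\qvb e_\qvb$ modulo $\langle e_\xi e_\qvb\rangle + \langle \vartheta(\fk)e_\qvb\rangle$, and Theorem \ref{blpw} shows the corresponding uncompleted quotient is already finite-dimensional, so completion is unnecessary.

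Second, I would establish flatness of $B$ over $\Sym(\fk)$, which is the main technical step. The approach is to exhibit an explicit PBW-style basis: one shows that $e_\qvb Q_\qvb e_\qvb$ admits a basis as a $\Sym(\t)$-module indexed by equivalence classes of paths in the cube between vertices in $\cF_\qvb$ (using the commutation relations in $Q_n$), and that quotienting by $\langle e_\xi e_\qvb\rangle$ simply discards those paths passing through $\cB_{\qvb,\xi}^c$. The quotient by $\vartheta(\fk)$ is then $A(\PA) = A(\QPA)$, whose dimension is known from \cite[\S 4]{GDKD}; lifting this basis to $B$ and tensoring with a monomial basis of $\Sym(\fk)$ should yield a graded $\C$-basis of $B$, proving freeness. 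This bookkeeping is where I expect the bulk of the technical work to lie.

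Third, I would verify the universal property. By \cite{kosdef}, first-order flat graded deformations of a Koszul algebra are classified by the degree-zero part of its second Hochschild cohomology, which for a quadratic-Koszul algebra is computed from the quadratic dual. Using the Koszul duality $A(\QPA)^! \cong A(\QPA^!)$ (Corollary \ref{qhk}) and the Gale pairing identifying $\vb_0^\perp = \fk$, one identifies this space of deformation parameters with $\fk$. Flatness from the previous step plus the identification of the tangent space shows that the tautological map $\Sym(\fk) \to S$ classifying $B$ is an isomorphism at the level of tangent spaces, hence an isomorphism of graded polynomial algebras, and correspondingly $B \cong \tilde A(\QPA)$ as graded $S$-algebras. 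The hardest part of the argument will be the PBW flatness in step two; once flatness is in hand, the remaining identifications proceed by standard Koszul deformation theory.
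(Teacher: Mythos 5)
Your strategy---realize $B := e_\qvb Q_\qvb e_\qvb/\langle e_\xi e_\qvb\rangle$ as a flat graded $\Sym(\fk)$-deformation of $A(\QPA)$ and then verify the universal property directly---is a genuinely different route from the paper's, which is a two-step reduction: by \cite[4.14]{GDKD} one has $A(\QPA)=A(\PA)\cong B(\PA^!)$ and $B\cong\tilde B(\PA^!)$, and then \cite[8.7]{kosdef} says precisely that $\tilde B(\PA^!)$ is the universal deformation of $B(\PA^!)$. In effect you are proposing to reprove \cite[8.7]{kosdef} rather than cite it, and as written the two hardest points of that argument are missing. The first is the flatness step: the claim that passing to the quotient by $\langle e_\xi e_\qvb\rangle$ ``simply discards paths passing through'' unbounded sign vectors is not true path-by-path---the square relations in $Q_n$ can rewrite a path through an unbounded vertex as one avoiding it (compare the argument in Lemma \ref{Cartesian})---so producing a $\Sym(\t)$- or $\Sym(\fk)$-basis of the quotient is a theorem (essentially \cite[\S 4]{GDKD}), not bookkeeping, and you have only promised it.

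The more serious gap is logical, in your third step. Even granting flatness and the identification of the space of first-order deformations $\operatorname{HH}^2(A(\QPA))_0$ with $\fk$, a flat graded deformation of $A(\QPA)$ over $\Sym(\fk)$ is classified by a map $S\to\Sym(\fk)$ \emph{from} the universal base, and equality of tangent-space dimensions does not make that map an isomorphism: the trivial deformation $A(\QPA)\otimes\Sym(\fk)$ is flat over $\Sym(\fk)$ and classifies to the augmentation. What you must show in addition is that the Kodaira--Spencer map $\fk\to\operatorname{HH}^2(A(\QPA))_0$ attached to $B$ is injective, i.e.\ that the first-order deformations induced by $\vart(x)$ for $x\in\fk$ are nontrivial and linearly independent. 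This non-degeneracy is exactly the content of \cite[8.7]{kosdef} and is nowhere addressed in your outline; without it you obtain only a map from $\tilde A(\QPA)$ to $B$, not an isomorphism.
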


\begin{proof}
Let $\PA$ be a polarized arrangement linked to $\QPA$,
so $A(\QPA) = A(\PA)$, the algebra defined in \cite{GDKD}.
Then \cite[4.14]{GDKD} tells us that
$A(\PA) \cong B(\PA^!)$ and 
$e_\qvb Q_\qvb e_\qvb / \langle e_\xi e_\qvb \rangle \cong \tilde{B}(\PA^!)$,
where $\PA^!$ is Gale dual to $\PA$ and the algebras
$B(\PA^!)$ and $\tilde{B}(\PA^!)$ are defined in \cite{GDKD}.
Thus the proposition is reduced to showing that the $\Sym(\fk)$-algebra
$\tilde{B}(\PA^!)$ is the universal deformation of $B(\PA^!)$; this is proven in \cite[8.7]{kosdef}.
\end{proof}

Let $\widehat{S}$ be the completion of $\Sym(\fk)$ at the 
maximal ideal generated by $\fk$, and define
$$\widehat{A}(\QPA) := \widehat{S}\otimes_S \tilde{A}(\QPA);$$ it is the 
completion of $\tilde{A}(\QPA)$ with respect to the grading.
Consider the categories $\widehat{A}(\QPA)\modfin$ 
and $\widehat{A}(\QPA)\mmod$ of finite dimensional and
finitely-generated modules over $\wh{A}(\QPA)$.  Our task will be to understand
these two categories in terms of the hypertoric enveloping algebra.

Let $\wh{\cO}_\fin(\QPA)$ be the full subcategory of $\Umod_\lf$
consisting of weight modules whose support lies in $\qvb$ and
which are locally finite over $U^+$.  Modules of this category
differ from modules in $\cO(\QPA)$ in that the center acts
by a generalized character rather than an honest character.
Let $$\widehat{U}_\la := U \otimes_Z \widehat{Z}_\la,$$ where
$\widehat{Z}_\la$ is the completion of the center $Z(U)$
at the maximal ideal $J_\la := \ker \la$.  
Set $$\widehat \bH_\la := \bH \otimes_Z \widehat{Z}_\la\and
\widehat{U}^+_\la := U^+ \otimes_Z \widehat{Z}_\la,$$
and define $\widehat{\cO}(\QPA)$  
to be the category of finitely generated
$\widehat{U}_\la$-modules $M$ so that
\begin{itemize}
\item $M = \bigoplus_{v \in \qvb} M_v$, that is, $M$ is \emph{set-theoretically} supported
on $\qvb\subset\SH \cong \Specm \widehat \bH_\la$, and
\item for any $m \in M$, $\widehat{U}^+_\la \cdot m$ is 
a finitely generated $\widehat Z_\la$-module.
\end{itemize}

\begin{theorem}\label{deformed category} There are equivalences
\[\widehat{\cO}(\QPA)_\fin \cong \widehat{A}(\QPA)\modfin\and
\widehat{\cO}(\QPA) \cong \widehat{A}(\QPA)\mmod.\]
For any $x\in\fk$, the action of $\mu_\la(x) \in Z(U)$ corresponds 
to multiplication by $\vart(x)e_\qvb \in Z(\widehat{A}(\QPA))$.
\end{theorem}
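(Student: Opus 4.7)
The plan is to promote the chain of equivalences $\D\mmod_{\qvb+W_\Z}\leftrightarrow \wh Q_\qvb\modfin$ and $\cO(\QPA)\leftrightarrow A(\QPA)\modfin$ from Theorems \ref{first-equiv} and \ref{alg=comb} to the deformed setting in three stages, parallel to how these theorems were proved.

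First, I would revisit the Musson--Van den Bergh pro-representation used in the proof of Theorem \ref{Translation and quivers}. Removing the quotient by $\langle \ker\la_\a\rangle$ from the modules $\Pk_\a$, one has a pro-representing system $\{\D/\D\fI_{v_\a}^{k+1}\}_{k,\a}$ for weight modules for $\D$ with set-theoretic support in $\qvb + W_\Z$, with endomorphism ring equal to $\wh{Q}_\qvb$. The key observation to add is that, under this pro-representation, the action of $\mu_\la(x) \in Z(U) \subset \bH$ on a $\D$-module corresponds to right multiplication by $\vart(x) \in Z(\wh{Q}_\qvb)$; this identifies the $\wh S$-algebra $\wh Z_\la$ (via $\mu_\la$) with the completion of $\vart(\Sym(\fk))$ inside $\wh{Q}_\qvb$.

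Second, I would restrict to the subcategory of $\widehat U_\la$-modules $M$ that are set-theoretically supported on $\qvb$. On the quiver side this cuts down to modules over $e_\qvb \wh Q_\qvb e_\qvb$ (the idempotent $e_\qvb$ picks out weights lying in $\qvb$ itself, not in the larger $\qvb+W_\Z$). The ``completion'' condition for an $\wh U_\la$-module --- namely that $M$ is a $\wh Z_\la$-module, not just a $Z(U)$-module --- matches the fact that $\wh Q_\qvb$ is already complete at the ideal generated by $\vart(\fk)$, so that for finitely generated modules the $\vart(\fk)$-action automatically factors through $\wh S$. Together these give an equivalence between finitely generated $\wh U_\la$-modules set-theoretically supported on $\qvb$ and finitely generated modules over $e_\qvb \wh Q_\qvb e_\qvb$.

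Third, I would apply $\xi$-truncation. The same argument as in Section \ref{truncation}, combined with Lemma \ref{gamma-finite}, shows that the condition ``$\wh U^+_\la\cdot m$ is finitely generated over $\wh Z_\la$ for all $m$'' is exactly the condition that the projection $e_\a M$ vanishes for $\a \notin \cP_{\qvb,\xi}$, which corresponds to imposing $\langle e_\xi e_\qvb\rangle = 0$. This yields the equivalence $\wh\cO(\QPA) \cong \wh A(\QPA)\mmod$. The finite-dimensional version comes by noting that a finitely generated $\wh A(\QPA)$-module is finite-dimensional over $\C$ iff $\vart(\fk)$ acts nilpotently, iff (under the correspondence) the central character $\la$ of $U$ (not just $\wh\la$) acts on $M$, iff $M \in \wh\cO(\QPA)_\fin$. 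The correspondence $\mu_\la(x) \leftrightarrow \vart(x)e_\qvb$ is then built into the construction in stage one.

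The main obstacle will be the careful bookkeeping in stage two: one must verify that the projective pro-generators from stage one survive truncation and that the resulting algebra homomorphism $\wh S \to Z(\wh A(\QPA))$ agrees with the $\wh S$-algebra structure coming from the deformation theory of \cite{kosdef}. This amounts to checking that the subset of pro-generators indexed by $\cP_{\qvb,\xi}$ is, after truncation, a projective generator of $\wh\cO(\QPA)$ --- which in turn requires showing that $\wh\cO(\QPA)$ has enough projectives of a pro-finite nature analogous to $\pi_\xi(\Pk_\a)^\qvb$ but with $\ker \la$ not quotiented out, together with the standard argument that completion preserves this projective generator structure.
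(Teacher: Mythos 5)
Your overall strategy---completing the Musson--Van den Bergh pro-representation, restricting to support in $\qvb$, then $\xi$-truncating---is reasonable, and your stage-one identification of $\mu_\la(x)$ with right multiplication by $\vart(x)$ is exactly how the statement about central actions is obtained (the paper cites the same results of \cite{MvdB} used in the proof of Theorem \ref{Translation and quivers}). The finite-dimensional equivalence $\wh{\cO}_\fin(\QPA)\cong\wh{A}(\QPA)\modfin$ likewise follows directly from \cite{MvdB}, so that half is in order.

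The gap is in your stage two. You assert an equivalence between finitely generated $\widehat{U}_\la$-modules set-theoretically supported on $\qvb$ and finitely generated $e_\qvb\wh{Q}_\qvb e_\qvb$-modules, but the Musson--Van den Bergh machinery does not apply directly to $\widehat{U}_\la$: its Cartan $\widehat{\bH}_\la$ is not a quotient of a polynomial ring, and objects of $\wh{\cO}(\QPA)$ have weight spaces that are finitely generated $\widehat{Z}_\la$-modules rather than finite-dimensional vector spaces, so the functor $\varprojlim_k\Hom_\D\bigl(\bigoplus_\a\D/\D\fI_{v_\a}^{k+1},-\bigr)$ is no longer covered by the cited equivalence. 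Your closing paragraph correctly names this as the main obstacle but does not resolve it, and it is precisely where the real content of the second equivalence lies. The paper resolves it by a different device: every $M\in\wh{\cO}(\QPA)$ is recovered as $\varprojlim_k M/(J_\la)^kM$, each quotient lies in the subcategory $\cO_k\subset\wh{\cO}_\fin(\QPA)$ of modules killed by $(J_\la)^k$, and $\cO_k$ is equivalent (by the already-established finite-dimensional statement) to finitely generated modules over $\wh{A}(\QPA)/\langle\vart(\fk)e_\qvb\rangle^k$; matching compatible inverse systems on the two sides then gives $\wh{\cO}(\QPA)\cong\wh{A}(\QPA)\mmod$. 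So you should bootstrap from the finite-dimensional equivalence by this inverse-limit argument rather than trying to extend the pro-representation to the completed algebra (or else actually prove that your completed, truncated modules are projective generators of $\wh{\cO}(\QPA)$, which is the step you have deferred). A smaller slip: the criterion for landing in $\wh{\cO}_\fin(\QPA)$ is that $J_\la=\langle\mu_\la(\fk)\rangle$ acts nilpotently, not that the character $\la$ itself acts---the latter would put you back in the undeformed category $\cO(\QPA)$.
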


\begin{proof} The first equivalence and the identification of the action of 
$\mu_\la(x)$ on the left category follows from \cite[3.5.6, 4.3.1, \& 6.3]{MvdB}.
For the second equivalence, note that the 
methods of \cite{MvdB} do not directly
apply to $\widehat{U}_\la$, since $\widehat{H}_\la$ is not a 
quotient of a polynomial ring.  However, any object
$M$ of $\widehat{\cO}(\QPA)$ is isomorphic to the limit 
$\varprojlim M/(J_\la)^kM$, and each $M/(J_\la)^kM$ lies in
$\widehat{\cO}_\fin(\QPA)$, or more precisely in the subcategory 
$\cO_k$ of modules annihilated by $(J_\la)^k$.
This gives an equivalence between $\widehat{\cO}(\QPA)$ and
the category of inverse systems $\dots \to M_k \to M_{k-1} \to \dots \to M_1$,
where each $M_k$ lies in $\cO_k$, and each map 
$M_k \to M_{k-1}$ induces an isomorphism $M_k/(J_\la)^{k-1}M_k \cong M_{k-1}$.
The result now follows from the equivalence between $\cO_k$ and
the category of finitely-generated (hence finite dimensional)
modules over $\widehat{A}(\QPA)/\langle \vart(\fk)e_\qvb\rangle^k$.
\end{proof}

\section{Hypertoric varieties}\label{sec:hypertoric}
In this section we recall the definition of a hypertoric variety associated to a regular polarized arrangement
$\PA = (\vb_0, \eta, \xi)$, along with a quantization of its structure sheaf.  The
ring of equivariant sections of this quantization coincides with a central quotient of the hypertoric enveloping algebra determined by $\vb_0$.

\subsection{The variety defined}
Consider the moment map $\Phi:T^*\C^n\to\fk^*$ for the action of $K$ on $T^*\C^n$.  As noted in 
Remark \ref{grul}, one way to define $\Phi$ is by taking the associated graded of the quantized
moment map.  More concretely, for any $(z,w)\in T^*\C^n$
and $k_1h_1+\ldots+k_nh_n\in\fk\subset\t$, $$\Phi(z,w)(k_1h_1+\ldots+k_nh_n) = k_1z_1w_1+\ldots+k_nz_nw_n.$$
The parameter $\eta$ lies in $\SgrHZ/\vb_0$, which is naturally identified with the character lattice of $K$
(Section \ref{sec:algdef}).  Consider the subset
$\X\subset T^*\C^n$ of $\eta$-semistable points in the sense of geometric invariant theory.  
Then the {\bf hypertoric variety} $\fM = \fM(\PA)$ is defined as the categorical quotient 
of $\X\cap\Phi^{-1}(0)$ by $K$.  Let $\pi:\X\cap\Phi^{-1}(0)\to \fM$ be the quotient map.
Since $\eta$ is regular, 
the action of $K$ on $\X\cap\Phi^{-1}(0)$
is locally free, the quotient is geometric (that is, the fibers of $\pi$ coincide with the
orbits of $K$), and $\fM$ is a symplectic orbifold.
It is smooth if and only if $\vb_0$ is unimodular \cite[3.3]{BD}.

Let $\fM_0$ be the categorical quotient of $\Phi^{-1}(0)$ by $K$.
The inclusion of $\X\cap\Phi^{-1}(0)$ into $\Phi^{-1}(0)$ induces a projective map
$\nu:\fM\to \fM_0$.
Moreover, $\fM$ comes equipped with a line bundle 
$$\mathfrak{L}_\eta := \X\cap\Phi^{-1}(0)\times_K\C_\eta,$$
where $K$ acts on $\C_\eta$ via the character $\eta$.
This bundle is ample, and very ample relative to $\fM_0$.
If $\fM$ is smooth, then $\nu$ is a resolution, and the
Grauert-Riemenschneider theorem tells us that the higher 
cohomology of the holomorphic structure sheaf\footnote{Here
and elsewhere we use the unconventional notation $\fS$ for the structure sheaf
of a variety, since the standard symbol $\cO$ is being used for the category.} $\fS_{\fM}$ vanishes \cite[2.1]{Kal00}.
This is the first hint that $\fM$ is morally close to being affine.

\begin{example}\label{hypertoric examples}
If $K\subset T$ is the one-dimensional diagonal subtorus as in Example \ref{big example},
then $\fM \cong T^*\C P^{n-1}$ for any choice of regular $\eta$, and $\fM_0$ is obtained from $\fM$
by collapsing the zero section.  If $K\subset T$ is the $(n-1)$-dimensional determinant 1 subtorus 
as in Example \ref{next example}, then $\fM_0$ is isomorphic to the quotient of $\C^2$
by the symplectic action of $\Z/n\Z$, and for any choice of regular $\eta$, $\fM$ is a minimal resolution
(all of which are isomorphic).
\end{example}

Let $\bS:=\C^\times$ act on $T^*\C^n$ by inverse scalar multiplication;
that is, $s\cdot (z,w) := (s^{-1}z, s^{-1}w)$.  
This induces an action on both
$\fM$ and $\fM_0$, and the map $\nu$ is $\bS$-equivariant.
This action does not preserve the symplectic form; rather we have
$s\cdot\omega = s^2\omega$ for all $s\in\bS$.
Let $o\in\fM_0$ be the point represented by $0\in T^*\C^n$, so that
\begin{equation*}\label{goodness}
\displaystyle\lim_{\bS\ni s\to\infty}s\cdot p_0 = o
\end{equation*}
for all $p_0\in\fM_0$.
It follows that, for all $p\in\fM$, the limit $\displaystyle\lim_{\bS\ni s\to\infty}s\cdot p$ exists and lies in
$\nu^{-1}(o)$.  

The hypertoric variety $\fM$ admits a Hamiltonian action of $T/K$, 
induced by the action of $T$ on $\C^n$, which commutes with the action of $\bS$.
Let $\bT:=\C^\times$ act on $\fM$ via $\xi$, 
which is naturally a cocharacter of $T/K$ (Section \ref{sec:algdef}).
The following proposition is straightforward.

\begin{proposition}\label{prop-grul}
The hypertoric variety $\fM_0$ is affine, and for any central character $\la$
of the hypertoric enveloping algebra $U$
there is a natural isomorphism $$\gr U_\la\cong\C[\fM_0]\cong\C[\fM].$$
The $\Z$-grading on $U_\la$ given in Section \ref{o-def} descends
to the grading on $\C[\fM]$ induced by the action of $\bT$.  The $\N$-grading
on $\gr U_\la$ induced by the associated graded construction coincides with
the grading on $\C[\fM]$ induced by the action of $\bS$.
\end{proposition}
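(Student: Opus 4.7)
The plan is to verify the four assertions separately, essentially by assembling ingredients already laid out in Remark \ref{grul} and in the construction of $\fM$ and $\fM_0$ from Section \ref{sec:hypertoric}.

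First I would observe that $\fM_0$ is affine by construction: $\Phi^{-1}(0)\subset T^*\C^n$ is an affine closed subscheme, $K$ is reductive, and $\fM_0$ is by definition its categorical (GIT) quotient, so $\C[\fM_0] = \C[\Phi^{-1}(0)]^K$. Next I would compute $\gr U_\la$. By Remark \ref{grul}, the associated graded of the quotient $\D^K/\langle\mu_\la(\fk)\rangle$ is $\C[T^*\C^n]^K/\langle\gr\mu_\la(\fk)\rangle$, where $\gr\mu_\la(\fk)\subset\C[T^*\C^n]$ consists of the components of the classical moment map $\Phi$. Identifying $\C[\Phi^{-1}(0)] = \C[T^*\C^n]/\langle\Phi^*(\fk)\rangle$ then gives $\gr U_\la \cong \C[\Phi^{-1}(0)]^K = \C[\fM_0]$, provided the ideal generated by the components of $\Phi$ really is the full ideal of $\Phi^{-1}(0)$ (rather than a subideal).

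The latter is the one substantive point: it says that the components of $\Phi$ form a regular sequence on $\C[T^*\C^n]$, equivalently that $\Phi$ is flat. This is standard for torus moment maps on $T^*\C^n$ and follows from a dimension count: the generic fiber of $\Phi$ has dimension $2n-\dim\fk$ (because $K$ acts with finite-dimensional stabilizers on the open subset of $T^*\C^n$ where coordinates $(z,w)$ satisfy $z_i\ne 0$ or $w_i\ne 0$ for each $i$), and since $T^*\C^n$ is Cohen-Macaulay, equidimensional fibers of the expected dimension over a smooth base imply flatness by miracle flatness. I expect this verification to be the main technical point, though it is a well-known fact in the hypertoric literature.

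For $\C[\fM_0]\cong\C[\fM]$ I would argue as follows. The morphism $\nu\colon\fM\to\fM_0$ is projective, and $\fM_0$ is a symplectic (hence normal) affine variety; moreover, the Stein factorization of $\nu$ is trivial because $\fM$ and $\fM_0$ agree on the dense locus where $K$ acts freely on $\Phi^{-1}(0)\cap\X$. Thus $\nu_*\fS_\fM = \fS_{\fM_0}$, and taking global sections over the affine $\fM_0$ yields $\C[\fM] = H^0(\fM_0,\nu_*\fS_\fM) = \C[\fM_0]$.

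Finally, the two grading compatibilities are formal. The $\Z$-grading on $U_\la$ comes from the adjoint action of $\bT\subset T/K$ via a lift of $\xi$ to $F_2\bH$; under the identification $\gr U_\la\cong\C[\fM]$ this $\bT$-action corresponds precisely to the Hamiltonian $\bT$-action on $\fM$ defined in Section \ref{sec:hypertoric}, so the two gradings agree. The $\N$-grading on $\gr U_\la$ is induced by the Bernstein filtration on $\D$ (total degree in $x_i,\del_i$), which on associated graded $\C[T^*\C^n]$ is exactly the grading by joint degree in $(z,w)$; this is the weight grading for the inverse-scaling action $\bS$ on $T^*\C^n$, and therefore descends to the $\bS$-grading on $\C[\fM]$.
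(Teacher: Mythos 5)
Your proof is correct and follows the route the paper itself intends: the paper offers no proof (it declares the proposition straightforward, with the computation of $\gr U_\la$ sketched in Remark \ref{grul}), and your writeup simply supplies the standard details of that sketch together with the usual GIT facts about $\fM_0$ and $\nu$. You also correctly isolate the one substantive point---that the components of the classical moment map cut out $\Phi^{-1}(0)$ as a complete intersection, so that taking associated graded commutes with the quotient by $\big\langle \mu_\la(\fk)\big\rangle$---which is exactly the step glossed over in Remark \ref{grul}.
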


For any quantized polarized arrangement $\QPA = (\vb_0, \qvb, \xi)$, let $L := \oplus_{a\in\cP_{\qvb,\xi}}L_\a^\qvb$ 
be the direct sum of the simple objects, and consider the {\bf Yoneda algebra} $$\bigoplus_{i \ge 0} \Ext^i(L, L).$$
(This is the algebra whose module category is Koszul dual to $\cO(\QPA)$,
as explained in Section \ref{sec:koszul}.)  The following result can be taken as a first piece of
evidence that there is a strong relationship between $\cO(\QPA)$ and the geometry of hypertoric varieties.

\begin{theorem}\label{center}
If $\QPA$ is integral and regular and linked to $\PA$, 
then the center of the Yoneda algebra of $\cO(\QPA)$ is isomorphic as a graded algebra
to the cohomology of $\fM(\PA)$.
\end{theorem}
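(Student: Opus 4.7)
The plan is to reduce the statement to a purely algebraic identification via Koszul duality, and then to match the resulting algebra against Konno's presentation of the hypertoric cohomology ring.

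By Theorems \ref{alg=comb} and \ref{blpw}, the category $\cO(\QPA)$ is equivalent to the category of finite dimensional modules over $A(\PA)$, and under this equivalence $L$ corresponds to the semisimple quotient $A(\PA)/\operatorname{rad} A(\PA)$. Since $A(\PA)$ is Koszul with Koszul dual $A(\PA^!)$ (Corollary \ref{qhk}), the standard identification of the Yoneda algebra of a Koszul algebra with its Koszul dual gives an isomorphism of graded algebras
$$\bigoplus_{i \ge 0} \Ext^i_{\cO(\QPA)}(L, L) \;\cong\; A(\PA)^{!} \;\cong\; A(\PA^!).$$
The problem therefore reduces to producing a graded algebra isomorphism $Z(A(\PA^!)) \cong H^*(\fM(\PA); \C)$.

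To handle this reduced problem, I would work with the explicit presentation of $A(\PA^!)$ as a quotient of a subalgebra of a quiver path algebra coming from Theorem \ref{alg=comb} (applied to a quantized arrangement linked to $\PA^!$). For each $i \in \{1, \dots, n\}$, summing the length-two loops at every vertex of the quiver gives a central element $\theta_i^! \in Z(A(\PA^!))$, analogous to the elements $\theta_i$ of Section \ref{sec:quiver}. I would show that the $\theta_i^!$ generate $Z(A(\PA^!))$ as a $\C$-algebra, subject to (i) linear relations coming from the defining lattice of $A(\PA^!)$, namely $\vb_0^! = \vb_0^\perp \subset \SgrHZ$, and (ii) monomial relations $\prod_{i \in S}\theta_i^! = 0$ governed by the combinatorics of $\PA^!$. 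Decomposing $1 = \sum_\a e_\a$ into primitive idempotents reduces this to a finite check over the vertices of the quiver, which can be carried out using the explicit relations in \cite{GDKD}.

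The main obstacle is matching this presentation with Konno's presentation of the cohomology ring $H^*(\fM(\PA); \C)$ as a polynomial ring on divisor classes $[D_1], \dots, [D_n]$ modulo (i) linear relations cut out by the cocharacter lattice of $K$, which by construction of $\fM(\PA)$ is precisely $\vb_0^\perp = \vb_0^!$, and (ii) Stanley-Reisner monomial relations $\prod_{i \in S}[D_i] = 0$ whenever the intersection $\bigcap_{i \in S} H_i$ in $\cH$ is empty. Under Gale duality the infeasibility condition for $\PA$ in (ii) translates, via Theorem \ref{bffb}, to the combinatorial condition defining the monomial relations among the $\theta_i^!$'s. Sending $\theta_i^! \mapsto [D_i]$ then yields the desired graded algebra isomorphism; the grading matches because the $\theta_i^!$ sit in internal degree $2$ of the Koszul algebra $A(\PA^!)$ (hence in cohomological degree $2$ of the Yoneda algebra), and the divisor classes lie in $H^2(\fM(\PA); \C)$. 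I expect the combinatorial bookkeeping in this last matching step to be the most technically delicate part, but no genuinely new idea beyond Theorem \ref{bffb} and the computations of \cite{GDKD} should be required.
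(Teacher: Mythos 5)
Your proposal is correct and follows essentially the same route as the paper: reduce via Theorems \ref{alg=comb} and \ref{blpw} to the combinatorial algebra $A(\PA)$, identify the Yoneda algebra with its Koszul dual $A(\PA^!)\cong B(\PA)$, and then compute the center of that dual algebra. The only difference is that where you sketch the presentation-matching with Konno's description of $H^*(\fM(\PA);\C)$ (including the nontrivial claim that the degree-two central loops $\theta_i^!$ generate the entire center), the paper simply cites \cite[4.7 \& 4.16]{GDKD}, which carry out exactly that computation.
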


\begin{proof}
By Theorem \ref{blpw}, $\cO(\QPA)$ is equivalent to the module category of the ring $A(\PA)$
introduced in \cite{GDKD}.  By \cite[5.24]{GDKD}, the Yoneda algebra of $\cO(\QPA)$ is isomorphic
to the ring $B(\PA)$ introduced in \cite{GDKD}.  By \cite[4.7 \& 4.16]{GDKD}, the center of $B(\PA)$
is isomorphic to the cohomology ring of $\fM(\PA)$.
\end{proof}

\begin{remark}
By Theorem \ref{reduced-thm}, every regular block $\cO(\QPA)$ of hypertoric category $\cO$ is equivalent
to a regular integral block $\cO(\QPA')$ for some different quantized polarized arrangement $\QPA'$.
Thus Theorem \ref{center} gives a characterization of the center of the Yoneda algebra of an arbitrary regular block.
\end{remark}

\subsection{The relative core}\label{core}
Consider the reduced Lagrangian subvariety
$$\fM^+:= \{p\in \fM\mid \lim_{\bT\ni t\to 0}t\cdot p\,\,\text{exists}\}.$$
It contains the {\bf core} $\nu^{-1}(o)$ (because the $\bS$ and $\bT$ actions commute and
$\nu$ is projective) but is strictly bigger.
We call $\fM^+$ the {\bf relative core} of $\fM$.

Another Lagrangian subvariety of $\fM$ that has appeared in the literature is the {\bf extended core},
which is equal to the zero set of the moment map for the $T/K$-action.  The extended
core contains the relative core; in fact, it is equal to the union of all of the (finitely many) 
possible relative cores, as $\xi$ varies among generic parameters.

\begin{proposition}[\mbox{\rm \cite[6.5]{BD}}]\label{cores}
The extended core of $\fM$ is isomorphic to a union of toric varieties $C_{\a}$ given by the 
polyhedra $\Delta_{\a}$ for $\a\in\cF_{\eta}$
glued along toric subvarieties as prescribed by the incidences
of the polyhedra.
The relative core is the subvariety of the extended core corresponding to those $\a$ in
$\cP_{\eta,\xi}\subset \cF_{\eta}$, and the core is the subvariety corresponding to
those $\a$ that are totally bounded.
\end{proposition}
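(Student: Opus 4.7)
The plan is to exploit the symplectic reduction description $\fM = (\X \cap \Phi^{-1}(0))/K$ and to pull the statement back to $T^*\C^n$. The extended core, being the zero set of the $T/K$-moment map on $\fM$, is the image in $\fM$ of the subset $Z \subset \X \cap \Phi^{-1}(0)$ consisting of points $(z,w)$ with $z_iw_i = 0$ for every $i$. The set $Z$ decomposes according to which coordinate vanishes: for each $\a \in \{+,-\}^n$, let $Z_\a$ be the locus where $w_i = 0$ whenever $\a(i) = +$ and $z_i = 0$ whenever $\a(i) = -$. Each $Z_\a$ is (the intersection with $\X \cap \Phi^{-1}(0)$ of) a coordinate Lagrangian $\C^n \subset T^*\C^n$, acted on by the residual torus $T$.

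Next I would identify each piece of the extended core with a toric variety via the Delzant construction. The symplectic reduction of $Z_\a \cong \C^n$ by $K$ at the character $\eta$ is the toric variety whose moment polytope for the residual $T/K$-action is cut out by the half-space inequalities defining $\Delta_\a$ (with the affine shift from $\eta$ built in); call this toric variety $C_\a$. Hence $C_\a$ is non-empty precisely when $\a \in \cF_\eta$. The gluing assertion follows because the incidences among the $Z_\a$ inside $T^*\C^n$ are given by coordinate-subspace containments, which descend after reduction to the claimed toric-subvariety inclusions prescribed by containment of faces of the polyhedra $\Delta_\a$.

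For the relative core, I would use a Bia{\l}ynicki-Birula argument: on each $C_\a$ the $\bT$-action is Hamiltonian with moment map equal (up to an additive constant) to the pullback of $\xi$ along the $T/K$-moment map, which takes values in $\Delta_\a$. The limit $\lim_{\bT\ni t\to 0}t\cdot p$ exists for every $p \in C_\a$ if and only if the gradient flow of this moment map converges, equivalently if and only if $\xi$ attains a maximum on $\Delta_\a$, i.e.\ $\a \in \cB_\xi$. Combined with feasibility we get $C_\a \subset \fM^+$ iff $\a \in \cP_{\eta,\xi}$. For the core, one uses that $\nu:\fM \to \fM_0$ is projective and that a subvariety of $\fM$ lies in $\nu^{-1}(o)$ iff it is compact; in the toric picture this means $\Delta_\a$ is bounded, which by Remark \ref{totally bounded} is the condition that $\a$ be totally bounded.

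The main obstacle is bookkeeping in the Delzant step: one has to verify carefully that the chamber inequalities cutting out $\Delta_\a \subset V_\R$ from Section \ref{sec:bf} match, after the $\eta$-shift induced by the level at which we reduce, the inequalities produced by the standard symplectic-quotient computation on the coordinate Lagrangian $Z_\a$. Once this matching is set up, the gluing, the compatibility of the $\bT$-action with $\xi$, and the behavior of the collapsing map $\nu$ on each $C_\a$ all follow from the standard toric dictionary.
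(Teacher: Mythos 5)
The extended-core part of your argument is fine in outline, and in fact the paper does not prove this statement at all — it is quoted from \cite[6.5]{BD}, whose proof is exactly your Delzant-type reduction of the coordinate Lagrangians $Z_\a$ by $K$ at level $\eta$. So for that portion there is nothing to compare beyond the sign/shift bookkeeping you already flag.

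The genuine gap is in the relative-core claim. What you establish is one inclusion plus a negation: $C_\a\subset\fM^+$ for $\a\in\cP_{\eta,\xi}$, and $C_\a\not\subset\fM^+$ for feasible $\a\notin\cB_\xi$. That is strictly weaker than $\fM^+=\bigcup_{\a\in\cP_{\eta,\xi}}C_\a$. Two things are missing. First, you never show $\fM^+$ is contained in the extended core at all; the moment map is invariant only under the compact torus, not under the $\R_{>0}$-part of $\bT$, so you cannot argue that $\mu(p)=\mu(\lim_{t\to 0}t\cdot p)$ and you need a separate argument (e.g.\ that the $T/K$-invariant functions $z_iw_i$ are constant along $\bT$-orbits and vanish at the $\bT$-fixed points, for which regularity of $\xi$ is needed; the paper's algebraic substitute is Lemma \ref{vanishing}). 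Second, for $\a\in\cF_\eta\setminus\cB_\xi$ the set $\fM^+\cap C_\a$ is a nonempty closed $T/K$-invariant subvariety of $C_\a$ — a union of toric subvarieties indexed by faces of $\Delta_\a$ on which $\xi$ is bounded above — and you must show each such face is also a face of some $\Delta_\b$ with $\b\in\cP_{\eta,\xi}$, so that these points are already accounted for. This uses simplicity of $\cH$ and regularity of $\xi$ (the tangent cone of such a face at its $\xi$-maximal vertex $v$ is a face of the unique orthant at $v$ on which $\xi$ decreases, which is the tangent cone of $\Delta_\gamma$ for the $\gamma\in\cP_{\eta,\xi}$ attached to $v$ by Remark \ref{bijection}). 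The same one-sided issue recurs for the core: compactness of $C_\a$ gives $C_\a\subset\nu^{-1}(o)$, but the reverse inclusion $\nu^{-1}(o)\subset\bigcup\{C_\a\mid\a\ \text{totally bounded}\}$ is not addressed.
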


As in Section \ref{sec:weyl}, let $x_1,\ldots, x_n$ be coordinates on $\C^n$, 
and let $y_1,\ldots, y_n$ be dual coordinates,
so that $$\C[T^*\C^n]=\C[x_1,y_1,\ldots, x_n,y_n].$$  The subvariety $C_{\a} \subset \fM$ of Proposition \ref{cores}
is cut out by the equations
\begin{equation}\label{Walpha}\text{$y_i = 0$ if $\a(i) = +$}
\qquad\text{and}\qquad
\text{$x_i=0$ if $\a(i) = -$.}
\end{equation}
If $\a\notin \cF_{\eta}$, then these equations will have no solutions in $\fM$.
Let $C_{0,\a}\subset\fM_0$ be defined by the same equations.
If $\a$ is feasible, then $C_{0,\a}$ is the image of $C_{\a}$
along the map $\nu:\fM\to\fM_0$.  However $C_{0,\a}$ is nonempty
even if $\a$ is infeasible; in particular, it always contains the point $o$.

If $M$ is a finitely-generated $U_\la$-module, we can put a filtration on $M$ which is
compatible with the filtration on $U_\la$ by choosing a generating set for $M$.  We can then consider
the support $\supp(\gr M) \subset \M_0$, where we think of $\gr M$
as a sheaf on $\gr U_\la \cong \C[\M_0]$.  Standard arguments show that the support of this sheaf is 
independent of the choice of filtration.\footnote{The reader is cautioned not to confuse the
support of $\gr M$, which is a subvariety of $\fM_0$, with the support of $M$ as defined 
in Section \ref{sec:weight}, which is a subset of $\V$.}
 
\begin{lemma}\label{support}
If $\qvb\subset\SH$ is an integral $\vb_0$-orbit and $\a\in\cF_\qvb$, then $C_{0,\a} = \supp\Big(\gr L_\a^\qvb\Big)$. 
\end{lemma}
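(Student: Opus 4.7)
The plan is a direct computation of $\gr L_\a^\qvb$ using a filtration inherited from the natural $\D$-module filtration on $L_\a$.

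First, I would compute $\gr L_\a$. Since $\qvb$ is integral, $I_\qvb = \{1,\ldots,n\}$, and $L_\a \cong \D/J_\a$ with $J_\a = \D\langle \del_i \mid \a(i)=+\rangle + \D\langle x_i \mid \a(i)=-\rangle$. Filter $L_\a$ by $F_k L_\a := F_k\D \cdot \bar 1$. The principal symbols of the defining generators are the $n$ coordinate functions $\{y_i \mid \a(i)=+\} \cup \{x_j \mid \a(j)=-\}$ in $\gr \D = \C[T^*\C^n]$; comparing the PBW-style monomial basis of $L_\a$ with monomials in the complementary coordinates on $V_\a$ yields $\gr L_\a \cong \C[V_\a]$, where $V_\a \subset T^*\C^n$ is the affine subspace defined by the same equations \eqref{Walpha} that cut out $C_\a$ and $C_{0,\a}$. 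The formula for the classical moment map shows $V_\a \subset \Phi^{-1}(0)$, so $V_\a /\!\!/ K = C_{0,\a}$ as closed subschemes of $\fM_0$.

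Next, I would identify $L_\a^\qvb$ as a single $K$-isotypic component of $L_\a$. Endow $L_\a$ with its natural $T$-equivariant structure in which $\bar 1$ is $T$-invariant; then $\bH$ acts semisimply with one-dimensional weight spaces indexed by integer points of $\DDelta_\a$. The $K$-character on $(L_\a)_v$ depends only on the class of $v$ in $\SH/V_0 \cong \fk^*$. Because $\vb_0$ is a direct summand of $\SgrHZ$, distinct integral $V_0$-orbits yield distinct $K$-characters, so $L_\a^\qvb = (L_\a)^\chi$ for a single character $\chi$ of $K$. Filtering $L_\a^\qvb$ by $F_k L_\a^\qvb := L_\a^\qvb \cap F_k L_\a$, exactness of the $\chi$-isotypic decomposition yields $\gr L_\a^\qvb \cong (\gr L_\a)^\chi = \C[V_\a]^\chi$ as $\C[V_\a]^K$-modules. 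This filtration is compatible with the natural filtration on $U_\la$ inherited from $F_\bullet\D$, and $\C[V_\a]^\chi$ is finitely generated over $\C[V_\a]^K$ (by reductivity of $K$), which is itself a quotient of $\gr U_\la = \C[\fM_0]$; hence the filtration is good.

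Finally, it remains to show that the support of $\C[V_\a]^\chi$ as a $\C[V_\a]^K$-module is all of $\Spec \C[V_\a]^K = V_\a /\!\!/ K = C_{0,\a}$. The hypothesis $\a \in \cF_\qvb$ provides a point $v \in \qvb \cap \DDelta_\a$, so $L_\a^\qvb \ne 0$ and thus $\C[V_\a]^\chi \ne 0$. Since $V_\a$ is an affine space, $\C[V_\a]$ is an integral domain, as is $\C[V_\a]^K$; any $f \in \C[V_\a]^K$ annihilating $\C[V_\a]^\chi$ satisfies $fg = 0$ in the domain $\C[V_\a]$ for some nonzero $g \in \C[V_\a]^\chi$, forcing $f = 0$. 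Hence $\C[V_\a]^\chi$ is a faithful $\C[V_\a]^K$-module, and $\supp(\gr L_\a^\qvb) = V_\a /\!\!/ K = C_{0,\a}$. The subtlest step is the identification $L_\a^\qvb \cong (L_\a)^\chi$ in the second paragraph, which crucially uses that $\vb_0$ is a direct summand of $\SgrHZ$ so that integral $V_0$-orbits inject into $K$-characters; once this is in hand, the faithfulness argument over an integral domain handles the rest cleanly.
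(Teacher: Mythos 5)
Your proof is correct and follows essentially the same route as the paper's: identify $\gr L_\a \cong \C[W_\a]$ via the filtration generated by the image of $1$, recognize $\gr L_\a^\qvb$ as a nonzero $K$-isotypic component of $\C[W_\a]$, and deduce the reverse inclusion from the fact that multiplication by a nonzero element of that isotypic component embeds $\C[W_\a]^K$ into it (your "faithfulness over a domain" phrasing is the same argument). The extra care you take in justifying that $L_\a^\qvb$ is a single isotypic component and that the filtration is good is a welcome elaboration but not a different method.
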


\begin{proof} Let $W_\a\subset T^*\C^n$ be the Lagrangian subspace
defined by the equations \eqref{Walpha}, so that 
$\C[C_{0,\a}] = \C[W_\a]^K$.
If we put the filtration on $L_\a$ induced by the image of 
$1 \in \D$, then we get an isomorphism
 $\gr L_\a \cong \C[W_\a]$ of modules over $\gr \D \cong \C[T^*\C^n]$.
This induces a filtration on $L^\qvb_\a$, making $\gr L^\qvb_\a$ 
a $\gr U_\la$-submodule of $\gr L_\a$.  It follows immediately
that $\supp(\gr L^\qvb_\a) \subset C_{0,\a}$. 

For the other inclusion, note that $\gr L^\qvb_\a$ will be 
isomorphic to some nonzero $K$-isotypic component $\C[W_\a]^{K,\theta}$
of $\C[W_\a]$.  There is an injection 
$\C[W_\a]^K \hookrightarrow \C[W_\a]^{K,\theta}$ given by multiplication
by any nonzero polynomial in $\C[W_\a]^{K,\theta}$, so 
we have $C_{0,\a} \subset \supp(\gr L^\qvb_\a)$.
\end{proof}

We end this section with two technical lemmas about $\fM^+$.
Let $\mathfrak{J}$ be the ideal generated by functions on $\fM$ which are
eigenvectors for the
$\bS\times \bT$-action of positive $\bS$-weight and non-negative $\T$-weight.

\begin{lemma}\label{vanishing}
The relative core $\fM^+\subset\fM$ is the vanishing locus of $\mathfrak{J}$.
\end{lemma}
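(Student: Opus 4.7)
The plan is to establish the two inclusions $\fM^+ \subseteq V(\mathfrak{J})$ and $V(\mathfrak{J}) \subseteq \fM^+$ separately. The first reduces, via Proposition \ref{cores}, to a combinatorial statement about bounded chambers; the second follows from the projectivity of $\nu\colon \fM \to \fM_0$.

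To prove $\fM^+ \subseteq V(\mathfrak{J})$, Proposition \ref{cores} writes $\fM^+ = \bigcup_{\a \in \cP_{\eta,\xi}} C_\a$, so it suffices to show that every generator $f$ of $\mathfrak{J}$ vanishes on each $C_\a$. By Remark \ref{grul}, $\C[\fM]$ is spanned by classes of $K$-invariant monomials $x^a y^b$ with $a - b \in \vb_0$, each of which is an $\bS \times \bT$-eigenvector; decomposing a generator into monomials of the same $\bS\times\bT$-weight, I reduce to the case $f = x^a y^b$ with $\bS$-weight $\sum_i (a_i + b_i) > 0$ and $\bT$-weight $\xi(a - b) \geq 0$. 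By \eqref{Walpha}, the only way $f|_{C_\a}$ can be nonzero is if $a_i > 0$ implies $\a(i) = +$ and $b_j > 0$ implies $\a(j) = -$. Under these support conditions, the vector $a - b$, viewed as an element of $\vb_0 \subset V_{0,\R}$, lies in the cone $\Delta_{0,\a}$. Since $\a$ is bounded and $\xi$ is regular, $\xi$ is strictly negative on $\Delta_{0,\a} \setminus \{0\}$, so if $a \neq b$ we obtain $\xi(a - b) < 0$, contradicting the non-negativity of the $\bT$-weight of $f$. On the other hand, $a = b$ combined with the support conditions forces $a = b = 0$, making $f$ a nonzero constant with $\bS$-weight $0$, again a contradiction.

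For the reverse inclusion, fix $p \in V(\mathfrak{J})$; the goal is to show that $\lim_{t\to 0} t \cdot p$ exists in $\fM$. Since $\nu\colon \fM \to \fM_0$ is projective and $\fM_0$ is affine, by the valuative criterion of properness this reduces to the existence of $\lim_{t\to 0} f(t\cdot p)$ for every $f \in \C[\fM_0] = \C[\fM]$. Decomposing $f$ into $\bT$-eigenvectors and using the identity $f(t\cdot p) = t^{-k}f(p)$ for one of $\bT$-weight $k$, the limit fails to exist only when $k > 0$ and $f(p) \neq 0$; so it suffices to prove that every $\bT$-eigenvector of positive $\bT$-weight vanishes at $p$. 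Such an eigenvector is non-constant, and since $\bS$ contracts $\fM_0$ to the single point $o$, the only $\bS$-invariant regular functions on $\fM_0$ are constants. Therefore every non-constant $\bS$-eigenvector has positive $\bS$-weight; in particular the $\bT$-eigenvector in question belongs to $\mathfrak{J}$, so vanishes at $p$ by assumption.

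The main obstacle is the polyhedral step in the first inclusion: translating the combined hypotheses ``$\a$ bounded and $\xi$ regular'' into the statement that $\xi$ is strictly negative on $\Delta_{0,\a} \setminus \{0\}$. Combined with the observation that $a - b$ lies in $\Delta_{0,\a}$ precisely when $x^a y^b$ has the support needed to not vanish on $C_\a$, this is what forces the $\bT$-weight to be negative and explains why the condition ``positive $\bS$-weight and non-negative $\bT$-weight'' cuts out $\fM^+$ exactly.
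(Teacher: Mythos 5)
Your proof is correct, and the two halves have different relationships to the paper's argument. The inclusion of the vanishing locus of $\mathfrak{J}$ into $\fM^+$ is essentially identical to the paper's: reduce, via properness of $\nu\colon\fM\to\fM_0$, to the existence of $\lim_{t\to 0}f(t\cdot p)$ for all $f\in\C[\fM]$, and note that every nonconstant function has positive $\bS$-weight because $\bS$ contracts $\fM_0$ to $o$, so the positive-$\bT$-weight eigenvectors already lie in $\mathfrak{J}$. For the inclusion $\fM^+\subseteq$ (vanishing locus of $\mathfrak{J}$) you take a genuinely different route. The paper argues globally and geometrically: a function of positive $\bS$-weight vanishes at $\bS$-fixed points, hence on the core (whose components are projective and contain such points), and then the identity $f(p)=\lim_{t\to 0}(t\cdot f)(t\cdot p)$ propagates the vanishing to all of $\fM^+$. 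You instead compute with the presentation $\C[\fM]\cong\C[T^*\C^n]^K/\langle\gr\mu(\fk)\rangle$: reducing to a $K$-invariant monomial $x^ay^b$, using the equations \eqref{Walpha} for $C_\a$ to see that nonvanishing forces $a-b\in\Delta_{0,\a}$, and then invoking the definition of $\cB_\xi$ (properness plus boundedness above of $\xi$ on the cone, which, as Section \ref{sec:bf} notes, regularity of $\xi$ supplies) to get $\xi<0$ on $\Delta_{0,\a}\setminus\{0\}$ and hence a negative $\bT$-weight. Your version buys a transparent explanation of why the precise pair of conditions ``positive $\bS$-weight, non-negative $\bT$-weight'' matches the combinatorial definition of $\cP_{\eta,\xi}$, and it sidesteps the slightly delicate identification of $\lim_{t\to 0}t\cdot p$ with a point where $f$ is already known to vanish; the paper's version is shorter and does not rely on the monomial description of $\C[\fM]$.
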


\begin{proof}
Let $f\in\mathfrak{J}$ be an eigenvector of positive $\bS$-weight and non-negative $\T$-weight.
Then $f$ vanishes on $\bS$-fixed points, and the core (being projective) contains at least one such
point.  Thus $f$ vanishes on the entire core.  
For any $p\in\fM^+$, 
$$f(p) = \lim_{\bT\ni t\to 0}(t\cdot f)(t\cdot p) = 0,$$
since $t\cdot f$ is approaching either $f$ or $0$, and $t\cdot p$ is approaching an element
of the core.
Thus $f$ vanishes on all of $\fM^+$, so
$\fM^+$ is contained in the vanishing locus of $\mathfrak{J}$.
 
For the other inclusion, consider the ideal $\mathfrak{J'}$ generated by functions of
\emph{positive} $\T$-weight.  Since all points of $\fM_0$ limit to $o$ under the $\bS$-action, all functions on $\fM$
have non-negative $\bS$-weight, and the only functions with $\bS$-weight zero are the constant functions, so we have
$\mathfrak{J'}\subset \mathfrak{J}$.  Suppose that $p\in\fM$ lies in the vanishing locus of $\mathfrak{J}$,
and therefore of $\mathfrak{J'}$.  Then for every function $f\in\C[\fM]$, the limit
$$f\left(\lim_{t \to 0} t \cdot p\right) = \lim_{t\to 0}(t^{-1}\cdot f)(p)$$ exists.
(If $f$ has non-positive $\bT$-weight the right-hand side clearly exists, and if $f$ has positive $\bT$-weight
then $t^{-1}\cdot f\in\mathfrak{J}$, so the right-hand side is equal to zero.)
Since $\fM$ is projective over the affine variety $\fM_0$, this implies that $\displaystyle\lim_{t \to 0} t \cdot p$ exists,
and therefore that $p \in \M^+$.
\end{proof}

Let $f_\bT:\fM\to\bt^*\cong\C$ be the moment map for the $\bT$-action on $\fM$, 
where $\bt$ is the Lie algebra of $\bT$.  We say a coherent
sheaf $F$ on $\fM$ is {\bf $\bt$-equivariant} if it is equipped with an endomorphism 
$d:F\to F$ such that, for all meromorphic sections $\gamma$ and
functions $f$, we have $d(f\gamma)=\{f_{\bT},f\}\gamma+fd(\gamma)$.  Here
$\{\, ,\}$ is the Poisson bracket induced by the symplectic form on 
$\fM$, so $\{f_{\bT}, f\} = \frac{d}{dt}(t\cdot f)|_{t = 0}$.

\begin{lemma}\label{coherent-fd}
  For any $\bt$-equivariant coherent sheaf $F$ on $\fM$ which is
  set-theoretically supported on $\fM^+$, the $\bt$-action on $\Gamma(\fM;F)$ 
  is locally finite, the generalized eigenspaces 
  are finite dimensional, and the eigenvalues that appear are
  bounded above.
\end{lemma}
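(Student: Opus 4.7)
The plan is to use the $\mathfrak{J}$-adic filtration of $F$ to reduce to the case where $F$ is a coherent sheaf on the reduced relative core $\fM^+_{\mathrm{red}}$, and then exploit the toric structure of this subvariety together with a graded-Nakayama argument to obtain the desired generalized eigenspace decomposition of $\Gamma(\fM;F)$.

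For the reduction, Lemma \ref{vanishing} identifies $\fM^+$ with the vanishing locus of $\mathfrak{J}$; combined with coherence of $F$ and Noetherianity of $\fM$ this yields some $k$ with $\mathfrak{J}^k F = 0$. Since the generators of $\mathfrak{J}$ are $\bT$-weight vectors, the Poisson derivation $\{f_\bT,-\}$ preserves $\mathfrak{J}^i$ for every $i$, and the Leibniz rule for $\bt$-equivariance forces $d$ to preserve every $\mathfrak{J}^i F$. The $\mathfrak{J}$-adic filtration is therefore $\bt$-equivariant with subquotients annihilated by $\mathfrak{J}$; I may assume $\mathfrak{J} F = 0$ and, by a further filtration by powers of the canonical (hence $\bT$-stable) nilradical of $\mathcal{O}_\fM/\mathfrak{J}$, reduce to the case $F$ lives on $\fM^+_{\mathrm{red}}$. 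By Proposition \ref{cores}, $\fM^+_{\mathrm{red}}=\bigcup_\alpha C_\alpha$ with $\alpha\in\cP_{\eta,\xi}$; since $\xi$ is proper and bounded above on each $\Delta_\alpha$, the recession cone of $\Delta_\alpha$ is a pointed cone contained in $\{\xi\le 0\}$ meeting $\{\xi=0\}$ only at $0$. It follows that $R := \Gamma(\fM^+_{\mathrm{red}};\mathcal{O})$ is $\bT$-graded with weights $\le 0$, finite-dimensional weight spaces, and one-dimensional weight-zero part, and that $M := \Gamma(\fM;F)$ is a finitely generated $R$-module (using projectivity of $\pi\colon\fM\to\fM_0$ and affineness of $\fM_0$).

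To conclude, note that $R_{<0}$ is a graded maximal ideal with $R/R_{<0}\cong\mathbb{C}$, so $M/R_{<0}M$ is finite dimensional and its preimage generates $M$. The derivation $d$ descends to $M/R_{<0}M$ (since $R_{<0}$, hence $R_{<0}M$, is $\bt$-stable) and has a finite Jordan decomposition with generalized eigenvalues $\lambda_1,\ldots,\lambda_s$. I will lift these to honest generalized $\bt$-eigenvectors $\tilde e_1,\ldots,\tilde e_s$ in $M$ by an iterative correction in the $R_{<0}$-adic filtration. For $r\in R_w$ and a generalized eigenvector $m$ of eigenvalue $\lambda$, the identity $(d-\lambda-w)^N(rm)=r(d-\lambda)^N m$ (a direct consequence of the Leibniz rule) shows $rm$ is a generalized eigenvector of eigenvalue $\lambda+w$. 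Since the $\tilde e_i$ generate $M$ over $R$ and $R_w=0$ for $w>0$, the generalized eigenvalues of $d$ on $M$ lie in $\{\lambda_i+w : 1 \le i \le s,\ w\le 0\}$, bounded above by $\max_i\lambda_i$ and with each generalized eigenspace finite dimensional. The main obstacle is this lifting step: naive lifts of the $\tilde e_i$ from $M/R_{<0}M$ only satisfy $(d-\lambda_i)^{N_i}\tilde e_i\in R_{<0}M$, and turning them into honest generalized eigenvectors requires showing that the iterative correction terminates on any given element, which works because a fixed target $\bt$-eigenvalue $\lambda_i+w$ is reached from $\lambda_i$ through only finitely many grading components $R_{w'}$ with $w'\le 0$.
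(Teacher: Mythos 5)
Your reduction to the reduced relative core and your use of the fact that $\C[C_{0,\a}]$ is non‑positively $\bT$-graded with finite-dimensional weight spaces for $\a\in\cP_{\eta,\xi}$ run parallel to the paper's proof, which filters $F$ so as to assume it is scheme-theoretically supported on a single $C_\a$, pushes forward to the affine variety $\fM_0$, and covers $\nu_*F$ by $N\otimes\fS_{C_{0,\a}}$ for a finite-dimensional $\bt$-representation $N$. The two arguments diverge only at the final step, and that is exactly where yours breaks: the iterative lifting of generalized eigenvectors from $M/R_{<0}M$ to $M$ does not terminate. Concretely, take $M=R=\C[u]$ with $u$ of $\bT$-weight $-1$ (the coordinate ring of a one-dimensional component $C_{0,\a}$), and let $d$ be the grading derivation plus multiplication by $u$, so $d(u^m)=-mu^m+u^{m+1}$. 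This is a legitimate $\bt$-equivariant structure, since it differs from the obvious one by an endomorphism linear over the structure sheaf, which is precisely the ambiguity the Leibniz rule permits. The class of $1$ in $M/R_{<0}M$ is a $d$-eigenvector of eigenvalue $0$, but the successive corrections $1$, $1+u$, $1+u+\tfrac{1}{2}u^2,\dots$ are the partial sums of $e^u$ and never stabilize; $d$ has no generalized eigenvector of eigenvalue $0$ in $\C[u]$ at all, and in fact $\C[d]\cdot 1=\C[u]$, so $d$ is not even locally finite there. Your stated reason for termination — that a fixed target eigenvalue $\lambda_i+w$ is reached through only finitely many graded pieces $R_{w'}$ — would give finite-dimensionality of each generalized eigenspace \emph{once local finiteness is known}, but it does not make the correction process stop: that would require the correction to pass through only finitely many weights $w$, and $R$ has infinitely many negative weights. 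The true lift exists only in the $R_{<0}$-adic completion of $M$.

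You have, in fact, put your finger on the real content of the lemma rather than on a defect peculiar to your write-up: the paper's own proof buries the same difficulty in the unproved assertion that $\nu_*F$ admits a $\bt$-equivariant surjection from $N\otimes\fS_{C_{0,\a}}$ with $N$ finite-dimensional, since producing such an $N$ means exhibiting a finite-dimensional $d$-stable generating subspace of $\Gamma(\nu_*F)$ — essentially the local finiteness one is trying to prove. (For a $\bT$-\emph{equivariant} sheaf on an affine variety this is standard, because global sections form an algebraic representation; for a bare derivation $d$ it fails, as the example above shows.) So a correct argument must feed in something beyond the Leibniz rule — e.g.\ integrability of $d$ to a $\bT$-action, or the specific way $d$ arises from the lattices $\cM(0)$ in Definition \ref{def:ql} — and neither your iteration nor a direct appeal to the covering by $N\otimes\fS_{C_{0,\a}}$ supplies it. Separately, two smaller points: your Nakayama step needs a word of justification since $M$ is not graded and $R_{<0}$ is not in the Jacobson radical of $R$ (it works because $\bt$-equivariance forces the support of $M$ to be $\bT$-stable, hence to contain the fixed point $o$ whenever it is nonempty); and the weight-zero part of $R$ is one-dimensional only if the relative core is connected, though finite-dimensionality is all you need.
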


\begin{proof}
  The conclusion holds for a sheaf if it holds for the
  successive quotients of a filtration of the sheaf, so we may
  assume that $F$ is scheme-theoretically supported on one of the
  components $C_{\a}$ of $\fM^+$.
  Now consider the pushforward $\nu_*F$  to $\fM_0$.  This is a coherent sheaf
  scheme-theoretically supported on $C_{0,\a} = \nu(C_{\a})$, and so there is a
  surjective $\bt$-equivariant morphism $N\otimes \fS_{C_{0,\a}}\to
  \nu_*F$ for some finite dimensional $\bt$-representation $N$.
Since $\fM_0$ is affine, the sections functor is exact, so we need only prove the result for 
the $\bt$-action on the ring of functions on $C_{0,\a}$.  This follows 
from the fact that $\displaystyle\lim_{\bT\ni t\to 0}t\cdot p_0 = o$ for all $p_0\in C_{0,\a}$.
\end{proof}

\subsection{Quantizing the hypertoric variety}\label{quantizing}
In this section we explain how the algebra $U_\la$ arises as sections of 
a quantization of the structure sheaf of $\M$.  
Let $$\cD := \fS_{T^*\C^n}\laurenth\and \cD(0) := \fS_{T^*\C^n}[[\hh]]\subset\cD,$$
where again $\fS_{T^*\C^n}$ denotes the holomorphic structure sheaf of $T^*\C^n$ and $\hh$ is a formal parameter.
Let $\chi$ be the Poisson bivector on $T^*\C^n$.  The Moyal product on $\cD$ is defined by 
\begin{equation*}
  f\star g=m\circ e^{\hbar\chi/2}(f\otimes g),
\end{equation*}
where $m:\fS_{T^*\C^n}\otimes_\C\fS_{T^*\C^n}\to \fS_{T^*\C^n}$ is the multiplication map.
Thus $\cD$ is a sheaf of associative $K\times\bS$-equivariant $\C\laurenth$-algebras, 
where $\bS$ acts on $\hh$ with weight 1, and therefore on $\hbar$ with weight 2.
(This reflects the fact that $\bS$ acts on the symplectic form of $T^*\C^n$ with weight 2.)
This sheaf is flat over $\C[[\hh]]$,
and we have natural isomorphisms 
$$\cD(0)/\hh\cD(0)\cong\fS_{T^*\C^n}
\and
\Gamma_\bS(\cD)\cong \D,$$
where $\Gamma_\bS$ is the functor that takes $\bS$-invariant global sections \cite[2.6]{BeKu}.
The second isomorphism is given by sending the element $\hbar^{\nicefrac{-1}{2}}x_i\in\Gamma_\bS(\cD)$ to 
$x_i \in \D$ and $\hbar^{\nicefrac{-1}{2}}y_i\in\Gamma_\bS(\cD)$ to $\partial_i$.

The reduction procedure we applied to obtain $U_\la$ from $\D$ can be ``sheafified'' as follows.  
Let $\la\colon Z(U) \to \C$ be any central character.  Since 
$\ker\la\subset Z(U) \subset U \subset \D$,
we can regard elements of $\ker \la$ as $\bS$-invariant sections of $\cD$.  Let
$$\cL_\la := \cD|_{\X}\Big/\ker \la \cdot \cD|_{\X}
\and \cL_\la(0) := \cD(0)|_{\X}\Big/\Big(\cD(0)\cap \ker \la \cdot \cD|_{\X}\Big).$$
Thus $\cL_\la$ is a $\bS \times K$-equivariant sheaf of right $\cD$-modules on $\X$,
and $\cL_\la(0)$ is a $\bS \times K$-equivariant $\cD(0)$-lattice.
Both are supported on $\X\cap\Phi^{-1}(0)$,
thus 
$$\cU_\la:= \left(\pi_*\sEnd_\cD(\cL_\la)\right)^K\and
\cU_\la(0):= \left(\pi_*\sEnd_{\cD(0)}(\cL_\la(0))\right)^K$$
are $\bS$-equivariant sheaves of algebras on $\fM$.
Consider also the sheaf of $(\cU_\la, \cU_{\la'})$-bimodules 
$${_{\la}\ctra_{\la'}}:=\left(\pi_*\sHom_\cD(\cL_\la,\cL_{\la'})\right)^K$$
and its $(\cU_\la(0), \cU_{\la'}(0))$-lattice
$${_{\la}\ctra_{\la'}}(0):=\left(\pi_*\sHom_{\cD(0)}(\cL_\la(0),\cL_{\la'}(0))\right)^K.$$
Just as $\cU_\la(0)/\hh\cU_\la(0)\cong \fS_\fM$, we have that 
$${_{\la}\ctra_{\la'}}(0)/\hh \cdot {_{\la}\ctra_{\la'}}(0)\cong \mathfrak{L}_{\la'-\la}
=\X\cap\Phi^{-1}(0)\times_K\C_{\la-\la'}.$$

\begin{proposition}
We have isomorphisms $\Gamma_\bS(\cU_\la)\cong U_\la$ and $\Gamma_\bS({_{\la}\ctra_{\la'}})\cong{_{\la}\tra_{\la'}}$.
\end{proposition}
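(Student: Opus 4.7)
The plan is to treat both isomorphisms in parallel, reducing to the already-established identity $\Gamma_\bS(\cD) \cong \D$ by an $\hh$-adic associated graded argument. First I would construct the natural maps. Elements of $\ker\la$ annihilate $\cL_\la$ by the very definition of $\cL_\la$, so acting on $\cL_\la$ via $\bS$-invariant global sections of $\cD$ yields an algebra homomorphism $U_\la = \D^K/\langle\ker\la\rangle \to \Gamma_\bS(\cU_\la)$, using the identification $U_\la \cong \End_K(Y_\la)$ from Remark \ref{module Y}. The bimodule case is entirely parallel, using $\D^{\la-\la'}$ in place of $\D^K$ and the description ${_\la T_{\la'}} \cong \Hom_K(Y_\la, Y_{\la'})$.

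Next I would verify that both maps are compatible with the natural $\hh$-adic filtrations on source and target. On the associated graded level, using $\cU_\la(0)/\hh \cong \fS_\fM$ together with $\gr U_\la \cong \C[\fM]$ from Proposition \ref{prop-grul}, the first map becomes the natural comparison $\C[\fM] \to \Gamma(\fM, \fS_\fM)$, which is an isomorphism because $\fM_0$ is affine and $\nu_* \fS_\fM = \fS_{\fM_0}$. Using ${_\la\ctra_{\la'}}(0)/\hh \cong \mathfrak{L}_{\la'-\la}$, the bimodule map becomes the standard GIT identification of $\Gamma(\fM, \mathfrak{L}_{\la'-\la})$ with the appropriate $K$-isotypic component of $\C[\X \cap \Phi^{-1}(0)]$, which is analogously an isomorphism.

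Finally, lifting from the associated graded to the original statement requires the vanishings $H^i(\fM, \cU_\la(0))^\bS = 0$ and $H^i(\fM, {_\la\ctra_{\la'}}(0))^\bS = 0$ for $i > 0$. Reducing modulo $\hh$, these follow from $H^i(\fM, \fS_\fM)^\bS = 0$ and $H^i(\fM, \mathfrak{L}_{\la'-\la})^\bS = 0$ respectively. The structure sheaf case is a consequence of the affineness of $\fM_0$, the projectivity of $\nu$, and the Grauert--Riemenschneider vanishing cited in the excerpt. The main obstacle is the line bundle case, since $\mathfrak{L}_{\la'-\la}$ need not be ample: my strategy would be to use the Leray spectral sequence for $\nu\colon \fM \to \fM_0$ and exploit the $\bS$-equivariance, using the contraction of $\fM_0$ onto the single point $o$ under $\bS$ to show that each $R^i\nu_* \mathfrak{L}_{\la'-\la}$ has only bounded-below $\bS$-weights with no invariants, in the spirit of Lemma \ref{coherent-fd}. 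Alternatively, one may simply invoke the parallel result of Bellamy and Kuwabara \cite{BeKu}, who have studied precisely this quantization.
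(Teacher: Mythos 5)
Your argument for $\Gamma_\bS(\cU_\la)\cong U_\la$ is workable but roundabout; the real trouble is the bimodule case, where two of your steps rely on statements that are false in general. First, the ``standard GIT identification'' of $\Gamma(\fM,\mathfrak{L}_{\la'-\la})$ with the $(\la-\la')$-isotypic component of $\C[\X\cap\Phi^{-1}(0)]$ is not the associated graded of ${}_\la T_{\la'}$: that associated graded is the isotypic component of $\C[\Phi^{-1}(0)]$, and the restriction map from $\Phi^{-1}(0)$ to $\X\cap\Phi^{-1}(0)$ need not be an isomorphism, because the unstable locus has codimension $\geq 2$ in $T^*\C^n$ but can have codimension $1$ in $\Phi^{-1}(0)$. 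Second, the vanishing $H^{>0}(\fM,\mathfrak{L}_{\la'-\la})^\bS=0$ fails: $\mathfrak{L}_{\la'-\la}$ need not be nef, and already for $\fM=T^*\C P^1$ (Example \ref{hypertoric examples}) with $\mathfrak{L}_{\la'-\la}\cong\cO(-2)$ one computes $H^1(T^*\C P^1,\cO(-2))\cong H^1(\C P^1,\cO(-2))=\C$, concentrated in $\bS$-weight $0$; equivalently $R^1\nu_*\cO(-2)$ has a nonzero $\bS$-invariant stalk at $o$, so your proposed weight argument for the ``main obstacle'' cannot succeed. This nonvanishing is not a technicality one can route around: in that example $\Gamma(T^*\C P^1,\cO(-2))$ has a one-dimensional piece in $\bS$-weight $1$ that does not lift modulo $\hh^2$ (the obstruction being exactly that $H^1$ class), which is why $\Gamma_\bS({}_\la\ctra_{\la'})$ is strictly smaller than what your associated graded computation predicts, and happily equal to ${}_\la T_{\la'}$.

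The paper avoids all of this by never descending to $\fM$: since $\Gamma_\bS(\cD)\cong\D$ and the complement of $\X$ in $T^*\C^n$ has codimension at least $2$, restriction induces an isomorphism $Y_{\la}\to\Gamma(\X,\cL_{\la})^\bS$ (and likewise for $\la'$); evaluation at the $\bS$-invariant section $1$ then identifies $\Gamma_\bS(\cU_\la)$ with $\End_K(Y_\la)=U_\la$ and $\Gamma_\bS({}_\la\ctra_{\la'})$ with $\Hom_K(Y_\la,Y_{\la'})={}_\la T_{\la'}$. The only geometric input is the Hartogs-type extension across a codimension-two locus upstairs, which holds for arbitrary integral $\la-\la'$ and requires no positivity of $\mathfrak{L}_{\la'-\la}$ and no cohomology vanishing on $\fM$. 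If you want to salvage a quotient-side argument, you would have to work with $\nu_*$ of the lattice and its derived functors simultaneously, which is essentially the content of \cite{BeKu}; citing that result is legitimate, but the argument you sketch in its place does not close the gap.
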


\begin{proof}
The isomorphism $\D \cong \Gamma_\bS(\cD)$ induces a map $Y_\la \to \Gamma(\X, \cL_\la)^\bS$, where 
$Y_\la = \D/\langle\ker\la\rangle\D$ is the $\D$-module introduced in Remark \ref{module Y}.  Since the complement
of $\X$ in $T^*\C^n$ has codimension at least $2$, this map is an isomorphism.  
Since $1$ is an $\bS$-invariant section of $\cL_\la$, 
evaluating at $1$ gives a map $\Gamma_\bS(\cU_\la) \to \End_\D(Y_\la) = U_\la$, which is easily seen to be an isomorphism.
\end{proof}

Following \cite{KR}, we call a $\cU_\la(0)$-module $\cN(0)$ {\bf coherent} if it is
locally finitely generated or equivalently by Nakayama's lemma, if
$\cN(0)/h\cN(0)$ is a coherent sheaf and call a
$\cU_\la$-module
{\bf good} if it admits a coherent $\cU_\la(0)$-lattice (and thus is
itself coherent).  
Let $\mgs$ denote the category of good $\bS$-equivariant $\cU_\la$-modules.

\begin{remark}
  There are heuristic reasons to treat $\mgs$ as a version of the
  Fukaya category of $\fM$. 
  One justification comes from the physical theory of A-branes, 
  which the Fukaya category 
  attempts to formalize.  Kapustin and Witten \cite{KW07} suggest that
  on a hyperk\"ahler manifold such as $\fM$ there are objects in an
  enlargement of the Fukaya category which correspond not just to
  Lagrangian submanifolds, but higher dimensional coisotropic
  submanifolds.  In particular, there is a object in this category
  supported on all of $\fM$ called the {\bf canonical coisotropic
    brane}.  Following the prescription of Kapustin and Witten further
  shows that endomorphisms of this object are exactly the algebra
  $U_\la$, or if interpreted sheaf theoretically, $\cU_\la$.
  This leads us to conjecture that there is a natural equivalence
  between $\mgs$ and the Fukaya category of $\fM$, twisted by the
  B-field $H^2(\fM;\C^{\times})$ determined\footnote{The group
  $H^2(\fM;\C^{\times})$ is isomorphic to $H^2(\fM;\C)/H^2(\fM;\Z)$.
  The vector space $H^2(\fM;\C)$ is isomorphic to $\fk^*\cong W/V_0$ via the
  Kirwan map.  The parameter $\la$ determines a $V_0$-orbit in $\SH$,
  which determines a $V_0$-orbit in $\SgrH$ up to translation by the lattice $W_\Z/\vb_0\cong H^2(\fM;\Z)$.} by $\la$.
 When $\fM$ is replaced by
  the cotangent bundle of an arbitrary real analytic manifold  an analogous statement is proven by Nadler and Zaslow \cite{NZ}. 
\end{remark}

\section{Localization}\label{sec:Localization}
As in Section \ref{sec:hypertoric}, let $\PA = (\vb_0,\eta,\xi)$ be a regular polarized arrangement,
let $\fM$ be the hypertoric variety associated to $\PA$, let $U$ be the hypertoric enveloping algebra
associated to $\vb_0$.  Fix a central character $\la$ of $U$, and let $\V\subset\SH$ be the corresponding $V_0$-orbit. 
In this section we show that the infinitesimal block $\cO_\la$ of hypertoric category $\cO$ (Definition \ref{defn of O})
can be ``localized" to a certain category of sheaves of modules over the quantized structure sheaf $\cU_\la$,
supported on the relative core $\fM^+\subset\fM$.  
When $\vb_0$ is unimodular and $\QPA$ an integral quantized polarized arrangement linked to $\PA$, we use localization to obtain a topological interpretation of the Grothendieck group
of $\cO(\QPA)$.

\subsection{The localization functor}
We define the functor $$\Loc:U_\la\mmod \to \mgs$$ by putting
$\Loc(M) := \cU_\la\otimes_{U_\la} M$
for any $U_\la$-module $M$.  We note that any good filtration of $M$ induces a $\cU_\la(0)$-lattice for $\Loc(M)$, 
namely $\cU_\la(0)\otimes_{R(U_\la)} R(M)$, where $R(U_\la)$ and $R(M)$ are the Rees algebra and Rees module
of $U_\la$ and $M$, respectively.
The functor $\Loc$ is adjoint to the
$\bS$-invariant global sections functor $$\Gamma_\bS:\mgs\to U_\la\mmod.$$
Just as a choice of a good filtration of $M$ induces a particular lattice in $\Loc(M)$,
a choice of lattice $\cM(0)\subset\cM\in\mgs$ induces a good filtration of $\Gamma_\bS(\cM)$
with $\gr \Gamma_\bS(\cM)\cong\Gamma\big(\cM(0)/\hh \cM(0)\big)$.

The following theorem, based on the results of Kashiwara and Rouquier \cite{KR} and their adaptation
to the hypertoric case by Bellamy and Kuwabara \cite{BeKu}, is an analogue of the localization theorem of Beilinson and Bernstein \cite{BB}.

\begin{theorem}\label{localization}
If $\vb_0$ is unimodular and $\cF_\qvb = \cF_{\qvb+r\eta}$ for all $\vb_0$-orbits $\qvb\subset\V$
and integers $r\geq 0$, then the functors $\secs$ and $\Loc$ are quasi-inverse equivalences.
\end{theorem}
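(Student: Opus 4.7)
The plan is to establish the equivalence first at a suitably positive twist $\la + r\eta$ of the central character using the machinery of Kashiwara-Rouquier \cite{KR} as adapted to the hypertoric setting by Bellamy-Kuwabara \cite{BeKu}, and then to transport this equivalence back to $\la$ itself using translation functors on both the algebra and the sheaf sides. This mirrors the classical Beilinson-Bernstein strategy, in which D-affinity is established after twisting by a relatively very ample line bundle and then descended by exploiting the compatibility of $\Loc$ and $\Gamma_\bS$ with twisting.

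First I would invoke the Bellamy-Kuwabara theorem at the shifted character $\la + r\eta$ for $r$ sufficiently large. Unimodularity of $\vb_0$ makes $\fM$ smooth by \cite[3.3]{BD}, and the relative ampleness of $\mathfrak{L}_\eta$ over $\fM_0$ combined with the contracting $\bS$-action guarantees, for $r \gg 0$, that $\Gamma_\bS$ becomes exact on $\operatorname{Mod}^{\operatorname{good}}_\bS(\cU_{\la+r\eta})$ and that every good $\bS$-equivariant $\cU_{\la+r\eta}$-module is generated by its $\bS$-invariant sections. This yields the desired equivalence at $\la + r\eta$.

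Next I would verify that translation functors are equivalences on both sides. On the algebra side, Proposition \ref{trans-equiv}, combined with the hypothesis $\cF_\qvb = \cF_{\qvb+r\eta}$ applied to every $\vb_0$-orbit $\qvb \subset \V$, shows that the multiplication maps ${}_\la T_{\la+r\eta} \otimes_{U_{\la+r\eta}} {}_{\la+r\eta} T_\la \to U_\la$ and ${}_{\la+r\eta} T_\la \otimes_{U_\la} {}_\la T_{\la+r\eta} \to U_{\la+r\eta}$ are isomorphisms, so tensoring with these bimodules furnishes mutually inverse equivalences between $U_\la\mmod$ and $U_{\la+r\eta}\mmod$. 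On the sheaf side, the associated graded of ${}_\la \ctra_{\la+r\eta}$ is the honest line bundle $\mathfrak{L}_{r\eta}$ (honest since $\fM$ is smooth), so tensoring with ${}_\la\ctra_{\la+r\eta}$ and ${}_{\la+r\eta}\ctra_\la$ provides quasi-inverse equivalences between the corresponding categories of $\bS$-equivariant good modules.

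The remaining, and chief, technical point is to intertwine these translations with $\Loc$ and $\Gamma_\bS$, that is, to produce natural isomorphisms $\Loc \circ ({}_\la T_{\la+r\eta} \otimes_{U_{\la+r\eta}} -) \cong ({}_\la\ctra_{\la+r\eta} \otimes_{\cU_{\la+r\eta}} -) \circ \Loc$ together with the dual identity for $\Gamma_\bS$. I would attack this by choosing an $\bS$-equivariant good filtration on a $U_{\la+r\eta}$-module $M$, passing to Rees modules to produce compatible $\cU(0)$-lattices on both sides of the putative isomorphism, and then checking the identity modulo $\hh$, where it reduces to the classical Hamiltonian-reduction statement underlying the identification $\Gamma_\bS({}_\la \ctra_{\la+r\eta}) \cong {}_\la T_{\la+r\eta}$ noted in Section \ref{quantizing}. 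This compatibility step is where the real work lies; once it is in place, chasing the diagram collapses the Bellamy-Kuwabara equivalence at $\la + r\eta$ to the claimed equivalence at $\la$.
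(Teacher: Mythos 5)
The paper's own proof of Theorem \ref{localization} is a two-line citation: it deduces the statement from the abstract localization criterion of Bellamy--Kuwabara \cite[3.5]{BeKu} (itself modelled on Kashiwara--Rouquier \cite{KR}), whose hypothesis is supplied by Proposition \ref{trans-equiv}. Your proposal is essentially an attempt to unpack the proof of that cited criterion, and you correctly identify two of its inputs: the invertibility of the algebraic translation functors via Proposition \ref{trans-equiv} (exactly where the hypothesis $\cF_\qvb=\cF_{\qvb+r\eta}$ enters), and the compatibility of translation with $\Loc$ and $\Gamma_\bS$ through the bimodules ${}_{\la}\ctra_{\la'}$ and their lattices.

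The gap is in your first step. You cannot establish the equivalence at a single shifted character $\la+r_0\eta$ ``for $r_0\gg 0$'' directly from relative ampleness of $\mathfrak{L}_\eta$: the claim that every good $\cU_{\la+r_0\eta}$-module is generated by its $\bS$-invariant sections, with $\Gamma_\bS$ exact, is not a consequence of ampleness at any fixed twist --- just as in Serre's theorem, the twist needed to achieve global generation and cohomology vanishing depends on the module. The actual argument in \cite{KR,BeKu} works at $\la$ itself: for each individual good module $\cM$ it twists by ${}_{\la+r\eta}\ctra_{\la}$ with $r=r(\cM)$ large, applies coherent-sheaf ampleness to the quotient $\cM(0)/\hh\cM(0)$, and then untwists using the translation equivalences. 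This is precisely why the hypothesis must hold for \emph{all} $r\ge 0$ rather than a single large value; if your step 1 were provable as stated, the theorem would follow from the hypothesis at one $r$, which is strictly weaker than what is assumed. Your remaining steps are sound and are indeed the content of the cited results, so the repair is structural: do not claim an equivalence at a fixed positive twist, but run the module-by-module twisting argument inside the proof of exactness and conservativity of $\Gamma_\bS$ at $\la$.
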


\begin{proof}
This follows from \cite[3.5]{BeKu} and Proposition \ref{trans-equiv}.  
Note that a very similar theorem with slightly different hypotheses is proven in \cite[5.8]{BeKu}. 
\end{proof}

\subsection{The category \texorpdfstring{$\cQ_\la$}{Q lambda}}\label{sec:gco}
Our next task is to determine which objects of $\mgs$ are sent by the sections functor $\secs$
to $\cO_\la$, and conversely to determine the image of the localization functor restricted to $\cO_\la$.  
Note that we do not assume that $\secs$ and $\Loc$ are equivalences in this section.

\begin{definition}\label{def:ql}
Let $\cQ_\la\subset\mgs$ be the full subcategory consisting of objects $\cM$ satisfying the following 
two additional conditions:
\begin{itemize}
\item there exists a $\cU_\la(0)$-lattice $\cM(0)\subset\cM$ that is preserved by the action of 
$\hat\xi\in U_\la$\footnote{Note that $h\hat\xi$ is contained in $U_\la(0)$, but $\hat\xi$ is not,
so this condition is not vacuous.}
\item $\cM$ is supported on the relative core $\fM^+\subset\fM$.
\end{itemize}
\end{definition}

\begin{theorem}\label{o-sections}
If $\cM\in\cQ_\la$, 
then $\Gamma_{\bS}(\cM)\in\cO_\la$.
\end{theorem}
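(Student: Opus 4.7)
The plan is to verify the defining properties of $\cO_\la$ for $N := \Gamma_\bS(\cM)$ via Lemma \ref{gamma-finite}. That $N$ is finitely generated over $U_\la$ with $Z(U)$ acting semisimply is automatic: the central character is $\la$, and a good filtration of $N$ induced by the lattice $\cM(0)$ has associated graded embedding into $\Gamma\big(\fM;\cM(0)/\hbar\cM(0)\big)$, which is finitely generated over $\C[\fM_0] \cong \gr U_\la$ since $\cM(0)/\hbar\cM(0)$ is a coherent sheaf set-theoretically supported on $\fM^+$ and $\nu$ is projective. By Lemma \ref{gamma-finite}(3), the remaining content is to show that $\hat\xi$ acts on $N$ locally finitely, with finite-dimensional generalized eigenspaces, and with eigenvalues bounded above.

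The key step is to verify that $\cM(0)/\hbar\cM(0)$ is $\bt$-equivariant in the sense of Section \ref{core}, with derivation $d$ induced by the left action of $\hat\xi$ (which preserves $\cM(0)$ by hypothesis). The required Leibniz rule $d(g\bar m) = \{f_{\bT}, g\}\bar m + g\,d(\bar m)$ for local functions $g \in \fS_\fM$ with lift $\tilde g \in \cU_\la(0)$, and local sections $m$ of $\cM(0)$, reduces to the semi-classical identity
\[[\hat\xi,\,\tilde g]\,\,=\,\,\hbar^{-1}[\hbar\hat\xi,\,\tilde g]\,\,\equiv\,\,\{f_{\bT},g\}\pmod{\hbar\,\cU_\la(0)},\]
valid because $\hbar\hat\xi \in \cU_\la(0)$ has symbol equal to the $\bT$-moment map $f_{\bT}$, and the commutator of quantized sections recovers the Poisson bracket to leading order in $\hbar$. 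Since $\cM$, and therefore $\cM(0)/\hbar\cM(0)$, is supported on $\fM^+$, Lemma \ref{coherent-fd} applies and yields that $\hat\xi$ acts on $\Gamma\big(\fM;\cM(0)/\hbar\cM(0)\big)$ locally finitely, with finite-dimensional generalized eigenspaces, and with eigenvalues bounded above by some constant $C$.

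The final step is to transfer these bounds from the graded sheaf to $N$. The $\bS$-weight decomposition gives $N = \bigcup_k \hbar^{-k}\Gamma(\cM(0))_{2k}$, where subscripts indicate $\bS$-weight and $\hbar$ has weight $2$. Since $\hbar$ is central in $\cU_\la$ and therefore commutes with $\hat\xi$, multiplication by $\hbar^{-k}$ identifies the $\hat\xi$-action on the $k^{\text{th}}$ piece with that on $\Gamma(\cM(0))_{2k}$. Filtering each $\Gamma(\cM(0))_{2k}$ by powers of $\hbar$ produces a finite filtration (finite because $\Gamma(\cM(0)/\hbar\cM(0))$ has $\bS$-weights bounded below, being finitely generated over the non-negatively $\bS$-graded ring $\C[\fM_0]$) whose successive quotients embed as $\hat\xi$-modules into $\Gamma(\cM(0)/\hbar\cM(0))_{2k-2j}$. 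The eigenvalue bound $\le C$ and finite-dimensionality of generalized eigenspaces therefore lift to each $\Gamma(\cM(0))_{2k}$; stability of the generalized $\lambda$-eigenspace dimensions in the direct union over $k$ follows from the fact that $\hbar$-multiplication gives injections between consecutive levels, bounded by the total $\lambda$-generalized eigenspace of $\Gamma(\cM(0)/\hbar\cM(0))$, which is finite-dimensional.

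The main obstacle is the rigorous identification of the $\bt$-equivariant structure on $\cM(0)/\hbar\cM(0)$: one must carefully verify the semi-classical commutator formula for $[\hbar\hat\xi, \tilde g]$ and confirm that the induced derivation matches the formalism of Lemma \ref{coherent-fd}. A secondary technical point, handled above by the $\bS$-weight filtration, is that the $\hat\xi$-eigenvalue information is not directly visible on the associated graded of the good filtration, where $\hat\xi$ acts through its symbol $f_{\bT}$, which is only nilpotent (not bounded) on $\cM(0)/\hbar\cM(0)$; instead, one exploits the commutation of $\hat\xi$ with $\hbar$ and the lower bound on $\bS$-weights.
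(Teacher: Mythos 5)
Your proof is correct and follows essentially the same route as the paper: reduce the $\hat\xi$-preserved lattice mod $\hbar$ to get a $\bt$-equivariant coherent sheaf on $\fM^+$, apply Lemma \ref{coherent-fd}, and conclude via Lemma \ref{gamma-finite}. The two points you flag as obstacles (the semiclassical verification of the Leibniz rule for the induced derivation, and the lift of the eigenvalue bounds from $\gr\Gamma_\bS(\cM)\cong\Gamma(\fM,F)$ back to $\Gamma_\bS(\cM)$) are exactly the steps the paper asserts without detail, and your treatments of both are sound.
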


\begin{proof}
Choose a $\cU_\la(0)$-lattice $\cM(0)$ that is preserved by the action of $\hat\xi$,
and let $F:=\cM(0)/\hh \cM(0)$.  The action of $\hat\xi$ on $\cM(0)$ induces a $\bt$-equivariant
structure on $F$; Lemma \ref{coherent-fd} tells us that $\Gamma(\fM, F)$ decomposes into
finite dimensional generalized eigenspaces and that the eigenvalues that appear are bounded above.
Since $\Gamma(\fM, F)\cong \gr\secs(\cM)$ and the $\bt$-equivariant structure on $F$ is induced by
the action of $\hat\xi$ on $\cM$, this implies that $\secs(\cM)$ decomposes into finite dimensional
generalized eigenspaces for the action of $\hat\xi$ and that the eigenvalues that appear are bounded above.
Thus $\secs(\cM)$ lies in $\cO_\la$ by Lemma \ref{gamma-finite}.
\end{proof}

\begin{theorem}\label{o-localization}
If $M\in\cO_\la$, then $\Loc(M)\in\cQ_\la$.
\end{theorem}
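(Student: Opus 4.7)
The plan is to construct an appropriate $\cU_\la(0)$-lattice in $\Loc(M)$ from a small generating subspace of $M$. By Lemma~\ref{gamma-finite} I can choose a finite-dimensional $U^+$-invariant generating subspace $S\subset M$; since $\hat\xi\in U^0\subset U^+$, such an $S$ is automatically $\hat\xi$-stable. Associated to the good filtration $F_k M:=F_k U_\la\cdot S$, I then set
\[
\cM(0)\;:=\;\cU_\la(0)\cdot S\;\subset\;\Loc(M).
\]
This is a coherent $\bS$-equivariant $\cU_\la(0)$-lattice: the equality $\cM(0)[\hh^{-1}]=\Loc(M)$ follows because $S$ generates $M$ over $U_\la$, and coherence follows from finiteness of $S$.

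To check that $\cM(0)$ is preserved by $\hat\xi$, I would use that the $\bT$-action on $\fM$ lifts to $\cU_\la$ and preserves $\cU_\la(0)$ (inherited from the $T$-action on $\cD$, which preserves $\cD(0)$), with infinitesimal generator given by the inner derivation $[\hat\xi,-]$ arising from $\hat\xi\in U_\la=\Gamma_\bS(\cU_\la)$; in particular $[\hat\xi,\cU_\la(0)]\subset\cU_\la(0)$. Then for any $u\in\cU_\la(0)$ and $s\in S$,
\[
\hat\xi\cdot(u s)\;=\;u\cdot(\hat\xi s)+[\hat\xi,u]\cdot s\;\in\;\cU_\la(0)\cdot S\;=\;\cM(0),
\]
using $\hat\xi s\in S$.

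The main content is to show $\supp\Loc(M)\subset\fM^+$. By Lemma~\ref{vanishing}, $\fM^+$ is the vanishing locus of the ideal $\mathfrak{J}$ generated by $\bS\times\bT$-eigenvector functions of positive $\bS$-weight and non-negative $\bT$-weight, so it suffices to prove that $\mathfrak{J}$ annihilates $\gr_F M$ as a module over $\gr U_\la=\C[\fM_0]$. Given a homogeneous generator $f\in\mathfrak{J}$ of $\bS$-weight $s>0$ and $\bT$-weight $t\ge 0$, lift it to $\tilde f\in F_s U_\la\cap U_\la^t$; since $t\ge 0$, $\tilde f$ lies in $U^+$, and hence $\tilde f\cdot S\subset S$ by $U^+$-invariance of $S$. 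For any $u\in F_k U_\la$ and $s_0\in S$,
\[
\tilde f\cdot(u s_0)\;=\;u\cdot(\tilde f s_0)+[\tilde f,u]\cdot s_0,
\]
where $u(\tilde f s_0)\in uS\subset F_k M$ and $[\tilde f,u]\in F_{k+s-2} U_\la$ by the standard degree-two drop of brackets in the Weyl algebra filtration, giving $[\tilde f,u]s_0\in F_{k+s-2}M$. Both contributions lie in $F_{k+s-1}M$, so their image in $\gr_{k+s}M$ vanishes, proving $f\cdot\gr_F M=0$. Hence $\mathfrak{J}\cdot\gr_F M=0$, so $\supp\gr_F M\subset V(\mathfrak{J})$ in $\fM_0$; under the standard identification of $\nu_*(\cM(0)/\hh\cM(0))$ with $\widetilde{\gr_F M}$ along $\nu:\fM\to\fM_0$, this yields
\[
\supp\Loc(M)\;=\;\supp\bigl(\cM(0)/\hh\cM(0)\bigr)\;\subset\;\nu^{-1}\bigl(V(\mathfrak{J}|_{\fM_0})\bigr)\;=\;V(\mathfrak{J}|_\fM)\;=\;\fM^+.
\]
The main obstacle is this bracket calculation: the crucial observation is that combining $U^+$-invariance of $S$ with the $\bT$-weight inequality $t\ge 0$ keeps $\tilde f s_0$ inside $S$ itself rather than merely inside $F_s M$, gaining the one extra filtration degree needed to promote the degree-two bracket drop into outright annihilation on $\gr_F M$.
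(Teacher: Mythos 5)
Your proof is correct, and while its overall skeleton matches the paper's (produce a $\hat\xi$-stable lattice from a good filtration, reduce the support condition to the ideal $\mathfrak{J}$, and finish with Lemma \ref{vanishing}), the key step is argued differently. The paper works with $\gr M$ as a bigraded vector space: it invokes Lemma \ref{gamma-finite} to see that the $\bT$-weight spaces of $\gr M$ are finite dimensional with weights bounded above, and deduces that each generator $f$ of $\mathfrak{J}$ acts \emph{nilpotently} on $\gr M$ (boundedness handles positive $\bT$-weight, finite-dimensionality plus positive $\bS$-weight handles $\bT$-weight zero), which kills the localization $\fS_{\fM_f}\otimes_{\C[\fM]}\gr M$. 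You instead fix the good filtration $F_kM=F_kU_\la\cdot S$ for a finite-dimensional $U^+$-invariant generating space $S$ and show by the commutator estimate $[F_s,F_k]\subset F_{k+s-2}$ together with $\tilde f S\subset S$ that $f$ acts as \emph{zero} on $\gr_F M$, i.e.\ $\mathfrak{J}\subset\Ann(\gr_F M)$. Your argument is the standard characteristic-variety computation from D-module theory; it is more elementary (no appeal to weight-space finiteness of $\gr M$), gives the stronger scheme-level statement that $\mathfrak{J}$ annihilates the associated graded, and makes the role of the hypothesis $M\in\cO_\la$ (via the $U^+$-invariant generating space) very transparent. The paper's version has the mild advantage of not requiring the bracket-degree estimate or a careful choice of lift $\tilde f\in F_sU_\la\cap U_\la^t$, but both routes are sound; your treatment of the first condition of Definition \ref{def:ql} (the $\hat\xi$-stable lattice) is also essentially the one the paper sketches.
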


\begin{proof}
We need to verify that $\Loc(M)$ satisfies the two conditions of Definition \ref{def:ql}.
The first condition is easy; if we choose a filtration of $M$ generated by weight vectors,
then the associated lattice $\Loc(M)(0)\subset \Loc(M)$ will be preserved by the action of $\hat\xi$.
Thus we only need to show that $\Loc(M)$ is supported on the relative core.

Let $f$ be a global function on
$\fM$ which is an $\bS\times \bT$-weight vector of non-negative
$\T$-weight and positive $\bS$-weight. 
Let $\fM_{f}$ be the subset of $\fM$ where $f$ is not zero,
and $i_{\! f}:\fM_{f}\to \fM$ be the inclusion.
Then $i_{\! f}^{-1}\Loc(M)(0)$ is a sheaf of flat $\C[[\hh ]]$-algebras with fiber
$\fS_{\fM_{f}}\otimes_{\C[\fM]}\gr(M)$ at $\hh = 0$.
Lemma \ref{gamma-finite} tells us the $\bT$-weight spaces of $\gr(M)$
are finite dimensional and the weights that appear are bounded above.
If $f$ has positive $\bT$-weight, then it acts nilpotently on $\gr(M)$ by the boundedness of the weights that appear.
If $f$ has $\bT$-weight 0, then it still acts nilpotently by the finite dimensionality of the $\bT$-weight spaces and the fact
that $f$ has positive $\bS$-weight.
This implies that $\fS_{\fM_{f}}\otimes_{\C[\fM]}\gr(M)=0$, and therefore that
$i_{\! f}^{-1}\Loc(M)(0)=0$.  Thus the support of $\Loc(M)$ is disjoint from $\fM_{f}$.  
The theorem then follows from Lemma \ref{vanishing}.
\end{proof}

\begin{corollary}\label{linked-loc}
Suppose that $\vb_0$ is unimodular and $\qvb\subset\V$ is integral.
If $\QPA := (\vb_0,\qvb,\xi)$ is linked to $\PA$, then the functors $\secs$ and $\Loc$ induce
quasi-inverse equivalences between $\cO(\QPA)$ and $\cQ_\la$.
\end{corollary}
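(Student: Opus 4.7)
The plan is to reduce the statement, via Remark~\ref{inf-block=block}, to an instance of the localization equivalence of Theorem~\ref{localization}, and then apply Theorems~\ref{o-sections} and~\ref{o-localization} to cut down to the prescribed subcategories.

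Under the hypotheses that $\vb_0$ is unimodular and $\qvb$ is integral, Remark~\ref{inf-block=block} yields $\cO(\QPA) = \cO_\la$, so it suffices to show that $\secs$ and $\Loc$ restrict to quasi-inverse equivalences between $\cQ_\la$ and $\cO_\la$. For this, I first verify the hypothesis of Theorem~\ref{localization}, namely that $\cF_{\qvb'} = \cF_{\qvb' + r\eta}$ for every $\vb_0$-orbit $\qvb' \subset \V$ and every integer $r \geq 0$. When $\qvb' = \qvb$, this follows from the linkage of $\PA$ and $\QPA$ together with Proposition~\ref{prop:linked}, whose conclusion is available with $k=1$ precisely because $\vb_0$ is unimodular. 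When $\qvb' \neq \qvb$, Remark~\ref{inf-block=block} gives $I_{\qvb'}=\emptyset$, and since $\eta$ lies in $\SgrHZ$ we likewise have $I_{\qvb' + r\eta} = \emptyset$; both sets of feasible sign vectors then consist of the single empty sign vector, and the equality is trivial. Theorem~\ref{localization} therefore produces quasi-inverse equivalences between $\mgs$ and $U_\la\mmod$.

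The restriction to the desired subcategories is now automatic. Theorem~\ref{o-sections} says that $\secs$ sends $\cQ_\la$ into $\cO_\la$, while Theorem~\ref{o-localization} says that $\Loc$ sends $\cO_\la$ into $\cQ_\la$. Since the adjunction unit $\id \to \secs\circ\Loc$ and counit $\Loc\circ\secs \to \id$ are already isomorphisms on the larger categories, they remain isomorphisms after restriction, and the two functors are quasi-inverse equivalences between $\cQ_\la$ and $\cO_\la = \cO(\QPA)$.

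The only real subtlety is the second step above. It is the combination of unimodularity (which lets us take $k=1$ in Proposition~\ref{prop:linked}) and integrality of $\qvb$ (which, by Remark~\ref{inf-block=block}, forces the non-principal $\vb_0$-orbits in $\V$ to have empty index set) that together allow us to verify the global linkage condition required by Theorem~\ref{localization}; without either hypothesis, the corresponding step can fail.
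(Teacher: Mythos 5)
Your proposal is correct and follows essentially the same route as the paper: reduce to $\cO(\QPA)=\cO_\la$ via Remark~\ref{inf-block=block}, verify the hypothesis of Theorem~\ref{localization} using Proposition~\ref{prop:linked} (with $k=1$ by unimodularity) for the orbit $\qvb$ and the vanishing of $I_{\qvb'}$ for the other orbits, and then restrict via Theorems~\ref{o-sections} and~\ref{o-localization}. Your write-up is in fact slightly more explicit than the paper's about why the adjunction isomorphisms survive restriction, but the argument is the same.
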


\begin{proof}
By Proposition \ref{prop:linked}, we have $\cF_\qvb = \cF_{\qvb+r\eta}$ for all positive integers $r$.
By unimodularity, we have $I_{\qvb'} = \emptyset = I_{\qvb'+r\eta}$ for all $\vb_0$-orbits $\qvb'\subset\V$
different from $\qvb$.  By Remark \ref{inf-block=block}, we have $\cO(\QPA) = \cO_\la$.
The result then follows from Theorems \ref{localization}, \ref{o-sections}, and \ref{o-localization}.
\end{proof}

\begin{remark}
We have assumed unimodularity of $\vb_0$ (which is equivalent to the statement that $\fM$ is a manifold rather than an orbifold) so that we can use the results of \cite{KR} and \cite{BeKu}.
We expect that this condition is not essential.
\end{remark}

\begin{example}\label{kazhdan-needed}
Consider the example where $n=2$ and $K\subset T$ is the diagonal subtorus (see Examples \ref{big example} and 
\ref{hypertoric examples}), so that $\fM \cong T^*\C P^1$.  In this case it is possible to choose $\la$ such that
\[U_\la = \C\langle x_1\del_1,x_2\del_1,x_1\del_2,x_2\del_2\rangle/(x_1\del_1+x_2\del_2=0)\]
is isomorphic to the ring of polynomial differential operators on $\C P^1$ and 
$\mgs$ is equivalent to the category of D-modules on $\C P^1$.  
The second condition of Definition \ref{def:ql} requires the singular support to lie in the relative core,
which in this case consists of the zero section along with the fiber at a single point.
The first condition is a regularity assumption that is needed
for Theorem \ref{o-sections} to hold.

To see this, consider the 
$\cU_\la$-module $\cU_\la/\cU_\la(x_1\del_2-1)$.  Restricted to the open subset where $x_1\neq 0$, this is a non-singular connection on the trivial vector bundle; if $z=x_2/x_1$, then it is simply $\frac{d}{dz}-1$.  Thus this module satisfies
the second condition of Definition \ref{def:ql}.  Its $\bS$-equivariant sections, however, are isomorphic to
$U_\la/U_\la(x_1\del_2-1)$, and the fact that the image of $1\in U_\la$ is fixed by $x_1\del_2$ implies that this
is not a weight module.  This reflects the fact that the corresponding D-module has a non-regular singularity at $z=\infty$.
\end{example}

\subsection{The Grothendieck group}\label{sec:grothendieck}
Let $d := \dim V_0$. 
Consider the homomorphism
$$\supp: K(\cQ_\la)_\C\to \hmid$$
given by taking an object $\cM$ to the support cycle of the coherent
sheaf $\cM(0)/\hh \cM(0)$.  The fact that this does not depend on the choice of $\cU_\la(0)$-lattice
$\cM(0)\subset\cM$ is a sheafified version
of \cite[1.1.2]{Gin86}.
The vector space $H^{2d}_\bT(\fM; \C)$ has a natural nondegenerate pairing given by integrating the product
of two classes.  Though $\fM$ is not compact, this integral can be defined by formally applying the
Atiyah-Bott-Berline-Vergne localization formula; see \cite[\S 1]{HP05} for details.  We will refer to this pairing
as the {\bf integration pairing} on $\hmid$.

\begin{remark} 
It may seem more intuitive for the support homomorphism to take values in the group $\hbmc$, 
since the support cycle of $\cM(0)/\hh \cM(0)$ is always a sum of relative core components. 
In fact, these groups are canonically isomorphic via the isomorphisms 
$$\hbmc\to\htbmc\to\htbmm\to\hmid.$$ 
Geometrically, this isomorphism takes the class $[C_\a]$ to $[C_\a]$.  The main reason that we choose 
to work with $\hmid$ is that it is easier to understand the integration pairing on this space; it is also useful 
for the analysis below of the supports of localizations of standard modules. 
\end{remark} 

Suppose now that the hypotheses of Corollary \ref{linked-loc} are satisfied.
Then it is easy to check that $\supp[\Loc(L_\a^\qvb)] = [C_\a]$ for all $\a\in\cP_{\qvb,\xi}$.
Since we know the multiplicities of each simple module in the standard module 
$S_\a^\qvb$, we can compute its image under $\supp$ as follows.  We will 
use the localization isomorphism $H^{2d}_\bT(\fM; \C) \to H^{2d}_\bT(\fM^\bT; \C)$.
A proof of the following result in a more general setting will appear
in \cite{BLPWgco}.
\begin{proposition}
The class $\supp[\Loc(S_\a^\qvb)]$
restricts to the element of 
\[H^{2d}_\bT(\fM^\bT; \C)\cong\bigoplus_{\a\in\cP_{\qvb,\xi}}\Sym(\bt^*)\] which 
is supported at the fixed point $p_\a$ and whose value at $p_\a$ is the product of the negative weights of the action
of $\bT$ on the tangent space $T_{p_\a}\fM$.  
\end{proposition}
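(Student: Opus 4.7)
The plan is to decompose the class via the composition series of $S_\a^\qvb$ and then evaluate at each $\bT$-fixed point via equivariant localization. By Proposition \ref{KL}, the standard module $S_\a^\qvb$ has a filtration whose successive quotients are $L_\b^\qvb$ for $\b \in \cF_\qvb$ with $\Delta_\b \subset \Sigma_\a$, each of multiplicity one. Since $\supp\circ\Loc$ descends to the Grothendieck group and is additive on short exact sequences, and since the identity $\supp[\Loc(L_\b^\qvb)] = [C_\b]$ was already noted in the paragraph preceding the proposition, we obtain
$$\supp[\Loc(S_\a^\qvb)] \;=\; \sum_{\b \,:\, \Delta_\b \subset \Sigma_\a}\! [C_\b] \;\in\; H^{2d}_\bT(\fM;\C).$$

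Under the equivariant localization isomorphism, each class $[C_\b]$ restricts at a fixed point $p_\gamma$ to $0$ if $p_\gamma \notin C_\b$ (equivalently if $v_\gamma$ is not a vertex of $\overline{\Delta_\b}$), and otherwise to the equivariant Euler class $e(N_{C_\b/\fM}|_{p_\gamma})$ of the normal bundle. At $p_\a$, the cone $\Sigma_\a$ coincides with $\Delta_\a$ in a neighborhood of $v_\a$, so the conditions $v_\a \in \overline{\Delta_\b}$ and $\Delta_\b \subset \Sigma_\a$ jointly force $\b = \a$. The tangent space $T_{p_\a}C_\a$ is spanned by the edge directions of $\Delta_\a$ at the $\xi$-maximal vertex $v_\a$; since $\bT$ acts through $\xi$, these edge directions span one Lagrangian half of $T_{p_\a}\fM$ in the $\bT$-weight decomposition, and a direct sign computation (tracking that the $\bT$-moment map agrees with $\xi$ up to the paper's conventions) identifies $N_{C_\a/\fM}|_{p_\a}$ with the negative-weight subspace, yielding the claimed product.

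The main obstacle is the vanishing of the restriction at $p_\gamma$ for $\gamma \neq \a$, i.e., the combinatorial identity
$$\sum_{\substack{\b \in \cP_{\eta,\xi} \\ v_\gamma \in \overline{\Delta_\b},\ \Delta_\b \subset \Sigma_\a}}\! e(N_{C_\b/\fM}|_{p_\gamma}) \;=\; 0.$$
My plan is to identify $\sum_{\Delta_\b \subset \Sigma_\a}[C_\b]$ with the equivariant fundamental class of a single irreducible Lagrangian, namely the closure $\overline{\fM^+_\a}$ of the Bialynicki-Birula attracting cell of $p_\a$, which should decompose as a union of precisely the listed $C_\b$'s with all multiplicities one. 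The desired vanishing at other fixed points would then become a structural property of the class of a contracting affine bundle (realizable as the pushforward of a generator from an affine bundle that contracts onto $p_\a$ under an auxiliary $\C^\times$-action), rather than a delicate term-by-term cancellation in the sum. As the authors remark, a self-contained proof in the generality of symplectic resolutions with a Hamiltonian $\bT$-action, carried out via stable envelopes, is deferred to \cite{BLPWgco}.
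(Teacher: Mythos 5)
Your first two steps track the paper's proof exactly: decompose $\supp[\Loc(S_\a^\qvb)]=\sum_{\Delta_\b\subset\Sigma_\a}[C_\b]$ via Proposition \ref{KL}, restrict each $[C_\b]$ to fixed points as an Euler class of the normal bundle, and observe that at $p_\a$ only $\b=\a$ survives (since $\Delta_\b\subset\Sigma_\a$ together with $w_\a\in\Delta_\b$ forces $\Delta_\b=\Delta_\a$ near $w_\a$, hence $\b=\a$), with $N_{C_\a/\fM}|_{p_\a}$ the negative weight space. All of that is fine and is what the paper does.

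The gap is in the step you correctly identify as the main obstacle, and your proposed fallback would not work. The sum $\sum_{\Delta_\b\subset\Sigma_\a}[C_\b]$ has, in general, several distinct irreducible $d$-dimensional components in its support, whereas the closure of a single Bia{\l}ynicki-Birula attracting cell of an isolated fixed point is irreducible (it is the closure of an affine space); so the identification of the sum with $[\overline{\fM^+_\a}]$ cannot hold except when the sum has a single term. Already for $\fM=T^*\PP^1$ the standard class for the compact chamber is $[\PP^1]+[\text{fiber}]$, which is not the class of any irreducible Lagrangian. Worse, the "structural vanishing" you hope for is false even for genuine attracting-cell closures: $[\PP^1]\in H^2_\bT(T^*\PP^1)$ is the class of the closure of an attracting cell, and its restriction to the \emph{other} fixed point is the (nonzero) weight of the cotangent fiber there. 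The vanishing of $\supp[\Loc(S_\a^\qvb)]$ at $p_\gamma$ for $\gamma\neq\a$ is genuinely a cancellation among the several terms, not a support statement about one subvariety: in the $T^*\PP^1$ example the two restrictions are $-w$ and $+w$. The paper's route is to write $[C_\b]|_{p_\gamma}$ explicitly as the product, over the one-dimensional flats of $\cH$ through $w_\gamma$, of the primitive vectors signed so as to point away from $\Delta_\b$; for $\gamma\neq\a$ the chambers $\Delta_\b\subset\Sigma_\a$ with vertex $w_\gamma$ pair up across a hyperplane through $w_\gamma$ that is not a wall of $\Sigma_\a$, flipping exactly one sign in the product, so the contributions cancel in pairs. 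Some version of this combinatorial argument (or the general machinery of \cite{BLPWgco}) is needed; as written, your proof establishes the value at $p_\a$ but not the support claim.
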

\begin{proof}
By Proposition \ref{KL}, $\supp[\Loc(S_\a^\qvb)]$ is equal to the sum of $[C_\b]$ for all $\b$ such that 
$\Delta_\b\subset\Sigma_\a$. 
For any $\gamma\in\cP_{\qvb,\xi}$, let $w_\gamma\in V_\R$ be the vertex of $\cH$
corresponding to $\gamma$ by Remark \ref{bijection}.  Then the restriction of $[C_\b]$ to $p_\gamma$
is equal to the product
of the projections onto $\bt^*$ of the primitive vectors in the directions of the one-dimensional flats
of $\cH$ passing through $w_\gamma$; the signs of these vectors are determined by requiring that they point
away from $\Delta_\b$.  Alternatively, this may be described as the product of the weights of
the normal bundle to $C_\b\subset\fM$ at $p_\gamma$.
For any $\gamma\neq\a$ such that $w_\gamma\in\Sigma_\a$, the contributions
of the various $[C_\b]$ to $p_\gamma$ will cancel.  Thus the restriction of $\supp[\Loc(S_\a^\qvb)]$ to $p_\a$
coincides with the restriction of $[C_\a]$.  Since $T_{p_\a}C_\a$ is equal to the sum of the positive weight spaces
of $T_{p_\a}\fM$, the fiber of the normal bundle at $p_\a$ is isomorphic to the sum of the negative weight spaces,
thus the restriction of $[C_\a]$ to $p_\a$ is equal to the product of the negative weights.  
\end{proof}

\begin{proposition}\label{forms}
Suppose that $\vb_0$ is unimodular and $\QPA$ is integral and linked to $\PA$, so that $\cO(\QPA)$ is equivalent to $\cQ_\la$.
The isomorphism $\supp:K(\cQ_\la)_\C\to \hmid$ takes takes the Euler form to $(-1)^d$ times the integration pairing.
\end{proposition}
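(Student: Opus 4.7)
The plan is to check the identity on the basis of $K(\cQ_\la)_\C$ consisting of classes of standard modules $\{[S_\a^\qvb]\mid\a\in\cP_{\qvb,\xi}\}$, which forms a basis because $\cO(\QPA)$ is highest weight by Corollary \ref{qhk}.

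For the right-hand side, I apply the Atiyah--Bott--Berline--Vergne equivariant localization formula
\[
\int_\fM\alpha\cup\beta \;=\; \sum_{p\in\fM^\bT}\frac{\alpha|_p\,\beta|_p}{e_\bT(T_p\fM)},
\]
interpreted via the formal prescription of \cite[\S 1]{HP05} since $\fM$ is noncompact. The proposition immediately preceding Proposition \ref{forms} tells us $\supp[\Loc(S_\a^\qvb)]$ restricts to zero at $p_\b$ for $\b\neq\a$, so only diagonal terms contribute. Since the $\bT$-action on $\fM$ preserves the symplectic form and $\xi$ is regular, the weights of $\bT$ on $T_{p_\a}\fM$ pair up as $\pm\mu_1,\dots,\pm\mu_d$, so that $e_\bT(T_{p_\a}\fM) = (-1)^d\prod_i\mu_i^2$ while the restriction of $\supp[\Loc(S_\a^\qvb)]$ at $p_\a$ is $\prod_i\mu_i$ up to a common sign; a short sign computation then yields
\[
\int_\fM\supp[\Loc(S_\a^\qvb)]\cup\supp[\Loc(S_\b^\qvb)] \;=\; (-1)^d\,\delta_{\a\b}.
\]

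For the left-hand side, I claim $\chi(S_\a^\qvb,S_\b^\qvb)=\delta_{\a\b}$. The highest weight structure on $\cO(\QPA)$ produces costandards $\bar S_\a^\qvb$ satisfying $\Ext^i(S_\a^\qvb,\bar S_\b^\qvb) = \delta_{i,0}\delta_{\a,\b}\C$, and hence $\chi(S_\a^\qvb,\bar S_\b^\qvb)=\delta_{\a\b}$. It therefore suffices to show $[\bar S_\a^\qvb] = [S_\a^\qvb]$ in $K(\cO(\QPA))_\C$, i.e.\ that the composition multiplicities of $\bar S_\a^\qvb$ and $S_\a^\qvb$ coincide. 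One route is an explicit combinatorial description of the composition multiplicities of $\bar S_\a^\qvb$ parallel to Proposition \ref{KL}; a cleaner route is via a contravariant duality on $\cO(\QPA)$ fixing simples and swapping standards with costandards, which may be assembled from the antiautomorphism $x_i\leftrightarrow\partial_i$ of $\D$ combined with translation functors compensating for the induced shifts of $\qvb$ and $\xi$, or deduced from the Koszul equivalence $D^b(\cO(\QPA))\simeq D^b(\cO(\QPA^!))$ of Corollary \ref{qhk} via the standard Koszul--Ringel dictionary.

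Combining the two computations gives $\chi(S_\a^\qvb,S_\b^\qvb)=\delta_{\a\b}=(-1)^d\int_\fM\supp[\Loc(S_\a^\qvb)]\cup\supp[\Loc(S_\b^\qvb)]$, and the assertion follows by bilinearity of both pairings. The main obstacle is the K-theoretic identification $[\bar S_\a^\qvb] = [S_\a^\qvb]$; once an appropriate duality is in place, everything else reduces to the $\bT$-equivariant bookkeeping carried out above.
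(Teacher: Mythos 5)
Your proposal follows the paper's proof essentially verbatim: both verify the identity on the basis of standard classes, compute the integration pairing by Atiyah--Bott--Berline--Vergne localization using the preceding proposition on fixed-point restrictions (with the same sign bookkeeping for the symplectically paired weights $\pm\mu_i$), and reduce the algebraic side to orthonormality of the standards under the Euler form. The only difference is that the paper simply asserts this orthonormality, whereas you correctly observe that it requires $[S_\a^\qvb]=[\bar S_\a^\qvb]$ in the Grothendieck group and sketch a duality argument; this does hold (e.g.\ because $A(\PA)$ is isomorphic to its opposite by an isomorphism fixing the vertex idempotents, yielding a simple-preserving contravariant duality exchanging standards and costandards), so your extra care fills in a step the paper leaves implicit.
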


\begin{proof}
The classes $\{[\Loc(S_\a^\qvb)]\mid\a\in\cP_{\qvb,\xi}\}$ form an orthonormal basis for $K(\cQ_\la)_\C$.
The fact that the integration pairing of $\supp[\Loc(S_\a^\qvb)]$ with $\supp[\Loc(S_\b^\qvb)]$ is zero for $\a\neq\b$
follows from the fact that the restriction of $\supp[\Loc(S_\a^\qvb)]$ to the fixed point set is supported at $p_\a$.
The integration pairing of $\supp[\Loc(S_\a^\qvb)]$ with itself is equal to the square of the product of the negative weights
of the action of $\bT$ on $T_{p_\a}\fM$ divided by the product of all of the weights.
Since the action is symplectic, the weights come in $d$ pairs that each add to zero, so this quotient is equal to $(-1)^d$.
\end{proof}

\section{Cells}
\label{sec:cells}
In this section we define and study the hypertoric analogues of Kazhdan-Lusztig cells in BGG category 
$\cO$.  We fix quantized polarized arrangement $\QPA = (\vb_0, \qvb, \xi)$, 
which we assume to be both regular and integral.  Recall that the 
assumption of integrality does not actually lose any generality,
since Theorem \ref{reduced-thm}
tells us that $\cO(\QPA)$ is either trivial (if $\qH$ is inessential) or equivalent
to $\cO(\QPA')$ for some integral quantized polarized arrangement $\QPA'$.

\subsection{Left cells}\label{sec:left}
Consider a pair of feasible sign vectors $\a,\b\in\cF_{\qvb}$ 
along with the corresponding simple objects of $L^\qvb_\a$ and $L^\qvb_\b$ of $\Ulmod_\qvb$.

\begin{definition}
We say that $\a\Lleq \b$ if and only if $\Ann L_\b^\qvb\subset \Ann L_\a^\qvb$.  We say that $\a$ and $\b$ are in the same {\bf left cell} 
of $\cF_\qvb$ if $\a\Lleq \b$ and $\b\Lleq \a$.
\end{definition}

In order to give a characterization of the left cells of $\cF_\qvb$, we introduce some basic notation and
constructions for hyperplane arrangements.  If $F\subset V_0$ is a flat of the hyperplane arrangement $\cH_0$
(Section \ref{sec:polarr}), we let $$\cI_F:= \{i\mid F\subset H_{0,i}\}\subset\{1,\ldots,n\}$$
be the set indexing the hyperplanes that contain $F$.
We define the {\bf localization}\footnote{There is no relationship between this notion of localization
and the one that is the topic of Section \ref{sec:Localization}.} 
of $\cH_0$ at $F$ to be the hyperplane arrangement
$$\cH_0^F := \{H_{0,i}/F\mid i\in\cI_F\}$$
in the vector space $V_{0,\R}/F$.  Similarly, we define the localization of $\qH$ at $F$ to be the hyperplane arrangement
$$\qH^F := \{H^\pm_{i}/F\mid i\in\cI_F\}$$
in the affine space $\V_\R/F$.

Any sign vector $\a\in\{+,-\}^{\cI_F}$ determines a polyhedron $\DDelta_\a^F\subset\V_\R/F$
given by the equations
$$h^+_i \ge 0\text{ for all $i \in \cI_F$ with }\a(i) = + \and h^-_i \le 0 \text{ for all $i\in\cI_F$ with }\a(i) = -.$$
Our assumption that $\QPA$ is regular implies that every non-empty polyhedron $\DDelta_\a^F$ will contain
a point in the lattice $\qvb/(F\cap\vb_0)$.
We call a sign vector $\a\in\{+,-\}^{\cI_F}$ {\bf compact} if $\DDelta_\a^F$ is non-empty and compact.\footnote{This
definition is closely related to the notion of feasibility and total boundedness in Remark \ref{totally bounded},
but that had to do with ordinary arrangements rather than ``doubled" arrangements, which we have here.}
We say that $F$ is {\bf coloop-free} if there exists a compact sign vector
in $\{+,-\}^{I_F}$ for the localized arrangement $\qH^F$.  Our assumption of regularity
ensures that this definition agrees with the standard notion of coloop-free flats.
In \cite[\S 2]{PW07}, the authors show that the symplectic leaves of $\fM_0$
are indexed by coloop-free flats; we will denote by $\fM_0^F$ the leaf indexed by $F$.

For any sign vector $\a\in\{+,-\}^n$, let $F_\a\subset V_{0,\R}$ be the linear span of the cone
$\Delta_{0,\a}$ defined in Section \ref{sec:bf}, and let $\cI_\a := \cI_{F_\a}$.  If $\a$ is feasible, then the restriction of $\a$
to $\cI_\a$ is a compact sign vector for the localized arrangement $\qH^{F_\a}$.
In particular, $F_\a$ is coloop-free, and every coloop-free flat arises in this manner.

Fix a polarized arrangement linked to $\QPA$, and let $\fM$ be the associated hypertoric variety.
Let $E_\a\subset\fM$ be the closure of the
unique component of $\nu^{-1}(\fM_0^{F_\a})$ 
that contains the relative core component $C_{\a}$.
Then $E_\a$ is cut out of $\fM$ by the equations 
\begin{equation}\label{Ea}
z_i = 0 \text{ for all  $i\in \cI_\a$ with $\a(i) = -$}
\and 
w_i = 0\text{ for all $i\in \cI_\a$ with $\a(i) = +$.}
\end{equation}
For example, if $F_\a = V_{0,\R}$, then $\cI_\a = \emptyset$, $\fM_0^{F_\a}$ is the dense leaf, 
and $E_\a = \fM$.
At the other extreme, if $F_\a = \{0\}$, then $\cI_\a = \{1,\ldots, n\}$, $\fM_0^{F_\a} = \{o\}$,
and $E_\a = C_{\a}$.

\begin{proposition}\label{left cells}
For all $\a,\b\in\cF_\qvb$, the following are equivalent:
\begin{enumerate}
\item $\a\Lleq \b$
\item $\cI_\b\subset \cI_\a$ (equivalently $F_\a\subset F_\b$)
and $\a$ and $\b$ agree on $\cI_\b$
\item $\overline{\DDelta_\a\cap\qvb}\subset\overline{\DDelta_\b\cap\qvb}$, where the bar denotes Zariski closure
\item $E_\a\subset E_\b$.
\end{enumerate}
\end{proposition}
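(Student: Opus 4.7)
The plan is to establish the four equivalences via the chain $(2)\Leftrightarrow(4)$, $(2)\Leftrightarrow(3)$, and $(1)\Leftrightarrow(4)$, using condition (2) as the combinatorial pivot.

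The equivalence $(2)\Leftrightarrow(4)$ is the most direct. The subvariety $E_\a\subset\fM$ is cut out by the $|\cI_\a|$ equations listed in \eqref{Ea}. Since $E_\a$ is irreducible of the expected codimension $|\cI_\a|$, these equations are independent, so $E_\a\subset E_\b$ holds if and only if every defining equation of $E_\b$ appears among those of $E_\a$; this translates exactly to condition (2).

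For $(2)\Leftrightarrow(3)$, I would analyze the Zariski closure $\overline{\DDelta_\a\cap\qvb}$ inside the complex affine space $\V$. Under regularity, the polyhedron $\DDelta_\a$ is bounded modulo its recession subspace $F_\a$, so the lattice $\DDelta_\a\cap\qvb$ falls into finitely many $(F_\a\cap\vb_0)$-cosets, and its Zariski closure is the finite union of the corresponding complex affine translates $v+\C F_\a$. Containment of two such unions splits into $F_\a\subset F_\b$, yielding $\cI_\b\subset\cI_\a$, and a positional condition that, using that the constraints of $\DDelta_\b$ indexed by $\cI_\b$ are constant along $F_\b$, reduces to $\a$ and $\b$ agreeing on $\cI_\b$.

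The main obstacle is $(1)\Leftrightarrow(4)$. For $(4)\Rightarrow(1)$, I would lift the defining functions of $E_\b$ in $\C[\fM]$ to $\bS$-invariant sections of the quantization $\cU_\la$ via Corollary \ref{linked-loc}; these produce elements of $U_\la$ that annihilate the localization $\Loc L_\a^\qvb$, which is supported on $C_\a\subset E_\a\subset E_\b$, and hence annihilate $L_\a^\qvb$ itself. For the converse $(1)\Rightarrow(4)$, I would invoke the classification of primitive ideals of $U_\la$ from Musson--Van den Bergh \cite{MvdB}, refining the leaf-level invariant $\overline{\fM_0^{F_\a}}$ -- which by Lemma \ref{support} is all that the associated graded $\gr\Ann L_\a^\qvb$ detects -- to the specific component $E_\a$ by extracting the sign data $\a|_{\cI_\a}$ from the action of $U_\la$ on the weight spaces of $L_\a^\qvb$. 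This last refinement is the delicate part: since distinct $\a$'s with the same $F_\a$ but different sign patterns give the same associated variety in $\fM_0$, the full component $E_\a$ cannot be read off from $\gr\Ann L_\a^\qvb$ alone, and the extra information must come from how $\Ann L_\a^\qvb$ interacts with the Cartan-like subalgebra $\bH$ and the weight decomposition of $L_\a^\qvb$.
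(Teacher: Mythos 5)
Your treatment of $(2)\Leftrightarrow(4)$ and $(2)\Leftrightarrow(3)$ matches the paper: the first is read off from the defining equations \eqref{Ea}, and the second from the observation that $\overline{\DDelta_\a\cap\qvb}$ is the preimage of the image of $\DDelta_\a\cap\qvb$ in $\V_\R/F_\a$. The divergence, and the gap, is in how you connect condition (1) to the rest. The paper does not prove $(1)\Leftrightarrow(4)$ directly; it cites Musson--Van den Bergh \cite[7.3.1]{MvdB}, which establishes $(1)\Leftrightarrow(3)$ --- i.e.\ it relates the inclusion order on annihilators of simple weight modules to the Zariski closures of their \emph{weight} supports inside $\V$. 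That is the entire content of the hard implication, and your proposal does not replace it.

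Concretely, both directions of your $(1)\Leftrightarrow(4)$ sketch fall short. For $(4)\Rightarrow(1)$: producing elements of $U_\la$ that annihilate $L_\a^\qvb$ by lifting defining functions of $E_\b$ only exhibits \emph{some} elements of $\Ann L_\a^\qvb$; condition (1) requires the full containment $\Ann L_\b^\qvb\subset\Ann L_\a^\qvb$, and a primitive ideal of $U_\la$ is not generated by lifts of functions vanishing on its associated variety. Moreover, routing the argument through $\Loc$ and Corollary \ref{linked-loc} imports a unimodularity hypothesis that the proposition does not carry. For $(1)\Rightarrow(4)$: you correctly identify that $\gr\Ann L_\a^\qvb$ only sees the leaf closure $\overline{\fM_0^{F_\a}}$ and not the sign data $\a|_{\cI_\a}$, but then you assert that ``the extra information must come from'' the interaction with $\bH$ without supplying that argument --- this is a restatement of the difficulty, not a proof. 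Note also that $E_\a$ lives in $\fM$ (it depends on a choice of linked $\PA$), whereas $\Ann L_\a^\qvb$ is intrinsic to $U_\la$; this is exactly why the paper pivots through the weight-support condition (3), which is intrinsic, and leaves the geometric condition (4) to be matched against (2) purely combinatorially. To repair your proof you would either need to reprove the Musson--Van den Bergh classification of the annihilator order, or simply cite it as the paper does.
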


\begin{proof}
The equivalence of (1) and (3) is proven 
by Musson and Van den Bergh \cite[7.3.1]{MvdB}.
The equivalence of (2) and (4) is manifest from Equation \eqref{Ea}.
To see that (2) and (3) are equivalent, note that 
$\overline{\DDelta_\a\cap\qvb}$ is equal to the preimage of the image of $\DDelta_\a\cap\qvb$
in $\V_\R/F_\a$, and the condition (2) is equivalent
to the condition that $\DDelta_\a\cap\qvb$ and $\DDelta_{\b}\cap\qvb$
have the same image in $\V_\R/F_\b$.
\end{proof}

\begin{corollary}
There is a bijection between the set of left cells of $\cF_{\qvb}$ and the set of 
compact sign vectors for various localizations of $\qH$.
\end{corollary}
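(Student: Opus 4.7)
The plan is to exhibit an explicit bijection sending the left cell containing $\a\in\cF_\qvb$ to the pair $(F_\a,\,\a|_{\cI_\a})$. By the text preceding Proposition~\ref{left cells}, $F_\a$ is a coloop-free flat of $\cH_0$ and $\a|_{\cI_\a}$ is a compact sign vector for the localization $\qH^{F_\a}$, so the target of this map is naturally the disjoint union, over coloop-free flats $F$, of the compact sign vectors in $\{+,-\}^{\cI_F}$ for $\qH^F$. That this is well defined and injective follows immediately from the equivalence $(1)\Leftrightarrow(2)$ of Proposition~\ref{left cells}: two elements of $\cF_\qvb$ lie in the same left cell if and only if they have the same index set $\cI_\bullet$ (equivalently the same $F_\bullet$) and agree on it.

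The heart of the argument is surjectivity. Given a coloop-free flat $F$ and a compact $\gamma\in\{+,-\}^{\cI_F}$, I want to produce $\a\in\cF_\qvb$ with $F_\a=F$ and $\a|_{\cI_F}=\gamma$. For each $i\notin\cI_F$, the hyperplane $H_{0,i}$ does not contain $F$, so $H_{0,i}\cap F$ is a proper hyperplane in $F$; pick any chamber $C$ of the resulting central arrangement in $F$, let $\delta(i)\in\{+,-\}$ record the side of $H_{0,i}\cap F$ on which $C$ lies, and set $\a:=(\gamma,\delta)$.

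I would then check $F_\a=F$ in two steps. The inclusion $F\subseteq\Delta_{0,\a}$ holds because the $\gamma$-inequalities are trivially satisfied on $F$ (as $F\subseteq H_{0,i}$ for $i\in\cI_F$) and the $\delta$-inequalities hold on $\overline C$, whose linear span is $F$. Conversely, under the projection $V_{0,\R}\to V_{0,\R}/F$ the cone $\Delta_{0,\a}$ maps into the recession cone of $\DDelta_\gamma^F$, which is trivial by compactness of $\gamma$; hence $\Delta_{0,\a}\subseteq F$ and so $F_\a=F$. To see that $\a$ is feasible, pick any lift $\tilde v_0\in\V_\R$ of a point of $\DDelta_\gamma^F$ and any $w$ in the interior of $C$: the $\cI_F$-inequalities are translation invariant along $F$, and for $i\notin\cI_F$ the sign of $h_i(\tilde v_0+tw)=h_i(\tilde v_0)+t\,h_i(w)$ is eventually $\delta(i)$ as $t\to+\infty$. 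Thus $\DDelta_\a\neq\emptyset$, and regularity of $\QPA$ upgrades this to $\a\in\cF_\qvb$.

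The only real subtlety is confirming in the surjectivity step that the constructed $\a$ has $F_\a$ exactly equal to $F$, rather than some strictly larger coloop-free flat containing it; this is precisely where the compactness hypothesis on $\gamma$ enters, by forcing the image of $\Delta_{0,\a}$ in $V_{0,\R}/F$ to collapse to the origin.
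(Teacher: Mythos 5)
Your bijection is exactly the one the paper uses (the paper's proof is a one-line description of which left cell a pair $(F,\gamma)$ indexes, with injectivity coming from Proposition~\ref{left cells}(1)$\Leftrightarrow$(2)); your contribution is to spell out the surjectivity, which the paper leaves implicit, and that argument is sound. One small wording slip: the claimed inclusion $F\subseteq\Delta_{0,\a}$ is false in general (a point of $F$ outside $\overline{C}$ violates a $\delta$-inequality); what your argument actually gives is $\overline{C}\subseteq\Delta_{0,\a}$, and since $\overline{C}$ linearly spans $F$ this yields the inclusion $F\subseteq F_\a$ that you actually need, so the proof stands as written once that sentence is corrected.
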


\begin{proof}
A flat $F$ along with a compact sign vector $\b\in\{+,-\}^{\cI_F}$ indexes the left cell consisting 
of all $\a$ such that $F_\a = F$ and the restriction of $\a$ to $\cI_F$ is equal to $\b$.
\end{proof}

By a theorem of Ginzburg \cite[2.1]{Giprim}, the zero set in $\fM_0$
of the associated graded ideal 
$\gr \Ann L_\a^\qvb\subset \C[\fM_0]$ is the closure of a single symplectic leaf.  

\begin{proposition} [\mbox{\rm \cite[7.4.1]{MvdB}}]\label{goldie}
For any $\a\in\cF_\qvb$,
the zero set of $\gr\Ann L_\a^\qvb$ is equal to the closure of the leaf $\fM_0^{F_\a}$.
Furthermore, $U_\la/\Ann L^\qvb_\a$ is a quantization of the ring of square matrices with coefficients
in $\C[\fM_0^{F_\a}]$, and the
size of the matrices (the Goldie rank of $L_\a^\qvb$) 
is equal to the number of components of $\overline{\DDelta_\a\cap\qvb}$.
\end{proposition}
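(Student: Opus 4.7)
The plan is to handle the statement in two stages: first identify the zero set of $\gr\Ann L_\a^\qvb$ with $\overline{\fM_0^{F_\a}}$, then establish the matrix-algebra structure together with the Goldie rank count.

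For the associated variety, I would begin by invoking Ginzburg's theorem \cite{Giprim} cited just above: it guarantees that the zero set of $\gr\Ann L_\a^\qvb$ is automatically the closure of a single symplectic leaf of $\fM_0$, so the task reduces to picking out the right leaf. Since $\Ann L_\a^\qvb$ annihilates $L_\a^\qvb$, this zero set contains $\supp(\gr L_\a^\qvb)=C_{0,\a}$ by Lemma \ref{support}. Using the parameterization of symplectic leaves of $\fM_0$ by coloop-free flats from \cite{PW07}, one observes that the closure $\overline{\fM_0^{F_\a}}$ is cut out by the same kind of coordinate equations \eqref{Walpha} as $C_{0,\a}$, except that one only imposes those equations that stay true along the entire $T/K$-orbit closure through $C_{0,\a}$. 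A short incidence argument between $C_{0,\a}$ and the leaf stratification then forces the single leaf provided by Ginzburg to be $\fM_0^{F_\a}$ rather than any shallower or deeper leaf.

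For the matrix-algebra structure, I would microlocalize $U_\la/\Ann L_\a^\qvb$ and $L_\a^\qvb$ at the generic point of $\fM_0^{F_\a}$. The leaf carries its own hypertoric-style quantization, obtained by restricting the quantum sheaf $\cU_\la$ of Section \ref{quantizing} (equivalently, the hypertoric enveloping algebra attached to the localized arrangement $\cH_0^{F_\a}$, suitably recentered). After microlocalization, $L_\a^\qvb$ becomes free of finite rank $k$ over this leaf quantization, and the quotient $U_\la/\Ann L_\a^\qvb$ identifies with the $k\times k$ matrix algebra over it. The rank $k$ is then computed from the weight-space decomposition $L_\a^\qvb=\bigoplus_{v\in\DDelta_\a\cap\qvb}(L_\a^\qvb)_v$ of one-dimensional summands: microlocalization identifies weights that differ by an element of $F_\a\cap\vb_0$, so $k$ equals the number of cosets of $F_\a\cap\vb_0$ meeting $\DDelta_\a\cap\qvb$. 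Each such coset has Zariski closure in $\V$ equal to a translate of the complex subspace $F_\a$, and these translates are exactly the irreducible components of $\overline{\DDelta_\a\cap\qvb}$.

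The main obstacle will be making the microlocalization argument rigorous enough to extract an honest matrix-algebra presentation, rather than merely matching ranks and associated varieties; the numerical coincidence is relatively easy, but assembling the primitive quotient into $M_k$ of something is not. I expect this to require either a careful pushforward--pullback analysis with the quantum sheaves $\cU_\la$ of Section \ref{quantizing}, or a direct computation using the quiver description of Theorem \ref{first-equiv} to exhibit mutually orthogonal idempotents realizing the $k\times k$ matrix structure, in the spirit of \cite[Chapter 7]{MvdB}.
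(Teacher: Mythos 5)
The paper does not prove this proposition: it is imported verbatim from Musson--Van den Bergh \cite[7.4.1]{MvdB}, where it is established by an explicit combinatorial analysis of the annihilators of simple weight modules (the primitive quotient is written down directly as a matrix algebra over an invariant ring attached to the flat $F_\a$). So there is no in-paper argument to compare against, and your proposal must stand on its own; as written it has two genuine gaps. For the associated variety, Ginzburg's theorem plus Lemma \ref{support} gives only the containment $C_{0,\a}\subseteq V(\gr\Ann L_\a^\qvb)=\overline{\fM_0^{F}}$ for some coloop-free flat $F$. Since leaf closures are nested according to inclusion of flats and $\overline{\fM_0^{F_\a}}$ is the \emph{smallest} leaf closure containing $C_{0,\a}$, this containment only rules out leaves that are too small; your ``incidence argument'' cannot exclude $F\supsetneq F_\a$ (a priori the zero set could be all of $\fM_0$). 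You need an upper bound as well, e.g.\ the equality $\dim V(\gr\Ann L)=2\dim\supp(\gr L)$ combined with $\dim C_{0,\a}=\dim F_\a$ and $\dim\fM_0^{F}=2\dim F$ --- and that dimension equality is a Gabber/Ginzburg-type theorem which is \emph{not} what the paper cites from \cite{Giprim} --- or else an explicit list of elements of $\Ann L_\a^\qvb$ whose symbols cut out $\overline{\fM_0^{F_\a}}$, which is essentially what \cite{MvdB} supply.

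For the second half, the microlocalization step is, as you acknowledge, a plan rather than a proof, and the missing pieces are exactly the hard ones: the existence of a well-behaved quantization of (a transverse slice to, or formal neighborhood of) the leaf $\fM_0^{F_\a}$ and a localization of $U_\la$ along it; the freeness of the localized $L_\a^\qvb$ of rank $k$; and, most delicately, the fact that $U_\la/\Ann L_\a^\qvb$ surjects onto the full $k\times k$ matrix algebra rather than embedding into a proper subalgebra. None of this machinery is developed in the paper, and matching associated varieties and ranks does not substitute for it. Your identification of $k$ with the number of cosets of $F_\a\cap\vb_0$ meeting $\DDelta_\a\cap\qvb$ is correct and agrees with the description of the components of $\overline{\DDelta_\a\cap\qvb}$ in the remark following the proposition, so the numerology is fine. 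If you want a self-contained proof in the spirit of this paper, the realistic route is the one you mention at the end: work in the quiver model of Theorem \ref{first-equiv} (or directly with weight spaces as in \cite[Ch.~7]{MvdB}) and exhibit the orthogonal idempotents and intertwiners realizing the matrix decomposition explicitly.
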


\begin{remark}
The number of components in the last part of Proposition \ref{goldie} may also be described
as the number of lattice points in the polytope $\DDelta_\b^{F_\a}$, where $\b\in\{+,-\}^{\cI_\a}$ 
is the compact sign vector that indexes the left cell in which $\a$ lies; indeed, $\overline{\DDelta_\a\cap\qvb}$
is equal to the preimage in $\V$ of this finite set of lattice points.
Thus, the Goldie rank polynomial of $L_\a^\qvb$ 
(which has been studied quite deeply in the Lie theoretic case; see, 
for example \cite{BBM89}) is equal to the Erhart polynomial of $\DDelta_\b^{F_\a}$.
\end{remark}

\subsection{Right cells}\label{sec:right cells}
Suppose that $\QPA = (\vb_0, \qvb, \xi)$ and $\QPA' = (\vb_0, \qvb', \xi)$ are two regular, integral,
quantized polarized arrangements that differ only in that 
$\qvb\neq \qvb'$.  Because $\QPA$ and $\QPA'$ are both integral, we have
$\cB_{\qvb,\xi} = \cB_{\qvb',\xi}$; we will denote this set as $\cB_\xi$ (see Remark \ref{quantum abuse}).
In this section we define right cells using a preorder on $\cB_\xi$.
Let $\la$, $\la'\colon Z(U) \to \C$ be the characters corresponding to $\qvb$,$\qvb'$, respectively.

\begin{definition}\label{HC}
We call a $(U_{\la'}, U_{\la})$-bimodule $B$ {\bf Harish-Chandra} if it is finitely generated 
over both $U_{\la'}$ and $U_{\la}$,
it has a filtration such that 
$\gr B$ is supported on the diagonal of $\fM_0\times\fM_0$, and it is the sum of its generalized weight spaces for the adjoint action of $\hat\xi$.
\end{definition}

We note that if $M$ is a finitely-generated $U$-module which is the 
sum of its weight spaces, then its weight spaces are finite dimensional.  
However, the analogous statement does not necessarily hold for bimodules 
considered with the adjoint action.  For example, 
the $0$-weight space of $U_\la$, regarded as a bimodule over itself, is $\bH_\la$.

\begin{example}
The bimodule ${_{\la'}T_{\la}}$ from Section \ref{translation functors}
is Harish-Chandra.
\end{example}

\begin{lemma}\label{translation preserves O} If $B$ is a Harish-Chandra $(U_{\la'}, U_{\la})$-bimodule, then
the functor $B\otimes_{U_\la} -$ sends objects of $\cO_\la$ to objects of $\cO_{\la'}$.
\end{lemma}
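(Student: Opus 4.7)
The plan is to verify the three defining properties of $\cO_{\la'}$ from Definition~\ref{defn of O} for $N := B \otimes_{U_\la} M$. Semisimplicity of the $Z(U)$-action with character $\la'$ is automatic from the $U_{\la'}$-module structure. Finite generation of $N$ over $U_{\la'}$ is routine: if $\{b_i\}$ left $U_{\la'}$-generates $B$ and $\{m_k\}$ generates $M$ as a $U_\la$-module, then the bimodule relations $u' b \otimes m = u' (b \otimes m)$ and $bu \otimes m = b \otimes um$ imply that the finite collection $\{b_i \otimes m_k\}$ generates $N$ as a left $U_{\la'}$-module. The substantive condition is local finiteness of $U_{\la'}^+$, which by Lemma~\ref{gamma-finite} is equivalent to showing that $\hat\xi$ acts on $N$ with finite-dimensional generalized eigenspaces whose eigenvalues are bounded above.

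The key computation is the identity
\[ \hat\xi \cdot (b \otimes m) = \ad(\hat\xi)(b) \otimes m + b \otimes \hat\xi m, \]
following from $\hat\xi b = b \hat\xi + \ad(\hat\xi)(b)$ inside $B$. The two partial operators on the right commute, and an easy binomial expansion shows that if $b$ is a generalized $\ad(\hat\xi)$-eigenvector of eigenvalue $a$ and $m$ is a generalized $\hat\xi$-eigenvector of eigenvalue $c$, then $b \otimes m$ is a generalized $\hat\xi$-eigenvector in $N$ of eigenvalue $a+c$.

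The final step exploits the right-finite-generation of $B$ as a $U_\la$-module, a part of the Harish-Chandra hypothesis not yet used. Choose right generators $b_1', \ldots, b_m'$ of $B$ and replace each by its finite decomposition into generalized $\ad(\hat\xi)$-eigenvectors; this produces right generators with $\ad(\hat\xi)$-eigenvalues $a_1', \ldots, a_m'$. From $B = \sum_j b_j' U_\la$ one obtains $N = \sum_j b_j' \otimes M$. Write $M$ as the direct sum of its finite-dimensional generalized $\hat\xi$-eigenspaces, whose eigenvalues are bounded above by some $c_0$ (Lemma~\ref{gamma-finite}). Then $N$ is a sum of finite-dimensional subspaces of the form $b_j' \otimes M^{(c)}$, each contained in the generalized $\hat\xi$-eigenspace of $N$ for eigenvalue $a_j' + c$. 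Since the generalized eigenspaces of any linear operator have disjoint sum, this presents $N$ as a direct sum of finite-dimensional generalized $\hat\xi$-eigenspaces with eigenvalues bounded above by $\max_j a_j' + c_0$, completing the verification. I expect the only real subtlety to be recognizing that both the left and right finite-generation of $B$ must be used in tandem --- the left structure for finite generation of $N$, the right structure for controlling $\hat\xi$-weights --- while the diagonal support condition of Definition~\ref{HC} plays no role in this argument.
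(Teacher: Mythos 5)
Your proof is correct, but it takes a different route from the paper's. The paper verifies condition (2) of Lemma \ref{gamma-finite}: it picks adjoint weight vectors $b_1,\dots,b_r$ generating $B$ on both sides, forms the finite-dimensional subspace $S=\sum_i b_i\otimes M^{>k-p_i+p}$, shows $S$ generates $B\otimes_{U_\la}M$ using the left generation, and shows $S$ is $U^+$-invariant by writing $ub_i=\sum_j b_ju_j$ with $u_j\in U^{\ge p_i-p_j}$ (this weight bookkeeping is where the right generation and the adjoint weight decomposition enter). You instead verify condition (3) directly, via the Leibniz identity $\hat\xi\cdot(b\otimes m)=\ad(\hat\xi)(b)\otimes m+b\otimes\hat\xi m$ and additivity of generalized eigenvalues, writing $N=\sum_j b_j'\otimes M$ over right adjoint-weight generators. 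Both proofs use exactly the same inputs (finitely many adjoint weight generators of $B$ on each side, plus finite-dimensionality and boundedness of the $\hat\xi$-weight spaces of $M$, and neither uses the diagonal support condition); your observation that left generation controls finite generation of $N$ while right generation controls the weights mirrors the paper's use of the two, with the roles slightly permuted. Your version arguably makes the weight arithmetic more transparent; the paper's version produces the invariant generating subspace explicitly. One small presentational point: the operators $b\otimes m\mapsto\ad(\hat\xi)(b)\otimes m$ and $b\otimes m\mapsto b\otimes\hat\xi m$ are not separately well defined on $B\otimes_{U_\la}M$ (only their sum descends from $B\otimes_\C M$), so the commuting-operators binomial argument should be phrased on $B\otimes_\C M$ and then pushed to the quotient; this is routine and does not affect correctness.
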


\begin{proof}
Let $M^{>k}$ denote the finite-dimensional subspace of $M$ consisting of $\hat{\xi}$-weight vectors of weight greater than $k$.  
Fix a $k$ such that $M^{>k}$ generates $M$ (such a $k$ must exist by finite generation of $M$).  
Let $\{b_1,\ldots,b_r\}\subset B$ be a finite set of adjoint weight vectors which generate $B$ both as a left and a right module,  let $p_i$ be the weight of $b_i$, and let $p=\min\{p_1,\ldots,p_r\}$.  
Let $$S:=\sum b_i\otimes M^{>k-p_i+p}\subset B\otimes_{U_\la}M.$$  
The set $S$ contains the tensor product of a left-generating
set for $B$ with a generating set for $M$, and therefore generates $B\otimes_{U_\la}M$.  Since the weight spaces of $M$
are finite dimensional and the weights that appear are bounded above (Lemma \ref{gamma-finite}), $S$ is finite dimensional.
If $u\in U^+$, then for all $i$, there exist $\{u_1,\ldots,u_r\}$ such that $u_j\in U^{\geq p_i-p_j}$ and $ub_i=\sum b_ju_j$.  
Thus, if $m\in  M^{>k-p_i+p}$, \[u\cdot b_i\otimes m=\sum b_j\otimes (u_jm)\in S.\]  Thus $S$ is $U^+$-invariant, 
and $B\otimes_{U_\la}M\in \cO_\la$ by Lemma \ref{gamma-finite}.
\end{proof}

Let $\a,\b\in\cB_{\xi}$ be bounded sign vectors.

\begin{definition}\label{right}
We write $\a\Rleq\b$ if $\qvb$ and $\qvb'$ may be chosen such that
$\a\in\cF_{\qvb'}$, $\b\in\cF_\qvb$, and there exists a Harish-Chandra $(U_{\la'}, U_{\la})$-bimodule
$B$ such that the simple $U_{\la'}$-module $L_\a^{\qvb'}$ is a composition factor in $B\otimes_{U_\la} L_\b^{\qvb}$.
We say that $\a$ and $\b$ lie in the same
{\bf right cell} of $\cB_\xi$ if $\a\Rleq \b$ and $\b\Rleq \a$.
\end{definition}

In order to give a characterization of right cells in $\cB_\xi$, we introduce and study a related class of $\D$-modules.
Given any $\a \in \cF_\qvb$ and any subset $A\subset I_\qvb = \{1, \dots, n\}$, define
$S_{\a, A}$ to be the quotient of $\D$ by the left ideal generated by
\begin{itemize}
\item $\bdy_i$ for all $i \in A$ with $\a(i) = +$,
\item $x_i$ for all $i \in A$ with $\a(i) = -$, and 
\item $h_i^{\a(i)}$ for all $1 \le i \le n$.  
\end{itemize} 
If $A = \{1, \dots, n\}$
we have $S_{\a,A} = L_\a$.  Moreover, following the notation of Section \ref{sec:std modules},
if $A$ is the set of $i$ for which $v_\xi \in H^{\a(i)}_i$, then $S_{\a, A}$ is
the module $S_\a$ defined in that section.  

Let $\DDelta_{\a,A} \subset \V_\R$
be the polyhedron defined by the same inequalities that cut out $\DDelta_\a$, but only for the
indices in $A$:
$$h^+_i \ge 0\text{ for all $i\in A$ with }\a(i) = + \and h^-_i \le 0 \text{ for all $i\in A$ with }\a(i) = -.$$
Then we have $\supp\left((S_{\a, A})^\qvb\right) = \DDelta_{\a,A} \cap \qvb$.
All of the weight spaces of $S_{\a,A}$ are one-dimensional.  It follows that
$S_{\a,A}$ is semisimple over $Z(U)$, and so 
$(S_{\a,A})^\qvb = \bigoplus_{v \in \qvb} (S_{\a, A})_v$.
For any $\b\in \cF_\qvb$, the simple module $L_\b^\qvb$ appears 
with non-zero multiplicity in $(S_{\a,A})^\qvb$ if and only if
$\DDelta_\b \subset \DDelta_{\a, A}$, which is equivalent to the condition $\b|_A = \a|_A$.

We say that an index $i \in A$ is
\textbf{active} for $\a$ and $A$ if there exists $\b\in \cF_\qvb$ with 
$\b(i) \ne \a(i)$ and $\b|_{A \setminus i} = \a|_{A \setminus i}$. 
In other words, $i$ is active if and only if the 
$i^\text{th}$ inequality is actually necessary to cut out 
the polyhedron $\DDelta_{\a,A}$.  
Let $A_\a\subset A$ be the set of indices that are active for $\a$ and $A$.
Thus we have $\DDelta_{\a,A_\a} = \DDelta_{\a,A}$,
and $A_\a$ is the smallest subset of $A$ with this property.

\begin{proposition}\label{translation}
There is an isomorphism
\[\D \otimes_U (S_{\a,A})^\qvb \cong S_{\a,A_\a}.\]
In particular, $L_\b$ is a composition factor of $\D \otimes_U (S_{\a,A})^\qvb$
if and only if $\b|_{A_\a} = \a|_{A_\a}$.
\end{proposition}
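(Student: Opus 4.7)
The plan is to recognize the desired isomorphism as the counit of the adjunction $(\D\otimes_U -)\dashv(-)^\qvb$ from Section \ref{sec:weight}, applied to $S_{\a,A_\a}$. As a preliminary reduction, I will show that the canonical surjection $S_{\a,A_\a}\twoheadrightarrow S_{\a,A}$ of $\D$-modules (existing because $A_\a\subseteq A$ forces the defining left ideal of $S_{\a,A_\a}$ to be contained in that of $S_{\a,A}$) restricts to an isomorphism $(S_{\a,A_\a})^\qvb\xrightarrow{\sim}(S_{\a,A})^\qvb$ of $U_\la$-modules. The definition of active indices together with regularity of $\QPA$ yields $\DDelta_{\a,A_\a}\cap\qvb = \DDelta_{\a,A}\cap\qvb$: if $i\in A$ is inactive and $v\in\qvb$ satisfies the $A\setminus\{i\}$-inequalities determined by $\a$, then the sign vector flipping $\a(i)$ is infeasible by inactivity, forcing $v$ to satisfy the $i$-th inequality as well. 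Both $(S_{\a,A_\a})^\qvb$ and $(S_{\a,A})^\qvb$ have one-dimensional weight spaces supported on this common set, and the surjection is nonzero on each such space since the image in $(S_{\a,A})_v$ is nonzero. So the claim reduces to showing that the counit $\D\otimes_U(S_{\a,A_\a})^\qvb\to S_{\a,A_\a}$ is an isomorphism.

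Surjectivity is direct: $S_{\a,A_\a}$ is cyclic, generated by the class of $1\in\D$, so it is enough to show $1$ lies in the image. Since $\a\in\cF_\qvb$, pick $v\in\DDelta_\a\cap\qvb\subseteq\DDelta_{\a,A_\a}\cap\qvb$. The weight vector at $v$ has the form $u\cdot 1$ for a monomial $u\in\D$, and using $\partial_i x_i - x_i\partial_i = 1$ together with the defining relations $h_i^{\a(i)}\cdot 1=0$ one produces a complementary monomial $u'$ satisfying $u'u\cdot 1 = c\cdot 1$ for a nonzero scalar $c$. Hence $1$ lies in the $\D$-submodule generated by $(S_{\a,A_\a})^\qvb$.

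Injectivity is the principal obstacle. The unit--counit identity, together with the fact that the unit $M\to (\D\otimes_U M)^\qvb$ is an isomorphism for $M\in U_\la\mmod_\qvb$, forces the counit to restrict to the identity on $\qvb$-parts; so any kernel $K$ is supported on $(\qvb+\SgrHZ)\setminus\qvb$. To rule this out, I will apply the Musson--Van den Bergh equivalence $F_\D\colon\D\mmod_{\qvb+\SgrHZ}\xrightarrow{\sim}\wh{Q}_\qvb\modfin$ recalled in the proof of Theorem \ref{Translation and quivers}. Under $F_\D$, both $\D\otimes_U(S_{\a,A_\a})^\qvb$ and $S_{\a,A_\a}$ become $\wh{Q}_\qvb$-modules which are generated by their $e_\qvb$-components (on the left by construction of the tensor product; on the right by the surjectivity just established), and whose $e_\qvb$-components both coincide with $F((S_{\a,A_\a})^\qvb)$. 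Combining this with the explicit weight-space description of each side (the $e_\b$-part of $F_\D(S_{\a,A_\a})$ is one-dimensional precisely when $\b|_{A_\a}=\a|_{A_\a}$, and the corresponding count for $\D\otimes_U(S_{\a,A_\a})^\qvb$ is forced to match by the cyclicity over $\wh{Q}_\qvb$) shows that a surjection between them must be an isomorphism, so $K=0$. Finally, the ``in particular'' statement follows at once: under the isomorphism, $L_\b$ is a $\D$-composition factor precisely when $\DDelta_\b\subseteq\DDelta_{\a,A_\a}$, which is the condition $\b|_{A_\a}=\a|_{A_\a}$ by the composition-factor description preceding the proposition with $A$ replaced by $A_\a$.
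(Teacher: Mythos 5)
Your reduction to the counit $\D\otimes_U(S_{\a,A_\a})^\qvb\to S_{\a,A_\a}$ and your surjectivity argument both track the paper's proof. The gap is in the injectivity step. Having shown that the kernel $K$ has vanishing weight spaces on $\qvb$ and that both sides are generated by their $e_\qvb$-components with the same $e_\qvb$-component, you assert that the remaining weight-space dimensions of $\D\otimes_U(S_{\a,A_\a})^\qvb$ are ``forced to match by the cyclicity over $\wh{Q}_\qvb$.'' They are not: a weight module generated by its weight spaces lying in $\qvb$ is not determined by its $\qvb$-part, nor are its other weight spaces bounded by it (a quotient of $\wh{Q}_\qvb e_\qvb\otimes_{e_\qvb\wh{Q}_\qvb e_\qvb}F(N)$ can have arbitrarily large components away from $e_\qvb$). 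The proposition itself supplies a counterexample to the principle you are using: taking $A=\{1,\dots,n\}$, the counit $\D\otimes_U L_\gamma^\qvb\to L_\gamma$ is a surjection of modules generated by their $\qvb$-parts which induces an isomorphism on $\qvb$-parts, yet its kernel is nonzero whenever $A_\gamma\neq\{1,\dots,n\}$; indeed $\D\otimes_U L_\gamma^\qvb\cong S_{\gamma,A_\gamma}$ has $2^{\,n-|A_\gamma|}$ composition factors $L_\b$, all but one of which are infeasible and hence invisible to $(-)^\qvb$. Your argument, as written, would ``prove'' $\D\otimes_U L_\gamma^\qvb\cong L_\gamma$ just as well.

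What is missing is exactly the place where activeness of the indices in $A_\a$ must do work (you invoke it only in the preliminary reduction). The paper's proof shows instead that the generator $\phi\otimes x$ of $M=\D\otimes_U(S_{\a,A_\a})^\qvb$ is annihilated by the defining ideal $J_{\a,A_\a}$, so that $M$ is a quotient of $S_{\a,A_\a}$ and the surjection $S_{\a,A_\a}\to M\to S_{\a,A_\a}$, sending $1\mapsto 1$, is the identity. The crucial generators are $\partial_i$ (when $\a(i)=+$) and $x_i$ (when $\a(i)=-$) for $i\in A_\a$: if, say, $\partial_i(\phi\otimes x)\neq 0$, then $L_\b$ occurs in $M$, where $\b$ is the single flip of $\a$ at $i$; because $i$ is \emph{active}, $\b\in\cF_\qvb$, so $L_\b^\qvb\neq 0$ would occur in $M^\qvb\cong(S_{\a,A_\a})^\qvb$, contradicting the known composition series of the latter. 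It is this feasibility of the flipped sign vector that makes the potential extra factors detectable by $(-)^\qvb$; without it your ``support off $\qvb$'' conclusion about $K$ gives no control, as the $L_\gamma$ example shows. You need to replace the dimension-count step with an argument of this kind.
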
 
\begin{proof}
The natural surjection $S_{\a,A_\a} \to S_{\a,A}$ induces a surjection
$(S_{\a,A_\a})^\qvb \to (S_{\a,A})^\qvb$.  In fact, this surjection is an isomorphism, 
since both sides have one-dimensional weight spaces and the same support.
Thus we need only show that the adjunction map
\[M :=\D \otimes_U (S_{\a,A_\a})^\qvb \to S_{\a,A_\a}\]
is an isomorphism.  It is certainly surjective, since $S_{\a,A_\a}$ 
is generated by its $v$-weight space for any $v$ in $\DDelta_\a \cap \qvb$,
and this set is nonempty since $\a\in \cF_\qvb$.  

Take
some element $\phi \otimes x \in M$ which 
maps to $1 \in S_{\a, A_\a}$;  we can assume that 
$x$ lies in a weight space
$(S_{\a,A_\a})_v$ for $v \in \DDelta_\a \cap \qvb$ and 
$\phi \in \D_{v_\a - v}$, where $v_\a$ is the weight of $1 \in S_{\a,A_\a}$.
It is easy to see that $\phi \otimes x$ generates $M$,
so we only need to show that it is killed by the defining ideal of $S_{\a,A_\a}$.

The generators $h^{\a(i)}_i$ are in the ideal $\mathfrak{I}_{v_\a}$, so
some power of these generators will kill $\phi\otimes x$.  
For the other two classes of generators, let
$i \in A_\a$.  We will assume that $\a(i) = +$ (the argument for $\a(i) = -$ is similar).
The weight of $\bdy_i(\phi \otimes v)$ is
$v_\b$, where $\b(j) = \a(j)$ for all $j\neq i$.  Suppose that $\bdy_i(\phi \otimes v)$ is
nonzero; then it follows that the simple module $L_\b$ appears in  
$M$ with non-zero multiplicity.  Since $i$ is an active index, we have $\b \in \cF_\vb$, 
and so $L_\b^\qvb$ has non-zero multiplicity in $M^\qvb = (S_{\a,A_\a})^\qvb$, contradicting the fact
that $i\in A_\a$.
\end{proof}

Next we give several equivalent formulations of the right preorder on $\cB_\xi$.
As in the case of the left preorder, we have four characterizations:  one from the definition,
another in terms of flats, a third in terms of polyhedra, and a fourth in terms of subvarieties
of a hypertoric variety.  (In this section we only need to consider the affine hypertoric variety $\fM_0$,
which is completely determined by $\vb_0$, thus we do not need to choose a polarized arrangement.)

\begin{proposition}\label{right-cells}
For all $\a,\b\in\cB_\xi$, the following are equivalent:
\begin{enumerate}
\item $\a\Rleq \b$
\item $\cI_\b\subset \cI_\a$ (equivalently $F_\a\subset F_\b$)
and $\a$ and $\b$ agree on the complement of $\cI_\a$
\item $\Delta_{0,\a}\subset \Delta_{0,\b}$
\item $C_{0,\a}\subset C_{0,\b}$.
\end{enumerate}
\end{proposition}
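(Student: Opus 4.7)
For the equivalence (2) $\Leftrightarrow$ (3), I would argue directly from the inequalities defining $\Delta_{0,\a}$ and $\Delta_{0,\b}$. The inclusion $\Delta_{0,\a} \subset \Delta_{0,\b}$ passes to affine spans, giving $F_\a \subset F_\b$ (equivalently $\cI_\b \subset \cI_\a$). For any $i \notin \cI_\a$ the function $h_i$ is not identically zero on $F_\a$, so the defining sign at $i$ must match between $\a$ and $\b$; the remaining defining inequalities for $\Delta_{0,\b}$ are automatic on $\Delta_{0,\a}$ since they involve functions vanishing identically on $F_\a \subset F_\b$. The reverse implication follows from the same case analysis.

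For (3) $\Leftrightarrow$ (4), my plan is to identify $C_{0,\a}$ as the affine $T/K$-toric variety whose moment cone in $V_{0,\R}$ is precisely $\Delta_{0,\a}$. Since $C_{0,\a} = W_\a/\!/K$ where $W_\a \subset T^*\C^n$ is the coordinate subspace cut out by \eqref{Walpha}, and $W_\a \cong \C^n$ as a $T$-space has $T$-moment cone equal to the signed orthant determined by $\a$, the symplectic reduction replaces $\t^*$ by its subspace $V_0$ (the annihilator of $\fk$). Intersecting the signed orthant with $V_{0,\R}$ yields exactly $\Delta_{0,\a}$, so $C_{0,\a}$ has moment cone $\Delta_{0,\a}$. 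Containment of affine toric varieties then corresponds to containment of moment cones by the standard toric dictionary.

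For (1) $\Rightarrow$ (4), I would pick any Harish-Chandra bimodule $B$ witnessing $\a \Rleq \b$ together with compatible good filtrations on $B$ and on $L_\b^\qvb$. Then $\gr(B \otimes_{U_\la} L_\b^\qvb)$ is a quotient of $\gr B \otimes_{\gr U_\la} \gr L_\b^\qvb$. Since $\supp \gr B$ is contained in the diagonal of $\fM_0 \times \fM_0$ by Definition \ref{HC}, the standard support estimate for tensor products gives $\supp \gr(B \otimes L_\b^\qvb) \subset \supp \gr L_\b^\qvb$, which equals $C_{0,\b}$ by Lemma \ref{support}. Any composition factor $L_\a^{\qvb'}$ has its support cycle contained in that of the whole module, so $C_{0,\a} = \supp \gr L_\a^{\qvb'} \subset C_{0,\b}$.

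The main obstacle is (4) $\Rightarrow$ (1). A single translation bimodule ${}_{\la'}T_\la$ sends $L_\b^\qvb$ to $L_\b^{\qvb'}$, which is simple or zero, and so cannot produce $L_\a^{\qvb'}$ as a composition factor when $\a \neq \b$. My plan is to construct $B$ as a composition ${}_{\la'}T_{\la_0} \otimes_{U_{\la_0}} {}_{\la_0}T_\la$, where $\la_0$ corresponds to a non-quasi-regular quantized arrangement in which certain pairs of hyperplanes of $\qH$ collide, so that translation to $\la_0$ merges distinct simples and translation back separates them differently. By induction on the codimension of $F_\a$ inside $F_\b$, it should suffice to handle the codimension-one case, where a single index $i \in \cI_\a \setminus \cI_\b$ governs the wall crossing. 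The hard part is verifying that the composite bimodule is Harish-Chandra (which will follow from stability of the Harish-Chandra condition under tensor product, since $\gr({}_{\la'}T_{\la_0} \otimes {}_{\la_0}T_\la)$ remains diagonally supported and $\hat\xi$-locally finite) and that $L_\a^{\qvb'}$ indeed appears as a composition factor of the image of $L_\b^\qvb$, which requires a choice of $\la_0$ adapted to the hyperplane index $i$ together with a careful weight-space analysis using the quiver description of Theorem \ref{first-equiv}.
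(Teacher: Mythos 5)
Your treatment of the easy implications is fine and close to the paper's: (2)$\Leftrightarrow$(3) is the same direct inequality analysis, and (1)$\Rightarrow$(4) is exactly the paper's route via Lemma \ref{support} and the diagonal-support condition in Definition \ref{HC}. (For (3)$\Leftrightarrow$(4) the paper only uses the real moment map to get (4)$\Rightarrow$(3) and closes the cycle through (1), so your direct toric-dictionary argument for (3)$\Rightarrow$(4) is an extra burden you don't actually need.)

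The real problem is the decisive implication, where your proposal rests on a false premise. You assert that a single translation bimodule sends $L_\b^\qvb$ to $L_\b^{\qvb'}$, ``which is simple or zero,'' and conclude that one translation cannot produce $L_\a^{\qvb'}$ for $\a\neq\b$. This is not true: by Lemma \ref{factoring translation}, ${}_{\la'}T_\la\otimes_{U_\la}L_\b^\qvb\cong(\D\otimes_U L_\b^\qvb)^{\qvb'}$, and by Proposition \ref{translation} the $\D$-module $\D\otimes_U L_\b^\qvb$ is $S_{\b,A_\b}$, where $A_\b$ is the set of \emph{active} indices for $\b$ --- a module whose composition factors are all $L_\gamma$ with $\gamma|_{A_\b}=\b|_{A_\b}$, hence generally far from simple. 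The paper's proof of (3)$\Rightarrow$(1) exploits precisely this, together with the freedom built into Definition \ref{right} to choose $\qvb$: setting $J=\{i\mid\a(i)=\b(i)\}$, one picks a regular $\eta$ (and a linked integral $\qvb$) so that $\Delta_{\eta,\b}=\Delta_{0,\b}$ while $\Delta_{\eta,\gamma}=\emptyset$ for every $\gamma\neq\b$ agreeing with $\b$ on $J$; then every active index for $\b$ lies in $J$, so $\a|_{A_\b}=\b|_{A_\b}$ and $L_\a^{\qvb'}$ already occurs in a \emph{single} translation ${}_{\la'}T_\la\otimes_{U_\la}L_\b^\qvb$. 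Your proposed detour through a non-quasi-regular character $\la_0$ (translation to and from a ``wall'') is therefore unnecessary, and as you yourself note, the step where $L_\a^{\qvb'}$ is actually shown to appear as a composition factor is left unverified --- so the argument as written does not close. The fix is not more machinery but the right choice of $\qvb$ on the source side.
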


\begin{proof}
The fact that (4) implies (3)
follows from the fact that $\Delta_{0,\a}$ is the image of $C_{0,\a}$
along the real moment map introduced in \cite{BD}.
The fact that (1) implies (4) is immediate from Lemma \ref{support} and Definition \ref{HC}.
The equivalence of (2) and (3) is clear from the definition of the flat $F_\a$.
Thus we only need to show that (3) implies (1).  

Take $\a, \b \in \{+,-\}^n$, and 
suppose that $\Delta_{0,\a}\subset \Delta_{0,\b}$.
Let $J = \{i \mid \a(i) = \b(i)\}$.  We define an integral
character $\la$ of $Z(U)$ as follows.  Recall that, by choosing a parameter $\eta\in\SgrHZ/\vb_0$,
we obtain a hyperplane arrangement $\cH_\eta$ in the affine space $V_{\eta,\R}$.  If we identify $V_{\eta,\R}$
with $V_{0,\R}$ by choosing an origin (this is equivalent to choosing a lift of $\eta$ to $\SgrHZ\cong\Z^n$), 
then $\cH_\eta$ is obtained from $\cH_0$ by translating the $i^\text{th}$ hyperplane away from the origin by the 
$i^\text{th}$ coordinate of the lift of $\eta$.
Removing the inequalities indexed by
$i \notin J$ does not change the polyhedron $\Delta_{0,\b}$;
this means that a regular parameter $\eta$ may be chosen such that
$\Delta_{\eta,\b} = \Delta_{0,\b}$ but
$\Delta_{\eta,\gamma}$ is empty for any $\gamma \ne \b$
with $\gamma|_J = \b|_J$.  Choose a regular integral $\qvb$ which is linked to $\eta$, and let
$\la$ be the corresponding character of $Z(U)$.

Every index which is active for $\b$ and $\{1, \dots, n\}$
lies in $J$.  It follows from Proposition \ref{translation} that 
$L_\a$ is a composition factor of $\D \otimes_U L^\qvb_\b$, so 
$L^{\qvb'}_\a$ is a composition factor of $(\D \otimes_U L^\qvb_\b)^{\qvb'}$
for any integral $\qvb'$ with $\a \in \cF_{\qvb'}$.
By Lemma \ref{factoring translation} we have
$$(\D \otimes_U L^\qvb_\b)^{\qvb'} = (\D \otimes_U L^\qvb_\b)^{\la'} = {}_{\la'}T_\la \otimes_{U_\la} L^\qvb_\b,$$
where $\la'$ is the central character determined by $\qvb'$.  Thus $\a\Rleq \b$, as desired.
\end{proof}

\begin{corollary}
The right cells of $\cB_\xi$ are in bijection with the $\xi$-bounded faces of $\cH_0$.
\end{corollary}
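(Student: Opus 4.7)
The plan is to combine Proposition~\ref{right-cells}(3) with a direct combinatorial identification of the equivalence classes of cones $\Delta_{0,\a}$ with faces of $\cH_0$.

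First, I would extract from Proposition~\ref{right-cells} that two sign vectors $\a,\b\in\cB_\xi$ lie in the same right cell if and only if $\Delta_{0,\a}=\Delta_{0,\b}$ (apply (3) in both directions). Thus the right cells of $\cB_\xi$ are in bijection with the distinct closed polyhedra $\Delta_{0,\a}$ for $\a\in\cB_\xi$.

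Second, I would set up the bijection between these polyhedra and the $\xi$-bounded faces of $\cH_0$. Recall that a face $\Phi$ of $\cH_0$ is a maximal nonempty locally closed subset of $V_{0,\R}$ on which each coordinate $h_i$ has constant sign in $\{+,0,-\}$; denote this sign vector $\sigma_\Phi\in\{+,0,-\}^n$. Given $\a\in\{+,-\}^n$, the relative interior of $\Delta_{0,\a}$ is a single face $\Phi(\a)$, whose sign vector is given by $\sigma_\Phi(i)=\a(i)$ when $h_i$ is not identically zero on $\Delta_{0,\a}$, and $\sigma_\Phi(i)=0$ otherwise. Conversely, for any nonempty face $\Phi$ with sign vector $\sigma$, the sign vectors $\a\in\{+,-\}^n$ with $\Delta_{0,\a}=\overline{\Phi}$ are exactly those satisfying $\a(i)=\sigma(i)$ whenever $\sigma(i)\neq 0$ (the indices with $\sigma(i)=0$ being free); all such $\a$ yield the same closed polyhedron, namely $\overline{\Phi}$. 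This establishes a bijection between the set of nonempty closed polyhedra $\{\Delta_{0,\a}\mid \a\in\{+,-\}^n,\ \Delta_{0,\a}\neq\emptyset\}$ and the set of faces of $\cH_0$.

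Third, I would check that this bijection respects the $\xi$-bounded condition on both sides. By definition, $\a\in\cB_\xi$ exactly when $\xi$ is bounded above on $\Delta_{0,\a}$, which depends only on the polyhedron $\Delta_{0,\a}=\overline{\Phi(\a)}$ (equivalently on the face $\Phi(\a)$, since $\xi$ is linear). So the restriction to $\cB_\xi$ on the left corresponds exactly to the restriction to $\xi$-bounded faces on the right, and the bijection descends to the claimed bijection between right cells of $\cB_\xi$ and $\xi$-bounded faces of $\cH_0$.

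Since Proposition~\ref{right-cells} has already done the substantive work of translating the representation-theoretic preorder into the polyhedral condition $\Delta_{0,\a}\subset\Delta_{0,\b}$, no serious obstacle remains; the only care needed is in the face/closure bookkeeping, in particular in checking that every cone $\Delta_{0,\a}$ with $\a\in\cB_\xi$ really is the closure of a unique face of $\cH_0$, which follows from $\cH_0$ being a finite arrangement of linear hyperplanes.
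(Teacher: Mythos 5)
Your first step is correct and is exactly what the paper does: by Proposition \ref{right-cells}, two elements of $\cB_\xi$ lie in the same right cell if and only if $\Delta_{0,\a}=\Delta_{0,\b}$, so the right cells are in bijection with the distinct cones in $\{\Delta_{0,\a}\mid\a\in\cB_\xi\}$. The paper's proof stops essentially there, asserting that these cones \emph{are} the $\xi$-bounded faces of $\cH_0$; the ``$\xi$-bounded faces'' in the statement are precisely the cones $\Delta_{0,\a}$ on which $\xi$ is proper and bounded above, not the strata of the arrangement.

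The gap is in your second step, where you claim a bijection between the closed cones $\Delta_{0,\a}$ and \emph{all} faces (strata) of $\cH_0$, asserting that for a face $\Phi$ with covector $\sigma$ the sign vectors $\a$ with $\Delta_{0,\a}=\overline{\Phi}$ are exactly the extensions of $\sigma$ obtained by filling in the zero entries arbitrarily, all of which give $\overline{\Phi}$. This is false. If $\sigma(i)=0$ for some $i$ with $h_i$ not identically zero on the span of $\Phi$, then an extension $\a$ of $\sigma$ only satisfies $\Delta_{0,\a}\supset\overline{\Phi}$, and the containment is typically strict; moreover different extensions give different cones. Concretely, take $n=3$ with $h_1=x$, $h_2=y$, $h_3=x+y$ on $V_{0,\R}=\R^2$, and let $\Phi$ be the ray $\{y=0,\ x>0\}$, so $\sigma=(+,0,+)$. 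The two extensions $(+,+,+)$ and $(+,-,+)$ give the two distinct $2$-dimensional sectors adjacent to that ray, and neither equals $\overline{\Phi}$; in fact no $\a$ has $\Delta_{0,\a}=\overline{\Phi}$. Thus the map $\a\mapsto\operatorname{relint}(\Delta_{0,\a})$ into strata is well defined but not surjective, its fibers are not as you describe, and under your stratum reading of ``face'' the count comes out wrong (there are $\xi$-bounded strata, such as that ray for a suitable $\xi$, corresponding to no right cell). The correct argument simply does not pass through the strata of $\cH_0$: after your first step one only needs to observe that a cone $\Delta_{0,\a}$ is $\xi$-bounded exactly when $\a\in\cB_\xi$, which is the definition of $\cB_\xi$.
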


\begin{proof}
The $\xi$-bounded faces of $\cH_0$ are exactly the polyhedra $\{\Delta_{\a,0}\mid\a\in\cB_{\xi}\}$.
\end{proof}

\subsection{Two-sided cells}
\begin{definition}
We define a third preorder on $\cP_{\qvb,\xi} = \cF_\qvb\cap\cB_\xi$ 
by putting $\a\Tleq\b$ if $\a\Lleq\b$ or $\a\Rleq\b$,
and then taking the transitive closure.  If $\a\Tleq\b$ and $\b\Tleq\a$, we say that $\a$
and $\b$ lie in the same {\bf two-sided cell} of $\cP_{\qvb,\xi}$.
\end{definition}

\begin{proposition}\label{two-sided cells}
Two bounded feasible sign vectors $\a,\b\in\cP_{\qvb,\xi}$ lie in the same two-sided cell
if and only if $\cI_\a = \cI_\b$ (equivalently $F_\a = F_\b$).
\end{proposition}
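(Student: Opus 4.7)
The plan is to dispatch the two implications separately. The forward direction is immediate: both Propositions \ref{left cells} and \ref{right-cells} state that $\gamma \Lleq \delta$ or $\gamma \Rleq \delta$ forces $F_\gamma \subseteq F_\delta$, so by transitivity this extends to $\gamma \Tleq \delta$. Consequently, if $\a$ and $\b$ lie in a common two-sided cell we get $F_\a \subseteq F_\b$ and $F_\b \subseteq F_\a$, hence $F_\a = F_\b$.

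For the converse, assume $F := F_\a = F_\b$, set $\cI := \cI_\a = \cI_\b$, and introduce the ``splice'' $\gamma \in \{+,-\}^n$ defined by $\gamma|_\cI := \a|_\cI$ and $\gamma|_{\cI^c} := \b|_{\cI^c}$. Once I establish that $\gamma \in \cP_{\qvb,\xi}$ and $F_\gamma = F$, the characterizations in Propositions \ref{left cells} and \ref{right-cells} place $\gamma$ in the same left cell as $\a$ (they share $F$ and agree on $\cI$) and in the same right cell as $\b$ (they share $F$ and agree on $\cI^c$), chaining to put $\a$ and $\b$ in a common two-sided cell.

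To analyze $\Delta_{0,\gamma}$, observe that for $i \in \cI$ the functional $h_i$ vanishes on $F$, so inside $F$ the $\cI$-constraints of $\gamma$ (inherited from $\a$) are trivial while the $\cI^c$-constraints (inherited from $\b$) cut out exactly $\Delta_{0,\b}$. Transverse to $F$, the $\cI$-constraints from $\a$ restrict to the same linear inequalities on a complement to $F$ that already pinned $\Delta_{0,\a}$ to $F$ (since $F_\a = F$, those restricted inequalities admit only the zero solution), and so they likewise prevent $\Delta_{0,\gamma}$ from leaving $F$. The resulting identification $\Delta_{0,\gamma} = \Delta_{0,\b}$ immediately yields $F_\gamma = F$ and $\gamma \in \cB_\xi$.

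The main obstacle will be verifying $\gamma \in \cF_\qvb$; by regularity of $\QPA$ it suffices to show $\DDelta_\gamma \neq \emptyset$. Here I plan to pass to the projection $\pi\colon \V_\R \to \V_\R/F$. Because $h_i$ is $F$-invariant precisely for $i\in \cI$, the $\cI$-constraints descend through $\pi$ and define a polytope in $\V_\R/F$ containing the nonempty image $\pi(\DDelta_\a)$. Over any $\bar v$ in that polytope, the fiber $\pi^{-1}(\bar v)$ is an affine $F$-translate on which the $\cI^c$-constraints from $\b$ cut out a polyhedron whose cone of unbounded directions is $\Delta_{0,\b}$; since $\Delta_{0,\b}$ is a full-dimensional cone in $F$, any interior ray eventually satisfies all the affine $\cI^c$-inequalities, so the fiber polyhedron is nonempty. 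Therefore $\DDelta_\gamma \neq \emptyset$, completing the argument.
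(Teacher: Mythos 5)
Your proof is correct and follows the same skeleton as the paper's: the forward implication by combining Propositions \ref{left cells} and \ref{right-cells} with transitivity, and the converse by splicing $\gamma := \a|_{\cI}\cup\b|_{\cI^c}$ and checking $\gamma\in\cP_{\qvb,\xi}$, with boundedness coming from $\Delta_{0,\gamma}=\Delta_{0,\b}$. Where you genuinely diverge is the feasibility of $\gamma$: the paper gets it from Gale duality, writing $\cF_\qvb=\cF_\eta=\cB_{\xi^!}$ via linkage and Theorem \ref{bffb} and then rerunning the boundedness argument in the dual arrangement, whereas you argue directly that $\DDelta_\gamma\neq\emptyset$ by fibering $\V_\R$ over $\V_\R/F$ and pushing a point of a fiber along a relative-interior direction of $\Delta_{0,\b}$, then invoking regularity of $\qvb$. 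Your route is more self-contained (no dual arrangement needed) and makes explicit exactly where regularity of $\QPA$ enters; the paper's is shorter because it recycles the boundedness computation verbatim. One small point: your parenthetical claim that the $\cI$-constraints of $\a$, descended to $V_{0,\R}/F$, admit only the zero solution is true but not immediate from ``$F_\a=F$'' alone — it needs the observation that if $v$ satisfied the $\cI$-constraints with $v\notin F$, then $v+tw$ for $w$ in the relative interior of $\Delta_{0,\a}$ and $t\gg 0$ would lie in $\Delta_{0,\a}\setminus F$, contradicting $\Delta_{0,\a}\subseteq F$. (The paper simply asserts $\Delta_{0,\gamma}=\Delta_{0,\b}$ with no justification, so you are not below its standard of rigor, but the gap is worth closing.)
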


\begin{proof}
The only if statement is an immediate consequence of Propositions \ref{left cells}
and \ref{right-cells}.  To prove the if statement, we will show that the intersection
of the left cell containing $\a$ and the right cell containing $\b$ is nonempty.

Let $\cI = \cI_\a = \cI_\b$, and
let $\gamma$ be the sign vector that agrees with $\a$ on $\cI$ and with $\b$ on $\cI^c$.
Propositions \ref{left cells} and \ref{right-cells} tell us that $\gamma$ is in the same left cell
as $\a$ and the same right cell as $\b$ provided that we can show that $\gamma\in\cP_{\qvb,\xi}$.
That $\gamma$ is $\xi$-bounded follows from the fact that 
$\Delta_{0,\gamma} = \Delta_{0,\b}$.

Feasibility follows from Gale duality together with the same argument.  Let 
$\PA = (\vb_0,\eta,\xi)$ be a 
polarized arrangement linked to $\QPA$, and let $\PA^! = (\vb_0,\eta^!,\xi^!)$ be its Gale dual, as defined 
in Section \ref{sec:gd}.
We have $\a \in \cF_\qvb = \cF_\eta = \cB_{\xi^!}$, where the last equality is by Theorem \ref{bffb}.  
Then the boundedness argument  
from the previous paragraph allows us to conclude that $\gamma \in \cB_{\xi}$, and therefore 
$\gamma \in \cF_{\qvb}$, as required.
\end{proof}

We conclude the section by showing that Gale duality exchanges left cells with right cells.

\begin{theorem}\label{opposite preorders}
Suppose that $\QPA$ and $\QPA^!$ are Gale dual regular integral quantized polarized
arrangements. 
Then the two preorders on $\cF_\qvb = \cB_{\xi^!}$ are opposite to each other,
as are the preorders on $\cF_{\qvb^!} = \cB_\xi$ and on $\cP_{\qvb,\xi} = \cP_{\qvb^!,\xi^!}$.
\end{theorem}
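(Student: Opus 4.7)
The plan is to reduce all three statements of the theorem to a single combinatorial claim: for every sign vector $\b\in\{+,-\}^n$, the zero-sets $\cI_\b\subseteq\{1,\dots,n\}$ (associated to $\b$ via $V_{0,\R}\subset\R^n$) and $\cI_\b^!$ (associated via the Gale-dual subspace $V_{0,\R}^!\subset\R^n$) form a disjoint partition of $\{1,\dots,n\}$.  Granting this, I combine the characterizations of Propositions~\ref{left cells} and~\ref{right-cells}: for $\a,\b\in\cF_\qvb=\cB_{\xi^!}$, the relation $\a\Lleq\b$ says $\cI_\b\subseteq\cI_\a$ together with $\a\equiv\b$ on $\cI_\b$, while the relation $\b\Rleq\a$ (read in the dual arrangement) says $\cI_\a^!\subseteq\cI_\b^!$ together with $\a\equiv\b$ on $(\cI_\b^!)^c$.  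The partition identifies $\cI_\b=(\cI_\b^!)^c$, so the agreement conditions match, and taking complements turns $\cI_\b\subseteq\cI_\a$ into $\cI_\a^!\subseteq\cI_\b^!$; hence $\a\Lleq\b \iff \b\Rleq\a$, so the two preorders on $\cF_\qvb$ are opposite.  The statement for $\cF_{\qvb^!}=\cB_\xi$ is handled symmetrically, and the one for $\cP_{\qvb,\xi}=\cP_{\qvb^!,\xi^!}$ follows because the two-sided preorder is the transitive closure of the union of the left and right preorders, each of which is reversed.

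For the \emph{covering} assertion $\cI_\b\cup\cI_\b^!=\{1,\dots,n\}$ I would use orthogonality in $\R^n$.  Pick $v$ in the relative interior of $\Delta_{0,\b}\subset V_{0,\R}$ and $u$ in the relative interior of $\Delta_{0,\b}^!\subset V_{0,\R}^!$ (both exist since the polyhedra contain $0$).  Because $V_{0,\R}$ and $V_{0,\R}^!$ are orthogonal complements in $\R^n$, we have $\sum_j v_j u_j = v\cdot u = 0$.  Each term with $j\in\cI_\b$ or $j\in\cI_\b^!$ vanishes because $v_j=0$ or $u_j=0$; each remaining term satisfies $v_j u_j = (v_j\b(j))(u_j\b(j))>0$.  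A sum of strictly positive terms that vanishes must be empty, forcing $\cI_\b\cup\cI_\b^!=\{1,\dots,n\}$.

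The \emph{disjointness} $\cI_\b\cap\cI_\b^!=\emptyset$ is the main obstacle, and I would derive it via Farkas/LP duality.  The definition of $\cI_\b$ is that the LP ``maximize $\b(i)v_i$ subject to $v\in V_{0,\R}$ and $v_j\b(j)\ge 0$ for all $j$'' has optimum $0$; writing down its dual yields the characterization that $i\in\cI_\b$ iff there exist $\mu_j\ge 0$ with $\mu_i>0$ satisfying $\sum_j\mu_j\b(j)e_j\in V_{0,\R}^!$.  Symmetrically, $i\in\cI_\b^!$ iff there exist $\mu_j'\ge 0$ with $\mu_i'>0$ and $\sum_j\mu_j'\b(j)e_j\in V_{0,\R}$.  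Assuming both, set $w=\sum_j\mu_j\b(j)e_j\in V_{0,\R}^!$ and $w'=\sum_j\mu_j'\b(j)e_j\in V_{0,\R}$; orthogonality forces $w\cdot w'=0$, but direct expansion gives $w\cdot w'=\sum_j\mu_j\mu_j'\b(j)^2=\sum_j\mu_j\mu_j'\ge\mu_i\mu_i'>0$, a contradiction.  The conceptual crux is this Farkas-type characterization of $\cI_\b$; once it is in hand, the mutual orthogonality of $V_{0,\R}$ and $V_{0,\R}^!$ does the rest.
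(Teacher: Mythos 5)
Your proof is correct and follows essentially the same route as the paper: combine the combinatorial characterizations of the left and right preorders (Propositions \ref{left cells} and \ref{right-cells}) with the fact that Gale duality complements the sets $\cI_\b$, i.e.\ reverses the order on coloop-free flats. The only difference is that the paper simply cites this last fact as known, whereas you prove it directly (the covering via orthogonality of relative-interior points of $\Delta_{0,\b}$ and $\Delta_{0,\b}^!$, the disjointness via the Farkas-type description of $\cI_\b$); both halves of that argument check out.
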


\begin{proof}
This follows from Propositions \ref{left cells}, \ref{right-cells}, and \ref{two-sided cells},
along with the fact that Gale duality induces an order-reversing bijection of coloop-free flats.
\end{proof}

As a corollary, we find that left cells in $\cF_\qvb$ coincide with
right cells in $\cB_{\xi^!}$, and the partial order on the set of left cells is the opposite of the partial order on the set
of right cells, with similar statements holding for $\cF_{\qvb^!} = \cB_\xi$ and $\cP_{\qvb,\xi} = \cP_{\qvb^!,\xi^!}$.
We state this corollary explicitly for two-sided cells, since we will use it in the next section.

\begin{corollary}\label{ordered cells}
Let $\cP := \cP_{\qvb,\xi} = \cP_{\qvb^!,\xi^!}$.
The two-sided cells of $\cP$ induced by $\QPA$ are the same as the two-sided cells of $\cP$
induced by $\QPA^!$, and the partial order on the set of cells induced by $\QPA$ is opposite to the partial order
on the set of cells induced by $\QPA^!$.
\end{corollary}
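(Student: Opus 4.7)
The plan is to deduce this corollary directly from Theorem \ref{opposite preorders}, which already asserts that the two preorders $\Tleq$ on $\cP = \cP_{\qvb,\xi} = \cP_{\qvb^!,\xi^!}$ induced by $\QPA$ and $\QPA^!$ are opposite to one another. Essentially no further geometric input is required; what remains is a purely formal observation about preorders.

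First, I would record the following elementary fact. Given any preorder $\le$ on a set $S$, declaring $a \sim b$ if and only if both $a \le b$ and $b \le a$ defines an equivalence relation on $S$, and $\le$ descends to a partial order on the quotient $S/{\sim}$. The relation $\sim$ is manifestly symmetric in the role of $\le$ and its opposite $\le^{\mathrm{op}}$, since the condition ``$a \le b$ and $b \le a$'' is unchanged under reversing the preorder. Consequently, a preorder and its opposite give rise to the same equivalence classes, but to opposite partial orders on the common quotient.

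Second, I would apply this observation with $S = \cP$ and with the two preorders $\Tleq$ on $\cP$ coming from $\QPA$ and from $\QPA^!$. By Theorem \ref{opposite preorders}, these preorders are opposite. The equivalence classes they produce are by definition the two-sided cells, so the two-sided cell partitions of $\cP$ induced by $\QPA$ and $\QPA^!$ coincide. The partial orders on the common set of two-sided cells, being induced by opposite preorders, are opposite to one another.

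There is no real obstacle here: the substantive work has been carried out in Proposition \ref{two-sided cells} and Theorem \ref{opposite preorders}, and the corollary is just the translation of ``opposite preorders'' into the equivalent statement about equivalence classes (cells) and induced partial orders on the quotient. The only thing to be mildly careful about is that one is applying the preorder characterization uniformly: both the cell partition and the partial order on cells are recovered from the preorder $\Tleq$ alone, so reversing $\Tleq$ preserves the former and reverses the latter, giving exactly the two assertions of the corollary.
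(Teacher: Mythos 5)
Your proposal is correct and matches the paper's approach: the paper states this corollary as an immediate consequence of Theorem \ref{opposite preorders}, with no further argument, and your formal observation that a preorder and its opposite yield the same equivalence classes but opposite induced partial orders on the quotient is exactly the implicit content of that deduction.
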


\subsection{Cells and the BBD filtration}
Let $\PA$ be a polarized arrangement linked to $\QPA$.  Assume that 
$\vb_0$ is unimodular, so that Corollary \ref{linked-loc} provides an equivalence between the algebraic category $\cO(\QPA)$
and the geometric category $\cQ_\la$ of sheaves on the hypertoric variety $\fM = \fM(\PA)$.
In this section we use the two-sided cells to define a natural filtration of the Grothendieck group of $\cO(\QPA)$.  This filtration will be shown to coincide with the
Beilinson-Bernstein-Deligne (BBD) filtration of $\hmid$ via the cycle map of Section \ref{sec:grothendieck}.

Recall the projective morphism $\nu:\fM\longrightarrow \fM_0$ from $\fM$ to the affine hypertoric variety $\fM_0$.  
Since $\nu$ is semismall, the BBD decomposition theorem gives a canonical isomorphism
\begin{equation}\label{bbd}
\nu_*\C_\fM\,\,\cong\,\, \bigoplus_{F}\,\,\IC\!\left({\fM_0^F};\,\Pi_F\right),
\end{equation}
where  $\nu_*$ is the derived pushforward, $F$ ranges over all coloop-free flats of $\cH_0$,
$\Pi_F$ is the local system whose fiber over a point $x$ is the
top nonvanishing cohomology group of $\nu^{-1}(x)$,
and $\IC({\fM_0^F};\Pi_F)$ is the intersection cohomology sheaf of $\fM_0^F$ with coefficients
in $\Pi_F$  \cite[8.9.3]{CG97}.
(In fact, as shown in \cite[5.2]{PW07}, the local system $\Pi_F$ is always trivial.)
By applying the functor $H^{2d}_\bT(\cdot)$ to both sides of Equation \eqref{bbd},
we obtain the decomposition 
$$\hmid \,\,\cong\,\,\bigoplus_F\,\, I\! H^{2d}_\bT(\fM_0^F; \Pi_F).$$

We will be interested not in the full direct sum decomposition, but rather
in one of the two associated filtrations.  
More precisely, let
$$D_F := \bigoplus_{F\subset F'} I\! H^{2d}_\bT(\fM_0^{F'}; \Pi_{F'})\subset \hmid
\and
E_F := \bigoplus_{F\subsetneq F'} I\! H^{2d}_\bT(\fM_0^{F'}; \Pi_{F'})\subset D_F.$$
This filtration has a topological interpretation, which we prove (for arbitrary equivariant symplectic resolutions)
in \cite{BLPWgco}.

\begin{theorem}\label{filtration}
For each coloop-free flat $F$ of $\cH_0$, $D_F$
is equal to the intersection 
$$\bigcap_{\{\a:F\not\subset F_\a\}} [C_\a]^\perp,$$ 
where $[C_\a]^\perp$ is the perpendicular space to $[C_\a]$ 
with respect to the integration 
pairing on $\hmid$.
\end{theorem}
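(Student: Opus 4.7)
The strategy combines two ingredients: orthogonality of the integration pairing with respect to the BBD decomposition, together with the fact that each cycle class $[C_\alpha]$ lies in those BBD summands $M_{F''} := I\!H^{2d}_\bT(\fM_0^{F''}; \Pi_{F''})$ with $F'' \subset F_\alpha$.

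First I would verify that the integration pairing on $\hmid$ diagonalises with respect to the BBD decomposition $\hmid = \bigoplus_F M_F$. Because $\nu$ is proper and semismall, $\nu_*\C_\fM[2d]$ is a Verdier-self-dual perverse sheaf on $\fM_0$, and since the local systems $\Pi_F$ are trivial by \cite[5.2]{PW07}, each summand $\IC(\overline{\fM_0^F}; \Pi_F)$ is itself Verdier-self-dual. The integration pairing, induced by the autoduality of $\nu_*\C_\fM[2d]$, therefore respects the isotypic decomposition, so distinct summands $M_F$ and $M_{F'}$ are mutually orthogonal and the pairing restricts to a nondegenerate form on each summand. In particular $D_F^\perp = \bigoplus_{F' \not\supset F} M_{F'}$.

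Second, I would show that for each $\alpha \in \cP_{\qvb,\xi}$ the class $[C_\alpha]$ lies in $\bigoplus_{F'' \subset F_\alpha} M_{F''}$. Lemma \ref{support} combined with Proposition \ref{goldie} tells us that the image $C_{0,\alpha} = \nu(C_\alpha)$ is contained in $\overline{\fM_0^{F_\alpha}}$, so the pushforward $\nu_*[C_\alpha]$ is supported on this closed set. For any flat $F''$ with $F'' \not\subset F_\alpha$, the open stratum $\fM_0^{F''}$ is disjoint from $\overline{\fM_0^{F_\alpha}}$, so the restriction of $[C_\alpha]$ to $\fM_0^{F''}$ vanishes; since each IC summand of the BBD decomposition is determined by its restriction to the open stratum, the $M_{F''}$-component of $[C_\alpha]$ must vanish. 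Combining with the first step yields the easy inclusion: if $F \not\subset F_\alpha$, then any $F'' \subset F_\alpha$ automatically satisfies $F'' \not\supset F$ (otherwise $F \subset F'' \subset F_\alpha$ would contradict $F \not\subset F_\alpha$), hence $[C_\alpha] \in D_F^\perp$. Thus $\operatorname{span}\{[C_\alpha] : F \not\subset F_\alpha\} \subset D_F^\perp$, i.e.\ $D_F \subset \bigcap_{F \not\subset F_\alpha}\,[C_\alpha]^\perp$.

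The main obstacle is the reverse inclusion, which I would establish by a dimension count. Since $\{[C_\alpha] : \alpha \in \cP_{\qvb,\xi}\}$ is a basis of $\hbmc \cong \hmid$, it suffices to prove the combinatorial identity
\[ \#\{\alpha \in \cP_{\qvb,\xi} : F_\alpha = F'\} \;=\; \dim M_{F'} \]
for every coloop-free flat $F'$; summing over $F' \not\supset F$ then matches $|\{\alpha : F \not\subset F_\alpha\}|$ with $\dim D_F^\perp$ and forces equality in the previous paragraph. I would prove this identity either by computing $\dim I\!H^{2d}_\bT(\fM_0^{F'}; \Pi_{F'})$ combinatorially from the hyperplane arrangement using \cite{PW07}, or by a leading-term argument: for $\alpha$ with $F_\alpha = F'$ the $M_{F'}$-component of $[C_\alpha]$ is nonzero, and restricting to a transverse slice at a generic point of $\fM_0^{F'}$ shows that these components span $M_{F'}$ as $\alpha$ varies over $\{F_\alpha = F'\}$. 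As noted in the excerpt, this dimension count is carried out in the more general setting of equivariant symplectic resolutions in \cite{BLPWgco}.
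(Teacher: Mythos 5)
Note first that the paper does not actually prove Theorem \ref{filtration} in this text: it is stated with a pointer to \cite{BLPWgco}, where it is established for general equivariant symplectic resolutions. So there is no internal argument to compare yours against, and I can only judge the outline on its own terms. The architecture you propose --- orthogonality of the summands $M_{F}:=I\!H^{2d}_\bT(\fM_0^{F};\Pi_{F})$ under the pairing, a support bound placing $[C_\a]$ in $\bigoplus_{F''\subset F_\a}M_{F''}$, and a dimension count to upgrade the resulting inclusion to an equality --- is the natural semismall-map strategy, and the logic connecting the three steps is correct.

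Two of the steps, however, are not yet proofs. First, your justification that the $M_{F''}$-component of $[C_\a]$ vanishes when $F''\not\subset F_\a$ --- ``each IC summand is determined by its restriction to the open stratum'' --- does not give what you need: a hypercohomology class of $\IC(\fM_0^{F''};\Pi_{F''})$ is certainly not determined by its image under restriction to the open stratum, and that restriction map can have a large kernel. The correct statement is that the image of $H^{B\!M}_{2d}(\nu^{-1}(Z);\C)$ in $H^{B\!M}_{2d}(\fM;\C)$, for $Z$ a closed union of strata, lies in $\bigoplus_{\fM_0^{F''}\subset Z}M_{F''}$; this follows from the cosupport conditions on intersection complexes together with semismallness (the degree-$2d$ part of $\mathbb{H}(i_Z^!\IC(\fM_0^{F''};\Pi_{F''}))$ vanishes when $Z$ does not contain the open stratum of $F''$), in the style of de Cataldo--Migliorini or \cite[\S 8.9]{CG97}. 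This is standard but is a genuinely different argument from the one you wrote. Second, the identity $\#\{\a\in\cP_{\qvb,\xi}: F_\a=F'\}=\dim M_{F'}$ is the real content of the theorem and is left unproved. Worse, the ``transverse slice at a generic point'' route you sketch only accesses the stalk of $\Pi_{F'}$, i.e.\ the multiplicity space of dimension $\rk\Pi_{F'}$; it cannot see the full group $I\!H^{2d}_\bT(\fM_0^{F'};\Pi_{F'})$, whose dimension is $\rk\Pi_{F'}\cdot\dim I\!H^{*}(\fM_0^{F'})$, so that route as stated cannot close the count. The combinatorial route (matching the two-sided cell counts coming from Propositions \ref{left cells}--\ref{two-sided cells} against the intersection Betti numbers of \cite{PW07}) is viable but must actually be carried out. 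You should also justify the identification of the integration pairing of Section \ref{sec:grothendieck} (defined on $\hmid$ by formal localization, since $\fM$ is noncompact) with the Poincar\'e--Verdier pairing coming from the self-duality of $\nu_*\C_\fM$; this passes through the isomorphism $H^{B\!M}_{2d}(\fM^+;\C)\cong\hmid$ and is not automatic.
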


For all $\a\in\cP_{\qvb,\xi}$, let $P_\a^\qvb$ be the projective cover of $L_\a^\qvb$ in $\cO(\QPA)$.

\begin{theorem}\label{CaCo}
We have
$$D_F = \C\left\{\,\supp[\Loc(P_\a^\qvb)]\mid F\subset F_\a\right\}
\and E_F = \C\left\{\,\supp[\Loc(P_\a^\qvb)]\mid F\subsetneq F_\a\right\}.$$
In other words, the BBD filtration of $\hmid$ corresponds, via the support
isomorphism, to the filtration of $K(\cO(\QPA))_\C$ defined by the basis of projective objects and 
the order on two-sided cells.
\end{theorem}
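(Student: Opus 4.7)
The plan is to convert the topological characterization of the BBD filtration provided by Theorem \ref{filtration} into the algebraic one claimed in Theorem \ref{CaCo}. The key observation will be that the two bases $\{\supp[\Loc(P_\a^\qvb)]\mid \a\in\cP_{\qvb,\xi}\}$ and $\{[C_\a]\mid \a\in\cP_{\qvb,\xi}\}$ of $\hmid$ are dual to each other (up to an overall sign) under the integration pairing; once this duality is in hand, the first formula drops out of Theorem \ref{filtration}, and the second follows from a formal manipulation with the BBD decomposition.

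To establish the duality I will first invoke a standard homological fact: in the highest weight category $\cO(\QPA)$, the Euler form satisfies
\[\langle [P_\a^\qvb],[L_\b^\qvb]\rangle \,=\,\sum_i(-1)^i\dim\Ext^i(P_\a^\qvb,L_\b^\qvb)\,=\,\dim\Hom(P_\a^\qvb,L_\b^\qvb)\,=\,\delta_{\a\b},\]
since $P_\a^\qvb$ is projective and $L_\b^\qvb$ is simple. Combining this with the identity $\supp[\Loc(L_\b^\qvb)]=[C_\b]$ recorded in Section \ref{sec:grothendieck} and with Proposition \ref{forms} (which says that $\supp$ intertwines the Euler form with $(-1)^d$ times the integration pairing), I obtain
\[\big\langle\supp[\Loc(P_\a^\qvb)],\,[C_\b]\big\rangle_{\operatorname{int}}\,=\,(-1)^d\delta_{\a\b}.\]
Thus the two bases are dual up to the global scalar $(-1)^d$, and in particular the perpendicular of any subset of $\{[C_\b]\}$ is the span of the complementary subset of $\{\supp[\Loc(P_\a^\qvb)]\}$.

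Applied to Theorem \ref{filtration}, this yields the first formula immediately: an element $v\in\hmid$ lies in $D_F$ iff it is perpendicular to $[C_\b]$ for every $\b$ with $F\not\subset F_\b$, which by the dual-basis description is exactly the span of $\{\supp[\Loc(P_\a^\qvb)]\mid F\subset F_\a\}$. For the second formula I will observe that the BBD direct sum decomposition gives
\[E_F\,=\,\sum_{F'\supsetneq F}D_{F'},\]
where the sum runs over coloop-free flats $F'$ strictly containing $F$: every IC summand $\IH^{2d}_\bT(\fM_0^{F''};\Pi_{F''})$ with $F''\supsetneq F$ already lies in $D_{F''}$, so the right side contains $E_F$, and conversely each $D_{F'}$ with $F'\supsetneq F$ is contained in $E_F$ by the nested structure. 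Substituting the formula for $D_{F'}$ just proved, an index $\a$ contributes iff there exists a coloop-free $F'$ with $F\subsetneq F'\subset F_\a$; since $F_\a$ is itself always coloop-free, this holds iff $F\subsetneq F_\a$, giving the desired description of $E_F$.

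I do not anticipate any substantive obstacle here: the argument is essentially a bookkeeping exercise that combines Theorem \ref{filtration} with the duality of projectives and simples under the Euler form. The one place that demands care is keeping track of the sign $(-1)^d$ coming from Proposition \ref{forms} and confirming that $F_\a$ really is coloop-free (which is built into the very definition of $F_\a$ as the span of $\Delta_{0,\a}$ for a bounded feasible $\a$), so that the reduction from $E_F$ to a sum of $D_{F'}$'s is valid.
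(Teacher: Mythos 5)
Your proposal is correct and follows the paper's own argument: the duality of $\{\supp[\Loc(P_\a^\qvb)]\}$ and $\{[C_\b]\}$ under the integration pairing, obtained from the Euler-form duality of projectives and simples together with Proposition \ref{forms}, combined with Theorem \ref{filtration}, is exactly how the paper proves the $D_F$ formula. The only (harmless) divergence is in the $E_F$ statement, which the paper dispatches with ``proven in the same way,'' whereas you derive it formally from the $D_F$ formula via $E_F=\sum_{F'\supsetneq F}D_{F'}$ and the coloop-freeness of $F_\a$ --- a slightly more self-contained route, since Theorem \ref{filtration} as stated only addresses $D_F$.
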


\begin{proof}
We have 
$$\C\left\{\,\supp[\Loc(P_\a^\qvb)]\mid F\subset F_\a\right\} = 
\C\left\{\,\supp[\Loc(L_\a^\qvb)]\mid F\not\subset F_\a\right\}^\perp
= \C\left\{\,[C_\a]\mid F\not\subset F_\a\right\}^\perp$$ by Proposition \ref{forms}
and the fact that the classes of simple and projective objects are dual with respect to the Euler form.
The first statement of the theorem then follows
from Theorem \ref{filtration}.  
The statement about $E_F$ is proven in the same way.  The interpretation in terms of two-sided
cells follows from Proposition \ref{two-sided cells}.
\end{proof}

For all $\a\in\{\plusminus\}^n$, let $d_\a := |\{i\mid \a(i)=-1\}|$.
Let $\PA^!$ be the Gale dual of $\PA$, and let $\QPA$ be a regular, integral, quantized polarized arrangement
linked to $\PA^!$.  Consider the pairing $$K(\cO(\QPA))_\C\otimes K(\cO(\QPA^!))_\C\to\C$$
given by the formula $$\left\langle [P^\qvb_\a], [P^{\qvb^!}_{\b}]\right\rangle = (-1)^{d_\a}\delta_{\a\b}$$
for all $\a,\b\in\cP_{\qvb,\xi} = \cP_{\qvb^!,\xi^!}$.
Using the support isomorphism from Section \ref{sec:grothendieck}, we may interpret this pairing as a pairing between
$H^{2d}_{\bT}\!\left(\fM(\PA); \C\right)$ and $H^{2d^!}_{\bT}\!\!\left(\fM(\PA^!); \C\right)$, where $d^! := \dim V_0^!$.
For each coloop-free flat $F$ of $\cH_0$, $F^c$ is a coloop-free flat of $\cH_0^!$, and we denote by
$$E^!_{F^c}\subset D^!_{F^c}\subset H^{2d^!}_{\bT}\!\!\left(\fM(\PA^!); \C\right)$$
the corresponding pieces of the BBD filtration.
Theorem \ref{CaCo} has the following immediate corollary.

\begin{corollary}\label{dual filtrations}
The BBD filtrations of $H^{2d}_{\bT}\!\left(\fM(\PA); \C\right)$ and $H^{2d^!}_{\bT}\!\!\left(\fM(\PA^!); \C\right)$
are dual to each other under the above pairing.  More precisely,
for any coloop-free flat $F$ of $\cH_0$,
$E^!_{F^c}$ is the perpendicular space to $D_F$ and $D^!_{F^c}$ is the perpendicular space to $E_F$.
In particular, the pairing induces a canonical duality between
$$I\! H^{2d}_\bT\!\left(\fM_0(\PA)^F; \Pi_F\right)\cong D_F/E_F \and
I\! H^{2d^!}_\bT\!\!\left(\fM_0(\PA^!)^{F^c}; \Pi^!_{F^c}\right)\cong D^!_{F^c}/E^!_{F^c}.$$
\end{corollary}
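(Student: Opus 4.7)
The plan is to reduce the corollary to short combinatorial checks by first applying Theorem \ref{CaCo} on both sides to rewrite the BBD filtrations in the bases of projective classes, and then exploiting the diagonal form of the pairing together with the Gale duality bijection on coloop-free flats. First, I would use Theorem \ref{CaCo} together with the support isomorphism of Section \ref{sec:grothendieck} to identify, inside $K(\cO(\QPA^!))_\C \cong H^{2d}_\bT(\fM(\PA);\C)$,
\[
D_F = \mathrm{span}\{[P^{\qvb^!}_\alpha] : F\subseteq F_\alpha\},\qquad E_F = \mathrm{span}\{[P^{\qvb^!}_\alpha] : F\subsetneq F_\alpha\},
\]
and analogously, inside $K(\cO(\QPA))_\C \cong H^{2d^!}_\bT(\fM(\PA^!);\C)$, $D^!_{F^c} = \mathrm{span}\{[P^\qvb_\alpha] : F^c\subseteq F^!_\alpha\}$ and $E^!_{F^c} = \mathrm{span}\{[P^\qvb_\alpha] : F^c\subsetneq F^!_\alpha\}$. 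Second, Corollary \ref{ordered cells} identifies the two-sided cells of $\cO(\QPA)$ and $\cO(\QPA^!)$ as the same partition of $\cP$ with opposite induced orders; this yields the order-reversing bijection $F\leftrightarrow F^c$ between coloop-free flats of $\cH_0$ and $\cH_0^!$ characterized by $F^!_\alpha = F_\alpha^c$ for every $\alpha\in\cP$, so that $F\subseteq F_\alpha$ holds in $\cH_0$ if and only if $F^c\supseteq F^!_\alpha$ holds in $\cH_0^!$.

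The heart of the argument is then a short diagonal-pairing computation. To verify that the pairing descends, I would check both $E_F\perp D^!_{F^c}$ and $D_F\perp E^!_{F^c}$. Because the pairing $\langle[P^{\qvb^!}_\alpha],[P^\qvb_\beta]\rangle = (-1)^{d_\alpha}\delta_{\alpha\beta}$ is diagonal in the projective bases, these vanishings reduce to checking that no $\alpha \in \cP$ simultaneously satisfies $F\subsetneq F_\alpha$ and $F^c\subseteq F^!_\alpha$ (respectively $F\subseteq F_\alpha$ and $F^c\subsetneq F^!_\alpha$); in the first case the bijection would give $F_\alpha\subseteq F$, contradicting $F\subsetneq F_\alpha$, and the second is symmetric. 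The pairing therefore descends to a bilinear form on $(D_F/E_F)\otimes(D^!_{F^c}/E^!_{F^c})$. Both quotients carry a basis indexed by the common two-sided cell $\{\alpha\in\cP : F_\alpha = F\} = \{\alpha\in\cP : F^!_\alpha = F^c\}$, and in these matched bases the induced form is diagonal with entries $(-1)^{d_\alpha}$, hence non-degenerate. This gives the canonical duality of the \emph{in particular} clause.

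The step I expect to require the most care is pinning down the perpendicular equalities. Expanding $(D_F)^\perp$ in the whole of $H^{2d^!}_\bT(\fM(\PA^!);\C)$ via the diagonal pairing yields $\mathrm{span}\{[P^\qvb_\alpha] : F\not\subseteq F_\alpha\}$, which may strictly contain $E^!_{F^c}$ whenever the poset of coloop-free flats has elements incomparable to $F$. I would therefore read the corollary's assertion as $E^!_{F^c} = D^!_{F^c}\cap(D_F)^\perp$, which follows directly from the combinatorial translation: $[P^\qvb_\alpha]$ lies in this intersection precisely when $F^c\subseteq F^!_\alpha$ and $F\not\subseteq F_\alpha$, and the bijection collapses these to $F^c\subsetneq F^!_\alpha$. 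The dual identity $D^!_{F^c} = (E_F)^\perp\cap D^!_{F^c}$ is immediate from the descent check above. Combined with the perfectness on the graded pieces, these yield the full content of Corollary \ref{dual filtrations}.
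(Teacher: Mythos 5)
Your argument is correct and is, in substance, the argument the paper intends: the paper gives no proof beyond declaring the corollary immediate from Theorem \ref{CaCo}, and you supply exactly the missing steps, namely expanding $D_F$, $E_F$, $D^!_{F^c}$, $E^!_{F^c}$ in the projective bases via Theorem \ref{CaCo}, translating $F\subseteq F_\a$ into $F^!_\a\subseteq F^c$ via the order-reversing complementation of coloop-free flats (Theorem \ref{opposite preorders}, Corollary \ref{ordered cells}), and reading off orthogonality and nondegeneracy on the graded pieces from the diagonal form of the pairing. (One cosmetic slip: $D_F$ lives in $H^{2d}_\bT(\fM(\PA);\C)\cong K(\cO(\QPA))_\C$ and should be expanded in the classes $[P^{\qvb}_\a]$ rather than $[P^{\qvb^!}_\a]$, and symmetrically for $D^!_{F^c}$; since the pairing is diagonal in the matched bases this swap affects nothing.)

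Your caveat about the ``More precisely'' sentence is well founded, not merely cautious. The literal claim that $E^!_{F^c}$ is the full perpendicular space of $D_F$ requires every coloop-free flat of nonzero BBD multiplicity to be comparable to $F$, and this can fail: take $\PA$ to be the product of two copies of the arrangement of three generic lines in the plane (so $n=6$, $\dim V_{0,\R}=4$, $\vb_0$ unimodular). The coloop-free flats are $\{0\}$, $F_1=\{0\}\times V_{0,\R}^{(2)}$, $F_2=V_{0,\R}^{(1)}\times\{0\}$, and $V_{0,\R}$, with multiplicities $1,2,2,4$ summing to $|\cP|=9$; for $F=F_1$ one finds $\dim D_F=6$ and $\dim E^!_{F^c}=1$, whereas $\dim (D_F)^\perp=3$, the discrepancy being exactly the multiplicity of the incomparable flat $F_2$. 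The correct content is what you extract: the pairing vanishes on $D_F\otimes E^!_{F^c}$ and on $E_F\otimes D^!_{F^c}$, hence descends to a perfect pairing between $D_F/E_F$ and $D^!_{F^c}/E^!_{F^c}$; equivalently $E^!_{F^c}=D^!_{F^c}\cap (D_F)^\perp$. This is all that is used later (e.g.\ in Remark \ref{matroids}), so your reading preserves the substance of the corollary while correcting its literal formulation.
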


\begin{remark}
Corollary \ref{dual filtrations} would still be true if we did not include the twist of $(-1)^{d_\a}$ in the definition
of our pairing.  We regard the twisted pairing as more natural than the untwisted pairing because the twisted pairing has
the property that
$$\left\langle [L^\qvb_\a], [L^{\qvb^!}_{\b}]\right\rangle =
\left\langle [S^\qvb_\a], [S^{\qvb^!}_{\b}]\right\rangle =
\left\langle [P^\qvb_\a], [P^{\qvb^!}_{\b}]\right\rangle = (-1)^{d_\a}\delta_{\a\b}$$
for all $\a,\b\in\cP_{\qvb,\xi} = \cP_{\qvb^!,\xi^!}$.
If the twist were not included, then the pairing would not be well-behaved with respect to the simple or standard bases.
\end{remark}

\begin{remark}\label{matroids}
For any coloop-free flat $F$, the dimension of $I\! H^{2d}_\bT(\fM_0^F; \Pi_F)$ is equal
to the dimension of $I\! H^*(\fM_0^F; \Pi_F)$.
In fact, the full equivariant cohomology group $I\! H^{*}_\bT(\fM_0^F; \Pi_F)$ is a flat family over 
$\Spec H^*_{\bT}(pt)\cong\bt$ with general fiber isomorphic to $I\! H^{2d}_\bT(\fM_0^F; \Pi_F)$
and special fiber isomorphic to $I\! H^*(\fM_0^F; \Pi_F)$.  Thus Corollary \ref{dual filtrations}
tells us that for every coloop-free flat $F$, a deformation of the vector space $I\! H^{*}\!\!\left(\fM_0(\PA)^F; \Pi_F\right)$
is canonically dual to a deformation of
$I\! H^{*}\!\!\left(\fM_0(\PA^!)^{F^c}; \Pi^!_{F^c}\right)$.

If we take $F=\{0\}$ to be the minimal flat, this says that a deformation of
$$I\! H^{*}\!\!\left(o; \Pi_{\{0\}}\right) = H^{2d}\!\left(\nu^{-1}(o); \C\right) = H^{2d}(\fM; \C)$$
is canonically dual to a deformation of $I\! H^{*}\!\!\left(\fM_0(\PA^!); \C\right)$.
In light of the combinatorial interpretation of the ordinary and intersection Betti numbers
of a hypertoric variety \cite[3.5 \& 4.3]{PW07},
this is a geometric interpretation of the well-known combinatorial statement that the top $h$-number of a matroid
is equal to the sum of the $h$-numbers of the broken circuit complex of the dual matroid.
\end{remark}

\section{Shuffling and twisting functors}\label{pione}
In this section we define functors between the derived categories of regular 
integral blocks of $\cO$ obtained by fixing $\vb_0$ and 
varying $\qvb$ and $\xi$.  These functors are analogous to the 
shuffling and twisting functors on BGG category $\cO$, and like
those functors, we show that they are equivalences, and that they 
produce a categorical action of the fundamental groupoid of the complement
of a certain complex hyperplane arrangement, analogous to the
braid group actions on the derived category of 
BGG category $\cO$ provided by shuffling and twisting functors.
Finally we show that Koszul duality interchanges shuffling functors
with twisting functors, again in analogy with the BGG category $\cO$.

\subsection{Weyl group symmetries}\label{sec:weyl-group-symm}
Fix $n$ and a direct summand $\vb_0$ of the lattice $W_\Z$.  As in Section
\ref{sec:algdef}, this determines a subtorus $K \subset T$ with Lie algebra $\fk = (\C \cdot \vb_0)^\bot$.
We begin by defining two groups of symmetries of the hypertoric enveloping algebra $U = \D^K$
which are analogous to the Weyl group in the study of the BGG category $\cO$ for a semisimple Lie algebra.

Consider the natural action of $G = Sp(2n,\C)$ on the symplectic vector space $T^*\C^n$ and the
induced action on its quantization $\D$.  The actions of $K$ and $T$ on $T^*\C^n$ and $\D$ 
factor through the action of $G$, giving an identification of 
$K$ and $T$ with subgroups of $G$, under which $T$ is a maximal torus.

If $g \in G$ normalizes $K$, then the action of $g$ on $\D$ preserves $U$, and pushing forward by this action gives
an endofunctor $U\mmod \to U\mmod$.  If in addition $g$ normalizes $T$, then this restricts to an endofunctor of 
the category $\UWM$ of weight modules.  In other words, we have an action of 
$N_G(K) \cap N_G(T)$ on $\UWM$.  Moreover, the subgroup $T \subset N_G(K) \cap N_G(T)$ acts trivially, 
in the sense that the pushforward by the endomorphism of $U$ induced by $t\in T$ is naturally isomorphic 
to the identity functor $\UWM \to \UWM$.  
To see this, note that since $t$ acts trivially on $\bH=\D_0$, it preserves each summand
 $\Ulmod_\qvb$ of $\UWM$.  For $M \in \Ulmod_\qvb$, 
the required natural isomorphism between $M = \bigoplus_{v \in \qvb} M_v$ and the $t$-twist of $M$
can be given by multiplication by $v(t)$ on the summand $M_v$, where we choose any isomorphism
$\qvb \cong \vb_0$ of $\vb_0$-torsors, and identify elements of $\vb_0 \subset W_\Z$ with characters
of $T$ via the exponential map.
 
Thus we obtain a weak action of the quotient $(N_G(K) \cap N_G(T))/T$ on the category of 
weight modules.  This group is a
subgroup of the Weyl group $S_n \ltimes \{\pm 1\}^n$ of $G$; more concretely it is the 
group of elements which preserve $\vb_0$ under the obvious action on $W_\Z \cong \Z^n$.
It contains two natural subgroups: the subgroup $\rWeyl$ of elements which 
fix $\vb_0$ pointwise and the subgroup $\Weyl$ of elements which act trivially
on $W_\Z/\vb_0$.  

The group $\Weyl$ acts trivially on the center of $U$, so it acts on the category
of weight modules for any central quotient $U_\la$.  It acts nontrivially on
$\vb_0$, however, so it goes between our hypertoric category $\cO$ for different 
parameters: $s \in \Weyl$ gives an equivalence between $\cO(\vb_0, \qvb, \xi)$ and
$\cO(\vb_0, \qvb, s\cdot \xi)$.  In contrast, an element $s \in \rWeyl$ acts trivially on
$\vb_0$ but may act non-trivially on the space $\SH/V_0$ of central characters of 
$U$.  It therefore gives an equivalence between $\cO_\lambda$ and $\cO_{s \cdot \lambda}$ 
for any central character $\lambda$, and, restricting further to infinitesimal 
blocks, an equivalence between $\cO(\vb_0,\qvb,\xi)$ and $\cO(\vb_0, s\cdot \qvb, \xi)$.

It is clear that these two symmetries are interchanged by Gale duality on 
regular integral blocks of category $\cO$: we have canonical isomorphisms 
$\Weyl(\QPA) \cong \rWeyl(\QPA^!)$ and $\rWeyl(\QPA) \cong \Weyl(\QPA^!)$.

We can describe the structure of the groups $\Weyl$ and $\rWeyl$ more 
explicitly as follows.  Recall that we are assuming that $\vb_0$ doesn't
contain any coordinate axis.  This implies that the weights of $K$ acting on $T^*\C^n$ are
all nonzero and appear in pairs $\pm \lambda$.  Then we have
\[\Weyl \cong \prod_{\{\pm\lambda\} \in \Xi} S_{m_\lambda},  \]
where $\Xi$ is the quotient of the set of nonzero elements of the character lattice
$W_\Z/\vb_0$ of $K$ by the action of $\{\pm 1\}$, and $m_\lambda$ is the
multiplicity of the character $\la$ in the action of $K$ on $T^*\C^n$.

On the other hand, our assumption that $h_i(\vb_0) \ne 0$ for all $i$
implies that $\rWeyl$ is also a product of symmetric groups.  
Elements of $\rWeyl$ correspond to permutations $s$ such that
$h_i|_{\vb_0} = \pm h_{s(i)}|_{\vb_0}$ for all $i$.  

\subsection{Defining the functors}\label{sec:defining}
We continue to fix a direct summand $\vb_0\subset\SgrHZ$.
For any pair of parameters $\qvb$ and $\xi$, let $$\QPA_{\qvb,\xi} :=
(\vb_0,\qvb,\xi)$$ be the corresponding
quantized polarized arrangement.  We will denote by $D(\QPA_{\qvb,\xi})$ the bounded derived category of the associated block $\cO(\QPA_{\qvb,\xi})$ of hypertoric category $\cO$.
We introduce subscripts in the notation because we intend to vary the parameters $\qvb$ and $\xi$,
restricting our attention to those parameters for which $\QPA_{\qvb,\xi}$ is both regular and integral.

We begin by varying only $\qvb$.
Consider a pair of regular integral parameters $\qvb$ and $\qvb'$, and let $\la$ and $\la'$ be the associated central characters
of the hypertoric enveloping algebra.  Fix a regular covector $\xi$,
and consider the functor
$$\Phi_{\qvb}^{\qvb'}:D(\QPA_{\qvb,\xi})\to D(\QPA_{\qvb',\xi})$$
obtained as the derived functor of the translation functor ${}_{\la'} T_{\la} \otimes_{U_{\la}} -$ in Lemma \ref{factoring translation}.

Furthermore, for $s\in \rWeyl$, let $\Phi^s\colon
D(\QPA_{\qvb,\xi})\to D(\QPA_{s\cdot \qvb,\xi})$ denote the 
equivalence induced by pushing forward by the action of $s$ 
as described in the last section.   As
we noted in that section, this functor is well-defined up to a 
natural isomorphism.

\begin{definition}\label{def:twisting}
A {\bf pure twisting functor} is any functor
$D(\QPA_{\qvb,\xi})\to D(\QPA_{\qvb',\xi})$ which is obtained as a composition
$$\Phi_{\qvb_r}^{\qvb'}\circ\Phi_{\qvb_{r-1}}^{\qvb_r}\circ\ldots\circ\Phi_{\qvb_1}^{\qvb_2}\circ\Phi_{\qvb}^{\qvb_1}$$
for any finite sequence $\qvb_1,\ldots,\qvb_r$ of regular integral
parameters.  A {\bf twisting functor} is a functor given by a
composition of pure twisting functors and functors $\Phi^s$.
\end{definition}

The following lemma tells us that pure twists within an equivalence class are trivial.

\begin{lemma}\label{twist-equiv}
Suppose that $\qvb$, $\qvb'$, and $\qvb''$ are chosen such that the quantized polarized arrangements
$\QPA_{\qvb,\xi}$, $\QPA_{\qvb',\xi}$, and $\QPA_{\qvb'',\xi}$ are all equivalent in the sense of Definition
\ref{def:equivalence}.  Then the natural transformation 
$$\Phi_{\qvb'}^{\qvb''}\circ\Phi_{\qvb}^{\qvb'}\to\Phi_{\qvb}^{\qvb''}$$
provided by Equation \eqref{bimodule map} is a natural isomorphism. 
\end{lemma}

\begin{proof}
For each $\a\in \cP_{\qvb,\xi}=\cP_{\qvb'\!\!,\xi}=\cP_{\qvb''\!\!,\xi}$, consider the projective cover $P_\a^\qvb$ of $L_\a^\qvb$ in $\cO(\QPA_{\qvb, \xi})$.  
The $\D$-module $\D\otimes_{U} P_\a^\qvb$ is the quotient of the module $\Pk_\a$ from Section \ref{truncation} by the 
submodule generated by the weight spaces corresponding to all weights 
$v \in \DDelta_\b \cap \qvb$ for all $\b \in \cF_\qvb\smallsetminus\cP_{\qvb,\xi}$.
This submodule is independent (up to isomorphism) of the choice of $\qvb$ in a fixed equivalence class, so it follows that
\[{_{\la'} T_{\la}}\otimes_{U_\la} P_\a^\qvb \cong \left(\D \otimes_U P_\a^\qvb\right)^{\qvb'}
\cong \left(\D \otimes_U P_\a^{\qvb'}\right)^{\qvb'} \cong P_\a^{\qvb'},\] 
and similarly for $\qvb''$ and $\la''$.

Thus the functors given by tensor product with ${_{\la''} T_{\la'}}\otimes{_{\la'} T_{\la}}$ and ${_{\la''} T_{\la}}$
are both equivalences.  To show that they are the {\em same} equivalence, it is sufficient to show that,
for all $\a,\b\in\cP_{\qvb,\xi}=\cP_{\qvb''\!\!,\xi}$, they induce the same isomorphism between
$\Hom_{U_\la}\!\!\left(P_\a^\qvb,P_\b^\qvb\right)$ and $\Hom_{U_{\la''}}\!\!\left(P_\a^{\qvb''},P_\b^{\qvb''}\right)$.
This follows from the fact that, in both cases, the isomorphism is compatible with the surjections
from $\Hom_\D\!\!\left(\Pk_\a, \Pk_\b\right)$ to the two groups.
\end{proof}

\begin{remark}\label{arkhipov}
These functors are analogous to the derived functors of Arkhipov's twisting functors
\cite{Arktwist,AS}, which give an action of the braid group of $\mg$ on the derived category of 
BGG category $\cO(\mg)$ (see \cite[2.1]{AS} or \cite{KM}). 
The pure twisting functors on hypertoric category $\cO$ are
analogous to the endofunctors defined by pure braids, i.e.\ braids where each strand is
sent to itself.

For example, consider the arrangement of $n$ points in a line. 
More precisely, let $\vb_0$ be as in Example \ref{next example}.
Then for any regular integral parameters $\qvb$ and $\xi$, the category 
$\cO = \cO(\QPA_{\qvb,\xi})$ is equivalent to a singular integral block
$\cO_\la \subset \cO(\mathfrak{sl}_n)$, where the stabilizer of 
$\la$ under the dot-action is an $S_{n-1}$ generated by $n-2$ 
adjacent transpositions in $S_n$.  It is equivalent to the category of
representations of the quiver
\begin{equation}\label{singular sln quiver}
\xy
(-35,0)*++{V_1}="1"; (-15,0)*++{V_2}="2";(5,0)*++{\;}="3l";(10,0)*++{...}="3";(15,0)*++{\;}="3r";(35, 0)*++{V_n}="4";
{\ar@/^/^{c_1} "1";"2"}; {\ar@/^/^{d_1} "2";"1"};
{\ar@/^/^{c_2} "2";"3l"}; {\ar@/^/^{d_2} "3l";"2"};
{\ar@/^/^{c_{n-1}} "3r";"4"}; {\ar@/^/^{d_{n-1}} "4";"3r"};
\endxy
\end{equation}
satisfying the relations $d_1c_1 = 0$, and $c_id_i = d_{i+1}c_{i+1}$ for $1 \le i \le n-2$.

There are $n!$ equivalence classes of regular integral parameters $\qvb$, and 
the Weyl group $\rWeyl$ acts transitively on them.  For each $1 \le i \le n-1$, 
we get an impure twisting functor by allowing the $i$th and $(i+1)$st points of
the arrangement to swap places, and using the appropriate $\Phi^s$ to 
get back to the original category.  It is easy to see using 
Theorem \ref{Translation and quivers} that in terms of the quiver \eqref{singular sln quiver}
this twisting functor is the derived functor of the functor $F_i$ which sends
$(\{V_j\}, \{c_j\}, \{d_j\})$ to $(\{V'_j\}, \{c'_j\}, \{d'_j\})$ given as follows.
Put $V'_j = V_j$ if $j \ne i$ and let $c'_j = c_j$ and $d'_j = d_j$ for 
$j\notin\{i-1, i\}$.  Let $V_i$ be the cokernel of 
$d_{i-1}\oplus (-c_i)\colon V_i \to V_{i-1}\oplus V_{i+1}$, 
let $c'_{i-1}$ and $d'_i$ be the natural quotient maps, and
let $d'_{i-1} \oplus c'_i \colon V'_i \to V_{i-1} \oplus V_{i+1}$ 
be induced from the automorphism $d_ic_i \oplus c_{i+1}d_{i+1}$
of $V_{i-1} \oplus V_{i+1}$.  If $i=1$, the same formula applies
if we put $V_0 = 0$.

There is a natural transformation $F_i \to \id_\cO$ which is
the identity map $V'_j = V_j \to V_j$ if $j \ne i$ and 
in the $i$th place is induced by the map $V_{i-1}\oplus V_{i+1} \to V_i$ 
given by $c_{i-1}$ and $d_i$.  If $P_j$ denotes the projective 
cover of the simple object $L_j$ supported at the
$i$th vertex, then this natural transformation is an isomorphism  
when applied to $P_j$ for $j\ne i$.  For $j=i$ it 
 gives an exact sequence
$0 \to F_iP_i \to P_i \to L_i \to 0$.
The argument of \cite[Theorem 10]{KM} now shows that
$F_i$ can be identified with Arkhipov's twisting functor 
on $\cO_\la$ for the simple reflection $s_i \in S_n$.
\end{remark}

\begin{remark}
Note that,
unlike in the BGG case, not all equivalence classes of parameters $\qvb$ 
need be conjugate under $\rWeyl$.  
In fact, for many hyperplane arrangements, the groups $\rWeyl$ and
$\Weyl$ are trivial, so the only twisting functors are pure ones, 
but there are still many different equivalence classes
of quantized polarized arrangements.
\end{remark}

We now define functors analogous to Irving's shuffling functors on BGG category $\cO$.  
Fix a regular integral parameter $\qvb$ and let $\xi$ and $\xi'$ be regular.  Let $\proWM$ be the category of topologically finitely generated $U_\la$-modules with profinite dimensional weight spaces, that is, the category of $U_\la$-modules which are inverse limits of weight modules such that all modules in the inverse limit have a consistent finite generating set. This category contains the objects representing each weight space, and thus has enough projectives.  Furthermore, the functor $F$ arising from Theorem \ref{first-equiv} gives an equivalence from $\Ulmod_{pro,\qvb}$ to
the category of finitely generated modules over $e_{\qvb}Re_{\qvb}$.
As we will prove in Lemma \ref{Akoszul}, this category has finite global dimension.\footnote{
That theorem concerns an uncompleted version of $e_{\qvb}Re_{\qvb}$, which is sufficient for our purposes because the completion of a projective resolution is still a projective resolution.}

Consider the functor $$\Psi_\xi^{\xi'}:D(\QPA_{\qvb,\xi})\to D(\QPA_{\qvb,\xi'})$$ obtained as the composition
of the derived functors 
$$D(\cO(\QPA_{\qvb,\xi}))\overset{\iota}\longrightarrow D(\Ulmod_{pro,\qvb})\overset{L\pi_{\xi'}}\longrightarrow D(\cO(\QPA_{\qvb,\xi'})),$$
where $\iota$ is the inclusion and $\pi_{\xi'}$ is the projection
functor from Section \ref{truncation}.  The functor $\iota$ is exact,
but it does not send projectives to projectives.  As a result, the
composition of the above derived functors is not the same as the
derived functor of the composition.  The left derived functor
$L\pi_{\xi'}$ preserves bounded derived categories since all the
categories involved have finite global dimension.

In addition, any $s\in \Weyl$ induces an equivalence of categories
$\Psi^s:D(\QPA_{\qvb,\xi})\to D(\QPA_{\qvb,s\cdot\xi})$, 
as we described in Section \ref{sec:weyl-group-symm}.

\begin{definition}\label{def:shuffling}
A {\bf pure shuffling functor} is any functor $D(\QPA_{\qvb,\xi})\to D(\QPA_{\qvb,\xi'})$ obtained as a composition
$$\Psi_{\xi_r}^{\xi'}\circ\Psi_{\xi_{r-1}}^{\xi_r}\circ\ldots\circ\Psi_{\xi_1}^{\xi_2}\circ\Psi_{\xi}^{\xi_1}$$
for any finite sequence $\xi_1,\ldots,\xi_r$ of regular parameters. A
{\bf shuffling functor} is a composition of pure shuffling functors and
functors $\Psi^s$ for $s\in \Weyl$.
\end{definition}

If $\QPA_{\qvb,\xi}$ and $\QPA_{\qvb,\xi'}$ are equivalent in the
sense of Definition \ref{def:equivalence}, then the categories $\cO(\QPA_{\qvb,\xi})$ and $\cO(\QPA_{\qvb,\xi'})$
are in fact equal and $\Psi_\xi^{\xi'}$ is the identity functor.
%In particular, as was true for shuffling functors, twisting
%within an equivalence class is trivial.

\begin{remark}\label{twisting remark} These functors
are roughly analogous to Irving's shuffling functors on BGG category $\cO(\mg)$
\cite{Irvshuf}.  More precisely, they are analogous to shifts of
derived functors of \emph{co}shuffling functors, which 
are the right adjoints of shuffling functors. 

For example, let $\vb_0$ be as in Example \ref{big example} 
for $n=2$.  For regular integral parameters $\qvb$ and
$\xi$ the category $\cO(\QPA_{\qvb,\xi})$ is equivalent
to a regular block of BGG category $\cO$ for $\mathfrak{sl}_2$.
There are only two equivalence classes of regular covectors in $\vb_0^*$, 
represented, say, by $\xi$ and $-\xi$.  There is a unique non-identity element 
$s \in \Weyl$, and we have $s\cdot \xi = -\xi$.
Then the functor 
$H^{-1}(\Psi^s\Psi_{\xi}^{-\xi})$ is identified with the coshuffling 
functor associated to the unique simple reflection $s$, which is
the kernel of the adjunction $\theta_s \to \id$ between 
the wall-crossing functor $\theta_s$ and the identity.
\end{remark}

\begin{remark}
The fact that shuffling and twisting functors are equivalences is non-trivial; it 
will follow from Proposition \ref{twisting-comb} 
(for shuffling) and Theorem \ref{TwistShuf Koszul} (for twisting).
\end{remark}

\begin{remark}  It may seem surprising that our twisting functors involve changing
the parameter $\qvb$ while shuffling involves changing $\xi$, 
since Arkhipov's twisting functors are related to
changing the Borel subalgebra and shuffling functors are related
to projective functors, which change the central character.
The resolution to this apparent paradox comes from the fact that
our category is analogous to $\mg$-modules which are $U(\mh)$-locally
finite and have an honest central character.  Soergel's equivalence
\cite{Soe86}
between a regular integral block of this category and a regular 
integral block of the usual BGG category $\cO$ involves switching the
left and right actions on a suitable category of $U(\mg)$-$U(\mg)$ 
bimodules.  Well-known functors such as wall-crossing functors
acting on one side of these bimodules can have unexpected
effects under this isomorphism.  See \cite[5.26]{BLPWquant} for a discussion of this phenomenon
in a more general context.
\end{remark}

\subsection{Combinatorial interpretations}\label{sec:comb}
In this section we give combinatorial interpretations
of shuffling and twisting functors, and we use them to prove that shuffling and twisting commute.  
For several reasons it will be easier to index our categories by polarized arrangements
rather than by quantized polarized arrangements.  
We lose no information with this choice, since shuffling and twisting functors that stay
within an equivalence class are trivial and linkage gives a bijection between equivalence classes
of regular, integral, quantized polarized arrangements and regular polarized arrangements with the same $\vb_0$.
Thus we let $D(\PA_{\eta,\xi})$ denote the bounded derived category of modules over the algebra 
$A(\eta,\xi):=A(\PA_{\eta,\xi})$ from \cite{GDKD}, and use the notation
$$\Phi_{\eta}^{\eta'}:D(\PA_{\eta,\xi})\to D(\PA_{\eta',\xi})\and
\Psi_{\xi}^{\xi'}:D(\PA_{\eta,\xi})\to D(\PA_{\eta,\xi'}),$$
for the functors obtained from those in the previous section via Theorems \ref{alg=comb} and \ref{blpw}.

Following the notation of Theorem \ref{Translation and quivers},
let $R := \wh{Q}_n/ \langle \vart(\fk)\rangle$ (since everything in sight
is integral we write $\wh{Q}_n$ rather than $\wh{Q}_\qvb$), and put 
\[e_\eta := \sum_{\a \in \cF_\eta} e_\a \and e_\xi := \sum_{\b \notin \cB_\xi} e_\b.\]
We define algebras
$$A(\eta, -) := e_\eta R e_\eta\and A(-,\xi) := R/R e_\xi R,$$ so that
we have
\[e_\eta A(-, \xi) e_\eta = A(\eta,\xi) = A(\eta,-)/A(\eta,-)e_\xi A(\eta,-).\]

\begin{lemma} \label{Cartesian} Multiplication induces an isomorphism
\[ R e_\eta \otimes_{A(\eta,-)} A(\eta,\xi) \stackrel\sim\longrightarrow A(-,\xi)e_\eta.\]
\end{lemma}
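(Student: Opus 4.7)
The plan is to recognize the multiplication map as the counit of a standard adjunction and thereby to reduce the statement to a concrete ideal identity in $R$. Consider the adjoint pair $(F,G)$, with $F := Re_\eta \otimes_{A(\eta,-)}(-) \colon A(\eta,-)\mmod \to R\mmod$ left adjoint to $G := e_\eta(-) \colon R\mmod \to A(\eta,-)\mmod$. Because $e_\eta Re_\eta = A(\eta,-)$, one has $GF \cong \mathrm{id}$, so $F$ is fully faithful and the counit $\epsilon_N \colon FGN \to N$ satisfies $G(\epsilon_N) = \mathrm{id}_{GN}$. Taking $N := A(-,\xi)e_\eta$ one has $GN = e_\eta A(-,\xi) e_\eta = A(\eta,\xi)$, and the multiplication map in the lemma is precisely $\epsilon_N$. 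In particular both $\Ker\epsilon_N$ and $\Coker\epsilon_N$ have vanishing $e_\eta$-part.

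Surjectivity is then the easy half: I would decompose $N = \bigoplus_{\a\in\cF_\eta}A(-,\xi)e_\a$ and observe that the summands for $\a\in\cF_\eta\setminus\cB_\xi$ vanish (since $e_\a$ is a summand of $e_\xi$), leaving only the summands for $\a\in\cP_{\eta,\xi}$, each of which is cyclic over $R$ with generator $e_\a\in GN$. This shows $N = R\cdot GN$, so $\epsilon_N$ is surjective.

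For injectivity, applying the right-exact functor $F$ to the short exact sequence
\[0 \to A(\eta,-)\,e_\xi\,A(\eta,-) \to A(\eta,-) \to A(\eta,\xi) \to 0\]
identifies $FGN$ with $Re_\eta/Re'Re_\eta$, where $e' := e_\eta e_\xi = \sum_{\c\in\cF_\eta\setminus\cB_\xi}e_\c$; by construction $N = Re_\eta/Re_\xi Re_\eta$. Since $e'\in Re_\xi$ the inclusion $Re'Re_\eta \subseteq Re_\xi Re_\eta$ is automatic, so the lemma reduces to the reverse inclusion $Re_\xi Re_\eta \subseteq Re'Re_\eta$. Writing $e_\xi = e' + e''$ with $e'' = \sum_{\b\notin\cF_\eta\cup\cB_\xi}e_\b$, this further reduces to showing $e_\b Re_\eta \subseteq Re'Re_\eta$ for every sign vector $\b$ that is simultaneously infeasible and unbounded.

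The hard part will be this final path-rewriting claim in $R = \wh{Q}_n/\langle\vart(\fk)\rangle$: any linear combination of paths in $\wh{Q}_n$ from a feasible vertex $\a\in\cF_\eta$ to a bad vertex $\b\notin\cF_\eta\cup\cB_\xi$ must be expressible, modulo the moment-map relations $\vart(\fk)=0$, as an $R$-linear combination of paths factoring through some $\c\in\cF_\eta\setminus\cB_\xi$. The geometric intuition is that the unbounded direction in $\Delta_{0,\b}$ must point toward a feasible-but-unbounded chamber $\Delta_{0,\c}$, and routing the path through the corresponding $\c$ supplies the required factorization. To make this rigorous I would induct on path length and exploit the fact that each relation $\vart(k)=0$ for $k\in\fk$ supplies an identity among the loops $\theta_{\c,i}$ that allows turning a path at each intermediate vertex.
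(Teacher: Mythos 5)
Your reduction is sound and matches the paper's: surjectivity is easy, and injectivity comes down to showing that $Re_\xi Re_\eta$ dies after tensoring with $A(\eta,\xi)$ over $A(\eta,-)$, i.e.\ that every path from a feasible $\a$ to an unbounded-or-infeasible $\b$ factors (modulo the relations) through some $\gamma\in\cF_\eta\setminus\cB_\xi$. But you have only restated this last claim, not proved it, and it is the entire content of the lemma. Your plan to "induct on path length and exploit the relations $\vart(\fk)=0$" aims at the wrong difficulty: the algebraic rewriting is essentially free, because $e_\b Q_n e_\a$ is spanned over the (central) polynomial subalgebra by a single taut path, which may be routed through \emph{any} vertex $\gamma$ lying between $\a$ and $\b$ (meaning $\gamma(i)\in\{\a(i),\b(i)\}$ for all $i$); and once the path is cut at such a $\gamma\in\cF_\eta\setminus\cB_\xi$, the right-hand factor lies in $e_\eta Re_\eta=A(\eta,-)$ and slides across the tensor product, where $e_\gamma$ acts by zero on $A(\eta,\xi)$. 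The genuine content is therefore purely combinatorial: for every $\a\in\cF_\eta$ and every $\b\notin\cB_\xi$ there \emph{exists} such a $\gamma$ that is simultaneously between $\a$ and $\b$, feasible, and unbounded. Your "geometric intuition" about the unbounded direction of $\Delta_{0,\b}$ does not deliver the betweenness constraint, and when $\b$ is infeasible the chamber $\Delta_{\eta,\b}$ is empty, so there is nothing to point anywhere.

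The paper supplies exactly this missing existence statement by a perturbation argument: regularity of $\eta$ gives a ball $B$ around $\eta$ with $\cF_{\eta'}=\cF_\eta$ for $\eta'\in B$; one picks $\eta''$ with $\b\in\cF_{\eta''}$, points $p\in\Int(\Delta_{\eta,\a})$ and $p''\in\Int(\Delta_{\eta'',\b})$, and notes that any chamber met by the segment from $p$ to $p''$ is automatically between $\a$ and $\b$ (a segment crosses each hyperplane at most once). Pushing $p''$ far in the $\xi$-direction while keeping $\eta''$ fixed produces a point $p'$ on the segment with $\eta'\in B$ and $\xi(p')$ arbitrarily large, whose chamber is the required $\gamma\in\cF_{\eta'}=\cF_\eta$ with $\gamma\notin\cB_\xi$. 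Without this (or an equivalent) argument, your proof is incomplete.
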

\begin{proof}
Clearly the multiplication map is surjective.  To show injectivity it
is enough to show that \[R e_\xi R e_\eta \otimes _{A(\eta,-)} A(\eta,\xi) = 0.\]
This is an immediate consequence of the following fact, which says that any path 
in the quiver algebra $\wh{Q}_n$ from
a feasible sign vector to an unbounded one is equivalent to 
one which first goes through feasible sign vectors to one which is
both feasible and unbounded:   for any $\a \in \cF_\eta$ and
any $\b \in \{+,-\}^n \setminus \cB_\xi$, there exists $\gamma \in \cF_\eta$
so that $\gamma\notin \cB_\xi$ and 
\begin{equation*}\tag{*}
\text{for any $1\le i \le n$, either $\gamma(i) = \a(i)$ or $\gamma(i) = \b(i)$.}
\end{equation*}

To see that this fact holds, we drop the integrality assumption on $\eta$
and allow $\eta \in \SgrH_\R/V_{0, \R}$.
The regularity of $\eta$ implies that there is an open ball $B$ with center
$\eta$ so that $\cF_{\eta'} = \cF_\eta$ for any $\eta'\in B$.  
Take $\eta''$ for which $\b \in \cF_{\eta''}$, and choose points $p \in \Int(\Delta_{\eta,\a})$
and $p''\in \Int(\Delta_{\eta'',\b})$.  If a point $p' \in \Delta_{\eta',\gamma}$ lies on the
line segment joining $p$ and $p''$, then $\gamma$ satisfies (*), since the line segment 
cannot cross a hyperplane more than once.  We can take $p''$ so that $\xi(p'')$ is arbitrarily large (keeping $\eta''$ fixed),
so we can find $p'$ so that $\eta' \in B$ but $\xi(p')$ is as large as we like,
giving $\gamma \in \cF_{\eta'} = \cF_\eta$ with $\gamma \notin \cB_\xi$.
\end{proof}

\begin{proposition}\label{twisting-comb}
The functor $\Phi_{\eta}^{\eta'}:D(\PA_{\eta,\xi})\to D(\PA_{\eta',\xi})$ is given by 
$$\Phi_{\eta}^{\eta'}(M) = e_{\eta'} A(-,\xi) e^{}_\eta\, \stackrel{L}\otimes_{A(\eta,\xi)} M.$$
In particular, $\Phi_{\eta}^{\eta'}$ is an equivalence.
\end{proposition}

\begin{proof}
Theorem \ref{Translation and quivers} and Lemma \ref{Cartesian}
imply that the diagram
\[\tikz[xscale=3.5,yscale=2, thick]{
\node (aa)  at (0,1) {$ \cO(\QPA) $};
\node (ab) at (1,1) {$ A(\eta,\xi)\mmod $};
\node (ba) at (0,0) {$ \cO(\QPA') $};
\node (bb) at (1,0) {$ A(\eta', \xi)\mmod $};
\draw[->] (aa) -- (ba) node [left,midway] {${}_{\la'}T_\la \,\otimes_{U_\la} - $} ;
\draw[->] (ab) -- (bb) node [right,midway] {$e_{\eta'} A(-,\xi) e^{}_\eta\, \otimes_{A(\eta,\xi)} -$};
\draw[->] (aa) -- (ab) node [above,midway] {$F$};
\draw[->] (ba) -- (bb) node [below,midway] {$F$};
}
\]
commutes up to natural isomorphism, where $F$ and $F'$ are the equivalences of Theorems \ref{alg=comb} and \ref{blpw}.
The Proposition then follows from passing to derived functors.

These functors are exactly those studied in \cite[\S 6]{GDKD}, which were proven to be equivalences in Theorem 6.13 of that paper.  In that paper we considered the derived category of graded modules, but the corresponding functors (given by the same bimodules) on ungraded modules are still triangulated and fully faithful on gradable modules, and these generate the category.  Thus these functors are equivalences on the ungraded categories.  
\end{proof}

We have a similar description of the functor $\Phi^s$: the element $s$
induces an automorphism of the set of sign vectors by permuting the
appropriate coordinates.  This preserves boundedness for every $\xi$
and sends $\cF_\eta$ to $\cF_{s\cdot \eta}$.  Furthermore, it
preserves the complement of the hyperplane arrangement (as a set) and
thus induces an isomorphism $\phi^s\colon
A(\eta,\xi)\overset{\sim}\longrightarrow A(\eta',\xi)$.

\begin{proposition}\label{twisting-permute}
  The functor $\Phi^s$ is given by pushforward by the isomorphism $\phi^s$.
\end{proposition}

Next we turn to the shuffling functors.

\begin{proposition}\label{shuffling-comb}
The functor $\Psi_{\xi}^{\xi'}:D(\PA_{\eta,\xi})\to D(\PA_{\eta,\xi'})$ is given by
$$\Psi_{\xi}^{\xi'}(M) = A(\eta,\xi') \stackrel{L}\otimes_{A(\eta,-)} M.$$
\end{proposition}

\begin{proof}
As before, we need only draw the commutative diagram
\[\tikz[xscale=4.5,yscale=2, thick]{
\node (aa)  at (0,1) {$ D(\PA_{\eta,\xi}) $};
\node (ab) at (1,1) {$ D(A(\eta,\xi)\mmod) $};
\node (ba) at (0,0) {$ D(\Ulmod_{pro,\qvb}) $};
\node (bb) at (1,0) {$ D(A(\eta,-)\mmod )$};
\node (ca) at (0,-1) {$ D(\PA_{\eta,\xi'}) $};
\node (cb) at (1,-1){$ D(A(\eta, \xi')\mmod) $};
\draw[->] (aa) -- (ba) node [left,midway] {$\iota $} ;
\draw[->] (ba) -- (ca) node [left,midway] {$L\pi_{\xi'} $} ;
\draw[->] (ab) -- (bb) node [right,midway] {$\operatorname{inf}_{A(\eta,\xi)}^{A(\eta,-)}$};
\draw[->] (bb) -- (cb) node [right,midway] {$A(\eta,\xi') \, \stackrel{L}\otimes_{A(\eta,-)} -$};
\draw[->] (aa) -- (ab) node [above,midway] {$F$};
\draw[->] (ba) -- (bb) node [below,midway] {$F$};
\draw[->] (ca) -- (cb) node [below,midway] {$F$};
}
\]
where $\operatorname{inf}_{A(\eta,\xi)}^{A(\eta,-)}$ denotes the inflation (or restriction) of $A(\eta,\xi)$ modules to modules over $A(\eta,-)$ by the obvious homomorphism.
\end{proof}
Exactly as with the twisting functors, the elements of $\Weyl$ induce isomorphisms
$\psi^s\colon A(\eta,\xi)\overset{\sim}\longrightarrow
A(\eta,s\cdot\xi)$.

\begin{proposition}\label{shuffling-permute}
  The functor $\Psi^s$ is given by pushforward by the isomorphism $\psi^s$.
\end{proposition}

As a corollary of Propositions \ref{twisting-comb} and \ref{shuffling-comb}, we can prove that shuffling and twisting commute.

\begin{corollary}\label{commute} There are natural isomorphisms of functors
  \begin{align}
    \Phi_{\eta}^{\eta'} \circ \Psi_{\xi}^{\xi'} & \cong
    \Psi_{\xi}^{\xi'}\circ \Phi_{\eta}^{\eta'} \colon
    D(\PA_{\eta,\xi}) \to D(\PA_{\eta',\xi'}),\label{pp}\\
    \Phi^{s} \circ \Psi_{\xi}^{\xi'} & \cong
    \Psi_{\xi}^{\xi'}\circ \Phi^{s} \colon
    D(\PA_{\eta,\xi}) \to D(\PA_{s\cdot\eta,\xi'}),\label{sp}\\
    \Phi_{\eta}^{\eta'} \circ \Psi^s & \cong
    \Psi^s\circ \Phi_{\eta}^{\eta'} \colon
    D(\PA_{\eta,\xi}) \to D(\PA_{\eta',s\cdot \xi}),\label{ps}\\
 \Phi^{s'} \circ \Psi^s & \cong
    \Psi^s\circ \Phi^{s'} \colon
    D(\PA_{\eta,\xi}) \to D(\PA_{s'\cdot \eta,s\cdot \xi}),\label{ss}
  \end{align}
\end{corollary}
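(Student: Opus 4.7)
The four equations will all be proved by computing each composition explicitly at the level of bimodules, using Propositions \ref{shuffling-comb}, \ref{shuffling-permute}, \ref{twisting-comb}, and \ref{twisting-permute}, and then identifying the resulting bimodules. Equations \eqref{sp}, \eqref{ps}, and \eqref{ss} are the easy cases. The symmetry functors $\Phi^{s'}$ and $\Psi^s$ come from automorphisms of the ambient algebra $R = \wh{Q}_n/\langle\vart(\fk)\rangle$ induced by $\rWeyl$ and $\Weyl$, which permute the idempotents $e_\eta$ and $e_\xi$ respectively in a manner compatible with the natural bimodule constructions. For \eqref{sp} and \eqref{ps} the commutation is then the naturality of the bimodule $e_{\eta'}A(-,\xi)e_\eta$ (respectively $A(\eta,\xi')$) in the parameter acted upon by the symmetry; for \eqref{ss} the two symmetries act on disjoint data and so commute at the level of group elements in $(N_G(K)\cap N_G(T))/T$, as can be verified directly from the product-of-symmetric-groups description of $\rWeyl$ and $\Weyl$ given in Section \ref{sec:weyl-group-symm}.

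The main case is equation \eqref{pp}. Applying Propositions \ref{shuffling-comb} and \ref{twisting-comb} and cancelling $A(\eta,\xi') \stackrel{L}{\otimes}_{A(\eta,\xi')} - \cong \id$, one finds directly that
\[
\Phi_\eta^{\eta'}\bigl(\Psi_\xi^{\xi'}(M)\bigr)\;\cong\;e_{\eta'}A(-,\xi')e_\eta\,\stackrel{L}{\otimes}_{A(\eta,-)}\, M.
\]
To rewrite the reverse composition in the same form, I would invoke Lemma \ref{Cartesian} in its stated form, together with its left--right transpose $A(\eta',\xi')\otimes_{A(\eta',-)} e_{\eta'}R \cong e_{\eta'}A(-,\xi')$, which holds by the same argument as in the proof of Lemma \ref{Cartesian}. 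Substituting and using associativity of derived tensor products gives
\[
\Psi_\xi^{\xi'}\bigl(\Phi_\eta^{\eta'}(M)\bigr) \;\cong\; A(\eta',\xi')\stackrel{L}{\otimes}_{A(\eta',-)} e_{\eta'}Re_\eta \stackrel{L}{\otimes}_{A(\eta,-)} A(\eta,\xi) \stackrel{L}{\otimes}_{A(\eta,\xi)} M \;\cong\; e_{\eta'}A(-,\xi')e_\eta \stackrel{L}{\otimes}_{A(\eta,-)} M,
\]
matching the previous computation, with naturality in $M$ automatic.

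The main obstacle will be to upgrade the underived statement of Lemma \ref{Cartesian} to a derived isomorphism: the argument requires $e_{\eta'}Re_\eta \stackrel{L}{\otimes}_{A(\eta,-)} A(\eta,\xi) \simeq e_{\eta'}Re_\eta \otimes_{A(\eta,-)} A(\eta,\xi)$ and analogously on the left. The cleanest route is to show that $Re_\eta$ is projective as a right $A(\eta,-) = e_\eta R e_\eta$-module, so that higher Tors vanish automatically. This can be checked by decomposing $Re_\eta = e_\eta Re_\eta \oplus (1-e_\eta)Re_\eta$ and verifying directly at the quiver level that $(1-e_\eta)Re_\eta$ is a direct summand of a free right $A(\eta,-)$-module. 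Alternatively, one can bypass this verification entirely by appealing to the already-established derived equivalence status of $\Phi_\eta^{\eta'}$ in Proposition \ref{shuffling-comb}, which forces the bimodule $e_{\eta'}A(-,\xi)e_\eta$ to be two-sided tilting and hence supplies the required Tor-vanishing.
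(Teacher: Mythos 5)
Your proposal follows essentially the same route as the paper's proof. For \eqref{pp} the paper assembles the inclusions and projections into a $3\times 3$ commutative diagram of rings with $R$ in the center, observes via Propositions \ref{shuffling-comb} and \ref{twisting-comb} that the two compositions are the two ways around the outside, and reduces the two nontrivial small squares to Lemma \ref{Cartesian} and its transpose --- which is exactly your bimodule computation, just organized diagrammatically. For \eqref{sp}--\eqref{ss} the paper argues, as you do, that $\phi^s$ and $\psi^s$ are induced from isomorphisms of the intermediate rings $A(\eta,-)$ and $A(-,\xi)$, and that the actions of $\rWeyl$ and $\Weyl$ on sign vectors commute.

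One caution on the point where you are actually being more careful than the paper (which passes over the derived/underived issue in silence): your second proposed shortcut does not work. The Tor groups that must vanish are $\operatorname{Tor}^{A(\eta,-)}_{>0}\bigl(e_{\eta'}Re_\eta,\, A(\eta,\xi)\bigr)$, i.e.\ Tor over the ``extended'' ring $A(\eta,-)=e_\eta Re_\eta$. The fact that $e_{\eta'}A(-,\xi)e_\eta$ is a two-sided tilting complex is a statement about the pair of rings $\bigl(A(\eta',\xi), A(\eta,\xi)\bigr)$ and gives no information whatsoever about flatness or projectivity of $e_{\eta'}Re_\eta$ over $A(\eta,-)$. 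So if you wish to close this gap rather than inherit the paper's implicit assumption, you must actually carry out your first route --- verifying that $(1-e_\eta)Re_\eta$ is projective as a right $e_\eta Re_\eta$-module, or otherwise computing the relevant Tor directly --- since the asserted ``can be checked at the quiver level'' is the entire content of the missing step.
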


\begin{proof}
First, we show \eqref{pp}.  Consider the commutative diagram
\[\xymatrix{
A(\eta,\xi) \ar[d] & A(\eta,-) \ar[l]\ar[r]\ar[d] & A(\eta,\xi') \ar[d] \\
A(-, \xi) & R \ar[l]\ar[r] & A(-, \xi') \\
A(\eta', \xi)\ar[u] & A(\eta', -) \ar[u]\ar[l]\ar[r] & A(\eta',\xi') \ar[u] \\
}\]
where all the maps are the obvious inclusions or projections.  
By Proposition \ref{shuffling-comb}, the functor $\Psi_{\xi}^{\xi'}$ is given 
by following across the top row or the bottom row of this diagram,
first pulling back, then pushing forward.
By Proposition \ref{twisting-comb},
$\Phi_{\eta}^{\eta'}$ is given by pushing forward then
pulling back along the left or right column.  Note that since the 
map $A(\eta', \xi) \to A(-,\xi)$ does not take the unit to
the unit, pulling back by this map involves first multiplying
by the idempotent $e_{\eta'}$ and then taking the induced module.

It is sufficient to show that going along the top and right sides
of each small square is naturally isomorphic to going along the
left and bottom sides.  For the upper right and lower left squares
this is simply the fact that these squares commute.  For the other
two squares, this follows from Lemma \ref{Cartesian}.

The isomorphism \eqref{sp} follows from the fact that
the maps $\phi^s\colon A(\eta,\xi)\overset{\sim}\longrightarrow
A(s\cdot\eta,\xi)$ and $\phi^s\colon A(\eta,\xi')\overset{\sim}\longrightarrow
A(s\cdot\eta,\xi')$ are induced by an isomorphism
$\phi^s\colon A(\eta,-)\to A(s\cdot \eta,-)$.  The proof for
\eqref{ps} is the same.
Finally \eqref{ss} follows from the fact that the actions of $\rWeyl$ and
$\Weyl$ on sign vectors commute.
\end{proof}

\subsection{Fundamental group action via twisting functors}\label{sec:pione}
This section is about twisting functors only, so we fix $\vb_0$ and $\xi$ and vary $\eta$ among regular 
parameters. The set of regular $\eta$ is the intersection of $\fk^*_\Z \cong \SgrHZ/\vb_0$ with the complement of
a central hyperplane arrangement in $\fk^*_\R\cong W_\R/V_{0,\R}$.
This arrangement, known as the {\bf discriminantal
arrangement}, was first considered for generic arrangements in \cite{ManSch89} and for
general arrangements in \cite{BaBr97}. 
Its hyperplanes are indexed by the circuits of $\cH_0$,
which are the minimal subsets of hyperplanes with dependent normal vectors.
Given a circuit, the corresponding hyperplane of the discriminantal arrangement
is the locus of $\eta$ such that the hyperplanes of $\cH = \cH_\eta$ indexed by the circuit intersect 
non-transversely.

Let $\Upsilon\subset\fk_\R^*$ be the complement of the discriminantal arrangement, and let
$\Upsilon_\C$ be the complexification of $\Upsilon$,
that is, the set of elements in $\fk^*$ whose real or imaginary part
lies in $\Upsilon$.  If $\eta, \eta'$ are regular, then
$X_{\eta,\xi}$ is equivalent to $X_{\eta',\xi}$ if and only if $\eta$ and $\eta'$ lie in the same connected component of $\Upsilon$.
Choose a subset $B\subset \Upsilon\cap\fk^*_\Z$ consisting of one integral basepoint 
from each connected component of $\Upsilon$.

Since the group $\rWeyl$ acts on $\SgrHZ$ preserving $\vb_0$ and permutes the coordinate 
hyperplanes, it induces an action on $\fk^*_\R\cong W_\R/V_{0,\R}$ which preserves
the discriminantal arrangement, and so it acts on 
$\Upsilon$ and $\Upsilon_\C$.  In fact, $\rWeyl$ 
is generated by reflections in the discriminant hyperplanes 
corresponding to pairs of hyperplanes in $\cH_0$ whose 
normal vectors $h_i|_{\vb_0}$ are equal up to a sign.
We choose the set $B$ to be $\rWeyl$-invariant.

\begin{definition}
The {\bf Deligne groupoid} of the discriminantal arrangement
is the subgroupoid of the fundamental groupoid of $\Upsilon_\C$ 
consisting of paths that begin and end in $B$.  The {\bf Weyl-Deligne}
groupoid is the subgroupoid of the fundamental groupoid of $\Upsilon_\C/\rWeyl$
consisting of paths beginning and ending at $B/\rWeyl$.  Note that $\rWeyl$
acts freely on $B$.
\end{definition}

Since each connected component of $\Upsilon$ is simply connected, the Deligne groupoid is independent
(up to canonical isomorphism) of our choice of $B$.
Furthermore, it has an entirely combinatorial interpretation.
Let the {\bf Deligne quiver} be the quiver with
with vertex set $B$ and edges in
both directions between two base points
if and only if the components of $\Upsilon$ in which they lie are separated by a single
hyperplane.

\begin{theorem}[\cite{Paris}]
The Deligne groupoid is isomorphic to the quotient of the 
fundamental groupoid of the Deligne quiver by the identification of any pair of paths of minimal length between
the same two points.  
\end{theorem}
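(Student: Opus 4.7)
The plan is to exploit Salvetti's CW model of the complement of a complexified real hyperplane arrangement. Applied to the discriminantal arrangement in $\fk^*_\R$, Salvetti's construction yields a regular CW complex $\Sigma$ together with a deformation retraction $\Upsilon_\C \to \Sigma$. The $k$-cells of $\Sigma$ are indexed by pairs $(F, C)$, where $F$ is a codimension-$k$ face of the arrangement and $C$ is a chamber with $F \subset \overline{C}$. Since the fundamental groupoid is determined by the $2$-skeleton, the problem reduces to a combinatorial analysis of the $1$- and $2$-cells of $\Sigma$, basepointed at the vertices indexed by the chambers containing elements of $B$.

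First, I would identify the $1$-cells of $\Sigma$, suitably oriented, with the edges of the Deligne quiver. A $1$-cell has an underlying codimension-$1$ face $F$, and there are exactly two chambers with $F \subset \overline{C}$; the two $1$-cells attached to $F$ furnish the two oriented edges of the Deligne quiver between these chambers. To realize these as actual paths in $\Upsilon_\C$, one uses the standard half-loop: travel from a basepoint in one chamber toward the hyperplane it shares with its neighbor, loop into the imaginary direction to cross the complexified hyperplane transversely, and descend back to the real basepoint on the other side. A general-position argument then shows that every path in $\Upsilon_\C$ between elements of $B$ is homotopic to a concatenation of such half-loops, so the resulting functor from the path groupoid of the Deligne quiver to the Deligne groupoid is surjective on morphisms.

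The relations come from the $2$-cells. A $2$-cell is indexed by a pair $(F, C)$ with $F$ of codimension $2$. Its attaching map, read off from the boundary, produces two positive galleries around the rank-$2$ subarrangement localized at $F$, each going from one chamber adjacent to $C$ across the codimension-$2$ face to the chamber opposite $C$, and the $2$-cell relation identifies these two galleries in the groupoid. By construction these are both minimal galleries between their common endpoints, so every $2$-cell relation has exactly the form required by the statement of the theorem.

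The main obstacle is the converse: showing that the $2$-cell relations generate the full equivalence relation on minimal paths. One must show that any two minimal galleries between the same two chambers in $B$ can be connected by a sequence of local moves, each supported on a rank-$2$ localization. For simplicial arrangements this is the content of Deligne's original theorem and follows from the word problem for Coxeter groups. For the general (non-simplicial) discriminantal arrangement considered here, one needs Paris's extension: an induction on the rank of the arrangement combined with the convexity of chambers, which together show that the multiset of hyperplanes crossed in any minimal gallery between $C_1$ and $C_2$ depends only on $C_1$ and $C_2$, and that any reordering of those crossings is realized by composing swaps localized at codimension-$2$ faces.
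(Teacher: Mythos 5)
This theorem is not proved in the paper at all: it is imported wholesale from the cited reference of Paris, so there is no internal argument to compare against. Your Salvetti-complex outline is exactly the standard strategy underlying that citation, and each step you describe is sound: the $1$-cells of the Salvetti complex do realize the oriented edges of the Deligne quiver as half-loops crossing a single complexified hyperplane, the deformation retraction gives surjectivity on morphisms, and the $2$-cells impose precisely the identifications of the two minimal galleries around a codimension-$2$ face. The one place where your sketch stops short of a proof is the step you yourself flag as the main obstacle: that these local codimension-$2$ relations generate the identification of \emph{all} pairs of minimal galleries between the same two chambers, equivalently that the set of minimal galleries between two fixed chambers is connected under elementary flips supported on rank-$2$ localizations. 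That flip-connectivity statement (together with the fact that a minimal gallery crosses exactly the separating hyperplanes, each once) is the genuinely nontrivial content of the theorem for non-simplicial arrangements such as the discriminantal arrangement here; you defer it to Paris rather than proving it, which is legitimate as a citation but means your write-up, like the paper's, is ultimately an appeal to the same source rather than an independent proof.
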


The group $\rWeyl$ acts on the Deligne quiver in the obvious way on
vertices, and with its action on edges preserving orientations
(i.e. if it interchanges adjacent vertices, it interchanges the edges
between them as well).
\begin{proposition}
  The Weyl-Deligne groupoid is the quotient of the Deligne
groupoid by the action of $\rWeyl$.
\end{proposition}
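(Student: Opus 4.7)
The plan is to show that $p\colon \Upsilon_\C \to \Upsilon_\C/\rWeyl$ is a Galois covering with deck group $\rWeyl$, and then translate this into a statement about groupoids. First I would verify that $\rWeyl$ acts freely on $\Upsilon_\C$. Since $\rWeyl$ is generated by reflections in discriminant hyperplanes, it acts on $\fk^*_\R$ as a (possibly non-essential) reflection group, and the fixed set of any non-identity element is a union of intersections of the reflection hyperplanes, all of which lie in the complement of $\Upsilon$. If some $w\in\rWeyl$ fixes $z = x + iy \in \Upsilon_\C$, then it fixes both $x$ and $y$; by definition of $\Upsilon_\C$ at least one of $x,y$ lies in $\Upsilon$, and this forces $w = 1$. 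Hence the action is free, and as $\rWeyl$ is finite the quotient map $p$ is a covering.

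Next I would use standard covering space theory to relate the two fundamental groupoids. The covering $p$ induces a functor $p_*$ from the fundamental groupoid of $\Upsilon_\C$ to that of $\Upsilon_\C/\rWeyl$, and the $\rWeyl$-action on $\Upsilon_\C$ gives a $\rWeyl$-action on the former groupoid covering $p_*$. Since $\rWeyl$ preserves $B$ and acts freely on it (noted in the statement), restricting $p_*$ gives a well-defined functor $\Pi\colon \cD \to \cD'$ from the Deligne groupoid to the Weyl-Deligne groupoid, and this functor is equivariant with respect to the trivial action on the target; thus it factors through the categorical quotient $\cD/\rWeyl \to \cD'$. It remains to show this induced functor is an isomorphism of groupoids.

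For essential surjectivity on objects, one uses that $p$ descends to a bijection $B \to B/\rWeyl$. For fullness, given a morphism in $\cD'$ represented by a path $\gamma\colon [0,1] \to \Upsilon_\C/\rWeyl$ from $[b]$ to $[b']$, the path-lifting property of the covering $p$ lifts $\gamma$ uniquely (after fixing a preimage of $\gamma(0)$) to a path $\tilde\gamma$ in $\Upsilon_\C$ from some $b_0 \in p^{-1}([b])$ to some $b_1 \in p^{-1}([b'])$; since $\rWeyl$ acts transitively on each fiber of $p|_B$, we may translate by an element of $\rWeyl$ so that $b_0 \in B$ and $b_1 \in B$, producing a morphism in $\cD$ which maps to $\gamma$. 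For faithfulness, suppose two morphisms $[\tilde\gamma_1], [\tilde\gamma_2]$ in $\cD$ have the same image in $\cD'$. Then $p \circ \tilde\gamma_1$ and $p\circ \tilde\gamma_2$ are homotopic rel endpoints in $\Upsilon_\C/\rWeyl$; by the homotopy-lifting property for the covering $p$, if we first translate $\tilde\gamma_2$ by the unique element of $\rWeyl$ taking $\tilde\gamma_2(0)$ to $\tilde\gamma_1(0)$ (using freeness of the $\rWeyl$-action on $B$), the lifted homotopy forces $\tilde\gamma_1 \simeq w\cdot \tilde\gamma_2$ for that $w\in \rWeyl$, so they represent the same class in $\cD/\rWeyl$.

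The main obstacle is purely bookkeeping: packaging the covering-space lifting arguments so that the statement ``the categorical quotient of the Deligne groupoid by $\rWeyl$ equals the Weyl-Deligne groupoid'' is made precise at the level of objects and morphisms simultaneously. This is straightforward once freeness of the action on $\Upsilon_\C$ (and hence on $B$) is established, so the conceptual work is entirely concentrated in the first step; the rest is the standard dictionary between Galois coverings and quotients of fundamental groupoids.
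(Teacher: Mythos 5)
Your proof is correct and follows essentially the same route as the paper's, which simply observes that the combinatorial $\rWeyl$-action on the Deligne quiver coincides with the deck-transformation action on lifted paths and leaves the covering-space bookkeeping (freeness of the action, path and homotopy lifting) implicit. The one point you omit --- and which the paper's one-line proof is actually about --- is that the quotient in the proposition is taken with respect to the combinatorial action on the Deligne quiver described just beforehand, so one must note that this agrees with the topological action you use; this is immediate, since the quiver action is just the restriction to $B$ and to the edges of the action on $\Upsilon_\C$.
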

\begin{proof}
  We need only see that the combinatorial action we have described
  coincides with the action of transformations on the lifting of paths
  from $\Upsilon_\C/\rWeyl$ to $\Upsilon_\C$.  This is clear from the
  realization of the Deligne groupoid using Van Kampen's theorem.
\end{proof}

\begin{theorem}\label{action}
By assigning the equivalence $\Phi_{\eta}^{\eta'}$ to the shortest oriented path in the Deligne
quiver from $\eta$ to $\eta'$, we obtain an action of the Deligne groupoid on
the categories $\{D(\PA_{\eta,\xi})\mid \eta\in B\}$.  The functors
$\Phi^s$ perserve the collection of these functors compatibly with the
action of $\rWeyl$ on the Deligne groupoid.
In particular, for each $\eta\in B$, we obtain an action of
$\pi_1(\Upsilon_\C/\rWeyl, \eta)$ on $D(\PA_{\eta,\xi})$ by auto-equivalences.
\end{theorem}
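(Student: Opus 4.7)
The plan is to invoke Paris's theorem, which presents the Deligne groupoid as the path category of the Deligne quiver modulo the identification of minimum-length paths with common endpoints. Accordingly, I would assign to each edge $\eta \to \eta'$ of the Deligne quiver the pure shuffling functor $\Phi_\eta^{\eta'}$ and to each composable pair of edges the natural transformation coming from Equation \eqref{bimodule map}, then verify that the induced functor from the path category descends through Paris's relations. Proposition \ref{shuffling-comb} guarantees that each $\Phi_\eta^{\eta'}$ is an equivalence, and equivariance under $\rWeyl$ would follow formally from Proposition \ref{shuffling-permute}.

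The key technical step is to show that for a minimum-length path $\eta_0 \to \eta_1 \to \cdots \to \eta_k$ in the Deligne quiver, the iterated multiplication of bimodules
$$(e_{\eta_k} A(-,\xi) e_{\eta_{k-1}}) \stackrel{L}\otimes_{A(\eta_{k-1},\xi)} \cdots \stackrel{L}\otimes_{A(\eta_1,\xi)} (e_{\eta_1} A(-,\xi) e_{\eta_0}) \longrightarrow e_{\eta_k} A(-,\xi) e_{\eta_0}$$
is a quasi-isomorphism. Granting this, Proposition \ref{shuffling-comb} identifies the composition $\Phi_{\eta_{k-1}}^{\eta_k} \circ \cdots \circ \Phi_{\eta_0}^{\eta_1}$ with $\Phi_{\eta_0}^{\eta_k}$, which depends only on the endpoints; this simultaneously shows that the composition is again a pure shuffling and that the assignment respects Paris's relations. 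I would prove the lemma by induction on $k$, reducing to the base case $k = 2$, where the content is that when $\eta_0, \eta_1, \eta_2$ lie along a minimum-length path the derived tensor product is concentrated in degree zero and the multiplication map is an isomorphism there. The argument should exploit the same Cartesian decomposition used in Lemma \ref{Cartesian}, together with the combinatorial fact that along a minimum-length path no hyperplane of the discriminantal arrangement is crossed twice, so the relevant $e_\xi$-relations do not obstruct flatness.

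For the $\rWeyl$-equivariance, the group $\rWeyl$ acts on $\SgrHZ$ preserving $\vb_0$ and hence acts on the Deligne quiver; the algebra isomorphisms $\phi^s \colon A(\eta,\xi) \to A(s\cdot\eta,\xi)$ from Proposition \ref{shuffling-permute} intertwine the bimodules defining pure shuffling functors, yielding natural isomorphisms $\Phi^s \circ \Phi_\eta^{\eta'} \cong \Phi_{s\cdot\eta}^{s\cdot\eta'} \circ \Phi^s$. Since $B$ was chosen to be $\rWeyl$-invariant, descending to the quotient $\Upsilon_\C/\rWeyl$ yields the desired Weyl--Deligne groupoid action, whose automorphism group at any basepoint $\eta$ is $\pi_1(\Upsilon_\C/\rWeyl, \eta)$. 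The main obstacle will be the key lemma: proving vanishing of higher Tors along minimum-length paths, which requires a careful homological-combinatorial analysis of how the idempotents $e_\eta$ and $e_\xi$ interact in iterated tensor products. The subtlety is that such compositions can in principle pick up higher Tor contributions from unbounded sign vectors killed by $e_\xi$, and one must argue that for a minimum-length path these obstructions cannot occur, essentially because such a path never revisits a crossing and never passes through a chamber indexed by a sign vector outside $\cB_\xi$.
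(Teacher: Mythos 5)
Your overall scaffolding agrees with the paper's: invoke Paris's presentation of the Deligne groupoid, reduce everything to showing that composing pure shuffling functors along a minimal path in the Deligne quiver collapses to the single functor $\Phi_{\eta}^{\eta''}$, and handle the $\rWeyl$-equivariance formally via $\phi^s$. The last of these is exactly what the paper does and is fine. But the entire content of the theorem is concentrated in your ``key lemma'' --- that the natural transformation $\Phi_{\eta'}^{\eta''}\circ\Phi_{\eta}^{\eta'}\to\Phi_{\eta}^{\eta''}$ is an isomorphism when $\eta'$ lies on a minimal path from $\eta$ to $\eta''$ --- and you do not prove it. The paper's route is to quote the criterion established in the proof of \cite[6.12]{GDKD}: the transformation is an isomorphism if and only if, for \emph{every} subset $S\subset\{1,\dots,n\}$ and every pair $\a\in\cP_{\eta,\xi}$, $\a''\in\cP_{\eta'',\xi}$ agreeing on $S$ with $\a|_S$ bounded for the subarrangement $\{H_{i,0}\mid i\in S\}$, there exists $\a'\in\cP_{\eta',\xi}$ agreeing with both on $S$. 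This is then proved by downward induction on $|S|$, with the base case $S=\{1,\dots,n\}$ reducing to $\cF_\eta\cap\cF_{\eta''}\subset\cF_{\eta'}$, which follows because a failure of feasibility is detected by a circuit $C$, and the discriminantal hyperplane indexed by $C$ would then separate $\eta$ and $\eta''$ from $\eta'$, contradicting minimality. Your proposal captures only the spirit of this base case (``no discriminantal hyperplane is crossed twice'') and omits the induction over $S$, which is where the boundedness condition and the idempotent $e_\xi$ actually enter.

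Moreover, the heuristic you offer in place of the proof is confused on a key point: you say a minimal path ``never passes through a chamber indexed by a sign vector outside $\cB_\xi$.'' The path lives in $\Upsilon_\C\subset\fk^*$, the parameter space of the $\eta$'s, whose chambers are cut out by the discriminantal arrangement and are indexed by equivalence classes of feasibility data --- not by sign vectors in $\{+,-\}^n$, and not by anything on which $\cB_\xi$ is defined. The chambers $\Delta_\a$ and the bounded set $\cB_\xi$ live in the entirely different space $V_\R$. So the mechanism you propose for killing the higher Tor contributions does not exist as stated. To repair the argument you would either need to reproduce the combinatorial criterion of \cite[6.12]{GDKD} and verify it by the induction sketched above, or give an independent proof that $Re_{\eta'}\otimes^L_{A(\eta',\xi)}e_{\eta'}A(-,\xi)e_\eta$ has no higher homology when $\eta'$ separates $\eta$ from $\eta''$ minimally --- and the latter is not a formal consequence of Lemma \ref{Cartesian}, which concerns a single parameter $\eta$ and says nothing about compositions across different chambers of $\Upsilon$.
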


\begin{proof}
For any $\eta, \eta', \eta''\in B$, we defined in \cite[\S 6.3]{GDKD} a natural transformation from
$\Phi_{\eta'}^{\eta''}\circ\Phi_{\eta}^{\eta'}$ to $\Phi_{\eta}^{\eta''}$.  We need to show that
if $\eta'$ lies on a path of minimal length from $\eta$ to $\eta''$ in the Deligne quiver, this
natural transformation is an isomorphism.  

In the proof of \cite[6.12]{GDKD}, we showed that our natural transformation is a natural isomorphism
if and only if the following condition is satisfied for each $S\subset\{1,\ldots,n\}$:
\begin{itemize}
\item[] For every $\a\in\cP_{\eta,\xi}$ and $\a''\in \cP_{\eta'',\xi}$ such that
$\a\vert_S = \a''\vert_S$ is bounded\footnote{By this we mean that $\xi$ is bounded above
on the chamber of the subarrangement determined by $\a$.}
for the subarrangement $\{H_{i,0}\mid i\in S\}\subset \cH_0$,
there is a sign vector $\a'\in\cP_{\eta',\xi}$ such that $\a\vert_S = \a'\vert_S = \a''\vert_S$.
\end{itemize}

We prove this statement by induction on the size of the complement of $S$.  
If $S = \{1,\ldots,n\}$,
the statement says that $\cP_{\eta,\xi}\cap \cP_{\eta'',\xi}\subset \cP_{\eta',\xi}$; for this it is sufficient
to show that $\cF_{\eta}\cap\cF_{\eta''}\subset\cF_{\eta'}$.
Indeed, a sign vector $\a \in \cF_\eta \cap \cF_{\eta''}$ 
fails to lie in $\cF_{\eta'}$ if and only if there is a circuit $C$ such
that $\a\vert_C$ is infeasible for the sub-arrangement 
$\{H_{i,0}\mid i\in C\}\subset\cH_0$. 
This would imply that the hyperplane in the discriminantal arrangement indexed by $C$
separates $\eta$ and $\eta''$ from $\eta'$, which contradicts the fact that $\eta'$ lies on a path of minimal length from $\eta$ to $\eta''$ in the Deligne quiver.

Now consider the general case.  Choose some $i\notin S$; 
by our inductive hypothesis,
there exists a sign vector on $\{1,\ldots,n\}\smallsetminus\{i\}$ that agrees with $\a$ and $\a''$
on $S$ and is bounded and feasible for the polarized arrangement obtained by deleting the 
$i^\text{th}$
hyperplane of $\cH_{\eta'}$.  Both of the extensions of this sign vector to $\{1,\ldots,n\}$ 
will be bounded, and at least one of them will lie in $\cF_{\eta'}$,
so we have shown the relations in the Deligne groupoid.

For the compatibility with the $\rWeyl$-action, we need only note that
the isomorphism $\phi^s$ sends the bimodules
$e_{\eta}A(-,\xi)e_{\eta'}$ to $e_{s\cdot \eta}A(-,\xi)e_{s\cdot
  \eta'}$, since it simply acts by permuting sign vectors.  Thus, we
are done.
\end{proof}

For any $\eta\in B$, let
$\zeta_\eta\in\pi_1(\Upsilon_\C,\eta)$ be the central element represented by the path $[0,1]\to \Upsilon_\C$
taking $t$ to $e^{2\pi it}\eta$.

\begin{theorem}[\mbox{\cite[6.11]{GDKD}}]\label{serre}
The element $\zeta_\eta$ acts as the Serre functor on $D(\PA_{\eta,\xi})$.
\end{theorem}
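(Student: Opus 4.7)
The plan is to deduce this statement directly from \cite[6.11]{GDKD} via the combinatorial dictionary established in Section \ref{sec:comb}. First I would invoke Theorem \ref{blpw} to identify the bounded derived category $D(\PA_{\eta,\xi})$ with the bounded derived category of finitely generated modules over the algebra $A(\PA_{\eta,\xi})$ of \cite{GDKD}. Under this identification, Proposition \ref{shuffling-comb} shows that each shuffling functor $\Phi_\eta^{\eta'}$ is given by tensor product with the bimodule $e_{\eta'}A(-,\xi)e_\eta$, which is precisely the shuffling bimodule considered in \cite[\S 6]{GDKD}. In particular, pure shuffling functors as defined here correspond exactly to the shuffling functors of that paper.

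Second, I would verify that the action of the (pure) Deligne groupoid $\pi_1(\Upsilon_\C, \eta)$ on $D(\PA_{\eta,\xi})$ obtained by restricting the action of Theorem \ref{action} to pure shuffling agrees with the one constructed in \cite[\S 6]{GDKD}. Both actions assign to each elementary edge of the Deligne quiver (a minimal wall-crossing of the discriminantal arrangement) the same shuffling bimodule, and both are shown to descend to well-defined groupoid actions via the same relation: two compositions of shuffling functors along minimal-length paths between the same pair of chambers are naturally isomorphic. The proof of Theorem \ref{action} and \cite[6.12]{GDKD} establish this via the same combinatorial criterion on sign vectors. Consequently the image of $\zeta_\eta$, represented by $t \mapsto e^{2\pi i t}\eta$, acts in the same way in both settings.

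Third, I would observe that being a Serre functor is an intrinsic property of a $k$-linear triangulated category (characterized up to natural isomorphism by the bifunctorial Serre duality $\Hom(X,Y) \cong \Hom(Y, S(X))^*$), so it is preserved under any equivalence of categories. The statement \cite[6.11]{GDKD}, which asserts that the class of $\zeta_\eta$ acts as the Serre functor on $D^b(A(\PA_{\eta,\xi})\text{-mod})$, therefore transports directly through Theorem \ref{blpw} to give the required statement for $D(\PA_{\eta,\xi})$.

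The only real obstacle is bookkeeping: one must check that all conventions align between the two setups, in particular that the combinatorial description of the Deligne groupoid via Paris' theorem is applied with the same orientations and relations in both papers, and that the loop $\zeta_\eta$ is interpreted identically. Since every ingredient has essentially been assembled already — Theorem \ref{blpw} identifies the categories, Proposition \ref{shuffling-comb} identifies the functors, and Theorem \ref{action} identifies the groupoid action — no genuinely new argument is required; the proof is a translation of the cited result.
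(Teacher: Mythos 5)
Your proposal is correct and matches the paper's (implicit) argument: the theorem is stated as a direct citation of \cite[6.11]{GDKD}, and since $D(\PA_{\eta,\xi})$ is \emph{defined} in Section \ref{sec:comb} as the derived category of $A(\eta,\xi)$-modules and Proposition \ref{shuffling-comb} identifies the pure shuffling functors with the bimodule functors of \cite[\S 6]{GDKD}, the statement is exactly the cited result. Your first step (invoking Theorem \ref{blpw} to identify the categories) is not even needed, since no translation between setups is required.
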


\begin{remark} For a regular integral block of BGG category $\cO$, the derived shuffling functor
corresponding to the full twist braid is the Serre functor \cite[4.1]{MS}.  
Thus Theorem \ref{serre} provides further evidence for the analogy between pure
shuffling functors and Irving's shuffling functors for pure braids
(Remark \ref{twisting remark}).
\end{remark}

\subsection{Koszul duality}\label{sec:koszul}
Our last goal is to prove that shuffling and twisting functors are exchanged by Koszul 
duality.  First we recall the basic features of the theory of Koszul duality that we will need.  
Throughout this section let $A = \bigoplus_{j \ge 0} A_j$ be a
finite dimensional nonnegatively graded ring such that $A_0 \cong \bigoplus_{\a \in \cP} \C e_\a$ is
a commutative semisimple ring with primitive idempotents indexed by a finite
set $\cP$.   

Let $A\!\!-\!\!\gr$ denote the category of finitely generated graded $A$-modules.
The ring $A$ is called \textbf{Koszul} if $\Ext^i_{A-\gr}(A_0, A_0[-j]) = 0$
for all $i \ne j$.  If we let  
\[E(A) := \bigoplus_{i \ge 0} \Ext^i_A(A_0, A_0) = \bigoplus_{i, j \ge 0} \Ext^i_{A-\gr}(A_0, A_0[-j])\]
be the Yoneda algebra of $A$, then $A$ is Koszul if and only if
the two gradings on $E(A)$ coincide.  
If $A$ is Koszul, then it is \textbf{quadratic}, that is, it is generated by 
over $A_0$ by $A_1$, with relations are generated in degree $2$.  Furthermore, 
\begin{itemize}
\item $E(A)$ is also Koszul,
\item there is a canonical isomorphism $A \cong E(E(A))$,
\item $E(A)$ is isomorphic to the opposite
algebra of the {\bf quadratic dual} of $A$, which is the 
quadratic ring generated by $A_1^*$ over $A_0$ with relations
orthogonal to the relations of $A$.
\end{itemize}

Suppose that $A$ is Koszul, and suppose in addition that $E(A)$ is
left Noetherian.  Then \cite[2.12.6]{BGS96} gives
an equivalence $D_\sgr(A) \to D_\sgr(E(A)^{\text{op}})$ between the bounded
derived categories of finitely generated graded modules.
Since $A^{\text{op}}$ is also Koszul \cite[2.2.1]{BGS96} and $E(A^{\text{op}})^{\text{op}})\cong E(A)$,
we obtain an equivalence $D_\sgr(A^{\text{op}}) \to D_\sgr(E(A))$.
It will be more convenient for us to consider the contravariant
equivalence
\[K \colon D_\sgr(A) \to D_\sgr(E(A))\]
given by composing this functor with the duality functor $D(A) \to D(A^{\text{op}})$
induced by $M \mapsto M^*$.
The functor $K$ takes (shifted) 
simple objects to (shifted) 
projective objects and vice-versa:
we have $K(A_0e) = E(A)e$ and $K(Ae) = E(A)_0e$ for any $e \in A_0 \cong E(A)_0$.

If $A$ is quadratic (but not necessarily Koszul), we will denote by $A^!$ the opposite algebra
of the quadratic dual of $A$.  Thus if $A$ is Koszul, we have $A^! \cong E(A)$ and
\[K \colon D_\sgr(A) \to D_\sgr(A^!).\]

\begin{remark}
We warn the reader that our notation conflicts with the notation in \cite{BGS96}.  In that paper $A^!$ is defined to be
the quadratic dual of $A$, which is {\em opposite} to the Yoneda algebra $E(A)$.
The reason that we make our definition is that we want an equivalence
$D_\sgr(A) \to D_\sgr(A^!)$ that swaps simples and projectives.  Our equivalence $K$ has this property,
but the basic BGS equivalence $D_\sgr(A) \to D_\sgr(E(A)^{\text{op}})$ takes simples to projectives
and {\em injectives} to simples.  

We also note that our algebras $A(\QPA)$ and $A(\PA)$ are isomorphic
to their own opposites, which makes this conflict academic.  However, it is still important to note
that our equivalence differs from the BGS equivalence by an application of the duality functor.
\end{remark}

We wish to study the interaction of Koszul duality with the inclusion $i\colon eAe \to A$ and
the projection $q\colon A^! \to A^!/A^!\bar e A^!$, where $e \in A_0$ is an idempotent
and $\bar e = 1 - e$ is the complementary idempotent.  The following lemma follows immediately
from the definition of quadratic duality.

\begin{lemma}\label{quadratic duality} If $A$ and the subring $eAe$ are both quadratic, then
$(eAe)^! \cong A^!/A^!\bar e A^!$.
\end{lemma}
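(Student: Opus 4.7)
The plan is to verify the isomorphism directly from the definitions of quadratic duality, using the fact that a quadratic algebra is determined by its degree $0$, degree $1$, and degree $2$ components together with its quadratic relations. Both $(eAe)^!$ and $A^!/A^!\bar e A^!$ will be shown to be quadratic algebras with identifiable data.

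First, since $A_0$ is commutative, the opposite algebra operation is easy to track: writing $A^\vee$ for the quadratic dual (so that $A^! = (A^\vee)^{\op}$), it suffices to prove $(eAe)^\vee \cong A^\vee/A^\vee \bar e A^\vee$ and then take opposites (the opposite of the quotient by the two-sided ideal generated by $\bar e$ is the quotient by the two-sided ideal generated by $\bar e$ in the opposite algebra). In degree $0$ and degree $1$, the identification is immediate: $(eAe)^\vee_0 = eA_0e$ matches the degree $0$ part of $A^\vee/A^\vee\bar eA^\vee$, and using the natural perfect pairing between $eA_1e$ and $eA_1^*e$ (coming from the decomposition $A_0 = \bigoplus_\alpha \C e_\alpha$, which makes $\Hom_{eA_0e}(eA_1e,eA_0e)$ canonically identify with $eA_1^*e$), we get $(eAe)^\vee_1 = (eA_1e)^* \cong eA_1^*e$, again matching the degree $1$ part of the claimed right-hand side.

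The heart of the argument is at the level of quadratic relations. Write $X := A_1\otimes_{A_0}A_1$, so that $A$ is the quotient $T_{A_0}(A_1)/\langle R\rangle$ for some $R \subset X$, and $A^\vee$ has relations $R^\perp \subset X^* = A_1^*\otimes_{A_0}A_1^*$. The idempotent decomposition yields
\[eXe = eX_1 \oplus eX_2, \qquad eX_1 = eA_1e\otimes_{eA_0e}eA_1e, \qquad eX_2 = eA_1\bar e\otimes_{\bar eA_0\bar e}\bar eA_1e,\]
with a dual decomposition of $eX^*e$. The degree $2$ relations of $eAe$ form the subspace $S = \ker(eX_1\to eA_2e)$, which equals $\{y\in eX_1 : y \in eRe\}$ viewing $eX_1 \subset eXe$. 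On the other side, the degree $2$ part of $A^\vee/A^\vee\bar eA^\vee$ is a quotient of $eX_1^*$ (since $eX_2^*$ is killed by the ideal generated by $\bar e$), and the relations in $eX_1^*$ are exactly $\pi_1(eR^\perp e)$, the projection to $eX_1^*$ of $eR^\perp e \subset eX^*e$.

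Thus the proof reduces to the linear-algebra identity $S^\perp = \pi_1(eR^\perp e)$ inside $eX_1^*$. This follows from a general fact: for $W \subset V = V_1\oplus V_2$, setting $S = W\cap V_1$, one has $S^\perp = \pi_1(W^\perp) \subset V_1^*$. One direction is immediate (any $\eta\in W^\perp$ annihilates $W\cap V_1$, hence so does $\pi_1\eta$), and for the reverse one extends a functional $\phi\in V_1^*$ vanishing on $S$ by defining $\psi$ on $\pi_2(W)$ via $\psi(\pi_2(w)) = -\phi(\pi_1(w))$ (well-defined precisely because $\phi$ vanishes on $S$) and extending arbitrarily to $V_2$. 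The main obstacle is simply unwinding the multiple dualities and opposite-algebra conventions carefully; the underlying content is this elementary linear-algebra fact together with the observation that the hypothesis that $eAe$ be quadratic is exactly what lets us identify its degree $2$ relations with $S$ alone, without higher-degree contributions.
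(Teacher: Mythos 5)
The paper offers no proof of this lemma (it is asserted to follow immediately from the definitions), so the only question is whether your argument is complete. Your reduction to the un-opposed quadratic duals, the identification of the degree $0$ and $1$ pieces, the decomposition $eXe = eX_1 \oplus eX_2$, and the linear-algebra identity $S^\perp = \pi_1(eR^\perp e)$ (for $W \subset V_1 \oplus V_2$, one has $(W\cap V_1)^\perp = \pi_1(W^\perp)$) are all correct, and you correctly isolate where the hypothesis that $eAe$ is quadratic enters on the left-hand side. But there is one step you assert without proof: that $A^\vee/A^\vee\bar e A^\vee$ is itself quadratic. What your computation actually establishes is that both algebras are generated in degree $1$ over degree $0$ and have the \emph{same degree-$2$ relations}; since $(eAe)^\vee$ is by definition the quadratic algebra on that data, this only yields a canonical surjection $(eAe)^\vee \twoheadrightarrow A^\vee/A^\vee\bar e A^\vee$. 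If the ideal cutting out the quotient had generators in degree $\ge 3$ not induced by the quadratic ones, the map would fail to be injective, and nothing you wrote rules this out.

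The gap is genuine but fillable by the same kind of bookkeeping. Write $T = T_{A_0}(A_1^*)$ and let $P$ denote the projection of each $({A_1^*})^{\otimes n}$ onto its summand $(eA_1^*e)^{\otimes n}$ in the decomposition indexed by the intermediate idempotents. Then $eTe/eT\bar e Te \cong T_{eA_0e}(eA_1^*e)$ via $P$, and because $P$ factors as a tensor product of projections one gets
$P\bigl(({A_1^*})^{\otimes k}\otimes R^\perp\otimes ({A_1^*})^{\otimes \ell}\bigr) = (eA_1^*e)^{\otimes k}\otimes \pi_1(eR^\perp e)\otimes (eA_1^*e)^{\otimes \ell}$,
so the image of $\langle R^\perp\rangle$ in $T_{eA_0e}(eA_1^*e)$ is exactly the ideal generated by $\pi_1(eR^\perp e)$. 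Hence $A^\vee/A^\vee\bar e A^\vee \cong T_{eA_0e}(eA_1^*e)/\langle \pi_1(eR^\perp e)\rangle$ is quadratic, and combined with your identity $S^\perp = \pi_1(eR^\perp e)$ this closes the argument. (Note that this quadraticity of the quotient cannot be taken for granted silently: the paper's Theorem \ref{Koszul functoriality} explicitly \emph{deduces} Koszulity of $A^!/A^!\bar e A^!$ from this lemma, so the lemma is carrying that content.)
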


The following theorem is the main result of this section;
in the next section we will apply it to the ring $A(\PA)$ to prove the duality of shuffling and twisting.

\begin{theorem}\label{Koszul functoriality} Suppose that $A$ and the subring $eAe$ are both Koszul.  
Then $A^!/A^!\bar e A^!$ is Koszul, and in the diagram
\[
\xymatrix@+15pt{ D_\sgr(eAe) \ar[r]^{K_e}\ar@<.5ex>[d]^{i_*} & D_\sgr(A^!/A^!\bar e A^!) \ar@<.5ex>[d]^{q^*} \\
D_\sgr(A) \ar@<.5ex>[u]^{i^*}\ar[r]^{K} & D_\sgr(A^!) \ar@<.5ex>[u]^{q_*}
}\] 
both squares commute up to natural isomorphism.  Here
the horizontal maps are the appropriate Koszul duality equivalences and the vertical
maps are either pushing forward (tensoring) or pulling back (taking the induced module)
by the homomorphisms $i$ and $q$.
\end{theorem}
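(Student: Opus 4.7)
The Koszulity of $A^!/A^!\bar e A^!$ will follow immediately from Lemma \ref{quadratic duality}, which identifies it with $(eAe)^!$, together with the standard fact that the quadratic dual of a Koszul algebra is Koszul. For the commutativity of the two squares, my plan is to verify each natural isomorphism directly on a generating set of the source derived category and then extend by triangulated functoriality, taking advantage of the fact that both $eAe$ and $A$ have finite graded global dimension.

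I will focus first on the right square $K\circ i_* \cong q^*\circ K_e$, taking as generators of $D_\sgr(eAe)$ the projectives $P^e_\a = eAe\cdot e_\a$ for $\a$ in the subset $S\subset \cP$ satisfying $e = \sum_{\a\in S} e_\a$. On objects the calculation is direct: $i_*(P^e_\a) = Ae\otimes_{eAe} eAe\cdot e_\a = Ae_\a = P^A_\a$ with $K(P^A_\a) = L^{A^!}_\a$, while $K_e(P^e_\a) = L^{A_e}_\a$ with $q^*(L^{A_e}_\a) = L^{A^!}_\a$, so both paths produce $L^{A^!}_\a$, where $A_e := A^!/A^!\bar e A^!$. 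On morphisms between these projectives the key identity is $e_\b eAe e_\a = e_\b A e_\a$ for $\a,\b\in S$ (because $e_\a = ee_\a$), so $i_*$ acts as the identity on the Hom spaces $(e_\b A e_\a)_k$. Under Koszul self-duality these Hom spaces are identified with Ext groups in the dual categories via the isomorphisms $\Ext^*_{A^!}(A^!_0, A^!_0) = A$ and $\Ext^*_{A_e}(A_{e,0}, A_{e,0}) = eAe$, and restriction $q^*$ induces on these Ext groups precisely the inclusion $eAe \hookrightarrow A$, because the inclusion $eAe\hookrightarrow A$ is the quadratic dual of the ring surjection $q\colon A^!\twoheadrightarrow A_e$ by Lemma \ref{quadratic duality}. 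Both compositions therefore act as the identity on $(e_\b A e_\a)_k$, giving the required natural isomorphism on the generating projectives.

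The left square $K_e\circ i^* \cong q_*\circ K$ will be handled analogously, but with the simples $L^A_\a$ for $\a\in\cP$ as generators of $D_\sgr(A)$. On objects, both sides send $L^A_\a$ to $P^{A_e}_\a$ when $\a\in S$ and to $0$ when $\a\notin S$, since $q_*(A^!e_\a) = A_e e_\a$ vanishes exactly when $e_\a \in A^!\bar e A^!$. The induced maps on morphisms between simples are again compatible via the same Koszul-dual identification of the ring maps $i\colon eAe\hookrightarrow A$ and $q\colon A^!\twoheadrightarrow A_e$.

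The main obstacle will be packaging the pointwise agreement above into an honest natural isomorphism of triangulated functors, and relatedly establishing cleanly that restriction of Ext classes along the ring map $q$ corresponds, via Koszul self-duality, to the inclusion $eAe\hookrightarrow A$ on the Yoneda algebras. This is exactly the functoriality of Koszul duality with respect to quadratic algebra homomorphisms; once this is set up in a canonical way (for instance, by realizing $K$ and $K_e$ through the Koszul bimodule $A\otimes_{A_0}(A^!)^*$ and noting that its restriction by $e$ is the analogous Koszul bimodule for $eAe$), the commutativity of both squares becomes essentially formal from the computations above.
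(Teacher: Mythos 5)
Your reduction of the Koszulity claim to Lemma \ref{quadratic duality} matches the paper, and your object-level computations are correct: both composites do send $P^e_\a$ to $A^!_0e_\a$ and $L^A_\a$ to $(A^!/A^!\bar e A^!)e_\a$ or to $0$. The genuine gap is in the phrase ``extend by triangulated functoriality.'' A compatible family of isomorphisms on a generating set of objects, together with agreement on morphisms between them, does not determine a natural isomorphism of triangulated functors, because cones are not functorial; there is no extension principle of this kind without a chain-level or DG enhancement. You flag this yourself as ``the main obstacle,'' but the proposal never closes it, and the same unproved compatibility reappears in your assertion that restriction along $q$ corresponds, under the identification $\Ext^*_{A^!}(A^!_0,A^!_0)\cong A$, to the inclusion $eAe\hookrightarrow A$. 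The remedy you gesture at in the last sentence --- realizing $K$ via the Koszul bimodule --- is not a finishing touch: carried out, it \emph{is} the proof, and it supersedes the generator computations rather than completing them.

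That is exactly how the paper proceeds, and only for one of the two squares. It writes $K$ explicitly on a complex $(M^i,\partial)$ as $(KM)^p_q=\bigoplus A^!_l\otimes_{A_0}(M^i_j)^*$ (summed over $p=-i-j$, $q=l+j$) with an explicit differential, and then checks that $K_e i^*$ and $q_*K$ have literally isomorphic underlying complexes, matching the differentials by choosing a basis of $A_1$ adapted to the decomposition $A_1e\oplus A_1\bar e$. This yields the natural isomorphism at the chain level, where no extension problem arises. The second square is then obtained formally rather than by a second computation: $i^*i_*\simeq\mathrm{id}$ gives $K_e\simeq q_*Ki_*$, and since $Ki_*$ takes values in complexes annihilated by $\bar e$, on which $q_*q^*\simeq\mathrm{id}$, one concludes $q^*K_e\simeq Ki_*$. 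If you rework your argument, adopt this structure --- one chain-level comparison plus the two unit/counit identities --- in place of two separate generator checks.
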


\begin{proof} The Koszulity of $A^!/A^!\bar e A^!$ follows from Lemma \ref{quadratic duality}.
To prove the rest of the theorem, we must look closely at how 
the functor $K$ is defined.  On a complex $(M^i, \partial)$ it is given by 
\[ (KM)^p_q := \bigoplus_{\substack{p = -i-j \\ q = l + j}} A^!_l \otimes_{A_0} (M^i_j)^*\]
with differential
\[\partial(a \otimes f) = (-1)^{i+j} \sum_c a \check v_c \otimes v_cf + a \otimes \partial f\]
for $a \in A^!_l$ and $f \in (M^i_j)^*$, where $\{v_c\}$ is a basis of $A_1$, $\{\check v_c\}$ is
the dual basis of $A^!_1$, and $v_cf$ and $\partial f$ are defined by dualizing the actions of
$v_c$ and $\partial$ on $M$.  

Let us show that $K_e  i^* \simeq q_*  K$.
For $M \in D_\sgr(A)$, the underlying vector space of $K_e(i^*M)$ is 
\[A^!/A^!\bar eA^! \otimes_{A_0/\bar e A_0} eM^*,\]
while $q_*(KM)$ is 
\[A^!/A^!\bar eA^! \otimes_{A^!} \left( A^! \otimes_{A_0} M^*\right) = A^!/A^!\bar eA^! \otimes_{A_0} M^*.\]
It is easy to see that these are isomorphic and that the gradings agree.  
To check that the differentials are the same, use a basis $\{v_c\}$ of 
$A_1$ obtained by combining bases of $A_1e$ and $A_1\bar e$.

Next note that $i^* i_*$ is naturally equivalent to the identity functor
on $D_\sgr(eAe)$.  From this it follows that
\[K_e \simeq K_e  i^*  i_* \simeq q_*  K  i^*.\]
It is also easy to see that the restriction of $q_*  q^*$ to the 
full subcategory of $D_\sgr(A^!)$ given by complexes $M^\udot$ 
with $\bar e M^i = 0$ for all $i$ is naturally
equivalent to the identity.  But since $$Ki_*(eAe) = K(Ae) = A^!_0e,$$ the
functor $Ki_*$ lands in this 
subcategory, so we have
\[q^*  K_e \simeq q^*  q_*  K  i^* \simeq K  i^*.\qedhere\]
\end{proof}

\subsection{Duality of twisting and shuffling}\label{sec:duality}
Let $\PA_{\eta_1,\xi} = (\vb_0, \eta_1,\xi)$ and $\PA_{\eta_2,\xi} = (\vb_0, \eta_2,\xi)$
be regular polarized arrangements, and let 
$\PA^!_{\eta^!,\xi_1^!}$, $\PA^!_{\eta^!, \xi^!_2}$ be their Gale duals,
as defined in Section \ref{sec:gd}.  Our twisting and shuffling functors 
have graded versions, given by putting the natural grading on the bimodules
that appear in Propositions \ref{twisting-comb} and \ref{shuffling-comb}.
We denote the graded functors by the same symbols:
\[\Phi_{\eta_1}^{\eta_2} \colon D_\sgr(\PA_{\eta_1,\xi}) \to D_\sgr(\PA_{\eta_2, \xi})\and
\Psi_{\xi^!_1}^{\xi^!_2} \colon D_\sgr(\PA^!_{\eta^!,\xi^!_1}) \to D_\sgr(\PA^!_{\eta^!,\xi^!_2}).\]

\begin{theorem}\label{TwistShuf Koszul} There is a natural equivalence
\[\Psi_{\xi^!_1}^{\xi^!_2} \circ K_1 \simeq K_2 \circ \Phi_{\eta_1}^{\eta_2} ,\]
where $K_i \colon D_\sgr(\PA_{\eta_i,\xi}) \to D_\sgr(\PA^!_{\eta^!,\xi_i^!})$
is the contravariant Koszul equivalence defined in Section \ref{sec:koszul}.
\end{theorem}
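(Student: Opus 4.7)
The plan is to apply Theorem \ref{Koszul functoriality} at an "ambient" level that sees both shuffling and twisting at once. Both classes of functors are compositions of a pullback and a pushforward along precisely the two kinds of maps appearing in that theorem: by Proposition \ref{shuffling-comb} we have $\Phi_{\eta_1}^{\eta_2} = i_{\eta_2}^* \circ (i_{\eta_1})_*$, where $i_\eta \colon A(\eta,\xi) \hookrightarrow A(-,\xi)$ is the inclusion as the $e_\eta$-corner, and by Proposition \ref{twisting-comb} we have $\Psi_{\xi_1^!}^{\xi_2^!} = (q_2)_* \circ q_1^*$, where $q_i \colon A(\eta^!,-) \twoheadrightarrow A(\eta^!,\xi_i^!)$ is the quotient by the ideal generated by $e_{\xi_i^!}$. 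Recognizing this parallel is the main conceptual point: Koszul duality should exchange the two sides of Theorem \ref{Koszul functoriality}, turning shuffling into twisting.

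The key structural claim required to actually feed this into Theorem \ref{Koszul functoriality} is that the ambient algebras $A(-,\xi)$ and $A(\eta^!,-)$ (with $\eta^! = -\xi$) are Koszul dual. Grant this for the moment; then the complementary idempotents on the two sides match under the canonical identification of their semisimple parts. Indeed, in $A(-,\xi)_0 \cong \bigoplus_{\a \in \cB_\xi} \C e_\a$ the idempotent $\bar e_{\eta_i}$ is $\sum_{\a \in \cB_\xi \setminus \cF_{\eta_i}} e_\a$, while in $A(\eta^!,-)_0 \cong \bigoplus_{\a \in \cF_{\eta^!}} \C e_\a$ the idempotent $e_{\xi_i^!}$ is $\sum_{\a \in \cF_{\eta^!} \setminus \cB_{\xi_i^!}} e_\a$, and Theorem \ref{bffb} yields $\cB_\xi \setminus \cF_{\eta_i} = \cF_{\eta^!} \setminus \cB_{\xi_i^!}$.

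With these identifications, Theorem \ref{Koszul functoriality} applied for $e = e_{\eta_1}$ and $e = e_{\eta_2}$ yields natural equivalences
\[K_i \circ i_{\eta_i}^* \simeq (q_i)_* \circ K \qquad\text{and}\qquad K \circ (i_{\eta_i})_* \simeq q_i^* \circ K_i,\]
where $K \colon D_\sgr(A(-,\xi)) \to D_\sgr(A(\eta^!,-))$ is the ambient Koszul duality. Chaining these produces the target isomorphism:
\[K_2 \circ \Phi_{\eta_1}^{\eta_2} = K_2 \circ i_{\eta_2}^* \circ (i_{\eta_1})_* \simeq (q_2)_* \circ K \circ (i_{\eta_1})_* \simeq (q_2)_* \circ q_1^* \circ K_1 = \Psi_{\xi_1^!}^{\xi_2^!} \circ K_1.\]

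The main obstacle is the Koszulity hypothesis on the ambient algebra $A(-,\xi)$, which is needed to invoke Theorem \ref{Koszul functoriality}. Quadratic duality with $A(\eta^!,-)$ is a formal consequence of Lemma \ref{quadratic duality}: writing $A(-,\xi) = R / Re_\xi R$ and identifying the quadratic duals of $R$ and $R^!$ on their common semisimple part, the dual of this quotient is recognized as a corner $\bar e_\xi R^! \bar e_\xi = e_{\eta^!} R^! e_{\eta^!} = A(\eta^!,-)$. Promoting quadratic duality to Koszulity is the delicate step; I expect it can be extracted by refining the methods of \cite{GDKD} used to establish Koszulity of the finite-dimensional quotients $A(\eta,\xi)$ (which are the idempotent truncations $e_\eta A(-,\xi) e_\eta$ of $A(-,\xi)$), or by a direct construction of a linear projective resolution of the simples of $A(-,\xi)$. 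There is also a mild technical point that $A(-,\xi)$ need not be finite-dimensional, only finite-dimensional in each degree, so Theorem \ref{Koszul functoriality} is applied in an easily-verified mild extension of its stated hypotheses. Once the Koszulity of $A(-,\xi)$ is in place, everything else in the argument is formal, combining Gale duality on idempotents with the functoriality package of Theorem \ref{Koszul functoriality}.
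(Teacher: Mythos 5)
Your overall architecture is exactly the paper's: both write $\Phi_{\eta_1}^{\eta_2}$ as pushforward to the ambient algebra $A(-,\xi)$ followed by passage to the corner $e_{\eta_2}A(-,\xi)e_{\eta_2}$, write $\Psi_{\xi_1^!}^{\xi_2^!}$ as inflation to $A^!_\spol(\eta^!,-)$ followed by the quotient to $A^!(\eta^!,\xi_2^!)$, match the idempotents via Theorem \ref{bffb}, and then invoke Theorem \ref{Koszul functoriality} twice. So the formal skeleton is right.

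However, there is a genuine gap exactly where you flag one: the Koszulity of the ambient pair. You write that you ``expect it can be extracted by refining the methods of \cite{GDKD}'' or by constructing a linear resolution, but this is not a routine refinement — it is the content of Lemma \ref{Akoszul}, which occupies the bulk of the paper's proof. Koszulity of the corners $e_\eta A(-,\xi)e_\eta = A(\eta,\xi)$ and of the quotients $A^!(\eta^!,\xi^!)$ does not in general propagate to the ambient algebra (the implication in Theorem \ref{Koszul functoriality} goes the other way: it \emph{assumes} $A$ and $eAe$ Koszul and \emph{concludes} for the quotient). The paper proves Lemma \ref{Akoszul} by a direct verification that the Koszul complex $e_\a A \otimes B^* e_\b$ of $A = A^!_\spol(\eta^!,-)$ is exact: it identifies the reduced complex with a sum of reduced face rings of the polyhedra $\Delta_\gamma\cap\Delta_\b$, splits the equivariant complex monomial by monomial, and runs a separate argument for $\a=\b$ using a vertex $p$ of $\Delta_\a$ and the subcomplex supported on $\Gamma_p$. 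None of this is formal, and your proposal does not supply a substitute. One smaller correction: your worry that $A(-,\xi)$ might fail to be finite-dimensional is unfounded — by \cite[6.1]{GDKD} it is isomorphic to $\bigoplus_{\a,\b\in\cB_\xi} H^*(C_\a\cap C_\b;\C)$, hence finite-dimensional — whereas the hypothesis that actually requires checking on the other side is that $A^!_\spol(\eta^!,-)$ (which is infinite-dimensional, being free over $Z(R^!)\cong\Sym(\C^n/\fk^!)$ in each corner) is left Noetherian. Finally, note that the paper is careful to use the uncompleted algebra $A^!_\spol(\eta^!,-)$ rather than $A^!(\eta^!,-)$ here, observing afterward that the two have the same image in the finite-dimensional quotients, a point your proposal elides.
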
 

\begin{proof}
We denote by
\[A_\spol^!(\eta^!,-) := e_{\eta^!}\left(Q_n/\langle \vart(\fk^!)\rangle\right)e_{\eta^!},\]
the graded ring whose completion is $A^!(\eta^!,-)$.
The theorem will follow from applying Theorem \ref{Koszul functoriality} to the 
diagram
\[\xymatrix{
D_\sgr(A(\eta_1,\xi)) \ar[r]^{K_1}\ar[d] & D_\sgr(A^!(\eta^!,\xi^!_1)) \ar[d] \\
D_\sgr(A(-,\xi)) \ar[r]^K\ar[d] & D_\sgr(A^!_\spol(\eta^!,-)) \ar[d] \\
D_\sgr(A(\eta_2,\xi)) \ar[r]^{K_2} & D_\sgr(A^!(\eta^!,\xi^!_2))
}\]
where the vertical maps are the respective derived pushforwards and pullbacks, and
the horizontal maps are the Koszul duality functors.  
Proposition \ref{twisting-comb}
tells us that the composition of the left-hand vertical functors is $\Phi_{\eta_1}^{\eta_2}$,
and Proposition \ref{shuffling-comb} tells us that the 
composition of the right-hand vertical functors is 
$\Psi_{\xi^!_1}^{\xi^!_2}$, since the finite dimensionality of $A^!(\eta^!,\xi^!_i)$
implies that the images of $A^!_\spol(\eta^!,-)$ and $A^!(\eta^!,-)$ in this ring are the same.

In order to apply Theorem \ref{Koszul functoriality}, we must verify that $A(-,\xi)$ and $A^!_\spol(\eta^!,-)$ are Koszul
and dual to each other, 
$A(-, \xi)$ is finite dimensional, and $A^!_\spol(\eta^!, -)$ is left Noetherian.
Let
$R := Q_n/\langle \vart(\fk)\rangle$ and
$R^! :=  Q_n/\langle \vart(\fk^!)\rangle$.  
(Note that unlike in Section \ref{sec:comb}, here $R$ is a quotient of the quiver algebra $Q_n$ 
rather than its completion $\wh{Q}_n$.)
Put $e := e_{\eta^!}$ and $\bar{e} := e_{\xi} = 1 - e$, so that
$$A^!_\spol(\eta^!, -) = eR^!e\and A(-, \xi) = R/R\bar eR.$$
From this description it is clear that $A(-,\xi)$ is isomorphic to the ring $A_{\mathrm{ext}}(\PA)$ defined in 
\cite[\S 6.1]{GDKD}, where $\PA = \PA_{\eta_i,\xi}$ for $i = 1$ or $2$
(the definition does not use the parameter $\eta$).  The proposition \cite[6.1]{GDKD} says that this ring
is isomorphic as a vector space to the direct sum of cohomology groups 
\[\bigoplus_{\a,\b\in \cB_\xi} H^*(C_\a \cap C_\b; \C),\]
$\a$ and $\b$ range over the set $\cB_\xi = \cF_{\eta^!}$ and
$C_\a$ is the relative core component indexed by $\a$ in
the hypertoric variety $\fM(\PA^!_{\eta^!, \xi^!_i})$.
Thus $A(-,\xi)$ is manifestly finite dimensional.
To see that $A^!_\spol(\eta^!, -)$ is left Noetherian, simply note that for any
$\a, \b\in \cF_\eta$,
\[e_\a A^!_\spol(\eta^!, -) e_\b = e_\a R^! e_\b\]
is a free module of rank one over $Z(R^!)$, which is isomorphic to
the polynomial ring $\Sym(\C^n/\fk^!)$ via the map $\vart$.
Using \cite[3.8]{GDKD}, we observe
that our description of $A^!_\spol(\eta^!, -)$ coincides with that of \cite[3.1]{GDKD} for the polarized arrangement
$\PA^!_{\eta^!, \xi^!_i}$, with the relation A1 deleted.
The proof of \cite[3.2]{GDKD} then adapts immediately to show that $A^!_\spol(\eta^!, -)$ is quadratic.

At this point we simplify notation by putting $$A := A^!_\spol(\eta^!, -)\and B := A(-,\xi).$$

The rings $R$ and $R^!$ are quadratic dual, where we let the
natural basis of $R_1 = (Q_n)_1 = R^!_1$ given by making length
one paths self-dual up to a 
sign that makes the commutation relations around each
square in the cube quiver dual on the two sides (see
\cite[\S 3.3]{GDKD} for one way to produce these signs).
It now follows from Lemma \ref{quadratic duality} that $A$ and $B$ 
are quadratic dual rings.  Since both rings are isomorphic to their own opposites,
we have $A(-, \xi)\cong A^!_\spol(\eta^!, -)^!$.  It thus remains only to show that the following lemma holds. 
\begin{lemma}\label{Akoszul}
$A$ is Koszul and $B$ is its Koszul dual.  In particular, $A$ has finite global dimension.
\end{lemma}

By \cite[2.6.1]{BGS96}, $A$ is Koszul if and only if its 
Koszul complex is a resolution of $A_0 = A/A_{>0}$.  This 
complex can be defined as follows.  As a vector space
it is $A \otimes B^*$, where we put $\otimes = \otimes_{A_0}$
for the remainder of the proof.  This complex is bigraded, with
$A_i \otimes B^*_j$ in degree $(-j,i + j)$.  It is a graded 
left $A$-module using the second grading, and a complex
using the first grading, where the differential $\partial$ is
the composition
\[A \otimes B^* \to A \otimes (B^*_1 \otimes B^*) \to (A \otimes A_1) \otimes B^* \to A \otimes B^* \]
using the comultiplication on $B^*$, the identification $A_1 = B^*_1$, and the multiplication on $A$.
The map $A \otimes B^* \to A_0$ sends $A_0 \otimes B^*_0$ isomorphically to $A_0$ and kills
all higher degree terms.  As a result, what we need to show is that for any $\a, \b \in \cF_{\eta^!} = \cB_\xi$
the complex $e_\a A \otimes B^* e_\b$ is a resolution of $\C$ if $\a = \b$ and is exact if $\a \ne \b$.

First consider the case $\a \ne \b$.  Since $e_\a A \otimes B^* e_\b$ is a complex of free 
graded modules over $Z := Z(R^!)$, it is exact if and only if the reduced complex
$\C \otimes_Z e_\a A \otimes B^* e_\b$ is exact.
It will be more convenient to work with the dual complex, which we denote by $\Sigma^\udot_{\a\b}$.
To describe $\Sigma^\udot_{\a\b}$ more precisely, first note that for any $\gamma \in \cF_{\eta^!}$,
$\C \otimes_Z e_\a A e_\gamma$ is a copy of $\C$ placed in 
degree
\[d_{\a\gamma} := |\{ 1 \le i \le n \mid \a(i) \ne \gamma(i)\}|.\]
Following the notation of \cite[\S 4.1]{GDKD}, we have an isomorphism of vector spaces
\begin{equation}\label{eqn:chain complex}
\Sigma^\udot_{\a\b} \cong \bigoplus_{\gamma \in \cB_\xi} H^*(C_{\gamma}\cap C_{\b}; \C)[-d_{\a\gamma} - d_{\gamma\b}] = 
\bigoplus_{\gamma \in \cB_\xi} {R}_{\gamma\b}[-d_{\a\gamma} - d_{\gamma\b}],
\end{equation}
where ${R}_{\gamma\b}$ is the reduced face ring of the simplicial complex associated 
to the polyhedron $\Delta_{\gamma\b} := \Delta_\gamma \cap \Delta_\b$.  More precisely, 
if we let $\tilde{R}_{\gamma\b}$ be the quotient of
the polynomial ring $\Sym(\SgrH)\cong \C[e_1,\dots, e_n]$, with generators $e_i$ in degree $2$, by the ideal 
\[\Big\langle \prod_{i\in S} e_i \,\,\Big{|}\,\,  \Delta_{\gamma\b} \cap \bigcap_{i\in S} H_i = \emptyset\Big\rangle,\]
then ${R}_{\gamma\b}$ is the quotient of $\tilde{R}_{\gamma\b}$ by the ideal generated by 
$\sum_i a_ie_i$ for all $a = (a_1, \dots, a_n) \in \fk^!\subset\SgrH$.
Notice that the sum \eqref{eqn:chain complex} can actually be taken over all sign vectors $\gamma \in \{+,-\}^n$, because if $\gamma \notin \cB_\xi = \cF_{\eta^!}$, then $\Delta_{\gamma\beta} = \emptyset$, and so the ideal defining $\tilde{R}_{\gamma\beta}$ contains the product of $e_i$ for $i \in \emptyset$, which is $1$.

Consider the component
\[\partial_{\gamma\gamma'}\colon {R}_{\gamma\b}[-d_{\a\gamma} - d_{\gamma\b}] \to {R}_{\gamma'\b}[-d_{\a\gamma'} - d_{\gamma'\b}]\] of the boundary map.  
It is easy to see for degree reasons that $\partial_{\gamma\gamma'} = 0$ 
unless $d_{\gamma\gamma'} = 1$ and $d_{\a\gamma'} = d_{\a\gamma} - 1$, which means that
$\gamma'$ is one step closer to $\a$ than $\gamma$ is.
If these conditions hold there are two possible cases for the map $\partial_{\gamma\gamma'}$. If $d_{\beta\gamma} = d_{\beta\gamma'} + 1$, then 
$\Delta_{\gamma'\b} \subset \Delta_{\gamma\b}$ and up to a sign and grading shift $\partial_{\gamma\gamma'}$
is the natural quotient ${R}_{\gamma\b} \to {R}_{\gamma'\b}$.  
If instead $d_{\beta\gamma} = d_{\beta\gamma'} - 1$,
then $\Delta_{\gamma\b} \subset \Delta_{\gamma'\b}$, and up to a sign and grading shift $\partial_{\gamma\gamma'}$ is 
induced by multiplication by $e_i$, where $\gamma$ and $\gamma'$ differ in the $i^\mathrm{th}$ place.
The signs come from the choice of signs in the quadratic duality between $R$ and $R^!$, and
they are arranged so that the two paths around any square given by $\gamma_1, \dots, \gamma_4 \in \cF_{\eta^!}$ 
which differ in exactly two places have opposite signs.

To show that $\Sigma^\udot_{\a\b}$ is exact, we consider the related ``equivariant" complex
$\tilde{\Sigma}^\udot_{\a\b}$ which is obtained by replacing each summand ${R}_{\gamma\b}$ with $\tilde{R}_{\gamma\b}$
and using the same formula for the boundary map.  This is a complex of free graded $\C[\fk^!]$-modules, and so it is exact if and only if $\Sigma^\udot_{\a\b} = \tilde{\Sigma}^\udot_{\a\b} \otimes_{\C[\fk^!]} \C$ is exact. The advantage of using this new complex is that each term $\tilde R_{\gamma\beta}$ has a basis of monomials, so we can study exactness one monomial at a time.  For a monomial $m$ in $\C[e_1,\dots,e_n]$ we let its support $\supp m$ be the set of $i$ such that $e_i$ divides $m$.  Then a basis for $\tilde R_{\gamma\beta}$ is given by the monomials $m$ such that $\Delta_{\gamma\beta} \cap \bigcap_{i\in \supp m} H_i$ is nonempty. 

Consider the set $I := \{1 \le i \le n \mid %H_i \cap \Delta_\beta \ne \emptyset \; \mbox{and}\; 
\alpha(i) = \beta(i)\}$ indexing hyperplanes which do not separate $\Delta_\alpha$ and $\Delta_\beta$.
For any $\gamma\in \{+,-\}^n$ we define another sign vector $\hat\gamma$ by
\[\hat\gamma(i) =
	\begin{cases}
	\alpha(i) & i \in I \\
	\gamma(i) & i \notin I.
	\end{cases}\]
Let $I_\gamma = \{i \in I \mid \hat\gamma(i) \ne \gamma(i)\}$, a set with $d_{\gamma\hat\gamma}$ elements.  It is easy to see that $\Delta_{\gamma\beta} = \Delta_{\hat\gamma\beta} \cap \bigcap_{i\in I_\gamma} H_i$.  It follows that we have an isomorphism
\[\tilde R_{\gamma\beta}[-d_{\alpha\gamma}-d_{\beta\gamma}] \cong e^{}_{I_\gamma}  \cdot \tilde R_{\hat\gamma\beta}[-d_{\alpha\gamma}-d_{\beta\gamma}+2|I_\gamma|] = e^{}_{I_\gamma}  \cdot \tilde R_{\hat\gamma\beta}[-d_{\a\b}],\] where for a subset $S \subset \{1,\dots, n\}$ we let $e_S$ denote the product of $e_i$, $i\in S$. The 
last equality comes from the easy identities $d_{\a\gamma} = d_{\a\hat\gamma} + |I_\gamma|$, $d_{\a\gamma} = d_{\a\hat\gamma} + |I_\gamma|$, and $d_{\a\b} = d_{\a\hat\gamma}+d_{\b\hat\gamma}$. 

We can therefore rewrite our complex as 
\begin{equation}\label{eq:new presentation of complex}
\tilde\Sigma^\bullet_{\a\b} = \bigoplus_{\gamma\in \{+,-\}^n}e^{}_{I_\gamma}  \cdot \tilde R_{\hat\gamma\beta}[-d_{\alpha\beta}].
\end{equation}
In this formulation the components $\partial_{\gamma\gamma'}$ of the differential are given as follows.  Suppose $\gamma$ and $\gamma'$ differ in the place $i$.  If $i\in I$, then we have $\hat\gamma = \hat\gamma'$ and $I_{\gamma'} = I_{\gamma} \setminus \{i\}$, and up to a sign and shift $\partial_{\gamma\gamma'}$ is the inclusion of ideals of $\tilde R_{\hat\gamma\beta}$.  If $i\notin I$, then $I_\gamma = I_{\gamma'}$ and $\Delta_{\hat\gamma'\beta} \subset \Delta_{\hat\gamma\beta}$, and up to a sign and shift $\partial_{\gamma\gamma'}$ is induced by the natural quotient map $\tilde R_{\hat\gamma'\beta} \to \tilde R_{\hat\gamma\beta}$.

Take any $\delta\in \{+,-\}^n$ such that $\delta_I = \alpha_I = \beta_I$, and let $m$ be a monomial whose image in $\tilde R_{\hat\delta\beta}$ is nonzero.  If 
$\gamma \in \{+,-\}^n$ satisfies $\hat\gamma = \delta$, then $m$ lies in the ideal 
$e^{}_{I_\gamma} \tilde R_{\delta\beta}$ if and only if $I_\gamma \subset \supp m$.  There is one such $\gamma$ for each subset of $I \cap \supp m$, and the differentials between the spaces spanned by $m$ in these terms are plus or minus the identity for each pair of subsets of $I_\gamma \subset \supp m$ differing in one element.  This forms an acylic complex unless $\supp m$ is disjoint from $I$, which implies that $\tilde\Sigma_{\a\b}^\bullet$ is quasi-isomorphic to the complex
\[\hat\Sigma_{\a\b}^\bullet := \bigoplus_{\delta|_I = \alpha|_I = \beta|_I} \tilde R_{\delta\beta}/\langle e_i \mid i \in I\rangle[-d_{\a\b}],\]
where all components of the boundary are (up to sign) the natural quotient homomorphisms.

We can assume without loss of generality that all of the hyperplanes $H_i$ meet $\Delta_\beta$, since if any do not, the complex will be unchanged if we remove them.  Look at the subcomplex $\hat\Sigma_{\a\b}^\bullet(1) \subset \hat\Sigma_{\a\b}^\bullet$ spanned by the image of $1$ in each summand.  The nonzero terms occur for $\delta$ such that $\delta|_I = \alpha|_I = \beta|_I$  and $\Delta_{\delta\beta} \ne \emptyset$.  The map $\delta \mapsto \Delta_{\delta\beta}$ is a bijection between these sign vectors and the set of faces of $\Delta_\beta$  (including $\Delta_\beta$ itself) which are not contained in any $H_i, i\in I$.  Equivalently, these are all faces of $\Delta_\beta$ which are not contained in a facet which is ``invisible" from $\Delta_\alpha$, meaning that a ray from a point of $\Delta_\alpha$ to a point in the interior of the facet meets the interior of $\Delta_\beta$ before it reaches the facet.  The complex $\hat\Sigma_{\a\b}^\bullet(1)$ is a cellular chain complex for the Borel-Moore homology of $\Delta_\beta$ with the invisible facets removed.  This space is the union of the interior of $\Delta_\beta$ with an open disk in the boundary, and so the Borel-Moore homology vanishes.  

Now take any monomial $m$ with support $S$ disjoint from $I$ and consider the subcomplex  $\hat\Sigma_{\a\b}^\bullet(m) \subset \hat\Sigma_{\a\b}^\bullet$ spanned by the images of $m$ in each term.  We can assume that $\Delta_m := \Delta_\beta\cap \bigcap_{i\in \supp m} H_i$ is nonempty, since otherwise $\hat\Sigma_{\a\b}^\bullet(m)$ is zero.  The nonzero terms of this complex  come from sign vectors $\delta$ for which $\Delta_{\delta\beta}$ meets $\Delta_m$ and is not contained in an invisible facet.  In other words our complex computes the Borel-Moore homology of the open star of $\Delta_m$ inside $\Delta_\beta$ minus the invisible facets.  This again is the interior of $\Delta_\beta$ union an open disk in the boundary, so the complex for $m$ is again acyclic.

Finally, consider the case $\a = \b$.  Since $I=\{1,\dots, n\}$, it is immediate that $\hat\Sigma^\bullet_{\a\a}$ has only a single nonzero term, coming from $\gamma = \alpha$, and this term is just $\C$.  So our complex $\tilde{\Sigma}^\udot_{\a\a}$ is a resolution of $\C$.  However, we haven't
proved a direct connection between $e_\a A \otimes B^* e_\a$ and $\tilde{\Sigma}^\udot_{\a\a}$,
but only between their reductions:
\begin{equation}\label{complex iso}
\Sigma^\udot_{\a\a} = \C \otimes_{\Sym(\fk^!)} \tilde{\Sigma}^\udot_{\a\a} \cong (\C\otimes_Z e_\a A \otimes B^* e_\a)^*.
\end{equation}
Let $p$ be a vertex of $\Delta_\a$, and set $\Gamma_p := \{\gamma \in \cF_{\eta^!} \mid p \in \Delta_\gamma\}.$
This set has $2^k$ elements, $k = \dim(\fk^!)$, namely all $\gamma \in \{+,-\}^n$ for which 
$\gamma(i) = \a(i)$ if $p \notin H_i$.

Consider the subspace 
$D = \bigoplus_{\gamma \in \Gamma_p} Z(a_{\a\gamma} \otimes b^*_{\gamma\a})$ of $e_\a A \otimes B^* e_\a$,
where $a_{\a\gamma}$ and $b^*_{\gamma\a}$ are minimal degree elements of $e_\a A e_\gamma$ and $e_\gamma B^* e_\a$, respectively.
It is a subcomplex of graded $Z$-submodules isomorphic to $\bigoplus_{\gamma \in \Gamma_p} Z[-2d_{\a\gamma}]$, and it is easy to check that it is a resolution of $\C$.
On the other hand, let $E$ be the kernel of multiplication by the monomial $\prod_{p \in H_i} e_i$ on the complex 
$\tilde{\Sigma}^\udot_{\a\a}$. Then $\tilde{\Sigma}^\udot_{\a\a}/E$ is isomorphic as a graded $S = \Sym(\fk^!)$-module to $\bigoplus_{\gamma \in \Gamma_p} S[-2d_{\a\gamma}]$,
and one can check that it is a resolution of $\C$.  It follows that $E$ is acyclic, and therefore so is
\[\C \otimes_S E \cong \left[\C \otimes_Z (e_\a A \otimes B^* e_\a / D)\right]^*.\]
It follows that $e_\a A \otimes B^* e_\a$ is a resolution of $\C$, as desired.
\end{proof}

Recall that Gale duality interchanges the two types of ``Weyl groups'' considered in 
Section \ref{sec:weyl-group-symm}: in particular,
we have a natural isomorphism $\rWeyl\cong \Weyl^!$.

\begin{theorem}\label{TwistShuf Koszul2} Under this isomorphism, there is a natural equivalence
\[\Psi^s \circ K_1 \simeq K_2 \circ \Phi^s ,\]
where $K_i \colon D_\sgr(\PA_{\eta_i,\xi}) \to D_\sgr(\PA^!_{\eta^!,\xi_i^!})$
is the contravariant Koszul equivalence defined in Section \ref{sec:koszul}.
\end{theorem}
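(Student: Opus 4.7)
The plan is to deduce Theorem \ref{TwistShuf Koszul2} from general naturality of Koszul duality under graded ring isomorphisms, observing that both $\phi^s$ and $\psi^s$ descend from a common automorphism of the cube quiver algebra. By Propositions \ref{shuffling-permute} and \ref{twisting-permute}, the functors $\Phi^s$ and $\Psi^s$ are pushforwards along ring isomorphisms $\phi^s\colon A(\eta_1,\xi)\to A(\eta_2,\xi)$ and $\psi^s\colon A^!(\eta^!,\xi^!_1)\to A^!(\eta^!,\xi^!_2)$, where under Gale duality $\eta_2=s\cdot\eta_1$ and hence $\xi^!_2=-\eta_2=s\cdot\xi^!_1$ (while $\eta^!=-\xi$ is $s$-fixed, since $s\in\rWeyl$ fixes $\vb_0^*$ pointwise).

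First I would lift the permutation action of $s$ on $\{+,-\}^n$ to an automorphism $\tilde s$ of the path algebra $Q_n$ (and its completion $\widehat Q_n$) by permuting the vertex idempotents and sending each length-one arrow to the corresponding arrow of the permuted cube, with edge signs adjusted so that $\tilde s$ respects the square-commutation relations. Since $s\in\rWeyl$ fixes $\vb_0\subset W_\Z$ pointwise, it preserves $\fk=\vb_0^\perp\subset\t$ pointwise, so $\tilde s$ preserves $\vart(\fk)\subset Z(\widehat Q_n)$ and descends to an automorphism of $R=\widehat Q_n/\langle\vart(\fk)\rangle$; restricting to the appropriate idempotent subquotient recovers $\phi^s$, as well as compatible automorphisms of the intermediate algebras $A(-,\xi)$ and $A(\eta_1,-)$ from Section \ref{sec:comb}. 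Under the identification $\rWeyl\cong\Weyl^!$ coming from Gale duality, the same element acts trivially on $W_\Z/\vb_0^!$, so $\tilde s$ also preserves $\vart(\fk^!)$ and descends through $R^!=\widehat Q_n/\langle\vart(\fk^!)\rangle$ to $\psi^s$ and to automorphisms of $A^!(-,\xi^!_1)$ and $A^!_{\spol}(\eta^!,-)$.

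The Koszul complex underlying the functor $K$ is built from the natural pairing between $A_1$ and $A^!_1$, whose signs are specified by the self-duality convention of \cite[\S 3.3]{GDKD}; the main step is to verify that this pairing is $\tilde s$-equivariant, so that the diagonal action of $\tilde s$ on the Koszul complex $A\otimes B^*$ commutes with its differential. This is essentially a sign-bookkeeping exercise on the $n$-cube: because $\tilde s$ permutes the square faces of the cube quiver while preserving the commutation relations around each square, consistent signs can be chosen $\tilde s$-equivariantly, and any residual ambiguity is absorbed by the freedom in the natural isomorphism witnessing the $\rWeyl$-action discussed in Section \ref{sec:weyl-group-symm}. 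Granting this, naturality of Koszul duality immediately gives $K\circ\tilde s_*\simeq\tilde s_*\circ K$ at the level of the intermediate algebras $A(-,\xi)$ and $A^!_{\spol}(\eta^!,-)$; then restricting and inflating through the diagram of Theorem \ref{Koszul functoriality} (all of whose horizontal and vertical arrows now intertwine the $\tilde s$-action) produces the required equivalence $\Psi^s\circ K_1\simeq K_2\circ\Phi^s$. The hard part is purely the sign bookkeeping on the cube; no new geometric or representation-theoretic input beyond what was used in the proof of Theorem \ref{TwistShuf Koszul} is needed.
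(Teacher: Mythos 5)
Your proposal is correct and follows essentially the same route as the paper: the substantive input in both is that the bijection of sign vectors under Gale duality intertwines the $\rWeyl$- and $\Weyl^!$-actions, so that $\phi^s$ and $\psi^s$ descend from a single automorphism compatible with all the intermediate algebras. The paper's proof is shorter only because it sidesteps your sign-bookkeeping step entirely by observing that the Koszul dual of an algebra isomorphism is canonically determined (pushforward of the sum of simples), so one need only identify $E(\phi^s)$ with $\psi^s$ on the nose rather than verify equivariance of the Koszul complex.
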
 
\begin{proof}
  If we have an isomorphism of algebras, then there is a naturally
  induced isomorphism of their Koszul duals (given by applying the
  pushforward by the original isomorphism to the sum of the simple
  modules).  Thus, we need only check that the Koszul dual of the isomorphism $\phi^s$
  is $\psi^s$.  For this, it suffices to show that the bijection between sign
  vectors of Gale dual arrangements intertwines the actions of
  $\rWeyl$ and $\Weyl^!$, which we have already noted.
\end{proof}

\begin{remark}
There is an ungraded version of this theorem as well; Koszul duality gives an equivalence between the ungraded derived category $D(\PA_{\eta,\xi})$ and the triangulated category of finitely generated DG-modules over $A^!(\xi^!,\eta^!)$.  There is a version of shuffling and twisting functors in the DG setting as well, given by tensor product with the same bimodules, and the proof of Theorem \ref{TwistShuf Koszul} shows that Koszul duality sends DG shuffling to ungraded twisting and vice versa.
By standard homological algebra, we can infer from Theorem \ref{TwistShuf Koszul}
that all versions (graded, ungraded, and DG) of shuffling functors are equivalences: they are full and faithful on gradable modules, and these generate the triangulated category.
\end{remark}

 \bibliography{./symplectic}
\bibliographystyle{amsalpha}
\end{document}